\newcommand{\nc}{\newcommand}
\newcommand{\ot}{\otimes}
\newcommand{\co}{\operatorname{co}}
\newcommand{\ord}{\operatorname{ord}}
\nc{\ydk}{^{K}_{K}\mathcal{YD}}
\newcommand{\gr}{\operatorname{gr}}
\nc{\Tr}{\mathrm{Tr}}
\nc{\X}{\mathbf{X}}
\newcommand{\N}{{\mathbb N}}
\newcommand{\matha}{\mathcal{A}}
\newcommand{\M}{{\mathcal M}}
\newcommand{\D}{{\mathcal D}}
\newcommand{\II}{{\mathcal I}}
\newcommand{\m}{\mathcal{M}}
\nc{\eps}{\varepsilon}
\newcommand{\Oc}{{\mathcal O}}
\newcommand{\ydh}{{}^H_H\mathcal{YD}}
\newcommand\rad{\operatorname{rad}}
\newcommand{\com}{\Delta}
\newcommand\Rep{\operatorname{Rep}}
\newcommand\ad{\operatorname{ad}}
\newcommand{\Comod}{\mbox{\rm Comod\,}}
\nc{\coM}{\M^\ast(2,\Bbbk)}
\nc{\coMtres}{\M^\ast(3,\Bbbk)}
\nc{\coMcua}{\M^\ast(4,\Bbbk)}
\nc{\coMcin}{\M^\ast(5,\Bbbk)}
\nc{\coMt}{\M^\ast(t,\Bbbk)}
\nc{\coMj}{\M^\ast(j,\Bbbk)}
\nc{\coMn}{\M^\ast(n,\Bbbk)}
\nc{\coMd}{\M^\ast(d,\Bbbk)}
\nc{\Ho}{H_0}
\nc{\GH}{G(H)}
\nc{\mas}{\oplus}
\nc{\cA}{\mathcal{A}}
\nc{\yd}{^{C_2}_{C_2}\mathcal{YD}}
\nc{\PH}{\cP(H)}
\nc{\e}{\varepsilon}
\nc{\GL}{\operatorname{GL}}
\nc{\wact}{\rightharpoonup}
\nc{\cark}{char\,k}
\nc{\adl}{\ad_\ell}
\nc{\cP}{\mathcal{P}}
\nc{\cU}{\mathcal{U}}
\nc{\fD}{\mathfrak{D}}
\nc{\cE}{\mathcal{E}}
\nc{\cS}{\mathcal{S}}
\nc{\be}{\textbf{e}}
\newcommand\id{\operatorname{id}}
\newcommand{\Ll}{{\mathcal L}}
\def\pf{\begin{proof}}
\def\epf{\end{proof}}
\theoremstyle{remark}
\numberwithin{equation}{section}
\theoremstyle{plain}
\newtheorem{lema}{Lemma}[section]
\newtheorem{theorem}[lema]{Theorem}
\newtheorem{cor}[lema]{Corollary}
\newtheorem{prop}[lema]{Proposition}
\theoremstyle{definition}
\theoremstyle{remark}
\newtheorem{obs}[lema]{Remark}
\newtheorem{remark}[lema]{Remark}
\newtheorem{rmk}[lema]{Remarks}
\theoremstyle{plain}
\newcounter{maint}
\newtheorem{mainthm}[maint]{Theorem}
\theoremstyle{plain}
\begin{document}

\renewcommand{\baselinestretch}{1.2}

\thispagestyle{empty}
%\vspace*{2in}

\title[Techniques for classifying Hopf algebras]
{Classifying   Hopf algebras  of a given dimension}
\author[M. Beattie and  G. A. Garc\'\i a]
{Margaret Beattie \and Gast\'on Andr\'es Garc\'\i a}
\thanks{This work was partially supported by
 ANPCyT-Foncyt, CONICET, Ministerio de Ciencia y Tecnolog\'\i a (C\'ordoba)
 and Secyt (UNC)}
\address{\newline \noindent Mount Allison University
\newline \noindent Sackville, NB E4L 1E6
\newline \noindent Canada
\vspace*{0.5cm}
\and
\newline \noindent Facultad de Matem\'atica, Astronom\'\i a y F\'\i sica,
\newline \noindent Universidad Nacional de C\'ordoba. CIEM -- CONICET.
\newline \noindent Medina Allende s/n
\newline \noindent (5000) Ciudad Universitaria, C\'ordoba, Argentina}
\email{mbeattie@mta.ca} \email{ggarcia@famaf.unc.edu.ar}

\subjclass[2010]{16T05}
\date{\today}

\begin{abstract} Classifying all Hopf algebras of a given finite dimension over
$\mathbb{C}$ is a challenging
 problem which remains open even for many small dimensions,
not least because few general approaches to the problem are known.
 Some useful techniques include counting the dimensions of
spaces related to the coradical
 filtration \cite{fukuda-pq}, \cite{andrunatale}, \cite{bitidasca},
studying sub- and quotient Hopf algebras
 \cite{GG}, \cite{GV}, especially those sub-Hopf algebras generated
by a simple subcoalgebra \cite{natale},
 working with the antipode \cite{Ng, Ng2, Ng3, Ng4}, and studying Hopf algebras
in Yetter-Drinfeld categories to help
 to classify Radford biproducts \cite{ChNg}.
In this paper, we add to the classification tools
 in \cite{bg} and apply our results to Hopf algebras of
dimension $rpq$ and $8p$ where $p,q,r$ are distinct  primes.
  At the end of this paper we
summarize in a table the status of the classification for dimensions
up to $100$ to date.
\end{abstract}

\maketitle

\section{Introduction}
Let $\Bbbk$ be an algebraically closed field of characteristic $0$. The
question of classifying all Hopf algebras of a given
dimension over $\Bbbk$ goes back
to Kaplansky in 1975. To date, there are very few general results.
 The Kac-Zhu Theorem \cite{Z}, states that a Hopf
algebra of prime dimension is isomorphic to a group algebra. S.-H. Ng
\cite{Ng} proved that in dimension $p^{2}$, the only Hopf algebras
are the group algebras and the Taft algebras, using previous results
in \cite{andrussch}, \cite{masuoka-p^n}. It is a common belief that
a Hopf algebra of dimension $pq$, where $p$ and $q$ are distinct
prime numbers, is semisimple. Hence, it should be isomorphic to a
group algebra or a dual group algebra by \cite{EG}, \cite{GW},
\cite{ma-2p}, \cite{pqq2}, \cite{So}.
This conjecture has been verified for some particular
values of $p$ and $q$, see \cite{andrunatale, bitidasca,
etinofgelaki2, Ng2, Ng3, Ng4}.
Hilgemann and S.-H. Ng gave the classification of
Hopf algebras of dimension $2p^{2}$ in \cite{hilgemann-ng} and more
recently Cheng and  Ng \cite{ChNg} studied the case $ 4p $, solving
the problem for dimension $20, 28$ and $44$.

In fact, all Hopf algebras of dimension $\leq 23$ are
classified: for dimension $\leq 11$ the problem was solved by
\cite{W}; an alternative proof appears in \cite{stefan}. The
classification for dimension 12 was done by \cite{fukuda} in the
semisimple case and then completed by \cite{natale} in the general
case and for dimension $16$ it was solved by \cite{kashina},
\cite{pointed16}, \cite{biti}, \cite{de1tipo6chevalley} and
\cite{GV}.   For dimension 18 the problem was solved by D. Fukuda
\cite{d-fukuda}  and recently Cheng and Ng finished the
classification for dimension $20$. For the   state of the
classification of low dimensional Hopf algebras as of 2009, see \cite{biti2}.

\smallbreak The classification appears   more difficult for even
dimensions as studied in this article. One reason may be
that  for $H$
  a nonsemisimple Hopf algebra of odd dimension, either $H$ or $H^{*}$ has a
nontrivial grouplike element. The smallest dimension that is still
unclassified is $24$ and, since the classification for
dimension $27$ was recently completed
in \cite{bg}, the next unclassified dimension after $24$ is $32$.

\smallbreak In this paper we study   Hopf algebras over
$\Bbbk$  whose dimension is either smaller than $100$ or
  can be decomposed into the product of a small number of prime numbers.
In particular, we give some partial results on Hopf algebras of
dimension $8p$, with applications to the case of dimension $24$, and
dimension $rpq$, where $r,\ p,\ q$ are distinct  prime numbers.
Since there are many results on the classification problem for
dimension
 $4p$ \cite{ChNg} but the complete classification is incomplete,
we cannot hope to complete the classification
  for dimension $8p$.  However we can narrow the possibilities.

We will say
that a Hopf algebra $H$  is of {\it type}
$(r,s)$
if $|G(H)|=r$ and $|G(H^{*})|=s$.

\begin{mainthm}\label{thm:8p}
   Let $ H $ be a nonsemisimple Hopf algebra of dimension
$ 8p $ with $p$ an odd prime.  If $H$ is not of type $ (r,s) $ with
$ r,s $ powers of $ 2 $, $ (2p,2) $ or $ (2p,4) $, then $H$ is
pointed or basic.
\end{mainthm}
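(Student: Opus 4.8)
The plan is to prove the contrapositive: assuming $H$ is neither pointed nor copointed, I will show its type $(r,s)=(|G(H)|,|G(H^{*})|)$ belongs to the listed families. Two global facts organize everything. By the Nichols--Zoeller theorem $\Bbbk G(H)$ and $\Bbbk G(H^{*})$ are Hopf subalgebras, so $r\mid 8p$ and $s\mid 8p$; and since $H$ is nonsemisimple it is neither a group algebra nor a dual group algebra, so $r<8p$ and $s<8p$. Hence $r,s\in\{1,2,4,8,p,2p,4p\}$. Moreover, by the Larson--Radford theorem $H$ nonsemisimple is equivalent to $H$ and $H^{*}$ being non-cosemisimple, so $\dim H_{0}<8p$ and $\dim(H/\rad H)=\dim (H^{*})_{0}<8p$.

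Next I record the meaning of the two negative hypotheses. As $H$ is not pointed it has a simple subcoalgebra of dimension $\geq 4$, whence $\dim H_{0}\geq r+4$. As $H$ is not copointed, $H^{*}$ is not pointed, i.e. $H$ has an irreducible module of dimension $\geq 2$, and since $\dim(H^{*})_{0}=\dim(H/\rad H)$ this gives $\dim(H/\rad H)\geq s+4$. The counting engine is the action of $G(H)$ on the set of simple subcoalgebras by left translation: each orbit consists of subcoalgebras of one fixed dimension and has size dividing $r$, and $H_{0}$ is a $\Bbbk G(H)$-submodule of $H$. The dual action of $G(H^{*})$ on the Wedderburn blocks of $H/\rad H$ gives the matching statement on the semisimple quotient.

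Now I run a case analysis on divisibility by $p$. If $p\nmid r$ and $p\nmid s$, then $r\mid 8$ and $s\mid 8$, so the type is $(2^{a},2^{b})$ and we are done. The remaining work is the regime in which $p$ divides $r$ or $s$. Here the sharp tool is the Hopf subalgebra $K$ generated by $\Bbbk G(H)$ together with a nontrivial simple subcoalgebra of dimension $4$, whose structure is controlled by the results of \cite{natale} and \cite{bg}: $\dim K$ divides $8p$ and $K$ is assembled from $\Bbbk G(H)$ and the matrix coalgebra in a restricted fashion. Combining this divisibility with the orbit counts above, I expect $r=4p$ to force $\Bbbk G(H)$ to be a normal Hopf subalgebra of index $2$ and thereby $H$ to be pointed, contradicting the assumption, and $r=p$ to be incompatible with $p+4\leq\dim H_{0}<8p$. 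This leaves $r=2p$ as the only $p$-divisible value for $|G(H)|$, and a final count using Radford's formula for $S^{4}$, which ties the distinguished grouplikes of $H$ and $H^{*}$ together, should pin $s$ to $\{2,4\}$, producing exactly $(2p,2)$ and $(2p,4)$.

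The main obstacle is precisely this $p$-divisible regime, and within it the asymmetry between the retained types $(2p,2),(2p,4)$ and the configurations that must instead collapse to pointed or copointed. There a large cyclic or dihedral group of $2p$ grouplikes coexists with a genuine $2\times 2$ matrix subcoalgebra, and bare divisibility does not separate the survivors from the rest; one must control the coradical filtration dimensions exactly, playing $\dim H_{1}-\dim H_{0}$ (computed from the skew-primitive data and the freeness of $H$ over $\Bbbk G(H)$) against the radical filtration of $H$. The delicate point, which I expect to be the crux, is to show that once the single factor $p$ of $\dim H=8p$ is absorbed into $G(H)$, the residual $2$-power room is too small to accommodate both a nontrivial simple subcoalgebra and a nontrivial matrix block, except in the two exceptional types.
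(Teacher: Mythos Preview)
Your outline gets the easy part right --- divisibility plus Nichols--Zoeller restricts $|G(H)|$ to $\{1,2,4,8,p,2p,4p\}$ --- but the three $p$-divisible cases are where the content lies, and your sketches for each are either wrong or do not match what the paper actually does.

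For $r=4p$: a normal Hopf subalgebra of index $2$ does not force $H$ to be pointed; that implication fails. The paper's argument is a one-line count: $|G(H)|$ divides $\dim H_{0,d}$ for every $d$ (Lemma~\ref{lema:andrunatale}), so the non-grouplike part of $H_0$ has dimension $\geq 4p$, whence $\dim H_0\geq 8p$ and $H=H_0$, contradicting nonsemisimplicity. For $r=p$: the inequality $p+4\leq\dim H_0<8p$ is not a contradiction at all. The paper first invokes Lemma~\ref{lema: dim H mn rel pr} (since $\gcd(p,8)=1$, $H$ has no nontrivial skew-primitives), then applies the lower bound of Proposition~\ref{prop:biti-dasca}(ii) together with the divisibility constraints on $H_{0,d}$ to force $\dim H>8p$.

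The real crux is $r=2p$, and here your plan diverges sharply from the paper and does not explain how it would close. Radford's $S^4$ formula relates the \emph{distinguished} grouplikes of $H$ and $H^*$; it does not by itself determine $|G(H^*)|$. The paper's route is structural: first a count forces $H_0\cong\Bbbk C_{2p}\oplus\mathcal{M}^*(2,\Bbbk)^p$, then Proposition~\ref{prop:biti-dasca} produces a nontrivial skew-primitive, yielding a pointed sub-Hopf algebra $\mathcal{A}$ of dimension $4p$ (Proposition~\ref{pr: grouplikes 2p}). Dualizing the inclusion gives $\pi:H^*\twoheadrightarrow\mathcal{A}^*$ with $2$-dimensional coinvariants $\Bbbk\{1,x\}$; the key technical input is Proposition~\ref{prop:2p-sweedler}, proved via Lemma~\ref{lema:proj-coalg}, showing $H^*$ contains no copy of $H_4$, which forces $x$ to be a trivial skew-primitive. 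This yields the exact sequence $\mathcal{A}\hookrightarrow H\twoheadrightarrow\Bbbk C_2$, and then Proposition~\ref{prop:exact-dim-cor} computes $\dim(H^*)_0$ exactly, after which a short case split on whether $\mathcal{A}^*$ is pointed pins $|G(H^*)|$ to $\{2,4\}$. None of this machinery --- the $4p$-dimensional pointed subalgebra, the no-$H_4$-in-$H^*$ lemma, the exact sequence --- appears in your proposal, and I do not see a way to replace it by antipode-order arguments alone.
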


Using counting arguments we can improve the theorem above
in case $ p=3 $.

\begin{mainthm}\label{thm:24}
Let $ H $ be a   Hopf algebra of dimension $ 24$ such that the coradical is not a sub-Hopf algebra
of $H$.
Then $H$ is of type
$ (2,2) $, $ (2,4) $ or $ (6,4) $.
\end{mainthm}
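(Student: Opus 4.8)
The plan is to feed Theorem~\ref{thm:8p} with $p=3$ and then remove the surviving possibilities by counting, the hypothesis on the coradical being exactly what excludes the pointed and copointed conclusions of that theorem.

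First I would use the hypothesis to fix the general shape of $H$. If $H$ were semisimple then, being over a field of characteristic $0$, it would be cosemisimple (Larson--Radford), so $H_0=H$ would be a sub-Hopf algebra, against the hypothesis; hence $H$ is nonsemisimple and $H_0\subsetneq H$. Moreover, if $H$ is pointed then $H_0=\Bbbk G(H)$ is a sub-Hopf algebra, while if $H$ is copointed then $H_0$ is again a sub-Hopf algebra; both possibilities are excluded. Thus neither $H$ nor $H^*$ is pointed, so by Theorem~\ref{thm:8p} the type $(r,s)=(|G(H)|,|G(H^*)|)$ lies in the list of that theorem, namely $r\in\{1,2,4,6,8\}$ and $s\in\{1,2,4,8\}$ with $(r,s)$ admissible there. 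In particular, since $H$ and $H^*$ are not pointed, each of them has a simple subcoalgebra of dimension $d^2\ge4$.

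Next I would narrow $r$ and $s$. By Nichols--Zoeller, $r$ and $s$ divide $24$ and the dimension of any sub-Hopf algebra divides $24$. The grouplike element of order $2$ produced by a nontrivial simple subcoalgebra of $H$, and then of $H^*$ (Nichols--Richmond), gives $2\mid r$ and $2\mid s$, discarding $r=1$ and $s=1$. To discard $r=8$ and $s=8$ I would write $\dim H_0=|G(H)|+\sum_{d\ge2}a_d\,d^2<24$, where $a_d$ counts the simple subcoalgebras of dimension $d^2$, use that $G(H)$ permutes these by translation with orbit sizes dividing $|G(H)|$, and combine this with $\dim H_1>\dim H_0$; with $|G(H)|=8$ (so that $\Bbbk G(H)$ has index $3$) these constraints cannot be met, and symmetrically for $H^*$. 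This leaves $r\in\{2,4,6\}$ and $s\in\{2,4\}$, hence $(r,s)\in\{(2,2),(2,4),(4,2),(4,4),(6,2),(6,4)\}$.

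It remains---and this is where I expect the real work to be---to eliminate $r=4$ and the pair $(6,2)$, for which the crude count above is not decisive. Here I would analyse the sub-Hopf algebra $K$ generated by $\Bbbk G(H)$ together with a $4$-dimensional simple subcoalgebra $C$: by Nichols--Zoeller $\dim K$ is a multiple of $|G(H)|$ dividing $24$, and it exceeds $|G(H)|$ since $C\not\subseteq\Bbbk G(H)$, so $\dim K\in\{8,12,24\}$ when $r=4$ and $\dim K\in\{12,24\}$ when $r=6$. When $\dim K\in\{8,12\}$ I would invoke the known classifications in dimensions $8$ and $12$ to check that no sub-Hopf algebra of the required type occurs; when $\dim K=24$, so that $G(H)$ and $C$ generate $H$, I would compare $\dim H_0=\dim(H^*/\rad H^*)$ with $\dim(H^*)_0=\dim(H/\rad H)$, using that $G(H^*)$ acts by tensoring on the irreducible $H$-modules with orbits of size dividing $s$ to bound the number of two-dimensional irreducibles on each side. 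The delicate point is precisely this last comparison: it must produce the asymmetry between $r$ and $s$ that allows $r=6$ but forbids $s=6$ and rules out $(6,2)$ while keeping $(6,4)$, and I expect it, rather than the appeal to Theorem~\ref{thm:8p}, to be the crux of the proof.
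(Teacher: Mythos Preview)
Your reduction to Theorem~\ref{thm:8p} has a genuine gap. You write ``if $H$ is copointed then $H_0$ is again a sub-Hopf algebra'', but this is false: the Chevalley property concerns $H_0$, not $(H^*)_0$. The paper itself provides a counterexample in Section~\ref{sect: 4p}: $\matha(-1,1)^*$ is copointed with coradical of dimension $4p-2$, which does not divide $4p$. So the hypothesis ``$H_0$ is not a sub-Hopf algebra'' does not by itself exclude copointed, and you cannot invoke Theorem~\ref{thm:8p} until that case is handled.

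Your appeal to ``Nichols--Richmond'' to obtain a grouplike of order $2$ in $H$ and in $H^*$ is not justified; that theorem constrains the combinatorics of simple comodules but does not hand you a grouplike. The paper's argument here is Lemma~\ref{lem:24-gr-2}, and it is not a one-liner: it rests on Proposition~\ref{pr: nontriv gplike}, which shows that a simple $4$-dimensional subcoalgebra \emph{stable under the antipode} forces a grouplike of order $2$ via the central exact sequence of Proposition~\ref{prop:natale-stefan}, followed by a case analysis on the sub-Hopf algebra it generates. Lemma~\ref{lem:24-gr-2} then does a separate case analysis on the possible shapes of $H_0$ to reduce to that situation.

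Finally, your plan for eliminating $(6,2)$ and $|G(H)|=4$ is too vague and does not resemble what the paper actually does. For $(6,2)$ the paper shows that $H$ sits in an exact sequence $\matha\hookrightarrow H\twoheadrightarrow\Bbbk C_2$ with $\matha$ pointed of dimension $12$ (Proposition~\ref{prop:2p-exact}), computes $\dim(H^*)_0$ via Proposition~\ref{prop:exact-dim-cor}, and then forces $H^*$ to contain a nontrivial skew-primitive and hence a copy of $\matha''_4$ (Corollary~\ref{cor: 3 5 C4}), so $|G(H^*)|=4$. For $(4,4)$ the paper first shows that both $H$ and $H^*$ have nontrivial skew-primitives, obtains a bosonization $H\cong R\#K$ with $K$ pointed and copointed of dimension $8$ and $R$ a semisimple braided Hopf algebra of dimension $3$ in $\ydk$ (Corollaries~\ref{cor: 4.18} and~\ref{cor:24-4-4-1}), and then proves via Lemma~\ref{lem:x-R=0} and Proposition~\ref{pr: boson implies chevalley} that such a bosonization has the Chevalley property. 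Exact sequences, the dimension formula of Proposition~\ref{prop:exact-dim-cor}, and bosonizations are the key mechanisms, and none of them appear in your sketch; the orbit-counting comparison of $\dim H_0$ with $\dim(H^*)_0$ that you propose is not sufficient on its own to produce the required asymmetry.
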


\section*{Acknowledgements}
This version of the paper contains a correction on the published version. The statement 
and proof of Proposition \ref{prop:R-skew} are changed and the proof of the
results that follow from it are corrected accordingly. 
We thank H.-S. Ng for kindly communicating the gap to us and for
the careful reading of our paper.

\section{Preliminaries}
In this section we introduce notation, recall some previous results
which help with the classification of finite dimensional Hopf
algebras, see \cite{andrunatale}, \cite{natale}, \cite{stefan},
\cite{bitidasca}, \cite{biti}, \cite{GV}, \cite{d-fukuda}, and
introduce a few new ones. For the general theory of Hopf algebras
see \cite{Mo}, \cite{S}.

\subsection{Conventions}\label{subsec:conv}
Throughout this paper $ p,q $ will denote   odd prime numbers, $
C_{k} $ the cyclic group of order $ k $ and $\mathbb{D}_{k}$ the
dihedral group of order $2k$. Unless otherwise specified, all Hopf
algebras in this article are finite dimensional over   a field
$\Bbbk$ algebraically closed  of characteristic zero.

\begin{remark}\label{rm: LR2} By \cite{LR2}, with the assumptions
above,
 a Hopf algebra is semisimple if and only if
it is cosemisimple if and only if $S^2$, the square of the antipode,
is the identity. Thus if $L$,$K$ are semisimple sub-Hopf algebras of
a Hopf algebra $H$, then $\langle L,K \rangle$, the sub-Hopf algebra
of $H$ generated by $L$ and $K$ is semisimple since $S_H^2$ is the
identity on $L$ and on $K$.
\end{remark}

For $H$  a Hopf algebra over $\Bbbk$ then $\com$, $\e$, $S$
denote respectively the comultiplication, the counit and the
antipode; $\GH$ denotes the group of grouplike elements of $H$; $H_{0}$
denotes the coradical;
$(H_n)_{n \in \N}$ denotes the coradical filtration of $H$ and $L_h$
(resp. $R_h$) is the left (resp. right) multiplication in $H$ by
$h$. We say that $H$ is \textit{pointed} if $H_{0} = \Bbbk G(H)$.

The set of $(h,g)${\it -primitives} (with
$h,g\in\GH$) and set of {\it skew-primitives} of $H$ are:
$$
\begin{array}{rcl}
\cP_{h,g}(H)&:=&\{x\in H\mid\com(x)=x\ot h+g\ot x\},\\
\noalign{\smallskip} \cP(H)&:=&\sum_{h,g\in\GH}\cP_{h,g}(H).
\end{array}
$$
We say that $x\in \Bbbk(h-g)$ is a {\it trivial} skew-primitive; a
skew-primitive not contained in $\Bbbk G(H)$ is \textit{
nontrivial}.

Let $\coMn$ denote the simple coalgebra of
dimension $n^2$, dual to the matrix algebra ${\mathcal M}(n,\Bbbk)$.
We say that a coalgebra $C$ is a $d\times d$ \textit{matrix-like
coalgebra} if $C$ is spanned by elements $(e_{ij})_{1\leq i,j\leq n}$
such that $\com(e_{ij}) =
\sum_{1\leq l \leq n} e_{il} \ot e_{lj}$ and $\e(e_{ij}) = \delta_{ij}$.
If  the set $(e_{ij})_{1\leq i,j\leq d}$ of
a coalgebra $C$ of dimension $d^{2}$ is linearly independent,
following \c Stefan we call $\be =  \{e_{ij}:\ 1\leq i,j\leq d\}$
a \textit{multiplicative matrix} and then
 $ C\simeq \coMd $ as coalgebras.

Since the only semisimple and pointed Hopf algebras are the group
algebras, we shall adopt the convention that `pointed' means
`pointed nonsemisimple'.
Similarly, we say that a finite
dimensional Hopf algebra $H$ is \textit{basic} if $H$ is
basic as an algebra or copointed as a coalgebra, i.e.,
all simple $H$-modules are one-dimensional or the dual is pointed,
but $H$ is not the dual of a group algebra.

\smallbreak Recall that a tensor category $\mathcal{C}$  over $\Bbbk$
has the Chevalley property if the tensor product of any two simple
objects is semisimple. We shall say that a Hopf algebra $H$ has
the \emph{Chevalley property} if the category $\Comod (H)$ of
$H$-comodules does.

\begin{rmk}
(i) The notion of the Chevalley property in the setting of Hopf
algebras was introduced by \cite{AEG}: it is said in \textit{loc.
cit.} that a Hopf algebra has the Chevalley property if the category
$\Rep(H)$ of $H$-modules does.

\smallbreak (ii) Unlike \cite{AEG}, in \cite[Section
1]{de1tipo6chevalley}, the authors refer
to the Chevalley property in the
category of $H$-comodules; this definition is the one we adopt.
Note that it is equivalent to say that the coradical $H_{0}$ of $H$ is a
sub-Hopf algebra.

\smallbreak (iii) If $H$ is semisimple or pointed then it has the
Chevalley property.
\end{rmk}

\par Let $N$ be a positive integer and let
$q$ be a primitive $ N^{th} $  root of unity. We denote by $ T_{q} $
the Taft algebra which is generated as an algebra by the elements $
g $ and $ x $ satisfying the relations $ x^{N} = 0 = 1-g^{N}$, $
gx=q xg $. Taft Hopf algebras are self-dual and pointed of dimension
$ N^{2} $  with $g$   grouplike and
$x$ a $(1,g)$-primitive, i.e.,  $ \com(g) = g\ot g $ and $
\com(x) = x\ot 1 + g\ot x $.  If $N=2$ so that $q = -1$, then $T_{-1}$
is called the Sweedler Hopf algebra
and will be denoted $H_4$ thoughout this article.

\subsection{Spaces of coinvariants}\label{sec: coinvariant}
Let $K$ be a coalgebra with a distinguished grouplike 1. If $M$
is a right $K$-comodule via $\delta$, then the space of {\it right
coinvariants} is $$ M^{\co \delta} = \{x\in
M\mid\delta(x)=x\ot1\}.
$$
 Left
coinvariants are defined analogously. If $\pi:H\rightarrow K$ is a morphism of Hopf
algebras, then $H$ is a right $K$-comodule via $(1\ot\pi)\com$.
In this case $H^{\co \pi}:=H^{\co (1\ot\pi)\com}$ and
$H^{co\pi}$ is a subalgebra of $H$.

We make the following observation.

\begin{lema}\label{lm: on R pi=epsilon}
Let $\pi: H \rightarrow K$ be a Hopf algebra map and let $R:= H^{co\pi}$. Then
 $\pi|_R = \varepsilon|_R$.
\end{lema}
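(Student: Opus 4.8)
The plan is to unwind the definition of the coinvariants and then collapse the comultiplication using the counit axiom. Recall that $R = H^{\co\pi} = H^{\co(1\ot\pi)\com}$, so an element $x\in H$ lies in $R$ precisely when
\[
(1\ot\pi)\com(x) = \sum x_{(1)}\ot\pi(x_{(2)}) = x\ot 1_K
\]
as an identity in $H\ot K$. This defining relation is the only hypothesis available, and the desired conclusion $\pi(x)=\e(x)1_K$ must be extracted from it.

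First I would apply the map $\e\ot\id_K\colon H\ot K\to\Bbbk\ot K\cong K$ to both sides of the displayed equation. On the right-hand side this yields $\e(x)1_K$. On the left-hand side, using linearity of $\pi$, it yields
\[
\sum \e(x_{(1)})\pi(x_{(2)}) = \pi\big(\textstyle\sum \e(x_{(1)})x_{(2)}\big) = \pi(x),
\]
where the last equality is the counit axiom $\sum\e(x_{(1)})x_{(2)}=x$. Comparing the two sides gives $\pi(x)=\e(x)1_K$, which is exactly the assertion $\pi|_R=\e|_R$ once $\e$ is regarded as taking values in $K$ through the unit $\Bbbk\to K$, $\lambda\mapsto\lambda 1_K$.

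There is essentially no obstacle here: the statement is a direct consequence of the counit axiom together with the defining property of the coinvariants. The only point requiring (minimal) care is to apply $\e$ in the \emph{first} tensor factor rather than the second, so that the counit axiom contracts $\com(x)$ back to $x$; applying it in the wrong factor would instead produce the trivial identity $\e(\pi(x))\,x$, which is not useful.
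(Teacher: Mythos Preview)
Your proof is correct and, in fact, slightly more direct than the paper's. The paper first applies $\pi\ot\id_K$ to the coinvariant identity, uses that $\pi$ is a coalgebra map to rewrite $(\pi\ot\pi)\com_H(z)$ as $\com_K\pi(z)$, and only then applies $\e_K\ot\id_K$ inside $K\ot K$; a final appeal to $\e_K\circ\pi=\e_H$ identifies the scalar. You instead apply $\e_H\ot\id_K$ immediately to the defining equation in $H\ot K$ and invoke the counit axiom for $H$, bypassing the detour through $\com_K$. Your route uses only the $\Bbbk$-linearity of $\pi$ and the counit axiom in $H$, so it is marginally more economical; the paper's route has the mild conceptual payoff of exhibiting $\pi(z)$ as a right $K$-coinvariant in $K$ (hence a scalar) before naming the scalar. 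Both arguments are essentially one-liners, and the difference is cosmetic.
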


\begin{proof} Let $z \in H^{co \pi}$.  Since $\pi$ is a morphism of Hopf algebras,
\begin{displaymath}
\Delta_{K} \pi(z) = (\pi \otimes \pi) \Delta_H(z) = \pi(z_1) \otimes \pi(z_2) = \pi(z) \otimes 1,
\end{displaymath}
so that, applying $m_K \circ (\varepsilon_K \otimes \id_{K})$ to the equation above,
 we obtain $\pi(z) = \varepsilon_K(\pi(z))\in \Bbbk$. Again, since $\pi$ is a Hopf algebra map,
 $\varepsilon_K(\pi(z)) = \varepsilon_H(z)$ and so $\pi(z) = \varepsilon_H(z)$.
\end{proof}

\subsection{Extensions of Hopf algebras}\label{subsec:extensions}
Recall \cite{andrudevoto} that an exact sequences of Hopf algebras is  a sequence of Hopf algebra
morphisms  $A\overset{\imath}\hookrightarrow
H\overset{\pi}\twoheadrightarrow B$ where $A,H,B$ are any Hopf algebras, $\imath$ is injective, $\pi$ is
surjective, $\pi \imath = \varepsilon_A$, $\ker \pi = A^+H$ and $ A = H^{co \pi}$.
 An exact sequence is called {\it central}
if $A$ is contained in the centre of $H$.

The next result will be useful throughout. For a proof see \cite[Lemma 2.3]{GV}.

\begin{lema}  \label{lm: dim H co dim B = dim H}
If $\pi:H\rightarrow B$ is an epimorphism of Hopf algebras
  then $\dim H=\dim H^{\co \pi}\dim B$. Moreover, if
$A=H^{\co \pi}$ is a sub-Hopf algebra of $H$ then the sequence $
A\overset{\imath}\hookrightarrow H\overset{\pi}\twoheadrightarrow B
$ is exact. \qed
\end{lema}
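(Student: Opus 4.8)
The plan is to realize $R:=H^{\co\pi}$ as the base of a Hopf--Galois extension $R\subseteq H$, conclude that $H$ is free over $R$, and read the dimension formula off the rank; the exactness statement then follows by a short dimension count. Throughout I would use the facts recorded in \S\ref{sec: coinvariant}: the map $\rho:=(\id\ot\pi)\com$ makes $H$ a right $B$-comodule algebra, and $R=H^{\co\pi}$ is a subalgebra.

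First I would exhibit the Galois structure explicitly. Consider the canonical map
\[
\beta\colon H\ot_R H\to H\ot B,\qquad \beta(x\ot_R y)=x\,y_{(1)}\ot\pi(y_{(2)}).
\]
Because $\pi$ is a surjective Hopf algebra map and $H$ has antipode $S$, every element of $B$ has the form $\pi(k)$, and I would propose the inverse
\[
\gamma\colon H\ot B\to H\ot_R H,\qquad \gamma(h\ot\pi(k))=h\,S(k_{(1)})\ot_R k_{(2)}.
\]
The composite $\beta\gamma=\id$ is a direct computation using $\rho(k_{(2)})=k_{(2)}\ot\pi(k_{(3)})$ and the antipode axiom $S(k_{(1)})k_{(2)}=\e(k)1$; the well-definedness of $\gamma$ (independence of the lift $k$ of $\pi(k)$) and the identity $\gamma\beta=\id$ are where the coinvariance condition defining $R$ is used. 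Once $\beta$ is shown bijective, $R\subseteq H$ is a right $B$-Galois extension.

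The main obstacle, and the one genuinely nontrivial input, is faithful flatness: I would invoke that a finite dimensional Hopf algebra is free as a module over the coinvariant subalgebra of a quotient (this can be obtained from the Nichols--Zoeller theorem applied to the sub-Hopf algebra $\pi^{*}(B^{*})\subseteq H^{*}$, or from Schneider's/Skryabin's freeness results). Writing $H\cong R^{(n)}$ as $R$-modules, freeness gives $\dim H=n\dim R$, while the Galois isomorphism yields $n\dim H=\dim(H\ot_R H)=\dim(H\ot B)=\dim H\cdot\dim B$. Hence $n=\dim B$ and $\dim H=\dim H^{\co\pi}\dim B$, which is the first assertion.

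For the \emph{moreover} statement, assume $A:=R=H^{\co\pi}$ is a sub-Hopf algebra and verify the axioms of \S\ref{subsec:extensions}: the inclusion $\imath$ is injective, $\pi$ is surjective by hypothesis, $A=H^{\co\pi}$ holds by definition, and $\pi\imath=\e$ on $A$ by Lemma~\ref{lm: on R pi=epsilon}. It remains to check $\ker\pi=A^{+}H$. The inclusion $A^{+}H\subseteq\ker\pi$ is immediate, since $\pi(a)=\e(a)=0$ for $a\in A^{+}$ and $\ker\pi$ is an ideal. For the reverse inclusion I would again use freeness: $H/A^{+}H\cong H\ot_{A}\Bbbk\cong\Bbbk^{\dim B}$, so $\dim A^{+}H=\dim H-\dim B=\dim\ker\pi$, and combined with $A^{+}H\subseteq\ker\pi$ this forces equality. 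This completes the exactness, and with it the lemma.
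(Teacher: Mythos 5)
The paper does not prove this lemma itself; it is quoted from \cite[Lemma 2.3]{GV}, where the dimension formula is obtained by applying Nichols--Zoeller to the Hopf subalgebra $\pi^{*}(B^{*})\subseteq H^{*}$: freeness of $H^{*}$ over $B^{*}$ dualizes to $H\cong B^{(n)}$ as right $B$-comodules with $n=\dim H/\dim B$, and since $B^{\co B}=\Bbbk 1$ this gives $\dim H^{\co\pi}=n$ at once, with no Galois map needed. Your ``moreover'' paragraph is correct and matches the intended argument (here $A$ is an honest sub-Hopf algebra, so Nichols--Zoeller applies directly and the count $\dim H/A^{+}H=\dim B$ is legitimate).

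The genuine gap is in your claim that $\beta$ is bijective because $\gamma$ is an explicit inverse. The identities $\beta\gamma=\id$ and $\gamma\beta=\id$ are indeed routine, but the well-definedness of $\gamma$ --- that $S(m_{(1)})\ot_{R}m_{(2)}=0$ in $H\ot_{R}H$ for every $m\in\ker\pi$ --- does \emph{not} follow from the coinvariance condition defining $R$. What that condition yields is $\com(R)\subseteq H\ot R$, and hence that the map $k\mapsto S(k_{(1)})\ot_{R}k_{(2)}$ annihilates $R^{+}H$; to conclude that it annihilates $\ker\pi$ you need precisely $\ker\pi=R^{+}H$, which is (the substance of) the exactness you are trying to establish, so as written the key step is circular. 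The standard repair is to prove faithful flatness of $H$ over $R$ first --- note that $R$ is in general only a left coideal subalgebra, so this needs Skryabin's freeness theorem or Schneider's finite-dimensional normal basis theorem rather than Nichols--Zoeller for $B^{*}\subseteq H^{*}$ --- and then invoke the Schneider--Takeuchi descent theorem to upgrade surjectivity of $\beta$ (which your formula for $\gamma$ does prove, since no well-definedness is required to exhibit a preimage) to bijectivity. Alternatively, and more economically in the finite-dimensional setting, drop the Galois map altogether and run the comodule-freeness count of \cite[Lemma 2.3]{GV} described above.
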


The following proposition tells us how to
compute,  in a particular case,
the dimension of the coradical of  $ H^{*} $
using exact sequences.

\begin{prop}\label{prop:exact-dim-cor}
Let $\Gamma$ be a finite group and
$A \hookrightarrow H \twoheadrightarrow \Bbbk \Gamma$
an exact sequence of
%finite dimensional
 Hopf algebras.
Then $\dim (H^{*})_{0} = \dim (H/\rad H) =
|\Gamma|\dim (A^{*})_{0}$.
\end{prop}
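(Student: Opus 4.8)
The plan is to separate the two equalities. The first equality, $\dim(H^*)_0 = \dim(H/\rad H)$, is a general fact about finite-dimensional algebras and needs no hypothesis on the exact sequence: for any finite-dimensional algebra $B$ the dual coalgebra $B^*$ satisfies $(B^*)_0 = (\rad B)^{\perp} \cong (B/\rad B)^*$, since cosemisimple subcoalgebras of $B^*$ correspond under the orthogonality pairing to semisimple quotient algebras of $B$, and the coradical is the largest such. Taking $B=H$ gives the first equality, and taking $B=A$ gives $\dim (A^*)_0 = \dim(A/\rad A)$; so it suffices to prove the middle equality in the form $\dim(H/\rad H) = |\Gamma|\,\dim(A/\rad A)$.

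For this I would exploit the $\Gamma$-grading coming from $\pi\colon H \twoheadrightarrow \Bbbk\Gamma$. Viewing $H$ as a right $\Bbbk\Gamma$-comodule algebra through $(\id\ot\pi)\com$ turns $H$ into a $\Gamma$-graded algebra $H = \bigoplus_{g\in\Gamma} H^g$ whose degree-$e$ component is exactly $H^{\co\pi} = A$. Because the sequence is exact, the extension $A \subseteq H$ is $\Bbbk\Gamma$-Galois, and for a group-algebra coaction this is equivalent to the grading being strong, i.e. $H^gH^{g'}=H^{gg'}$ for all $g,g'$; in particular $H^gH^{g^{-1}}=A$, so each $H^g$ is an invertible $A$-bimodule and $\dim H^g = \dim A$, consistently with Lemma~\ref{lm: dim H co dim B = dim H}.

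The heart of the argument is then a Maschke-type computation of the radical of a strongly graded algebra. Since $\operatorname{char}\Bbbk = 0$, $|\Gamma|$ is invertible, and for a finite-group grading with $|\Gamma|$ invertible the Jacobson radical is a graded ideal; for a strongly graded algebra this forces $\rad H = (\rad A)\,H = H\,(\rad A)$, with degree-$e$ component $(\rad H)\cap A = \rad A$. Consequently $H/\rad H$ is again strongly $\Gamma$-graded and semisimple, with degree-$e$ component $A/\rad A$; strong grading makes every homogeneous component an invertible bimodule over the semisimple algebra $A/\rad A$, hence of dimension $\dim(A/\rad A)$, and summing over the $|\Gamma|$ components yields $\dim(H/\rad H) = |\Gamma|\,\dim(A/\rad A)$. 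Combining the three displayed equalities finishes the proof.

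I expect the main obstacle to be this second equality, and within it the passage from the exact sequence to the strong grading together with the identity $\rad H = (\rad A)H$: one must invoke that an exact sequence of finite-dimensional Hopf algebras with group-algebra quotient is faithfully flat and $\Bbbk\Gamma$-Galois, so that the grading is strong, and that $|\Gamma|$ being invertible forces the radical to be graded. An alternative route, if one prefers to avoid the general graded-radical theorem, is to show the extension is cleft, write $H \cong A\#_\sigma\Bbbk\Gamma$ as a crossed product, and apply Maschke's theorem directly to identify $\rad(A\#_\sigma\Bbbk\Gamma) = (\rad A)\#_\sigma\Bbbk\Gamma$; the dimension count is then immediate.
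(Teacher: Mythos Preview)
Your proposal is correct, and in fact your closing ``alternative route'' is exactly what the paper does: it writes $H$ as the crossed product $A\ast\Gamma$, observes that the weak $\Gamma$-action on $A$ is by algebra maps so that $\rad A$ is $\Gamma$-stable, hence $\rad A\ast\Gamma$ is a nilpotent ideal; the quotient $H/(\rad A\ast\Gamma)\cong (A/\rad A)\ast\Gamma$ is semisimple by Maschke for crossed products, forcing $\rad H=\rad A\ast\Gamma$ and giving the dimension count immediately.

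Your primary route via the strong $\Gamma$-grading and the graded-radical theorem is a genuinely different packaging of the same content. It is more structural and would apply to any strongly $\Gamma$-graded algebra with $|\Gamma|$ invertible, not just to Hopf-algebra extensions; the price is that you must invoke (or prove) that the Jacobson radical of a finite-group-graded algebra is graded in characteristic zero, and that for a strongly graded algebra each homogeneous component of a graded ideal is $H^g\cdot I_e$, both of which are standard but add a layer of abstraction. The paper's crossed-product argument is shorter and more self-contained precisely because the weak action being by algebra automorphisms makes the stability of $\rad A$ a one-line observation, after which Maschke does the rest.
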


\begin{proof}
The statement follows from the proof of \cite[Lemma 5.9]{GV}. The
idea is the following: since the sequence is exact, $H$ is the
$\Gamma$-crossed product $A*\Gamma$. Let $g \in \Gamma$, then the
weak action of $g$ on $A$ defines an algebra map and consequently
$\rad A$ is stable by $\Gamma$. Then $\rad A*\Gamma$ is a nilpotent
ideal of $A*\Gamma$ and $\rad A*\Gamma\subseteq \rad H$. Since
$H/(\rad A*\Gamma)$ is semisimple, it follows that $\rad H \subseteq
\rad A*\Gamma$ and hence $\dim  (H^{*})_{0} = \dim (H/\rad H) = \dim
( A*\Gamma / \rad A*\Gamma)= |\Gamma|\dim (A/\rad A) =|\Gamma|\dim
(A^{*})_{0}  $.
\end{proof}

\subsection{Yetter-Drinfel'd modules} \label{subsect:yd} For $H$
any
Hopf algebra, a left Yetter-Drinfeld module $ M $ over $H$ is a left
$H$-module $(M,\cdot)$ and a left $H$-comodule $(M,\delta)$ such
that for all $h \in H, m \in M$,
$$
\delta(h \cdot m) = h_{{1}} m_{(-1)}\cS(h_{{3}}) \otimes
h_{{2}}\cdot m_{(0)},
$$
where $\delta(m) = m_{(-1)}\otimes m_{(0)}$. We will denote this category by
$ \ydh $.

\subsection{On the coradical filtration}\label{subsec:coradical filtration}
We begin by recalling a description of the coradical filtration due
to Nichols. More detail can be found in \cite[Section
1]{andrunatale}.

Let $D$ be a coalgebra over $\Bbbk$.
Then there exists a coalgebra projection
$\pi:  D \to D_{0 }$ from
$ D$ to the coradical
$ D_{0}$ with kernel $I$, see
\cite[5.4.2]{Mo}. Define the maps
$$\rho_{L}:= (\pi\ot \id)\com: D \to D_{0}\ot D \qquad\mbox{ and }\qquad
\rho_{R}:= (\id\ot \pi)\com: D \to D\ot D_{0},$$
and let $P_{n}$ be the sequence of subspaces defined recursively
by
\begin{align*}
P_{0} & = 0,\\
P_{1} & = \{x\in D:\ \com(x) = \rho_L(x) +\rho_R(x)\}
= \com^{-1}(D_{0}\ot I + I\ot D_{0}),\\
P_{n} & = \{x\in D:\ \com(x) - \rho_L(x) - \rho_R(x) \in
\sum_{1\leq i \leq n-1}P_{i}\ot P_{n-i}\}, \quad n\geq 2.
\end{align*}

Then by a result of Nichols, $P_{n} = D_{n}\cap I$ for $n\geq 0$,
see \cite[Lemma 1.1]{andrunatale}. Suppose that
$D_{0} = \bigoplus_{\tau \in \II} D_{\tau}$, where the $D_{\tau}$ are simple
coalgebras of dimension $d^{2}_{\tau}$.
Any $D_{0}$-bicomodule is a direct sum of simple $D_{0}$-sub-bicomodules
and every simple $D_{0}$-bicomodule has coefficient coalgebras
$D_{\tau}, D_{\gamma}$ and has dimension
$d_{\tau}d_{\gamma} = \sqrt{\dim D_{\tau}\dim D_{\gamma}}$
for some $\tau, \gamma \in \II$, where $d_{\tau},d_{\gamma}$
are the dimensions of the associated comodules of $D_{\tau}$ and
$D_{\gamma}$, respectively.

\smallbreak Now suppose $H$ is a   Hopf algebra.   Then $H_{n},P_{n}$ are
$H_{0}$-sub-bicomodules of $H$ via $\rho_R$ and $\rho_L$. As in
\cite{andrunatale}, \cite{d-fukuda}, for all $n\geq 1$ we denote by
$P_{n}^{\tau,\gamma}$ the isotypic component of the
$H_{0}$-bicomodule of $P_{n}$ of type the simple bicomodule with
coalgebra of coefficients $D_{\tau}\ot D_{\gamma}$. If $D_{\tau} =
\Bbbk g$ for $g$ a grouplike, we use the superscript $g$
instead of $\tau$.   If the simple subcoalgebras are $S(D_\tau)$, $S(D_\gamma)$,
(respectively $gD_\tau$, $D_\tau g$ for
$g$ grouplike)
we write $P_n^{S\tau, S\gamma}$,(respectively $P_n^{g\tau, g\gamma}$,$P_n^{\tau g, \gamma g}$.)
For $D_{\tau}, D_{\gamma}$ simple coalgebras we
denote $P^{\tau,\gamma} = \sum_{n \geq 0}P_{n}^{\tau,\gamma}$.

Similarly, for $\Gamma$ a set of grouplikes of   $H$,
let $ P^{\Gamma,\Gamma}$ denote  $\sum_{g,h \in \Gamma}P^{g,h}
 $
  and let $H^{\Gamma, \Gamma}:= P^{\Gamma, \Gamma}
\oplus \Bbbk\Gamma$. If $\mathcal{D},\mathcal{E}$ are
   sets of simple subcoalgebras, let $P^{\mathcal{D}, \mathcal{E}}$ denote
   $\sum_{D \in \mathcal{D}, E \in \mathcal{E}}P^{D,E}$.
Since $H_n = H_0 \oplus P_n$, we have that $H = H_0 \oplus \sum_{\tau,\gamma}
P^{\tau,\gamma}$.

Following D. Fukuda, we
say that the subspace $P_{n}^{\tau,\gamma}$ is
\textit{nondegenerate} if $P_{n}^{\tau,\gamma} \nsubseteq P_{n-1}$.
The following  results are due to D. Fukuda; note that (ii)  is a
generalization of \cite[Cor. 1.3]{andrunatale} for $n>1$.

\begin{lema}\label{lema:fukuda-deg} \label{lema:fukuda}
\label{lema:fukuda-deg-m}
\emph{(i)} \cite[Lemma 3.2]{fukuda-pq} If the subspace $P_{n}^{\tau,\gamma}$
is nondegenerate for some $n > 1$, then there exists a set of
simple coalgebras $
\{D_{1},\cdots ,D_{n-1} \}$ with
$P_{i}^{\tau,D_{i}}$, $P_{n-i}^{D_{i},\gamma}$   nondegenerate
for all $1\leq i\leq n$.

\par \emph{(ii)} \cite[Lemma 3.5]{fukuda-pq}
 For $S$ the antipode in the Hopf algebra $H$ and $g \in \GH$,
$$\dim P_{n}^{\tau, \gamma} = \dim P_{n}^{S\gamma,S\tau}
= \dim P_{n}^{g\tau, g\gamma}= \dim P_{n}^{\tau g, \gamma g}.$$

\par \emph{(iii)} \cite[Lemma 3.8]{fukuda-pq} Let $C,D$ be
simple subcoalgebras such that $P_{m}^{C,D}$ is nondegenerate. If
$\dim C \neq \dim D$ or $\dim P_{m}^{C,D} - P_{m-1}^{C,D} \neq \dim
C$ then there exists a simple subcoalgebra $E$ such that
$P_{\ell}^{C,E}$ is nondegenerate for some $\ell\geq m+1$.\qed
\end{lema}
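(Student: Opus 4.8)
The plan is to pass to the associated graded coalgebra $\gr H=\bigoplus_{n\ge0}P_n/P_{n-1}$ and to exploit the reduced coproduct $\overline{\com}(x):=\com(x)-\rho_L(x)-\rho_R(x)$, which by the recursive definition of the $P_n$ satisfies $\overline{\com}(P_n)\subseteq\sum_{1\le i\le n-1}P_i\ot P_{n-i}$. The isotypic decomposition $P_n=\bigoplus_{\tau,\gamma}P_n^{\tau,\gamma}$ as an $H_0$-bicomodule is compatible with $\com$ in the sense that the only summands of $P_i\ot P_{n-i}$ receiving a nonzero contribution from an element of bitype $(\tau,\gamma)$ are those $P_i^{\tau,E}\ot P_{n-i}^{E,\gamma}$ whose outer coefficients are $D_\tau,D_\gamma$ and whose inner coefficients agree at some $E$; this ``gluing'' is exactly the comatching forced by coassociativity. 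I would first dispose of \emph{(ii)}, which is cleanest: the antipode $S$ is bijective and an anti-coalgebra map, hence preserves the coradical filtration and each $P_n$, and the identity $\com\circ S=(S\ot S)\circ\mathrm{flip}\circ\com$ shows that $S$ restricts to an isomorphism $P_n^{\tau,\gamma}\xrightarrow{\sim}P_n^{S\gamma,S\tau}$, giving the first equality. For the grouplike identities, left multiplication $L_g$ by $g\in\GH$ satisfies $\com L_g=(L_g\ot L_g)\com$, so it is a filtration-preserving coalgebra automorphism; tracking coefficient coalgebras through $\rho_L$ and $\rho_R$ gives $L_g(P_n^{\tau,\gamma})=P_n^{g\tau,g\gamma}$ and likewise $R_g(P_n^{\tau,\gamma})=P_n^{\tau g,\gamma g}$, and bijectivity yields the dimension equalities.

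For \emph{(i)}, I would take a class $0\ne\bar x\in P_n^{\tau,\gamma}/P_{n-1}$ with $n\ge2$ and read off its reduced coproduct. By the gluing above, $\overline{\com}(\bar x)$ lives (modulo lower filtration) in $\bigoplus_{1\le i\le n-1}\bigoplus_E P_i^{\tau,E}\ot P_{n-i}^{E,\gamma}$, so it suffices to show that for each fixed $i$ with $0<i<n$ the degree-$(i,n-i)$ component does not vanish identically on $P_n^{\tau,\gamma}$, for then both tensor factors supply the required nondegenerate $P_i^{\tau,D_i}$ and $P_{n-i}^{D_i,\gamma}$. The nonvanishing for each $i$ is where coassociativity is essential: using the wedge description $H_n=H_{i-1}\wedge H_{n-i}$ together with $\com(H_n)\subseteq\sum_{a+b=n}H_a\ot H_b$, one argues that the projection of $\com(\bar x)$ to $(\gr H)_i\ot(\gr H)_{n-i}$ cannot be zero for a genuine degree-$n$ element, since otherwise $\bar x$ would already lie in $H_{n-1}$. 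Choosing for each $i$ an intermediate $E=:D_i$ on which the component is nonzero identifies the chain $D_1,\dots,D_{n-1}$.

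The real work is \emph{(iii)}, which I would attack by contraposition: assume $P_\ell^{C,E}$ is degenerate for \emph{every} simple subcoalgebra $E$ and every $\ell\ge m+1$, i.e.\ the left-type-$C$ part of $\gr H$ stops in degree $m$, and deduce $\dim C=\dim D$ and $\dim P_m^{C,D}-\dim P_{m-1}^{C,D}=\dim C$. I would combine three inputs: the injectivity (for $m\ge2$, from \emph{(i)} and coassociativity; the case $m=1$ checked directly) of the reduced coproduct on the top layer $V_m^{C,D}:=P_m^{C,D}/P_{m-1}^{C,D}$; the antipode symmetry \emph{(ii)}, which converts ``left-type $C$ stops at $m$'' into ``right-type $SC$ stops at $m$'' and thereby pins down both ends of every surviving path; and a dimension count of the isotypic $(C,-)$-bicomodules, in which each $V_i^{C,E}$ contributes a multiple of $d_Cd_E$ with $\dim C=d_C^2$. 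Balancing these multiplicities along the paths that begin at $C$ and are forced to terminate at $D$ in degree $m$ should leave room only for $d_C=d_D$ and for the terminal layer $V_m^{C,D}$ to be a single copy of the regular $(C,D)$-bicomodule, of dimension $d_C^2=\dim C$; any deviation ($\dim C\ne\dim D$, or a larger terminal multiplicity) would force a nonzero higher component, hence a continuation, contradicting termination. The main obstacle is precisely this bookkeeping: showing that the bicomodule multiplicities propagating up the filtration make termination genuinely \emph{incompatible} with any imbalance, rather than merely implausible, is the delicate combinatorial heart of the argument, and it is here that finite-dimensionality and the exactness of the antipode symmetry must be used in full.
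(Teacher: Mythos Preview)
The paper does not prove this lemma: it records the three statements with citations to \cite{fukuda-pq} and closes with \qed. There is thus no proof in the paper to compare your sketch against, so I evaluate your argument on its own terms.

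Your treatment of (ii) is correct and complete: $S$ is a bijective anti-coalgebra map preserving the coradical filtration, and $L_g,R_g$ are coalgebra automorphisms, so the dimension identities follow at once. Your argument for (i) is also correct in outline; the key point you invoke, that for $x\in H_n\setminus H_{n-1}$ the projection of $\com(x)$ to $(H_i/H_{i-1})\ot(H_{n-i}/H_{n-i-1})$ is nonzero, is exactly the identity $H_{n-1}=H_{i-1}\wedge H_{n-i-1}$ for the coradical filtration, together with the observation that $\sum_{a+b=n}H_a\ot H_b$ modulo $H_{i-1}\ot H + H\ot H_{n-i-1}$ reduces to the single graded piece at $(i,n-i)$. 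You gloss this as ``one argues that,'' but the step is routine once stated.

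Part (iii) has a genuine gap, and you identify it yourself. The contrapositive set-up is reasonable, and your list of ingredients (injectivity of the top-layer reduced coproduct, the antipode symmetry from (ii), and bicomodule dimensions in multiples of $d_Cd_E$) is plausible, but the passage from ``the $(C,-)$-filtration terminates at degree $m$'' to the two numerical conclusions $d_C=d_D$ and $\dim V_m^{C,D}=d_C^2$ is asserted, not argued. Concretely, nothing in your sketch explains \emph{why} an imbalance $d_C\ne d_D$, or a terminal multiplicity larger than one, would force some $P_{m+1}^{C,E}$ to be nondegenerate; ``balancing multiplicities along paths'' is a picture, not a mechanism. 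The actual argument in \cite{fukuda-pq} makes this precise via a careful analysis of how the graded pieces of $\gr H$ interact under the comultiplication, and that analysis is not reproduced here. As written, (iii) remains a strategy rather than a proof.
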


The following facts about dimensions from  \cite{andrunatale} will
be useful later.

\begin{lema}\label{lema:andrunatale}\cite{andrunatale} Let $H$ be a Hopf algebra
with $G := G(H)$. Then for $n \geq 0$, $d \geq 1$, $|G|$ divides $\dim H_n$  and
$ \dim H_{0,d}$, where $H_{0,d}$   denotes the
direct sum of the simple subcoalgebras of $H$ of dimension $d^{2}$. Also
$H_n = H_0 \oplus P_n$ so that $|G|$ divides $\dim P_n$.\qed
\end{lema}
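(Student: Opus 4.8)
The plan is to use that $\Bbbk G$ is a Hopf subalgebra of $H$ together with the compatibility of multiplication by grouplikes with the comultiplication. First I would note that for $g\in G$ the left (resp. right) translation $L_g$ (resp. $R_g$) is a coalgebra automorphism of $H$: since $g$ is grouplike, $\com(gx)=gx_{1}\ot gx_{2}$ and $\e(gx)=\e(x)$, so $\com L_g=(L_g\ot L_g)\com$ and $\e L_g=\e$. Coalgebra automorphisms preserve the coradical filtration and carry a simple subcoalgebra to a simple subcoalgebra of the same dimension; hence each $H_n$, each $H_n/H_{n-1}$ and each $H_{0,d}$ is stable under multiplication by $G$ and becomes a right $\Bbbk G$-module. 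Applying the Nichols--Zoeller theorem to $\Bbbk G\hookrightarrow H$ and to $\Bbbk G\hookrightarrow H_0$ shows that both $H$ and $H_0$ are free $\Bbbk G$-modules, so $|G|$ divides $\dim H$ and $\dim H_0$; the work is to propagate this to the intermediate terms.

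For $\dim H_n$ I would pass to the associated graded Hopf algebra $\gr H=\bigoplus_{k\ge0}H_k/H_{k-1}$ and argue in each degree. As $\gr H$ is coradically graded, its degree-zero component is the coradical $H_0$, which is a Hopf subalgebra, and the projection onto degree zero is a Hopf algebra map; by Radford's biproduct theorem $\gr H\cong R\#H_0$, where $R=\bigoplus_{k\ge0}R^k$ is the graded diagram with $R^0=\Bbbk$. Via right multiplication the biproduct is free as a right $H_0$-module, and this isomorphism respects the grading, so $H_k/H_{k-1}\cong R^k\ot H_0$ and therefore $\dim H_0$ divides $\dim(H_k/H_{k-1})$ for all $k$. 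Since $|G|\mid\dim H_0$ this gives $|G|\mid\dim(H_k/H_{k-1})$, and the telescoping identity $\dim H_n=\sum_{k=0}^{n}\dim(H_k/H_{k-1})$ yields $|G|\mid\dim H_n$. The final assertion is then immediate from $H_n=H_0\oplus P_n$: one has $\dim P_n=\dim H_n-\dim H_0$, a difference of multiples of $|G|$.

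For $\dim H_{0,d}$ I would count $G$-orbits inside the cosemisimple Hopf algebra $H_0$. Right multiplication by $G$ permutes the simple subcoalgebras of dimension $d^2$, so $H_{0,d}$ is the direct sum of these orbits and $\dim H_{0,d}=d^2\sum_{\text{orbits}}[G:S_C]$, where $S_C\le G$ is the stabilizer of a representative $C\cong\coMd$. It therefore suffices to show that $|S_C|$ divides $d^2$ for every orbit, and this is the step I expect to be the main obstacle. Dualizing, $C^{\ast}\cong\M(d,\Bbbk)$ and $S_C$ is realized by algebra automorphisms, which are inner by Skolem--Noether; the implementing elements assemble into a projective representation of $S_C$ of degree $d$, and one must prove that the associated twisted group algebra occupies $\M(d,\Bbbk)$ nondegenerately enough to force $|S_C|\mid d^2$ (equivalently, in $H_0^{\ast}$ one is bounding the stabilizer of a $d$-dimensional irreducible under tensoring by grouplikes). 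Granting this representation-theoretic fact, which is the substance of the cited result of Andruskiewitsch--Natale, each orbit contributes $d^2[G:S_C]=|G|\,(d^2/|S_C|)$, a multiple of $|G|$, and summing proves $|G|\mid\dim H_{0,d}$.
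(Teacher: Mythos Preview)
Your argument for $\dim H_n$ has a genuine gap: the passage through $\gr H\cong R\#H_0$ presupposes that $H_0$ is a Hopf subalgebra of $\gr H$, equivalently that $H_iH_j\subseteq H_{i+j}$ so that $\gr H$ is an algebra at all. This is exactly the Chevalley property, and the lemma is stated (and used throughout the paper) for arbitrary $H$; indeed most applications in Sections~3--4 are to Hopf algebras explicitly assumed \emph{not} to have the Chevalley property. The same hidden hypothesis slips in earlier when you invoke Nichols--Zoeller for $\Bbbk G\hookrightarrow H_0$: Nichols--Zoeller needs the ambient object to be a Hopf algebra, and $H_0$ is one only under Chevalley. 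So neither the base case $|G|\mid\dim H_0$ nor the inductive step survives in the generality required.

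The argument in \cite{andrunatale} avoids this entirely by observing that each $H_n$ is already a right $(H,\Bbbk G)$-Hopf module: $H_n$ is a subcoalgebra of $H$ (so a right $H$-comodule via $\Delta$), it is stable under right multiplication by $G$ (since $R_g$ is a coalgebra automorphism), and the compatibility $\Delta(hg)=h_1g\otimes h_2g$ is immediate because $g$ is grouplike. The Nichols--Zoeller freeness theorem for relative Hopf modules then gives that $H_n$ is free over $\Bbbk G$, hence $|G|\mid\dim H_n$. Exactly the same argument applies verbatim to $H_{0,d}$, since it too is a subcoalgebra stable under $R_g$; this bypasses your orbit--stabilizer reduction and the projective-representation claim you left unproved. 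The decomposition $H_n=H_0\oplus P_n$ and the divisibility of $\dim P_n$ then follow as you say.
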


It is well-known (see for example \cite{andrunatale}) that
if a Hopf algebra $H$ has a nontrivial skew primitive element, then
$\dim H$ must be divisible by a square.
More precisely we have the following lemma.

\begin{lema}\label{lema: dim H mn rel pr}
Let $H$ be a Hopf algebra with $|G(H)| = m$
and $\dim H = mn$ where $m,n$ are relatively prime.
Then $H$ has no nontrivial
skew-primitive element.
\end{lema}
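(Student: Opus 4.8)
The plan is to argue by contradiction: assuming a nontrivial skew-primitive exists, I will produce an integer $d>1$ dividing both $m$ and $n$, contradicting $\gcd(m,n)=1$.

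First I would normalize. If $x$ is a nontrivial skew-primitive, then $x\in\cP_{h,g}(H)$ for some $h,g\in\GH$ with $x\notin\Bbbk\GH$; replacing $x$ by $h^{-1}x$ turns it into a nontrivial $(1,g')$-primitive with $g'=h^{-1}g$. Since a finite dimensional Hopf algebra in characteristic $0$ has no nonzero primitive elements, $g'\neq 1$. Conjugation by $g'$ preserves $\cP_{1,g'}(H)$ and has finite order (as $g'^{\,\ord(g')}=1$), so it is diagonalizable; because $\dim\cP_{1,g'}(H)\geq 2$ (it contains the trivial line $\Bbbk(1-g')$ together with a nontrivial element) there is an eigenvector $x\notin\Bbbk(1-g')$, say $g'xg'^{-1}=\lambda x$. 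Then $\lambda$ is a root of unity with $\lambda^{\ord(g')}=1$; writing $d=\ord(\lambda)$, the no-primitives argument applied to $x$ itself (whose self-braiding in the diagram is $\lambda$) forces $\lambda\neq 1$, i.e. $d>1$. Finally $d\mid\ord(g')$ and $\ord(g')\mid m$ by Lagrange, so $d\mid m$. Renaming $g'=g$, we keep $g,x,\lambda,d$.

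Next I would extract the factor $d$ on the other side. Pass to $A:=\langle\Bbbk\GH,x\rangle$, the sub-Hopf algebra generated by all grouplikes and $x$; it is pointed with $G(A)=\GH$ and coradical $\Bbbk\GH$, so $\dim A=m\,k$ where $k=\dim R$ and $R$ is the diagram (the infinitesimal braided Hopf algebra) of $A$. By Nichols--Zoeller $\dim A\mid\dim H=mn$, whence $k\mid n$. The braided binomial formula together with $gxg^{-1}=\lambda x$ shows that $x^d$ is a braided primitive of trivial self-braiding $\lambda^{d^{2}}=1$, hence $x^d=0$ in $R$, so $R$ contains the Nichols algebra $\toba(\Bbbk x)\cong\Bbbk[x]/(x^d)$ of dimension $d$. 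Invoking freeness of $R$ over this Nichols subalgebra (a braided analogue of Nichols--Zoeller; when $\GH$ is abelian this is immediate from the PBW basis, the root vector of $x$ contributing exactly the factor $d$) gives $d\mid k\mid n$.

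Combining the two halves, $d\mid\gcd(m,n)=1$, so $d=1$, contradicting $d>1$; hence $H$ has no nontrivial skew-primitive. I expect the main obstacle to be the step $d\mid n$: establishing that $\dim R$ is divisible by $d$ when $\GH$ is non-abelian, where the single eigenvector $x$ need not span a Yetter--Drinfeld submodule, so one must replace $\toba(\Bbbk x)$ by the Nichols algebra of the submodule generated by $x$ and appeal to a freeness theorem over the braided subspace $\Bbbk x$. The reduction to the pointed sub-Hopf algebra $A$ is precisely what keeps the coradical equal to $\Bbbk\GH$, so that $\dim A=m\,k$ with $k\mid n$ and the grouplike count is not diluted by the larger matrix subcoalgebras that $H_0$ may contain.
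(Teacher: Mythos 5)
Your overall strategy is the same as the paper's: pass to the pointed sub-Hopf algebra $L=\langle \Bbbk G(H),x\rangle$, invoke Nichols--Zoeller to get $\dim L\mid mn$, and exhibit an integer $d>1$ dividing both $m$ and $\dim L/m$. The paper disposes of the second step in one line by citing \cite[Proposition 1.8]{andrunatale}, which states exactly that $\dim L$ is divisible by $rm$ for some $r\neq 1$ dividing $m$; you are in effect reproving that cited result. The first half of your reconstruction is correct: the normalization to a $(1,g)$-primitive, the diagonalization of conjugation by $g$ on $\cP_{1,g}(H)$, the exclusion of $\lambda=1$ via the quantum binomial (divided power) argument, the divisibilities $d\mid\ord(g)\mid m$, the identity $x^{d}=0$ in the diagram $R$, and the bookkeeping $\dim L=m\cdot\dim R$ with $\dim R\mid n$ are all fine.

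The gap --- which you flag yourself --- is the step $d\mid\dim R$, and it is genuine. The ``braided Nichols--Zoeller'' theorem applies to a Hopf subalgebra of $R$ \emph{in the category} $\ydg$, and $\Bbbk[x]/(x^{d})$ is one only if the line $\Bbbk x$ is stable under the action of all of $G(H)$; since conjugation by $h$ carries $\cP_{1,g}(H)$ to $\cP_{1,hgh^{-1}}(H)$, the element $x$ can be chosen as a common eigenvector only for $\langle g\rangle$ (or when $G(H)$ is abelian), not in general. Kharchenko's PBW basis is likewise unavailable here: $R$ is the diagram of $L$, not a Nichols algebra, and need not be generated in degree one. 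Replacing $\Bbbk x$ by the Yetter--Drinfeld submodule $V'$ it generates does produce an honest Hopf subalgebra $\toba'(V')$ of $R$ in $\ydg$, but then you lose all control of the factor $d$ in its dimension. Note also that the divisibility you \emph{can} obtain cheaply --- $d\cdot\ord(g)\mid\dim H$, from the Taft-like sub-Hopf algebra $\Bbbk\langle g,x\rangle$ of $\gr L$ --- is not enough for the lemma: with $m=4$, $\ord(g)=d=2$, $n=3$ one has $d\cdot\ord(g)=4\mid 12=mn$ and no contradiction arises. So the full strength $dm\mid\dim L$, equivalently $d\mid\dim R$, is really needed, and the clean way to get it is to quote \cite[5.5.1]{Mo} for the pointedness of $L$ and \cite[Proposition 1.8]{andrunatale} for the divisibility, as the paper does, rather than to rederive the latter.
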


\begin{proof}
Suppose that $x$ is a nontrivial skew-primitive
element in $H$ and let $L$ be the
sub-Hopf algebra of $H$ generated by $x$ and $G(H)$.
By \cite[5.5.1]{Mo}, $L$ is
pointed. By \cite[Proposition 1.8]{andrunatale},
$\dim L$ is divisible by $rm$ where $r\neq 1$ is a positive integer
dividing $m$. Then $\dim H = mn$ is divisible by $rm$,
contradicting the fact that $(m,n) = 1$.
\end{proof}

The next proposition generalizes results of Beattie and
D\v{a}sc\v{a}lescu \cite{bitidasca} and gives
a lower bound for the
dimension of a finite dimensional Hopf algebra
without nontrivial skew-primitive elements.

\begin{prop}\label{prop:biti-dasca}
\label{cor:bitidasca-p1}\cite[Proposition 3.2]{bg}
Let $H$ be a non-cosemisimple Hopf
algebra with no nontrivial skew-primitives.
\par \emph{(i)} For any $g \in G(H)$ there exists a simple subcoalgebra $C$ of
$H$ of dimension $> 1$ such that $P_{1}^{C,g}\neq 0$, $P_k^{C,D}$ is
nondegenerate for some $k>1$ and $D$ a simple subcoalgebra of the same dimension as
$C$, and $P_m^{g,h}$ is nondegenerate for some $m>1$ and $h$
grouplike.
\par \emph{(ii)}
Suppose $H_0\simeq \Bbbk G \oplus \sum_{i=1}^t{\mathcal M}^*(n_i,\Bbbk)$ with $t\geq 1$,
$2  \leq n_1 \leq \ldots \leq n_t$. Then
\begin{equation}  \label{form: bdf bound}\nonumber
\dim H \geq \dim(H_0) + (2n_1 + 1)|G| + n_1^2.
\end{equation}\qed
\end{prop}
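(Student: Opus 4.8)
The strategy is to read everything off the coradical filtration using the symmetries of Lemma~\ref{lema:fukuda}, the one substantive input being that a grouplike is joined to a matrix coalgebra already at the first level. First note that $H$ non-cosemisimple forces $P_1\neq 0$ (else $H_1=H_0\oplus P_1=H_0$ and the filtration collapses to $H=H_0$), while ``no nontrivial skew-primitives'' says exactly that $P_1^{g,h}=0$ for all $g,h\in\GH$. The heart of (i) is to exhibit \emph{one} grouplike $g_0$ and \emph{one} simple subcoalgebra $C$ with $\dim C>1$ and $P_1^{C,g_0}\neq 0$; the case of arbitrary $g$ then follows by right translation in Lemma~\ref{lema:fukuda}(ii), since $\dim P_1^{C,g_0}=\dim P_1^{Cg_0^{-1}g,\,g}$. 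I produce $g_0=1$ by duality: $H$ is non-cosemisimple iff $H^{*}$ is a non-semisimple finite dimensional Hopf algebra, and for such an algebra the trivial module $\Bbbk_\varepsilon$ is not projective (hence, by self-injectivity, not injective). Therefore $\Ext^1_{H^{*}}(\Bbbk_\varepsilon,S)\neq 0$ for some simple $S$. Under the identification of finite dimensional $H$-comodules with $H^{*}$-modules this non-split extension lives in $H_1$ and yields a nonzero first-level component joining the trivial grouplike $1$ to the simple subcoalgebra $C$ attached to $S$; using the antipode symmetry $\dim P_1^{1,C}=\dim P_1^{SC,1}$ we may put the grouplike in the second slot, so $P_1^{C,1}\neq 0$. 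Moreover $\dim C>1$ is automatic: the vanishing $\Ext^1_{H^{*}}(\Bbbk_\varepsilon,\chi)=0$ for every one-dimensional $\chi$ is precisely $P_1^{g,h}=0$, so the simple $S$ forced by non-projectivity is not one-dimensional.

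\textbf{(i)(b) and (i)(c).} From the nondegenerate $P_1^{C,g}$ (nondegenerate because $P_0=0$) I iterate Lemma~\ref{lema:fukuda}(iii): whenever a nondegenerate $P_n^{C,D}$ has $\dim D\neq\dim C$ (or fails the accompanying dimension equality) it produces a nondegenerate $P_\ell^{C,E}$ with $\ell\geq n+1$, keeping the first index $C$ fixed. Since $\dim\Bbbk g=1<\dim C$ the process starts, and finiteness of the filtration prevents the levels from growing forever, so the chain halts at a nondegenerate $P_k^{C,D}$ with $k>1$ where the hypothesis of Lemma~\ref{lema:fukuda}(iii) fails; in particular $\dim D=\dim C$, which is (b). For (c) I start instead from the antipode image $P_1^{g^{-1},SC}\neq 0$ (from $\dim P_1^{C,g}=\dim P_1^{g^{-1},SC}$) and run the same iteration keeping the \emph{grouplike} index $g^{-1}$ fixed; now halting forces the terminal second index to have dimension $1$, i.e.\ to be a grouplike $h'$, giving a nondegenerate $P_k^{g^{-1},h'}$ with $k>1$. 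Translating by $g^{2}$ yields $P_m^{g,h}$ with $m=k>1$ and $h=g^{2}h'\in\GH$.

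\textbf{The bound (ii).} Because $\dim H-\dim H_0=\sum_{\tau,\gamma}\dim P^{\tau,\gamma}$ is a sum over \emph{distinct} ordered pairs of simple subcoalgebras, I bound it from below by four pairwise-disjoint families of pairs supplied by (i), writing $m=\sqrt{\dim C}\geq n_1$ and translating/flipping by Lemma~\ref{lema:fukuda}(ii): the $|G|$ pairs $(Cg,g)$ with $P_1^{Cg,g}\neq 0$, each of size $\geq d_{Cg}d_g=m\geq n_1$; the $|G|$ antipodal pairs $(g,\,SC\cdot g)$, each $\geq n_1$; the $|G|$ grouplike pairs $(g,h_g)$ from (c), each $\geq 1$; and the single pair $(C,D)$ from (b), of size $\geq d_Cd_D=m^{2}\geq n_1^{2}$. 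The four families are separated by the dimension pattern of their two indices—$(>1,1)$, $(1,>1)$, $(1,1)$ and $(>1,>1)$—so all listed pairs are distinct and their contributions add to $2n_1|G|+|G|+n_1^{2}=(2n_1+1)|G|+n_1^{2}$, which is the asserted inequality.

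\textbf{Main obstacle.} Steps (b), (c) and the counting in (ii) are formal once Lemma~\ref{lema:fukuda} and the translation/antipode symmetries are available; the one genuinely substantive point is (i)(a)—forcing a first-level link between a grouplike and a matrix coalgebra. The duality argument reduces this to the standard fact that the trivial module of a non-semisimple Hopf algebra is not projective, but making the dictionary between $\Ext^1_{H^{*}}$ and the components $P_1^{\tau,\gamma}$ precise, in particular landing the grouplike in the correct slot and confirming $\dim C>1$, is the part that must be handled with care.
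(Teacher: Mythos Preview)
This proposition is cited from \cite{bg} and stated here without proof (note the terminal \qed), so there is no argument in the present paper to compare against.

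Your proof is correct and is essentially the intended one. The substantive step is (i)(a), and your reduction is the standard one: for a finite-dimensional non-semisimple Hopf algebra the trivial module is not projective (this is Maschke's theorem via integrals), hence by self-injectivity not injective, so $\Ext^1_{H^*}(\Bbbk_\varepsilon,S)\neq 0$ for some simple $S$; the hypothesis ``no nontrivial skew-primitives'' is exactly the vanishing of such extensions between one-dimensional simples, forcing $\dim S>1$. The dictionary with $P_1^{\tau,\gamma}$ is standard, and the antipode symmetry in Lemma~\ref{lema:fukuda}(ii) absorbs any ambiguity about which slot the grouplike lands in. For (b) and (c) the iteration of Lemma~\ref{lema:fukuda}(iii) terminates because the coradical filtration has finite length (for $n$ large every $P_n^{\tau,\gamma}$ coincides with $P_{n-1}^{\tau,\gamma}$ and is therefore degenerate); you use this implicitly and might say so. The count in (ii) is exactly right, with the four families disjoint by the pattern $(>1,1)$, $(1,>1)$, $(1,1)$, $(>1,>1)$ and internally distinct because one coordinate runs over $G$.

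One small refinement: for (ii) it is cleaner to run (i) once at $g=1$, fix a single $C$, $D$, $h$, and then produce all four families by translating over $G$ via Lemma~\ref{lema:fukuda}(ii), rather than invoking (i) separately for each $g$ (which a priori yields $g$-dependent data). This is what your family~1 already does and is implicit in the rest, but saying it once up front avoids any worry about collisions.
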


\subsection{Matrix-like coalgebras}
The next theorem due to \c Stefan has been a key component for
several classification results.

\begin{theorem}\label{thm:stefan}\cite[Thm. 1.4]{stefan}
Let $D$ be the simple coalgebra $\mathcal{M}^\ast(2, \Bbbk)$.
\begin{enumerate}
 \item[(i)] For $f$   an antiautomorphism of $D$ such
that $\ord ( f^{2}) = n < \infty$ and $n > 1$,   there exist a multiplicative
matrix $\be$ in
$D$ and a root of unity $\omega$ of order $n$ such that
$$f(e_{12}) = \omega^{- 1} e_{12},\quad f(e_{21}) = \omega e_{21},\quad
f(e_{11}) = e_{22},\quad  f(e_{22}) = e_{11}.$$

\item[(ii)] For $f$ be an automorphism of $D$ of finite order $n$,
  there exist a multiplicative matrix $\be$ on $D$
and a root of unity $\omega$ of order $n$ such
that $f(e_{ij}) = \omega^{i-j} e_{ij}$.
 \qed
\end{enumerate}
 \end{theorem}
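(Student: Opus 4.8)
The plan is to dualize and apply the Skolem--Noether theorem. By definition $D=\coM$ is the dual coalgebra of the matrix algebra $\M(2,\Bbbk)$, so a choice of multiplicative matrix $\be$ in $D$ is the same as a coalgebra isomorphism $D\cong\coM$, and dually the same as an algebra automorphism of $\M(2,\Bbbk)$; changing $\be$ therefore amounts to conjugating by an element of $\GL_{2}(\Bbbk)$. A coalgebra automorphism (resp.\ antiautomorphism) $f$ of $D$ dualizes to an algebra automorphism (resp.\ antiautomorphism) $\psi$ of $\M(2,\Bbbk)$. By Skolem--Noether every such automorphism is inner, $A\mapsto gAg^{-1}$; and since composing an antiautomorphism with the transpose $A\mapsto A^{t}$ gives an automorphism, every antiautomorphism has the form $A\mapsto gA^{t}g^{-1}$ for some $g\in\GL_{2}(\Bbbk)$.

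For part (ii) I would proceed as follows. Let $\psi$ be conjugation by $g$, dual to $f$, with $\ord\psi=n$. Then $g^{n}$ is central, so after rescaling $g$ by a scalar we may assume $g^{n}=I$; as the base field has characteristic zero, $x^{n}-1$ is separable and $g$ is diagonalizable with $n$-th root of unity eigenvalues $\alpha_{1},\alpha_{2}$. Taking $\be$ to be the multiplicative matrix dual to the associated eigenbasis of matrix units, a direct computation yields $f(e_{ij})=\alpha_{i}\alpha_{j}^{-1}e_{ij}$. Putting $\omega:=\alpha_{2}\alpha_{1}^{-1}$ gives $f(e_{ij})=\omega^{i-j}e_{ij}$; since $f^{k}=\id$ forces $\omega^{k}=1$, we get $\ord\omega=\ord f=n$, as required.

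For part (i), write the dual antiautomorphism as $\psi(A)=gA^{t}g^{-1}$. A short computation shows that $\psi^{2}$ is conjugation by $u:=g(g^{t})^{-1}$ and that $\det u=\det g/\det g^{t}=1$. Since $\ord(f^{2})=\ord(\psi^{2})=n>1$, the map $\psi^{2}$ is nontrivial, so $u$ is not scalar; combined with $\det u=1$ its eigenvalues are $\mu,\mu^{-1}$ with $\mu^{2}\neq1$, so $u$ is diagonalizable. Passing to the eigenbasis of $u$ and using the identity $g=u\,g^{t}$, I find that $g$ is forced to be antidiagonal, $g=\bigl(\begin{smallmatrix}0&\mu\\1&0\end{smallmatrix}\bigr)$ up to a scalar. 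Taking $\be$ to be the multiplicative matrix dual to this basis and evaluating $\psi$ on the matrix units, I expect to obtain $f(e_{11})=e_{22}$, $f(e_{22})=e_{11}$, $f(e_{12})=\mu\,e_{12}$ and $f(e_{21})=\mu^{-1}e_{21}$, which is the asserted form with $\omega=\mu^{-1}$.

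The main obstacle is the bookkeeping in part (i). First, one must check that the eigenbasis of $u$ genuinely yields a multiplicative matrix, equivalently that the reduction of $g$ to antidiagonal form is consistent; here it helps that under a change of multiplicative matrix $g$ transforms by the congruence $g\mapsto QgQ^{t}$, which conjugates $u$ by $Q$, so the conjugacy class of $u$ is the only invariant and the antidiagonal normalization realizes it. Second, one must pin down the order of $\omega$: since $f^{2}$ scales $e_{12}$ by $\omega^{-2}$, the hypothesis $\ord(f^{2})=n$ translates into $\ord(\omega^{2})=n$, and identifying the resulting root of unity is the delicate step. By contrast, part (ii) is routine once Skolem--Noether is available, the order of $\omega$ there being controlled directly (without squaring) by $\ord f=n$.
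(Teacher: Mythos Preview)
The paper does not supply its own proof of this theorem: it is quoted from \cite{stefan} and closed with a \qed, so there is no in-paper argument to compare against. Your plan via dualizing to $\M(2,\Bbbk)$ and invoking Skolem--Noether is exactly the standard route (and is, in essence, \c{S}tefan's own argument), so the approach is correct.

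One genuine loose end you have already spotted deserves emphasis. In part (i), with the normal form you obtain, $f^{2}$ fixes $e_{11},e_{22}$ and scales $e_{12}$ by $\omega^{-2}$, so $\ord(f^{2})=\ord(\omega^{2})$. The hypothesis $\ord(f^{2})=n$ therefore yields $\ord(\omega^{2})=n$, which only pins down $\ord(\omega)\in\{n,2n\}$; it does not by itself force $\ord(\omega)=n$. Your proof plan flags this as ``the delicate step'' but does not resolve it, and in fact it cannot be resolved from $\ord(f^{2})$ alone: the statement as reproduced here is slightly loose on this point, and what one actually gets is a primitive root $\omega$ with $\ord(\omega^{2})=n$. (For the applications in this paper only the eigenvector form of $f$ matters, not the exact order of $\omega$.) Everything else in your plan---the diagonalization in (ii), the congruence action $g\mapsto QgQ^{t}$ and the reduction of $g$ to antidiagonal form in (i)---is correct and complete.
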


Now we recall some useful results on matrix-like coalgebras.
In \cite{bitidasca}  all $2\times 2$ matrix-like
coalgebras of dimension less than $4$ were described; we summarize in the
following theorem.% First we need to introduce

\begin{theorem}\cite[Thm. 2.1]{bitidasca}\label{thm:2x2-matrix}
Let $D$ be a $2\times 2$ matrix-like coalgebra of dimension less
than $4$. If $\dim D = 1,2$ then $D$ has a basis of grouplike elements.
If $\dim D = 3$, then $D$ has a basis $\{ g,h,x  \}$ where $g,h$ are grouplike and
$x$ is $(g,h)$-primitive.\qed
\end{theorem}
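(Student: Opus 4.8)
The plan is to realise $D$ as a quotient coalgebra of $\coM$ and thereby translate the problem into the elementary classification of unital subalgebras of the matrix algebra $\M(2,\Bbbk)$. Since the spanning set $\{e_{ij}\}$ satisfies exactly the defining relations $\com(e_{ij})=\sum_{l}e_{il}\ot e_{lj}$ and $\e(e_{ij})=\delta_{ij}$ of $\coM$, the assignment $E_{ij}\mapsto e_{ij}$ extends to a surjective coalgebra map $\pi\colon\coM\twoheadrightarrow D$. Dualizing, $\pi^{\ast}\colon D^{\ast}\hookrightarrow(\coM)^{\ast}\cong\M(2,\Bbbk)$ is an injective unital algebra map, so its image is a unital subalgebra $A\subseteq\M(2,\Bbbk)$ with $\dim A=\dim D$ and $D\cong A^{\ast}$ as coalgebras. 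It then suffices to classify the unital subalgebras of $\M(2,\Bbbk)$ of dimension $1,2,3$ and read off the dual structure: grouplikes of $A^{\ast}$ correspond to the algebra characters $A\to\Bbbk$, while nontrivial skew-primitives are governed by $\rad A$.

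First I would dispose of the forced cases. For $\dim D=1$ the only unital subalgebra is $A=\Bbbk\,1$, whose dual is spanned by one grouplike. For $\dim D=3$, I claim $A$ is, after conjugating the multiplicative matrix by a suitable element of $\GL(2,\Bbbk)$, the algebra of upper triangular matrices. Indeed, $\M(2,\Bbbk)$ contains no three nonzero orthogonal idempotents (their ranks are positive and sum to at most $2$), so there is no $3$-dimensional semisimple subalgebra and $\rad A\neq0$. Moreover $\M(2,\Bbbk)$ contains no $2$-dimensional subspace of nilpotent matrices, so $\dim\rad A\neq 2$; hence $\dim\rad A=1$ and $A/\rad A\cong\Bbbk\times\Bbbk$, which pins $A$ down to the Borel subalgebra. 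Dualizing the two diagonal idempotents produces grouplikes $g,h$, and dualizing the nilpotent off-diagonal generator produces $x$ with $\com(x)=g\ot x+x\ot h$, giving the desired basis $\{g,h,x\}$ with $x$ a nontrivial $(g,h)$-primitive.

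For $\dim D=2$ the subalgebra $A$ is $2$-dimensional and unital, hence commutative, so over $\Bbbk$ it is either semisimple, $A\cong\Bbbk\times\Bbbk$, or local, $A\cong\Bbbk[t]/(t^{2})$. In the semisimple case the two primitive idempotents dualize to two grouplikes, yielding the grouplike basis of $D$ asserted by the theorem. The remaining point is to verify that, under the precise matrix-like hypotheses, only the semisimple alternative occurs; this step cannot be extracted from the formal quotient argument alone and must be settled by directly inspecting the linear dependencies among $e_{11},e_{12},e_{21},e_{22}$ imposed by $\com$ and $\e$, as in \cite{bitidasca}.

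The main obstacle is exactly this dichotomy in the $2$-dimensional case. The asymmetry with dimension $3$ is instructive: there the local possibility is excluded automatically, since it would require a $2$-dimensional nil subspace of $\M(2,\Bbbk)$, whereas in dimension $2$ the one-dimensional nil subspace spanned by a single $E_{12}$-type element is available, so $\Bbbk[t]/(t^{2})$ is a priori a legitimate subalgebra. Controlling this case — and, in dimension $3$, normalizing the multiplicative matrix so that the idempotents become the standard diagonal ones and $x$ is genuinely $(g,h)$-primitive — is where the real work lies, and it proceeds in the spirit of the normalization in Theorem~\ref{thm:stefan}. Once $A$ is identified up to conjugation in $\GL(2,\Bbbk)$, reading off the coalgebra structure of $A^{\ast}$ is immediate.
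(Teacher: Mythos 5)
The paper itself offers no proof of this statement: it is quoted from \cite[Thm.\ 2.1]{bitidasca}, where (as recalled in the proof of Lemma \ref{lema:proj-coalg}) the argument is a direct case analysis on which of the $e_{ij}$ are linearly independent, followed by explicit computations with $\com$. Your dualization is a genuinely different route: the surjection $\coM\twoheadrightarrow D$ does identify $D^{\ast}$ with a unital subalgebra $A\subseteq{\mathcal M}(2,\Bbbk)$ with $\dim A=\dim D$, and your treatment of $\dim D=1$ and $\dim D=3$ is correct and complete (no $3$-dimensional semisimple subalgebra, no $2$-dimensional nil subspace, hence $\dim\rad A=1$, $A$ is conjugate to the upper triangular algebra, and its dual is $\Bbbk g\oplus\Bbbk h\oplus\Bbbk x$ with $x$ a $(g,h)$-primitive). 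Where it works, this is cleaner and more conceptual than the computation in \cite{bitidasca}.

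The gap is exactly the one you flag in the case $\dim D=2$, and it cannot be closed by ``directly inspecting the linear dependencies'': the correspondence you set up is exact in both directions, since for any unital subalgebra $A$ the quotient $\coM/A^{\perp}\cong A^{\ast}$ is spanned by the images of the comatrix basis and hence is itself a $2\times 2$ matrix-like coalgebra. In particular the local algebra $\Bbbk I+\Bbbk E_{12}\cong\Bbbk[t]/(t^{2})$ really does occur: its dual is $C=\Bbbk g\oplus\Bbbk x$ with $g$ grouplike and $x$ a nontrivial $(g,g)$-primitive, and setting $e_{11}=e_{22}=g$, $e_{12}=x$, $e_{21}=0$ one checks $\com(e_{ij})=\sum_{l}e_{il}\ot e_{lj}$ and $\e(e_{ij})=\delta_{ij}$, so $C$ is a $2$-dimensional $2\times 2$ matrix-like coalgebra whose only grouplike is $g$. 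Thus, with the definition used in this paper, the $\dim D=2$ assertion requires either the additional alternative ``$D=\Bbbk g\oplus\Bbbk x$ with $x$ a nontrivial $(g,g)$-primitive'' in the conclusion or a hypothesis from \cite{bitidasca} that has been dropped in transcription; no argument can exclude the local case outright. (The applications in Lemma \ref{lema:proj-coalg} and Proposition \ref{pr: grouplikes 2p} are unaffected, because there the image lies in a pointed Hopf algebra where $\cP_{g,g}=g\,\cP_{1,1}=0$, so the extra alternative collapses to the grouplike one.) As written, your proposal establishes the $\dim D=1,3$ parts and only the semisimple half of the $\dim D=2$ part.
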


We end this section with the following lemma.

\begin{lema}\label{lema:proj-coalg}
Let $\pi: H \to H_{4}$ be a Hopf algebra epimorphism.
If $H$ is generated by a simple
subcoalgebra $D$
of dimension $4$, then $\dim\ ^{\co \pi}D \geq 2$ or
$\dim D^{\co \pi} \geq 2$.
\end{lema}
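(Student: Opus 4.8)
The plan is to analyse the image $\pi(D)$ together with the $H_4$-comodule structure it induces on $D$. Fix a multiplicative matrix $\be=\{e_{ij}\}_{1\le i,j\le2}$ for $D\simeq\coM$ and set $f_{ij}:=\pi(e_{ij})$. Since $\pi$ is a coalgebra map, $\com(f_{ij})=\sum_l f_{il}\ot f_{lj}$ and $\e(f_{ij})=\delta_{ij}$, so the $f_{ij}$ span a subcoalgebra $\pi(D)\subseteq H_4$ which is $2\times2$ matrix-like. Moreover $(\id\ot\pi)\com(e_{ij})=\sum_l e_{il}\ot f_{lj}\in D\ot H_4$ and $(\pi\ot\id)\com(e_{ij})=\sum_l f_{il}\ot e_{lj}\in H_4\ot D$, so $D$ is simultaneously a right and a left $H_4$-subcomodule, and $D^{\co\pi}$, ${}^{\co\pi}D$ are exactly the coinvariants for these two coactions. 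Because $H$ is generated as an algebra by $D$ and $\pi$ is onto, $\pi(D)$ generates $H_4$; as $G(H_4)=\{1,g\}$ spans the proper subalgebra $\Bbbk\{1,g\}$, this forces $\pi(D)\nsubseteq (H_4)_0=\Bbbk\{1,g\}$.

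Next I would pin down $\dim\pi(D)=3$. The value $4$ is excluded: if the $f_{ij}$ were linearly independent then $\pi|_D$ would be injective and $\pi(D)\simeq\coM$ would be a cosemisimple coalgebra of dimension $4$, whereas the only $4$-dimensional subcoalgebra of the pointed Hopf algebra $H_4$ is $H_4$ itself, which is not cosemisimple. The values $1,2$ are excluded too: by Theorem \ref{thm:2x2-matrix} a $2\times2$ matrix-like coalgebra of dimension $\le2$ has a basis of grouplikes, which would put $\pi(D)\subseteq\Bbbk\{1,g\}$, against the previous paragraph. Hence $\dim\pi(D)=3$ and $\ker(\pi|_D)=\Bbbk w$ is one-dimensional.

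The technical crux is to normalise $\be$. Writing $w=\sum_{ij}c_{ij}e_{ij}$, the identity $\e(w)=0$ gives $c_{11}+c_{22}=0$, so $[c_{ij}]$ is traceless; and replacing $\be$ by a conjugate $T\be T^{-1}$ with $T\in\GL(2,\Bbbk)$ is a coalgebra automorphism of $D$, hence preserves the property that $\Bbbk w$ is a coideal, and conjugates the matrix $[c_{ij}]$. A traceless $2\times2$ matrix is conjugate either to a nonzero diagonal matrix, or to the coefficient matrix of $e_{12}$ (a rank-one nilpotent), or to $0$. A direct check shows that the diagonal case does not define a coideal and that $w=0$ is excluded, so after conjugation $\Bbbk w=\Bbbk e_{12}$, i.e. $f_{12}=0$. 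Then $\com(f_{11})=f_{11}\ot f_{11}$ and $\com(f_{22})=f_{22}\ot f_{22}$, so $f_{11},f_{22}$ are grouplike, whence $\{f_{11},f_{22}\}=\{1,g\}$ (they are distinct, else $f_{21}$ would be a trivial skew-primitive and $\dim\pi(D)<3$), while $f_{21}$ is the nontrivial $(f_{11},f_{22})$-skew-primitive. Justifying this normal form — in particular ruling out the diagonal coideal — is the step I expect to require the most care.

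It then remains to compute the coinvariants in the two surviving cases, a short linear-algebra check in which an added trivial skew-primitive or scalar in $f_{21}$ does not change the dimension count. If $(f_{11},f_{22})=(1,g)$ then $f_{21}=x$, and applying $(\pi\ot\id)\com$ gives $(\pi\ot\id)\com(e_{11})=1\ot e_{11}$ and $(\pi\ot\id)\com(e_{12})=1\ot e_{12}$, so $\Bbbk e_{11}+\Bbbk e_{12}\subseteq{}^{\co\pi}D$ and $\dim{}^{\co\pi}D\ge2$. If instead $(f_{11},f_{22})=(g,1)$ then $f_{21}=gx$, and applying $(\id\ot\pi)\com$ gives $(\id\ot\pi)\com(e_{12})=e_{12}\ot1$ and $(\id\ot\pi)\com(e_{22})=e_{22}\ot1$, so $\Bbbk e_{12}+\Bbbk e_{22}\subseteq D^{\co\pi}$ and $\dim D^{\co\pi}\ge2$. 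In either case one of the two coinvariant spaces has dimension at least $2$, as claimed; the dichotomy reflects precisely whether the surviving skew-primitive is the $(1,g)$-primitive $x$ or the $(g,1)$-primitive $gx$.
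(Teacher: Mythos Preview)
Your proof is correct and takes a genuinely cleaner route than the paper's. Both arguments begin identically, using that $\pi(D)$ generates $H_4$ together with Theorem~\ref{thm:2x2-matrix} to force $\dim\pi(D)=3$. From there the paper fixes an arbitrary multiplicative matrix and splits into cases according to which three of the $\pi(e_{ij})$ are linearly independent: in its Case~1 the dependence reads $\pi(e_{22})=\pi(e_{11})+b\,\pi(e_{12})-b^{-1}\pi(e_{21})$, and explicit grouplikes $h_1,h_2$ and coinvariants $t_i,s_i$ are constructed by hand; its Case~2 either reduces to Case~1 or yields $\pi(e_{21})=0$, which is then handled separately. You instead exploit the $\GL(2,\Bbbk)$-action $\be\mapsto T\be T^{-1}$ on multiplicative matrices to conjugate the traceless coefficient matrix of the one-dimensional kernel into Jordan form, observing that the diagonal traceless option is excluded because $\Bbbk(e_{11}-e_{22})$ fails to be a coideal of $D$. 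This collapses the paper's two cases into the single normalised situation $f_{12}=0$, from which the coinvariants are read off immediately via $(\pi\ot\id)\Delta(e_{1j})$ or $(\id\ot\pi)\Delta(e_{i2})$, independently of the exact form of $f_{21}$. What your approach buys is a shorter, more conceptual argument with essentially no ad hoc computation; what the paper's approach buys is that it is entirely elementary and never invokes the conjugation action or the coideal obstruction (the step you flagged as needing the most care), at the cost of messier case-by-case formulas. The paper's own remark following the lemma, that stability under $S^2$ allows a simplifying choice of multiplicative matrix, is in the same spirit as your normalisation but less general.
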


\pf
Since $H_{4}$ is pointed, $\dim \pi(D) < 4$. Moreover, since
$D$ generates $H$ as an algebra, $\pi(D) $ generates $H_4 $.
Then by Theorem \ref{thm:2x2-matrix}, $ \dim \pi(D) =3 $.
  Let $\{e_{ij}\}_{1\leq i,j\leq 2}$
be a multiplicative matrix of $D$, then
$\{\pi(e_{ij})\}_{1\leq i,j\leq 2}$ is a linearly dependent set.

\par As in the proof of Theorem \ref{thm:2x2-matrix},
see \cite[Thm. 2.1]{bitidasca}, we divide the proof into two cases.

\noindent {\bf Case 1:}  The set $\{\pi(e_{11}), \pi(e_{12}),
\pi(e_{21})\}$ is linearly independent.

\par
Then $\pi(e_{22}) = \pi(e_{11}) +
a \pi(e_{12}) + b \pi(e_{21})$ for scalars $a,b$.
By comparing $\Delta \pi(e_{22})$ and $\pi \otimes \pi \circ \Delta (e_{22})$ one sees that
$ab = -1$ so that   there exists $0 \neq b\in \Bbbk$
such that
$\pi(e_{22}) = \pi(e_{11}) + b \pi(e_{12}) -
\frac{1}{b}\pi(e_{21})$.  Then it is straightforward to verify
that
the linearly independent elements $h_{1}= \pi(e_{11}) -
\frac{1}{b}\pi(e_{21})$ and
$h_{2} = \pi(e_{11}) + b\pi(e_{12})$ are grouplike.

\par
Suppose that $h_{1} = 1$. Then it is straightforward to show that $  t_{1}=
e_{11} - \frac{1}{b}e_{21} \in \ ^{\co \pi}D$.

Let $s_{1}= e_{22} - be_{12}  $ and
note that $s_1,t_1$ are linearly independent.
 Then   $\pi(s_{1}) =
\pi(e_{22}) - b\pi(e_{12}) =
\pi(e_{11}) + b \pi(e_{12}) - \frac{1}{b}\pi(e_{21})- b\pi(e_{12})=
\pi(e_{11}) - \frac{1}{b}\pi(e_{21}) = \pi(t_{1}) = 1$.
Then a computation similar to that for $t_1$,
shows that  $s_{1} \in \ ^{\co \pi}D$. Thus $\dim \ ^{\co \pi}D \geq 2$.

 \par
 If $h_2 = 1$ then define $t_2 = e_{11} + be_{12}$ and
$s_2 = e_{22} + \frac{1}{b}e_{21}$.  A computation
 similar to that above shows that $t_2,s_2 \in D^{co\pi}$
so that $D^{co\pi}$ has dimension at least $2$.

\noindent {\bf Case 2:}
The set $ \{\pi(e_{11}),\pi(e_{12}), \pi(e_{22} )\} $
is linearly independent.

Then there exist  $a, b \in \Bbbk$ such
that $\pi(e_{21}) = a\pi(e_{11}) + b\pi(e_{12})- a\pi(e_{22})$.
If $a \neq 0$, then the case reduces to Case 1. If $a = 0$, then
$\pi(e_{21}) = b\pi(e_{12})$ and by comparing
$\Delta \pi (e_{21})$ and $\Delta \pi (be_{12})$, we see that
$b=0$ so that $\pi(e_{21})=0$.
Thus
$ \com(\pi(e_{11})) = \pi(e_{11})\ot \pi(e_{11}) $,
$ \com(\pi(e_{22})) = \pi(e_{22})\ot \pi(e_{22}) $ and
$ \com(\pi(e_{12})) = \pi(e_{11})\ot \pi(e_{12}) +
\pi(e_{12})\ot \pi(e_{22}) $, which implies
that $G(H_{4}) = \langle \pi(e_{11}), \pi(e_{22})\rangle$.
If $ \pi(e_{11}) = 1 $, then $e_{11} \in D^{\co \pi}$,
for
\begin{align*}
(1\ot \pi) \com(e_{11})  =
(1\ot \pi) (e_{11}\ot e_{11} + e_{12}\ot e_{21})
= e_{11}\ot \pi(e_{11}) + e_{12}\ot \pi(e_{21}) =
e_{11}\ot 1.
\end{align*}
Moreover, the element $e_{11} + e_{21}   \in D^{\co \pi}$,
since
\begin{align*}
(1\ot \pi) \com(e_{11} + e_{21}) & =
(1\ot \pi) (e_{11}\ot e_{11} + e_{12}\ot e_{21} +
e_{21}\ot e_{11} + e_{22}\ot e_{21}\\
& = e_{11}\ot \pi(e_{11}) + e_{12}\ot \pi(e_{21})
+e_{21}\ot \pi(e_{11}) + e_{22}\ot \pi(e_{21})  =
e_{11}\ot 1 + e_{21}\ot 1.
\end{align*}
Thus, $\dim D^{\co \pi} \geq 2$. The case $ \pi(e_{22})=1 $
is completely analogous and implies that $\dim \ ^{\co \pi}D\geq 2$,
  taking the elements $e_{22}$ and $e_{22} + e_{21}$.
\epf

Note that if $D$ in Lemma \ref{lema:proj-coalg}
is stable under $S^2$ then the proof can be simplified considerably.
For then we may choose a multiplicative matrix for
$D$ consisting of eigenvectors for $S^2$ and we   must have
that $\pi(e_{ii}) \in \Bbbk C_2 \subset H_4$.

\subsection{Hopf algebras generated by a simple subcoalgebra}
\label{subsec:gen-simple}
In this subsection we summarize some known facts about
Hopf algebras generated by simple subcoalgebras of dimension $4$.
The most important  is the next proposition,
 due to Natale but  derived from
a result of \c{S}tefan \cite{stefan}.

\begin{prop}\label{prop:natale-stefan}\cite[Prop. 1.3]{natale}. Suppose that $H$ is
nonsemisimple Hopf algebra   generated by a simple subcoalgebra of
dimension $4$ which is stable by the antipode. Then $H$ fits into a
central exact sequence $\Bbbk^G\overset{\imath}\hookrightarrow
H\overset{\pi}\twoheadrightarrow A,$ where $G$ is a finite group and
$A^*$ is a pointed nonsemisimple Hopf algebra.\qed
\end{prop}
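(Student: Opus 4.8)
The plan is to use the $S$-stability of $D$ to force the antipode into the normal form of Theorem \ref{thm:stefan}, read off the resulting algebra relations among the matrix entries, recognize $H$ as a finite-dimensional quotient of the quantized coordinate algebra $\mathcal{O}_q(\mathrm{SL}_2)$ at a root of unity, and then produce the central Hopf subalgebra $\Bbbk^G$ via the quantum Frobenius map. First I would note that $S|_D$ is a coalgebra anti-automorphism of $D \simeq \coM$, so $S^2|_D$ is a coalgebra automorphism of finite order $n$ (the antipode of a finite-dimensional Hopf algebra has finite order). Since $D$ generates $H$ as an algebra and $S^2$ is an algebra automorphism, $S^2|_D = \id$ would force $S^2 = \id_H$, hence $H$ semisimple by Remark \ref{rm: LR2}, contrary to hypothesis; thus $n > 1$ and Theorem \ref{thm:stefan}(i) applies to $f = S|_D$. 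I obtain a multiplicative matrix $\be = \{e_{ij}\}$ and a root of unity $\omega$ with
\[ S(e_{11}) = e_{22}, \quad S(e_{22}) = e_{11}, \quad S(e_{12}) = \omega^{-1}e_{12}, \quad S(e_{21}) = \omega e_{21}. \]

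Next I would substitute this normal form into the antipode axioms $m(S \ot \id)\com = \e = m(\id \ot S)\com$ evaluated on each $e_{ij}$. Setting $q = -\omega$, a direct computation turns these four identities into the defining relations of $\mathcal{O}_q(\mathrm{SL}_2)$: the four $q$-commutation relations among $e_{11}, e_{12}, e_{21}, e_{22}$, together with
\[ e_{12}e_{21} = e_{21}e_{12}, \qquad e_{11}e_{22} - q\,e_{12}e_{21} = 1 = e_{22}e_{11} - q^{-1}e_{12}e_{21}, \]
so that the quantum determinant is trivial. As $\be$ generates $H$, this gives a Hopf algebra surjection $\mathcal{O}_q(\mathrm{SL}_2) \twoheadrightarrow H$, where $q$ is a root of unity; nonsemisimplicity of $H$ forces $q \neq 1$. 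Because $H$ is finite-dimensional, this surjection must factor through the quantum Frobenius homomorphism: the subalgebra $B$ generated by the $N$-th powers of the $e_{ij}$ (with $N = \ord(q)$) is a central Hopf subalgebra, and being a finite-dimensional \emph{commutative} Hopf algebra over $\Bbbk$ it is isomorphic to $\Bbbk^G$ for a finite group $G$.

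Finally, since $B \simeq \Bbbk^G$ is central, the projection $\pi\colon H \to A := H/B^{+}H$ fits into a central exact sequence $\Bbbk^G \hookrightarrow H \twoheadrightarrow A$: centrality together with faithful flatness (Nichols--Zoeller) yields $B = H^{\co\pi}$, and exactness then follows from Lemma \ref{lm: dim H co dim B = dim H}. It remains to control the quotient $A$ and to verify that $A^{*}$ is pointed nonsemisimple; dualizing gives $A^{*} \hookrightarrow H^{*} \twoheadrightarrow \Bbbk G$, exhibiting $A^{*}$ as a Frobenius--Lusztig-kernel-type Hopf algebra, generated by a grouplike and skew-primitives and hence pointed, while $A^{*}$ is nonsemisimple because $q$ is a nontrivial root of unity (equivalently, $H$ is nonsemisimple whereas $B$ is semisimple). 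The main obstacle is precisely this last identification: extracting the relations in the second step is bookkeeping, but pinning down the quotient $A$ and proving $A^{*}$ pointed nonsemisimple rests on the structure theory of quantum $\mathrm{SL}_2$ at a root of unity and its quantum Frobenius homomorphism, which is the technical heart of the argument.
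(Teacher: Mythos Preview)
The paper does not supply a proof of this proposition; it is quoted from \cite[Prop.~1.3]{natale} and closed with a \qed. Your sketch correctly reconstructs the argument underlying Natale's result, which in turn rests on \c Stefan's work \cite{stefan}: put $S|_D$ into \c Stefan's normal form, extract the $\Oc_q(\mathrm{SL}_2)$ relations from the antipode axioms, and invoke the quantum Frobenius at a root of unity to produce the central commutative Hopf subalgebra. The paper itself alludes to precisely this viewpoint just after Remark~\ref{lemma:desc-A}, observing that $\mathcal{K}$ is a finite-dimensional quotient of $\Oc_\xi(\mathrm{SL}_2)$, ``consistent with Proposition~\ref{prop:natale-stefan}.''

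One step deserves more care. In your last paragraph you assert that dualizing the exact sequence ``exhibits $A^{*}$ as a Frobenius--Lusztig-kernel-type Hopf algebra, generated by a grouplike and skew-primitives,'' but the dual sequence $A^{*}\hookrightarrow H^{*}\twoheadrightarrow\Bbbk G$ does not by itself give this. The clean argument is that the surjection $\Oc_q(\mathrm{SL}_2)\twoheadrightarrow H$ carries the central Frobenius subalgebra onto $B$, hence induces a surjection from the \emph{restricted} quantum function algebra onto $A$; dualizing, $A^{*}$ embeds as a sub-Hopf algebra of the restricted quantized enveloping algebra, which is pointed, and sub-Hopf algebras of pointed Hopf algebras are pointed. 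Nonsemisimplicity of $A$ then follows because $\Bbbk^G$ is semisimple while $H$ is not. Modulo this wrinkle and the bookkeeping on the order of $q$ in the Frobenius construction (where the parity of $N$ matters), your outline matches the intended proof.
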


We have the following useful results from \cite{GV}. If $H$ is a Hopf algebra,
${\mathcal
Z}(H)$ denotes its centre.

\begin{lema}\label{truco util}\cite[Lemma 4.2]{GV}
Let $\pi:H\rightarrow K$ be a morphism of
 Hopf
algebras such that $\pi(g)=1$ for some $g\in\GH,\,g\neq1$. Suppose
that $H$ is generated by a simple
subcoalgebra of dimension $4$ stable by $L_g$. Then $\pi(H)\subseteq
\Bbbk G(K)$. \smallbreak The same holds   with $R_g$ instead of
$L_g$; or with $\adl(g)$ or $\ad_r(g)$ if $g\notin{\mathcal
Z}(H)$.\qed
\end{lema}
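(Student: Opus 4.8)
The plan is to turn left multiplication by $g$ into a coalgebra automorphism of the simple subcoalgebra $D\cong\coM$, diagonalize it via \c Stefan's theorem, and then use $\pi(g)=1$ to annihilate the off-diagonal part of $\pi(D)$. First I would observe that, since $g\in\GH$ and $gD=D$, the map $\Phi:=L_g|_D$ is a coalgebra automorphism of $D$: for $d\in D$ one has $\com(gd)=(g\ot g)\com(d)=(\Phi\ot\Phi)\com(d)$ and $\e(gd)=\e(d)$. As $g$ has finite order in the finite dimensional Hopf algebra $H$, so does $\Phi$; write $m=\ord(\Phi)$. Theorem \ref{thm:stefan}(ii) then supplies a multiplicative matrix $\be=\{e_{ij}\}$ of $D$ and a root of unity $\omega$ of order $m$ with $ge_{ij}=\Phi(e_{ij})=\omega^{i-j}e_{ij}$. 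Assuming for the moment that $\omega\neq1$, applying $\pi$ and using $\pi(g)=1$ gives $\pi(e_{12})=\pi(ge_{12})=\omega^{-1}\pi(e_{12})$ and similarly $\pi(e_{21})=\omega\,\pi(e_{21})$, whence $\pi(e_{12})=\pi(e_{21})=0$. Since $\com(e_{11})=e_{11}\ot e_{11}+e_{12}\ot e_{21}$, applying $\pi\ot\pi$ shows $\pi(e_{11})$ is grouplike (its counit is $1$, so it is nonzero), and likewise $\pi(e_{22})$; thus $\pi(D)\subseteq\Bbbk G(K)$. As $D$ generates $H$ as an algebra, $\pi(D)$ generates $\pi(H)$, and because $\Bbbk G(K)$ is a subalgebra we conclude $\pi(H)\subseteq\Bbbk G(K)$.

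The step I expect to be the crux is ruling out the degenerate case $m=1$, in which $\Phi=\id$ and the eigenvalue argument is vacuous; here I must derive $g=1$, against the hypothesis $g\neq1$. The tool is the antipode identity $\sum_j e_{1j}S(e_{j1})=\e(e_{11})\,1=1$, valid for any multiplicative matrix of $D$. If $ge_{1j}=e_{1j}$ for $j=1,2$ (which is exactly what $\Phi=\id$ provides), then
$$g=g\cdot1=\sum_j (ge_{1j})\,S(e_{j1})=\sum_j e_{1j}S(e_{j1})=1,$$
a contradiction; hence $m\geq2$ and $\omega\neq1$, which is precisely what the previous paragraph needed.

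Finally, the three variants run in parallel. Right multiplication $R_g|_D$ is again a coalgebra automorphism (now $\com(dg)=(R_g\ot R_g)\com(d)$), and its degenerate case is excluded by the companion identity $\sum_j S(e_{1j})e_{j1}=1$. For $\adl(g)$ and $\ad_r(g)$, conjugation by the grouplike $g$ is a coalgebra automorphism of $D$; if it were trivial on $D$ then $g$ would commute with a set of algebra generators of $H$, forcing $g\in\Zc(H)$, which is excluded by hypothesis. In every case \c Stefan's normal form $\omega^{i-j}e_{ij}$ together with $\pi(g)=1$ kills $\pi(e_{12})$ and $\pi(e_{21})$ and leaves $\pi(D)$ spanned by grouplikes, giving $\pi(H)\subseteq\Bbbk G(K)$ as before.
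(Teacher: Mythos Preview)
The paper does not give its own proof of this lemma; it is quoted verbatim from \cite[Lemma 4.2]{GV} and closed with a \qed. Your argument is correct and follows the natural line: diagonalize the coalgebra automorphism $L_g|_D$ via Theorem~\ref{thm:stefan}(ii), use $\pi(g)=1$ to force $\pi(e_{12})=\pi(e_{21})=0$, and then exclude the degenerate case $L_g|_D=\id$ by the antipode identity $\sum_j e_{1j}S(e_{j1})=1$, which would yield $g=1$. The treatment of the $R_g$ and adjoint variants is also correct. There is nothing further to compare, since the paper itself provides no argument here.
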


\begin{lema}\label{truco util bis}\cite[Lemma 4.3]{GV}
Let $\pi:H\rightarrow K$ be an epimorphism of
Hopf algebras and assume that $K$ is nonsemisimple. Suppose that
$H$ is generated by a
simple subcoalgebra of $H$ of dimension $4$ stable by $S_H^2$.
Then $\ord S_H^2 = \ord S^2_K$.\qed
\end{lema}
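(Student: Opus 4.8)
The plan is to reduce the computation of both orders to the action of the squared antipode on the generating coalgebra, where \c{S}tefan's normal form (Theorem~\ref{thm:stefan}) determines everything explicitly. First, since $\pi$ is a morphism of Hopf algebras it commutes with the antipodes, so $\pi S_H^2 = S_K^2\pi$. Next I would exploit that $S_H^2$ is an \emph{algebra} automorphism: because $D$ generates $H$ as an algebra and is $S_H^2$-stable, a power $(S_H^2)^d$ is the identity on $H$ if and only if it is the identity on $D$, whence $n:=\ord S_H^2=\ord(S_H^2|_D)$. As $D\cong\coM$, Theorem~\ref{thm:stefan}(ii) then supplies a multiplicative matrix $\be=\{e_{ij}\}$ of $D$ and a root of unity $\omega$ of order $n$ with $S_H^2(e_{ij})=\omega^{i-j}e_{ij}$.

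Applying $\pi$ and using $\pi S_H^2=S_K^2\pi$ gives $S_K^2(\pi(e_{ij}))=\omega^{i-j}\pi(e_{ij})$. Since $\pi$ is surjective and $D$ generates $H$, the subspace $\pi(D)=\mathrm{span}\{\pi(e_{ij})\}$ generates $K$ as an algebra and is $S_K^2$-stable, so the same reduction as above yields $\ord S_K^2=\ord(S_K^2|_{\pi(D)})$. Now $S_K^2|_{\pi(D)}$ is diagonalized by the spanning family $\{\pi(e_{ij})\}$, with eigenvalues among $\{1,\omega,\omega^{-1}\}$ because $i-j\in\{-1,0,1\}$; hence its order equals $n$ as soon as one of $\pi(e_{12})$, $\pi(e_{21})$ is nonzero (the only eigenvalues of order $>1$ that can occur are $\omega^{\pm1}$, both of order $n$). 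Note that the divisibility $\ord S_K^2\mid\ord S_H^2$ is in any case formal, from $\pi S_H^{2n}=S_K^{2n}\pi$ and surjectivity of $\pi$; the content lies in the reverse inequality, which the eigenvalue count delivers.

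The crux is therefore to rule out the degenerate case $\pi(e_{12})=\pi(e_{21})=0$. In that case $S_K^2$ fixes the spanning set $\{\pi(e_{11}),\pi(e_{22})\}$ of $\pi(D)$, so $S_K^2|_{\pi(D)}=\id$ and hence $S_K^2=\id_K$; by Remark~\ref{rm: LR2} this forces $K$ to be semisimple, contradicting the hypothesis. Consequently some off-diagonal image is nonzero, the eigenvalue $\omega^{\pm1}$ of order $n$ occurs in $S_K^2|_{\pi(D)}$, and therefore $\ord S_K^2=n=\ord S_H^2$. The single genuine obstacle is this dichotomy, and it is exactly the point at which nonsemisimplicity of $K$ is indispensable: without it one could have $S_K^2=\id$ on the quotient and the asserted equality of orders would fail.
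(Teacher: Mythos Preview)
The paper does not actually supply a proof of this lemma: it is quoted verbatim from \cite[Lemma~4.3]{GV} and closed with a \qed, so there is nothing in the present paper to compare your argument against. That said, your proof is correct and is essentially the expected one. The reduction $\ord S_H^2=\ord(S_H^2|_D)$ via the algebra-automorphism property of $S_H^2$, the invocation of \c{S}tefan's normal form (Theorem~\ref{thm:stefan}(ii)) to diagonalize $S_H^2|_D$, and the transfer to $\pi(D)$ via $\pi S_H^2=S_K^2\pi$ are exactly the right ingredients. Your handling of the only delicate point---ruling out $\pi(e_{12})=\pi(e_{21})=0$ by observing that it would force $S_K^2=\id$ and hence $K$ semisimple via Remark~\ref{rm: LR2}---is clean and correct. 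One minor remark: the phrase ``diagonalized by the spanning family $\{\pi(e_{ij})\}$'' is slightly loose since this family may be linearly dependent, but what you actually use is only that any nonzero $\pi(e_{ij})$ is a genuine eigenvector of $S_K^2$ with eigenvalue $\omega^{i-j}$, and that suffices.
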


%\tcb{Dear Margaret: I think the following theorem is not used in the paper. Should we keep it?}
%
%\begin{theorem}\label{2n tiene dual punteado}\cite[Thm. 4.5]{GV}
%Let $H$ be a nonsemisimple Hopf algebra such that $\dim H$ is odd
%or equal to $p^aq^b$, with $p,q$ primes. Suppose that $H$ is
%generated by a simple subcoalgebra of dimension $4$ which is
%stable by the antipode. If
%$
%H_0=\Bbbk \GH \mas\coM$ or $\GH\cap{\mathcal Z}(H)=1$,
%then $H^\ast$ is pointed.\qed
%\end{theorem}

\begin{obs}
$(i)$ If $H$ is generated as an algebra  by $C\oplus D$ with $S(C) = D$,
then $C$ also generates $H$ as an algebra, since the
sub-bialgebra generated by $C$ is finite dimensional and thus a
sub-Hopf algebra.

\par $(ii)$ Suppose that
$H$ is generated by a
simple subcoalgebra $C$ of dimension $4$ stable by $S_H^2$.
If $H_{4}$ is a sub-Hopf algebra of $H^{*}$, then
$S_{H}^{4} = \id$. For the inclusion $H_{4} \hookrightarrow H^{*}$
induces a Hopf algebra surjection $H\twoheadrightarrow H_{4}$ and by
Lemma \ref{truco util bis} the claim follows.
\end{obs}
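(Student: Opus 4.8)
The plan is to dispatch the two assertions separately, extracting each from finite dimensionality together with the tools assembled above.

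For part (i), let $B$ be the subalgebra of $H$ generated by $C$. Since $C$ is a subcoalgebra and the product of two subcoalgebras is again a subcoalgebra, $B$ is a sub-bialgebra of $H$, and it is finite dimensional because $H$ is. At this point I would invoke the standard fact that a finite-dimensional sub-bialgebra of a Hopf algebra is automatically stable under the antipode, hence a sub-Hopf algebra; this is exactly where finite dimensionality is used. Consequently $D = S(C) \subseteq S(B) = B$, so $B$ contains both $C$ and $D$, and therefore the whole subalgebra generated by $C \oplus D$, which is $H$ by hypothesis. Hence $B = H$, i.e. $C$ already generates $H$ as an algebra.

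For part (ii), the idea is to convert the hypothesis on $H^{*}$ into a quotient of $H$ and then transport the order of $S^2$ through Lemma~\ref{truco util bis}. Dualizing the inclusion $H_{4} \hookrightarrow H^{*}$ gives a Hopf algebra map $H \cong H^{\ast\ast} \twoheadrightarrow H_{4}^{\ast}$, which is surjective because the inclusion is injective and all the algebras in sight are finite dimensional; since $H_{4}$ is self-dual as a Taft algebra, this is an epimorphism $\pi \colon H \twoheadrightarrow H_{4}$. Now $H_{4}$ is nonsemisimple and, by hypothesis, $H$ is generated by a simple subcoalgebra of dimension $4$ stable under $S_{H}^{2}$, so Lemma~\ref{truco util bis} applies and yields $\ord S_{H}^{2} = \ord S_{H_{4}}^{2}$. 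It then remains to compute $\ord S_{H_{4}}^{2}$: with $g$ grouplike and $x$ the $(1,g)$-primitive generating Sweedler's algebra, the antipode axiom forces $S(g) = g$ and $S(x) = -gx$, and a short computation using $gx = -xg$ gives $S^{2}(g) = g$, $S^{2}(x) = -x$, so that $S_{H_{4}}^{2} \neq \id$ while $S_{H_{4}}^{4} = \id$, that is $\ord S_{H_{4}}^{2} = 2$. Combining, $\ord S_{H}^{2} = 2$, which is precisely the assertion $S_{H}^{4} = \id$.

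The only load-bearing input is the fact quoted in part (i), that a finite-dimensional sub-bialgebra of a Hopf algebra is a sub-Hopf algebra; granting this, part (i) is immediate. The genuinely clever step in part (ii) is the dualization that replaces a statement about $H^{*}$ by an epimorphism out of $H$, after which Lemma~\ref{truco util bis} does the work and the order computation for $H_{4}$ is routine. I therefore expect no serious obstacle beyond correctly setting up the dual map and recalling the antipode of the Sweedler algebra.
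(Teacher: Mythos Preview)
Your proof is correct and follows essentially the same route as the paper's own justification: for (i) you invoke, as does the paper, that the sub-bialgebra generated by $C$ is a sub-Hopf algebra by finite dimensionality and hence contains $S(C)=D$; for (ii) you dualize the inclusion $H_4 \hookrightarrow H^\ast$ to an epimorphism $H \twoheadrightarrow H_4$ and apply Lemma~\ref{truco util bis}, exactly as the paper does. The only addition is that you spell out the explicit computation $\ord S_{H_4}^2 = 2$, which the paper leaves implicit.
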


We end this section with the following proposition.

\begin{prop}\label{prop:R-skew}\label{pr: HNg 1.3}
Let $ \pi: H \twoheadrightarrow A $ be a Hopf algebra
epimorphism and assume $ \dim H = 2\dim A $. Then
$H^{\co \pi}= \Bbbk\{1,x\}$ with $x$ a $(1,g)$-primitive
element with $\pi(x)=0$, $g\in G(H)$ and  $\ord g = 2n$ with $n\geq 1$.
If $x$ is trivial,  i.e.,  $x \in \Bbbk G(H)$,
 then $H$ fits into an exact sequence of Hopf algebras
$\Bbbk C_{2} \hookrightarrow H \twoheadrightarrow A$.   Otherwise
  $x^{2}=0$,   $xg = -gx$ and $x,g$ generate a pointed sub-Hopf algebra of dimension
$4n$; in particular, $4| \dim H$.
\end{prop}

\begin{proof} The statement follows from the proof of
\cite[Prop. 1.3]{hilgemann-ng}. We reproduce
  part of the proof. Let $ R=H^{\co \pi} $;
it is known that $R$ is a left coideal subalgebra and
$\dim R=2$. Let $x \in R\smallsetminus \{0\}$ such
that $\varepsilon (x) = 0 $. Then $R = \Bbbk\{1,x\}$ and
$\com(x) = a\ot 1 + b\ot x$ for some $a,b\in H$.
Since $x \in R$, it follows that $x = a$ and $\pi(x)=0$.    The
coassociativity of $ \com $ implies that $b$ is grouplike. Denote $g=b$,
then $x\in P_{1,g}$ and $x$ is a skew-primitive element.
If $x=\alpha(1-g)$ for some $\alpha \in \Bbbk^*$, then $g^{2}=1$ and
$R\simeq \Bbbk C_{2}$ implying that $H$ fits into an exact sequence
of Hopf algebras $\Bbbk C_{2} \hookrightarrow H \twoheadrightarrow A$.

 Assume $x$ is non-trivial and let $m=\ord g$.
As $R$ is a subalgebra
stable by the adjoint action of $g$, it follows that $gxg^{-1} = \beta x$ with
$\beta^{m}=1$ and
$\com (x^{2}) = x^{2}\ot 1 + (1+\beta)xg\ot x + g^{2}\ot x^{2}$.
Let $x^2 = \alpha 1 + \gamma x$. Then  applying $\varepsilon$, we see that $\alpha = 0$
 and $x^2 = \gamma x$ so that
$\com(x^2) = \gamma x \otimes 1 + \gamma g \otimes x$. Thus $\beta = -1$ and
 $\gamma g \otimes x =   g^2 \otimes \gamma x $ so that $\gamma =0$.
 Since $\beta = -1$,    $2|m$ and
the subalgebra generated by $g$ and $x$ is a pointed sub-Hopf algebra
of $H$ of dimension $2m = 4n$ for some $n\geq 1$.
\end{proof}

%Let $ \pi: H \twoheadrightarrow A $ be a Hopf algebra
%epimorphism and assume $ \dim H = 2\dim A $. Then
%$H^{\co \pi}= \Bbbk\{1,x\}$ with $x$ a $(1,g)$-primitive
%element with $g\in G(H)$ and $g^{2}=1$. Moreover,
%if
%$x$ is nontrivial, then $H$ contains a sub-Hopf algebra isomorphic
%to $ H_{4} $. In particular, $4| \dim H$.

%
%\begin{proof} The statement follows from the proof of
%\cite[Prop. 1.3]{hilgemann-ng}. We reproduce
%  part of the proof. Let $ R=H^{\co \pi} $;
%it is known that $R$ is a left coideal subalgebra and
%$\dim R=2$. Let $x \in R\smallsetminus \{0\}$ such
%that $\varepsilon (x) = 0 $. Then $R = \Bbbk\{1,x\}$ and
%$\com(x) = a\ot 1 + b\ot x$ for some $a,b\in H$.
%Since $x \in R$, it follows that $x = a$ and by the
%coassociativity of $ \com $ we have that $b$ is grouplike.
%Thus $x\in P_{1,b}$ and $x$ is a skew-primitive element.
%\end{proof}

\subsection{Hopf algebras of dimension $4p$}\label{sec: 4p general}
This section contains a brief overview of what is known for
dimension $4p$. Knowledge of the classification in this dimension is
of course necessary to understand dimension $8p$ which we study in
the last section of this note. Recall that for dimension $12$ the
classification is due to
\cite{andrunatale}, \cite{fukuda}, \cite{natale}.

\par  Up to isomorphism, the semisimple Hopf algebras of dimension
$4p$ consist of group algebras and their duals, and also of two
  self-dual Hopf algebras, constructed by Gelaki in
\cite{G}, which we will denote by $\mathcal{G}_{1}$ and
$\mathcal{G}_{2}$. Both have group of grouplikes of order $4$ with
$G(\mathcal{G}_{1}) \cong C_4$ and $G(\mathcal{G}_{2}) \cong C_2
\times C_2$.

\subsubsection{Nonsemisimple pointed Hopf algebras of dimension
$ 4p $.}\label{sect: 4p} All pointed Hopf algebras of dimension $4p$
have group of grouplikes isomorphic to $C_{2p}$ and are described in
 \cite[A.1]{andrunatale}.

  \par In particular, let $\matha$ be a pointed Hopf algebra of dimension $ 4p $.
Then, with $g$ denoting a generator of $C_{2p}$, and $\xi$ a primitive $p$th root of unity,
   $\matha$ is isomorphic to exactly one of the following.

   $$
\begin{array}{ll}
\matha(-1,0):=&\Bbbk\langle g,x  \mid
g^{2p}-1=x^2 =gx+xg=  0\rangle,
\\ \noalign{\smallskip}
&\com(g)=g\ot g,\quad\com(x)=x\ot 1  + g\ot x.
\end{array}
$$

 $$
\begin{array}{ll}
\matha(-1,0)^\ast:=&\Bbbk\langle g,x  \mid
g^{2p}-1=x^2 =gx+ \xi xg=  0\rangle,
\\ \noalign{\smallskip}
&\com(g)=g\ot g,\quad\com(x)=x\ot 1+g^p\ot x .
\end{array}
$$

 $$
\begin{array}{ll}
\matha(-1,1) :=&\Bbbk\langle g,x  \mid
g^{2p}-1= x^2 - g^2 + 1 =gx+   xg=  0\rangle,
\\ \noalign{\smallskip}
&\com(g)=g\ot g,\quad\com(x)=x\ot 1+g \ot x .
\end{array}
$$

$$
\begin{array}{ll}
H_{4} \otimes \Bbbk C_p:=&\Bbbk\langle g,x  \mid
g^{2p}-1= x^2   =gx+   xg=  0\rangle,
\\ \noalign{\smallskip}
&\com(g)=g\ot g,\quad\com(x)=x\ot 1+g^p \ot x .
\end{array}
$$

Note that $H_4 \otimes \Bbbk C_p$ is self-dual.  The Hopf algebra $\matha(-1,1)$ is a nontrivial lifting
of $\matha(-1,0)$ and has nonpointed dual.  The nonpointed Hopf algebra $\matha(-1,1)^\ast$ contains a copy of the Sweedler Hopf algebra and
as a coalgebra $ \matha(-1,1)^\ast \cong H_4 \otimes \mathcal{M}^\ast(2, \Bbbk)^{p-1}$.
The Hopf algebras
$\matha(-1,0)$ and $\matha(-1,1)$ do not have sub-Hopf algebras isomorphic to $H_4$ but
$\matha(-1,0)^\ast$ and $H_4 \otimes \Bbbk C_p$ do.
  In all four cases, $S^4 = \id$. In Section \ref{sec:8p} we
will use this notation for these pointed Hopf algebras. \vspace{1mm}

\subsubsection{Nonsemisimple nonpointed Hopf algebras of dimension $ 4p$}
These Hopf algebras  were studied
  in \cite{ChNg} with the classification completed  for $p=3, 5, 7,
  11$. The main theorems of \cite{ChNg} are:

  \begin{theorem} \label{th: ChNg I} \cite[Theorem I]{ChNg} For $H$ a nonsemisimple Hopf
  algebra of dimension $4p$, then $H$ is pointed if and only if
  $|G(H)|>2$.
  \end{theorem}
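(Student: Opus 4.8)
The plan is to prove the two implications separately, reducing the converse to a case analysis on $|G(H)|$ and settling the cases in increasing order of difficulty. Throughout write $G=G(H)$. The forward implication is immediate from the structure theory recalled in Section \ref{sect: 4p}: if $H$ is pointed (hence pointed nonsemisimple), then its group of grouplikes is isomorphic to $C_{2p}$, so $|G|=2p\geq 6>2$. For the converse, assume $|G|>2$. By the Nichols--Zoeller theorem $|G|$ divides $4p$, and the divisors of $4p$ exceeding $2$ are $4,\ p,\ 2p,\ 4p$. If $|G|=4p$ then $H=\Bbbk G$ is semisimple, contrary to hypothesis. So it remains to show that $|G|\in\{4,p\}$ is impossible and that $|G|=2p$ forces $H$ pointed.

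The case $|G|=2p$ is the clean one. Then $\Bbbk G$ is a sub-Hopf algebra of dimension $2p$, and by Lemma \ref{lema:andrunatale} we have $2p\mid \dim H_0$, while $\dim H_0\leq \dim H=4p$. Hence $\dim H_0\in\{2p,4p\}$. If $\dim H_0=4p$ then $H_0=H$, so $H$ is cosemisimple and therefore semisimple by Remark \ref{rm: LR2}, a contradiction. Thus $\dim H_0=2p=\dim\Bbbk G$, forcing $H_0=\Bbbk G$, i.e. $H$ is pointed, as desired.

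The case $|G|=p$ I would rule out by counting. Since $\gcd(p,4)=1$, Lemma \ref{lema: dim H mn rel pr} shows $H$ has no nontrivial skew-primitives; moreover $H$ is non-cosemisimple and cannot be pointed (a pointed Hopf algebra of dimension $4p$ has $|G|=2p$ by Section \ref{sect: 4p}), so in the decomposition $H_0\simeq\Bbbk G\oplus\bigoplus_i\mathcal M^\ast(n_i,\Bbbk)$ of Proposition \ref{prop:biti-dasca}(ii) there is at least one block ($t\geq 1$). By Lemma \ref{lema:andrunatale}, $p$ divides $\dim H_{0,d}$ for every $d$, and $\dim H_{0,d}$ is also a multiple of $d^2$. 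A $d=2$ block would then have dimension divisible by $\operatorname{lcm}(4,p)=4p$, impossible inside $\dim H_0<4p$; any $d\geq 3$ block with $p\nmid d$ has dimension divisible by $d^2p>3p$, and a block with $p\mid d$ forces $d^2\geq p^2>4p$ for $p\geq 5$; the sole surviving possibility, a single $9$-dimensional block when $p=3$, yields $\dim H_0=12=4p$ and hence cosemisimplicity. Every branch contradicts the hypotheses, so $|G|\neq p$.

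The case $|G|=4$ is the crux, and is where I expect the real work to lie. Here too $H$ has no nontrivial skew-primitives (as $\gcd(4,p)=1$) and is not pointed, so $t\geq 1$ and Proposition \ref{prop:biti-dasca}(ii) gives $\dim H\geq \dim H_0+(2n_1+1)\cdot 4+n_1^2\geq 8+24=32$ when $n_1=2$ (and more otherwise); this already disposes of $p\in\{3,5,7\}$, but for larger $p$ the value $4p$ leaves room for many $2\times2$ blocks and the crude bound no longer closes. To treat $p\geq 11$ uniformly I would aim to force the situation of Proposition \ref{prop:natale-stefan}: if $H$ is generated by a simple subcoalgebra of dimension $4$ stable under the antipode, then $H$ sits in a central exact sequence $\Bbbk^{\Gamma}\hookrightarrow H\twoheadrightarrow A$ with $A^\ast$ pointed nonsemisimple, whence $4\mid\dim A=4p/|\Gamma|$ and so $|\Gamma|\in\{1,p\}$. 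If $|\Gamma|=1$ then $H^\ast=A^\ast$ is one of the four pointed Hopf algebras of Section \ref{sect: 4p}, whose duals have $|G|\in\{2,2p\}$, never $4$; if $|\Gamma|=p$ then $\Gamma\cong C_p$ contributes $p$ central grouplikes to $H$, giving $|G|\geq p>4$. Either way $|G|=4$ is excluded. The main obstacle is precisely the reduction to this hypothesis: showing that when $|G|=4$ a dimension-$4$ simple subcoalgebra stable under the antipode exists and generates $H$. Failing a self-contained argument for this, the alternative is to close the count directly by feeding the nondegenerate components supplied by Proposition \ref{prop:biti-dasca}(i) into the chain lemma \ref{lema:fukuda}(i), producing a branching family of nondegenerate isotypic components across the coradical filtration, each of dimension divisible by $|G|=4$, whose total equals $4p-\dim H_0$; making this bookkeeping inconsistent uniformly in $p$ is the delicate point.
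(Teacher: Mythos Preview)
The paper does not prove this theorem; it merely records it as \cite[Theorem I]{ChNg} and uses it as a black box. So there is no ``paper's own proof'' to compare against, and the relevant question is whether your sketch stands on its own.

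Your treatment of the cases $|G|\in\{p,2p,4p\}$ is correct and in fact reproduces arguments already in the paper: the case $|G|=p$ is exactly Proposition~\ref{prop:dim4p-groupp}, and the case $|G|=2p$ is the standard Nichols--Zoeller/coradical count. The forward implication is also fine.

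The genuine gap is the one you yourself flag: the case $|G|=4$. Your conditional analysis via Proposition~\ref{prop:natale-stefan} is sound---if $H$ were generated by an $S$-stable simple $4$-dimensional subcoalgebra, the central exact sequence forces $|\Gamma|\in\{1,p\}$ and both branches contradict $|G|=4$, just as you say (this is essentially Proposition~\ref{dim4p-coalg-stable}). But the reduction to that hypothesis is not available from the tools in this paper. There is no guarantee that an $S$-stable $4$-dimensional simple subcoalgebra exists (the antipode could swap them in pairs), and even when one does, the sub-Hopf algebra it generates need not be all of $H$. Your fallback plan of pushing the Fukuda-style chain lemma to a contradiction uniformly in $p$ does not close either: with $|G|=4$ and many $2\times 2$ blocks the bookkeeping from Lemma~\ref{lema:fukuda} and Proposition~\ref{prop:biti-dasca} simply does not overrun $4p$ once $p$ is large. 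The actual proof in \cite{ChNg} goes through a different mechanism---realizing $H$ as a biproduct over $H_4$ and analyzing the odd-dimensional braided Hopf algebra in $^{H_4}_{H_4}\mathcal{YD}$ (trace arguments on $S^2$, structure of $H_4$-module algebras)---and that machinery is not reproduced here. So as written the proposal is a correct outline for three of the four cases and an honest admission of incompleteness for the fourth; it is not a proof.
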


  \begin{theorem}\label{th: ChNg II}\cite[Theorem II]{ChNg} For $H$
  a nonsemisimple Hopf algebra of dimension $4p$ where $p \leq 11$
  is an odd prime, then $H$ or $H^\ast$ is pointed.
  \end{theorem}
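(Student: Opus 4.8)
The plan is to argue by contradiction: suppose $H$ is nonsemisimple of dimension $4p$ with $p\in\{3,5,7,11\}$ and that \emph{neither} $H$ nor $H^{*}$ is pointed. Since $H^{**}\cong H$, the hypotheses are symmetric in $H$ and $H^{*}$, and both are nonsemisimple of dimension $4p$; I will use this symmetry freely. Applying Theorem \ref{th: ChNg I} to $H$ and to $H^{*}$ at once gives $|G(H)|\le 2$ and $|G(H^{*})|\le 2$. The case $p=3$ is dimension $12$, already classified, so it suffices to treat $p\in\{5,7,11\}$.

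The first step is to locate a simple subcoalgebra of dimension exactly $4$. As $H$ is not pointed, $H_{0}\neq\Bbbk G(H)$ and $H$ has a simple subcoalgebra of some dimension $d^{2}\ge 4$; as $\dim H=4p\le 44$, a priori $2\le d\le 6$. To force $d=2$ I would combine the divisibility statements of Lemma \ref{lema:andrunatale} (that $|G(H)|$ divides each $\dim H_{0,d}$ and each $\dim P_{n}$), the lower bound of Proposition \ref{prop:biti-dasca}, the propagation of nondegeneracy in Lemma \ref{lema:fukuda-deg}, and the corresponding constraints on the algebra block structure of $H$ read off from the coradical of $H^{*}$. For these small primes the resulting inequalities are tight enough to exclude $d\ge 3$; when $|G(H)|=1$ the coprimality $(1,4p)=1$ and Lemma \ref{lema: dim H mn rel pr} additionally kill all nontrivial skew-primitives, which streamlines the count.

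The engine of the argument is the Natale--Stefan structure theorem. Let $C$ be a dimension-$4$ simple subcoalgebra stable under $S$, and suppose the sub-Hopf algebra $K=\langle C\rangle$ is nonsemisimple. By Nichols--Zoeller $\dim K$ divides $4p$; since $K$ contains the simple coalgebra $C$ it cannot have dimension $1,2,4$ or $p$, and Hopf algebras of dimension $2p$ are semisimple, so necessarily $K=H$. Proposition \ref{prop:natale-stefan} now places $H$ in a central exact sequence $\Bbbk^{G}\hookrightarrow H\twoheadrightarrow A$ with $A^{*}$ pointed nonsemisimple and $\dim A=4p/|G|$. A pointed nonsemisimple Hopf algebra cannot have prime dimension (Kac--Zhu), nor dimension $2p$ (it would possess a nontrivial skew-primitive, which Lemma \ref{lema: dim H mn rel pr} forbids in dimension $2p$ because $(2,p)=1$); and $\dim A=4p$ would force $|G|=1$, making $A^{*}=H^{*}$ pointed. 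Hence $\dim A=4$, so $A\cong H_{4}$ and $|G|=p$. But then the central subalgebra $\Bbbk^{G}=\Bbbk^{C_{p}}$ contributes its $p$ grouplikes (the characters of $C_{p}$) to $G(H)$, whence $|G(H)|\ge p>2$; by Theorem \ref{th: ChNg I} this makes $H$ pointed, a contradiction.

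The main obstacle is the hypothesis underlying the engine: that $H$ possesses a dimension-$4$ simple subcoalgebra $C$ which is $S$-stable and generates a \emph{nonsemisimple} sub-Hopf algebra. Two things can go wrong. First, no single dimension-$4$ simple subcoalgebra need be $S$-stable; here I would study the $S$-orbits, using the dimension symmetries of Lemma \ref{lema:fukuda-deg}(ii) to reduce to the case of a single relevant subcoalgebra, or pass to $\langle C,S(C)\rangle$ via Remark (i) following Lemma \ref{truco util bis}. Second, and more seriously, every such $C$ might generate a \emph{proper} semisimple sub-Hopf algebra $K$; by the divisibility above $K$ would be forced to have dimension $2p$ and hence be $\cong\Bbbk^{\mathbb{D}_{p}}$, sitting inside $H$ with index $2$. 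To eliminate this configuration I would exploit the symmetry between $H$ and $H^{*}$ together with Proposition \ref{pr: HNg 1.3} and Lemma \ref{lema:proj-coalg}: the index-$2$ datum yields either a copy of $H_{4}$ in $H$ or a two-dimensional coideal subalgebra producing an additional grouplike, in either case driving $|G(H)|$ or $|G(H^{*})|$ above $2$ and again contradicting Theorem \ref{th: ChNg I}. It is exactly in this residual case analysis, and in the initial reduction to dimension-$4$ subcoalgebras, that the restriction $p\le 11$ is indispensable, since only then are the dimension inequalities tight enough to leave no room for larger simple subcoalgebras or exotic coradicals.
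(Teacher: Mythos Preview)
The paper does not contain a proof of this theorem: it is stated as a citation, \cite[Theorem II]{ChNg}, and used as a black box thereafter. There is therefore nothing in this paper to compare your proposal against.

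That said, your outline is not a proof but a plan with openly acknowledged gaps. The two places where you write ``I would combine'' and ``I would exploit'' are precisely the hard steps, and neither is carried out. In particular: (a) you have not actually shown that $H$ must contain a simple $4$-dimensional subcoalgebra stable under $S$, only asserted that the counting lemmas should force this for $p\le 11$; (b) your handling of the residual case where every such $C$ generates a proper semisimple sub-Hopf algebra $K\cong\Bbbk^{\mathbb D_p}$ is speculative --- Proposition \ref{pr: HNg 1.3} applied to the index-$2$ inclusion would give a skew-primitive in $H^{*}$, but if $|G(H^{*})|=2$ a nontrivial $(1,g)$-primitive with $g^{2}=1$ yields a copy of $H_{4}$ inside $H^{*}$, and you have not explained why this forces $|G(H^{*})|>2$ rather than simply coexisting with $|G(H^{*})|=2$. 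Also, in your case analysis of $\dim A$ you omitted $\dim A=4$ with $A$ pointed nonsemisimple giving $A\cong H_{4}$ correctly, but you should check that the grouplikes of the central $\Bbbk^{C_{p}}$ are genuinely nontrivial in $H$, which requires knowing the map $\Bbbk^{G}\hookrightarrow H$ is injective on grouplikes (it is, being injective).

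For context, the actual proof in \cite{ChNg} proceeds quite differently: it realises $H$ as a Radford biproduct $R\#H_{4}$ with $R$ a braided Hopf algebra of dimension $p$ in ${}^{H_{4}}_{H_{4}}\mathcal{YD}$, and then analyses such $R$ directly for small $p$. Your Natale--\c Stefan approach, if the gaps could be closed, would be more in the spirit of Proposition \ref{dim4p-coalg-stable} of this paper, but that proposition already assumes the $S$-stable generating subcoalgebra exists, which is exactly what remains to be established.
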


 \subsubsection{Applications of counting arguments}
 We end this section by applying the preceding preliminary material
 to give some simple alternate arguments for  known facts about
 nonsemisimple nonpointed Hopf
 algebras of dimension $4p$.

\begin{prop}\label{prop:dim4p-groupp} Let $H$ be a nonsemisimple
nonpointed Hopf algebra of dimension $4p$. Then
 $|\GH|  \notin \{ p,2p\}$.
\end{prop}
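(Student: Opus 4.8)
The plan is to pit the divisibility statements of Lemma~\ref{lema:andrunatale} against a dimension count on the coradical, using Remark~\ref{rm: LR2} to convert ``cosemisimple'' into ``semisimple''. Write $G = G(H)$ and decompose $H_0 = \Bbbk G \oplus \bigoplus_{d \geq 2} H_{0,d}$, where $H_{0,d}$ is the sum of the simple subcoalgebras of $H$ of dimension $d^2$. Since $H$ is nonpointed, at least one $H_{0,d}$ with $d \geq 2$ is nonzero; since $H$ is nonsemisimple it is not cosemisimple, so $\dim H_0 < \dim H = 4p$. By Lemma~\ref{lema:andrunatale}, $|G|$ divides both $\dim H_0$ and each $\dim H_{0,d}$, while $\dim H_{0,d}$ is moreover a positive multiple of $d^2$ whenever it is nonzero.

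The case $|G| = 2p$ is immediate: a nonzero $H_{0,d}$ then satisfies $\dim H_{0,d} \geq 2p$, so that $\dim H_0 \geq |G| + \dim H_{0,d} = 4p$, contradicting $\dim H_0 < 4p$.

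For $|G| = p$ I would argue as follows. Since $p \mid \dim H_0$ and $p = |G| < \dim H_0 < 4p$, we must have $\dim H_0 \in \{2p, 3p\}$, hence $\sum_{d \geq 2} \dim H_{0,d} \in \{p, 2p\}$. Each nonzero summand is a positive multiple of $p$ not exceeding $2p$, hence equals $p$ or $2p$; and each nonzero summand is a multiple of $d^2$ for some $d \geq 2$. Thus $d^2$ divides $p$ or $d^2$ divides $2p$ for some $d \geq 2$. Since $p$ is an odd prime, neither $p$ nor $2p$ admits a perfect-square divisor larger than $1$, so no such $d$ exists, a contradiction. I expect the bookkeeping in this last case to be the only real obstacle: one must check that the constraints ``multiple of $p$'' and ``multiple of $d^2$'', together with the bound $\dim H_0 < 4p$, leave no room for a simple subcoalgebra of dimension $> 1$, and the perfect-square-divisor observation is what makes this clean rather than a lengthy split over the values of $d$.
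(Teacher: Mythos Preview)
Your proof is correct and follows essentially the same approach as the paper: both arguments combine the divisibility of $\dim H_{0,d}$ by $|G|$ from Lemma~\ref{lema:andrunatale} with the obvious divisibility by $d^2$, and compare against the bound $\dim H_0 < 4p$ coming from non-cosemisimplicity. The only cosmetic difference is that for $|G|=p$ the paper splits into the cases $p\mid d_i$ versus $(p,d_i)=1$ and in each case exhibits a lower bound $\dim H_0 \geq 4p$, whereas you bound $\sum_{d\geq 2}\dim H_{0,d}\leq 2p$ from above and observe that no integer in $\{p,2p\}$ is divisible by a square $d^2\geq 4$; these are two phrasings of the same counting.
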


\begin{proof}
Suppose that  $|\GH| = p$ and then $ \Ho = \Bbbk\GH \oplus (\oplus_{i=1}^{t}
D_i^{n_{i}} ) $
where the $D_i$ are simple subcoalgebras of dimension $d_i^2$ with
 $1 < d_1 <d_2 < \ldots <d_t$. If $p$ divides some $d_i$, then $\dim
H_0 \geq p + p^2 = p(1+p) \geq 4p$ since $p\geq 3$, a contradiction.
Thus $(p,d_i)=1$ for all $i$ and $p$ must divide $n_i$ by Lemma
\ref{lema:andrunatale}. Then $\dim H_0 \geq p + 4p >4p$, a
contradiction. An analogous proof shows that $|G(H) | \neq 2p$.
\end{proof}

\begin{prop}\label{dim4p-coalg-stable}
Suppose that $H$ is generated as an algebra
by a simple coalgebra $D$ of dimension $4$ which is stable
by the antipode. Then $H^{*}$ is pointed.
\end{prop}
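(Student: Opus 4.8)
The plan is to combine the antipode-stability hypothesis with Natale's central-extension result and then read off the representation theory of $H$ from the centrality. Note first that the conclusion forces $H$ to be nonsemisimple (in our convention no cosemisimple Hopf algebra is pointed, so neither is any cosemisimple $H^{*}$), and that $H$ is itself nonpointed because it contains the simple subcoalgebra $D$ of dimension $4$. Hence Proposition \ref{prop:natale-stefan} applies and produces a central exact sequence $\Bbbk^{G}\overset{\imath}\hookrightarrow H\overset{\pi}\twoheadrightarrow A$ with $G$ a finite group and $A^{*}$ pointed nonsemisimple. I would then replace the target statement by an equivalent algebraic one: since $(H^{*})_{0}\cong(H/\rad H)^{*}$ as coalgebras, $H^{*}$ is pointed if and only if every simple factor of the semisimple algebra $H/\rad H$ is one-dimensional, that is, if and only if $H/\rad H$ is commutative. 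The same duality applied to $A$ turns the hypothesis that $A^{*}$ is pointed into the statement that $A/\rad A$ is commutative.

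The second step is to exploit that $\Bbbk^{G}$ is central. Its orthogonal idempotents $\{e_{g}\}_{g\in G}$ are then central idempotents of $H$, so $H=\prod_{g\in G}e_{g}H$ as algebras and $H/\rad H=\prod_{g\in G}\bigl(e_{g}H/\rad(e_{g}H)\bigr)$. Because $\Bbbk^{G+}H=\bigoplus_{g\neq 1}e_{g}H$, the map $\pi$ restricts to an algebra isomorphism $e_{1}H\cong A$, so the block at $g=1$ already has commutative semisimple quotient. By the Nichols--Zoeller theorem $H$ is free over $\Bbbk^{G}$, whence all blocks share the same dimension and $\dim H=|G|\dim A$. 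In the dimension $4p$ setting this is decisive: $A^{*}$ is pointed nonsemisimple and therefore has a nontrivial skew-primitive, so $\dim A=\dim A^{*}$ is divisible by a square greater than $1$; as $\dim A$ divides $4p$ this forces $\dim A\in\{4,4p\}$. If $\dim A=4p$ then $G$ is trivial and $H^{*}=A^{*}$ is pointed, while if $\dim A=4$ then $A\cong H_{4}$ and each of the $|G|$ blocks has dimension $4$.

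The case $\dim A=4$ is where the real work lies, and I expect it to be the main obstacle. A four-dimensional algebra has commutative semisimple quotient unless it is isomorphic to $\mathcal{M}(2,\Bbbk)$, so it suffices to prove that no block $e_{g}H$ is semisimple. To this end I would invoke cleftness of the finite-dimensional central extension, writing $H\cong\Bbbk^{G}\#_{\sigma}A$ and identifying $e_{g}H$ with the cocycle deformation $(A,\cdot_{\sigma_{g}})$ of $A=H_{4}$ obtained by evaluating the $\Bbbk^{G}$-valued cocycle at $g$; one then checks directly---this is easy in dimension $4$---that a cocycle deformation of $H_{4}$ is again nonsemisimple. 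Granting this, every block is nonsemisimple of dimension $4$, hence has commutative semisimple quotient, so $H/\rad H$ is commutative and $H^{*}$ is pointed. As a cross-check, and as an alternative ending that sidesteps the deformation analysis, once every block is known to be nonsemisimple each of the $p\geq 3$ blocks carries a one-dimensional representation; these have distinct central characters and so give distinct grouplikes of $H^{*}$, whence $|G(H^{*})|\geq p>2$ and $H^{*}$ is pointed by Theorem \ref{th: ChNg I}.
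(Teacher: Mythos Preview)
Your reduction via Proposition~\ref{prop:natale-stefan} to $\dim A\in\{4,4p\}$ is fine and parallels the paper's argument (the paper lists all divisors and rules out $1,2,p,2p$ by semisimplicity of Hopf algebras in those dimensions; your square-divisibility shortcut is equally valid). The problem is your treatment of $\dim A=4$.

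The key claim there---that every cocycle deformation of $H_{4}$ is a nonsemisimple algebra---is false. Concretely, take $B=\Bbbk\langle G,X\mid G^{2}=1,\ X^{2}=1,\ GX=-XG\rangle$; over an algebraically closed field this is $\mathcal{M}(2,\Bbbk)$ (send $G\mapsto\mathrm{diag}(1,-1)$, $X\mapsto\bigl(\begin{smallmatrix}0&1\\1&0\end{smallmatrix}\bigr)$). The map $\rho(G)=G\otimes g$, $\rho(X)=X\otimes 1+G\otimes x$ makes $B$ a cleft $H_{4}$-Galois object over $\Bbbk$, hence $B\cong (H_{4})_{\sigma}$ for a convolution-invertible $2$-cocycle $\sigma$. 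So one-sided cocycle twists of $H_{4}$ can very well be $\mathcal{M}(2,\Bbbk)$, and your ``easy in dimension $4$'' verification cannot go through. Since your alternative ending via Theorem~\ref{th: ChNg I} also presupposes that every block is nonsemisimple, it does not rescue the argument.

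The paper disposes of $\dim A=4$ without any block analysis: if $\dim A=4$ then $|G|=p$, so $\Bbbk^{G}\cong\Bbbk C_{p}$ sits inside $H$, whence $C_{p}\subseteq G(H)$ and $|G(H)|\in\{p,2p\}$. Proposition~\ref{prop:dim4p-groupp} says this cannot happen for a nonsemisimple nonpointed $H$ of dimension $4p$, so the case is empty and $A=H$.
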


\pf
By Proposition \ref{prop:natale-stefan}, $H$ fits into a central
exact sequence of Hopf algebras
\begin{displaymath} \Bbbk^{G}
\hookrightarrow H \twoheadrightarrow A,\end{displaymath}
with $A$ nonsemisimple and $A^{*}$  pointed.
Since $\dim A$ divides $4p$, then $\dim A \in \{ 1,2,4,p,2p,4p  \}$.
We will show that
  $\dim A = 4p$ so that $H \cong A$ and thus
$H^\ast$ is pointed.
\par Since $H$ is nonsemisimple, $\dim A >1$.
If $\dim A = 2, p$ or $2p$, then
$A$ is semisimple by the classification of Hopf algebras of
these dimensions, see \cite{stefan}, \cite{W}, \cite{Z}, \cite{Ng3}.
Since $\Bbbk^{G}$ is  semisimple, this would imply that
$H$ is also semisimple, a contradiction.

\par If $\dim A = 4$, then $\dim \Bbbk^{G} = p$
and this implies
that $ \Bbbk^{G} \simeq \Bbbk C_{p}$.
Thus $C_{p} \subseteq \GH$
so that $|\GH| = p$ or $2p$, a contradiction by
  Proposition \ref{prop:dim4p-groupp}.
Thus $A = H$ and   $H^\ast $ is pointed.
  \epf

%%%%%%%%%% DIMENSION PQR %%%%%%%%%%%%%%%%
\section{Hopf algebras of dimension $rpq$} In this section, $H$
will be a Hopf algebra of dimension $rpq$, with $r<p<q$   primes.
 Recently, Etingof, Nikshych and Ostrik \cite{eno-08},
finished the classification of the semisimple Hopf algebras of
dimension $rpq$ and $rp^{2}$. Specifically, they prove that all
semisimple Hopf algebras of these dimensions can be obtained as
abelian extensions (Kac Algebras). Then, the classification follows by a
result of Natale \cite{pqq}.
 \textbf{Thus, we
will assume that $H$ is nonsemisimple.} One purpose of this section
is to apply counting arguments in the style of D. Fukuda as we did
in \cite{bg}.

\begin{remark}\label{rm: rpq not gen by C}
Recall that by Lemma \ref{lema: dim H mn rel pr} a nonsemisimple
Hopf algebra $H$ of dimension $rpq$ is   nonpointed, has no pointed
sub-Hopf algebras and has no pointed
  quotient Hopf algebras. In particular,  $H$ cannot be generated by  a simple
$4$-dimensional subcoalgebra $C$ stable under the antipode. For then
by Proposition \ref{prop:natale-stefan}, $H \cong \Bbbk^G$, which is
semisimple, a contradiction.  \qed
\end{remark}

\par Also $H$ cannot have the Chevalley property.
  The proof is based
on the proof of \cite[Lemma A.2]{andrunatale}.

\begin{prop}\label{prop:no-chev-rpq}
No nonsemisimple Hopf algebra $H$  of dimension $rpq$ has the
Chevalley property.
\end{prop}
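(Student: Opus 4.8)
The plan is to argue by contradiction, so suppose $H$ has the Chevalley property. By the remark preceding the proposition this means the coradical $H_0$ is a sub-Hopf algebra of $H$. Since the coradical is always cosemisimple, Remark \ref{rm: LR2} shows $H_0$ is semisimple, and by the Nichols--Zoeller theorem \cite{Mo} $\dim H_0$ divides $rpq$. As $H$ is nonsemisimple we have $H\neq H_0$, so $\dim H_0$ is a proper divisor; as $H$ is not connected, $\dim H_0>1$; and $\dim H_0$ cannot be prime, for by Kac--Zhu \cite{Z} a Hopf algebra of prime dimension is a group algebra, whence $H_0$ would be pointed and $H$ itself pointed, contrary to Remark \ref{rm: rpq not gen by C}. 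Therefore $\dim H_0=st$ is a product of two of the three primes, say $\{s,t\}\subset\{r,p,q\}$ with $s<t$, and the third prime $\ell$ satisfies $rpq=st\,\ell$.

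Next I would pin down $H_0$ exactly. A semisimple Hopf algebra whose dimension is a product of two distinct primes is trivial, i.e. a group algebra or the dual of one, by the classification of semisimple Hopf algebras of dimension $pq$ \cite{EG,GW,ma-2p,pqq2,So}. A group algebra is pointed, and the dual of an abelian group algebra is again a group algebra, so in both of these cases $H_0=\Bbbk G(H)$ would be pointed, a contradiction. Hence $H_0\cong\Bbbk^{\Gamma}$ for a \emph{nonabelian} group $\Gamma$ of order $st$; such a $\Gamma$ is of the form $C_t\rtimes C_s$ (so $t\equiv 1 \bmod s$), has trivial centre, satisfies $|G(H)|=|\widehat{\Gamma}|=s$, and every noncocommutative simple subcoalgebra of $H$ has dimension $s^2$.

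Now I would pass to $\gr H$. Because $H_0$ is a sub-Hopf algebra, $\gr H\cong\mathfrak{B}(V)\#H_0$ is the bosonization of a Nichols algebra $\mathfrak{B}(V)$ by $H_0$, where $V\in{}^{H_0}_{H_0}\mathcal{YD}$ is the infinitesimal braiding; since $\mathfrak{B}(V)\#H_0$ is a sub-Hopf algebra of $\gr H$, Nichols--Zoeller gives $\dim\mathfrak{B}(V)=rpq/\dim H_0=\ell$. As $H$ is nonsemisimple, $V\neq 0$. If $W\subseteq V$ is a simple summand then $\mathfrak{B}(W)\subseteq\mathfrak{B}(V)$, so $\dim\mathfrak{B}(W)$ divides the prime $\ell$ and, being $>1$, equals $\ell$, forcing $W=V$; thus I may assume $V$ is a simple Yetter--Drinfeld module, supported on a single conjugacy class $\mathcal{O}$ of $\Gamma$. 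Finally, Lemma \ref{lema: dim H mn rel pr} (via Remark \ref{rm: rpq not gen by C}) says $H$ has no nontrivial skew-primitive; on the diagram this forbids a one-dimensional Yetter--Drinfeld submodule with grouplike coaction. Every one-dimensional $H_0$-comodule has grouplike coaction and, since $Z(\Gamma)=1$, would have to be supported in degree $e$; such a degree would carry the flip braiding and make $\mathfrak{B}(V)$ infinite-dimensional. Hence $\dim V\geq 2$ and $\mathcal{O}\neq\{e\}$.

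The heart of the matter, and the step I expect to be the main obstacle, is to contradict $\dim\mathfrak{B}(V)=\ell$, a prime lying \emph{outside} $\{s,t\}$. When $\mathcal{O}\subseteq C_t$ the elements of $\mathcal{O}$ commute, so the braiding on $V$ is diagonal with entries that are roots of unity of order dividing $\exp(\Gamma)=st$; Kharchenko's PBW theorem then writes $\dim\mathfrak{B}(V)=\prod_{\alpha}N_{\alpha}$ as a product of at least $\dim V\geq 2$ factors, each $N_{\alpha}\geq 2$ and dividing $st$, so $\dim\mathfrak{B}(V)$ is composite with all prime divisors in $\{s,t\}$ and cannot equal $\ell$. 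When $\mathcal{O}$ contains non-commuting elements $x,y$ the braiding is of genuinely nonabelian type, and $(\id+c)$ already has rank $\geq 2$ on the span of $v_x\otimes v_y$ and $v_y\otimes v_x$ (their images sit in distinct bidegrees), so $\dim\mathfrak{B}(V)(2)\geq 2$; by the Poincaré duality of finite-dimensional Nichols algebras the top degree is then $\geq 3$ and $\dim\mathfrak{B}(V)\geq 2\dim V+2$. Excluding primality in this nonabelian case is the delicate point, and I would resolve it using the known structure of finite-dimensional Nichols algebras over nonabelian groups (whose dimensions are composite with prime divisors dividing $|\Gamma|$), following \cite[Lemma A.2]{andrunatale}. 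Once settled, the conclusion $\ell\in\{s,t\}$ is the desired contradiction. (If \cite[Lemma A.2]{andrunatale} is read as the statement that a nonsemisimple Hopf algebra with the Chevalley property has dimension divisible by a square, the proposition is immediate, since $rpq$ is squarefree.)
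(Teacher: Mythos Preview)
Your setup through the identification $H_0 \cong \Bbbk^\Gamma$ for a nonabelian group $\Gamma$ of order $st$ is correct and matches the paper exactly. From that point, however, the paper takes the short route you relegate to a parenthetical at the very end: it dualizes the associated graded. Writing $\gr H \cong R \# \Bbbk^\Gamma$ with $R$ the diagram, one has $(\gr H)^* \cong R^* \# \Bbbk \Gamma$; since $R$ (and hence $R^*$) is graded connected, the coradical of this bosonization is the group algebra $\Bbbk \Gamma$, so $(\gr H)^*$ is pointed. But $(\gr H)^*$ is nonsemisimple of dimension $rpq$, contradicting Remark~\ref{rm: rpq not gen by C}. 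This is precisely the argument behind \cite[Lemma A.2]{andrunatale}, and it needs no analysis of $\mathfrak{B}(V)$ at all.

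Your direct Nichols-algebra route, by contrast, has a genuine gap. A minor point first: $\gr H = R \# H_0$ with $R$ the diagram, not $\mathfrak{B}(V)$ a priori; an extra Nichols--Zoeller step (applied to the bosonizations of the subalgebra generated by $V$ and of its Nichols quotient) is needed to force $R = \mathfrak{B}(V)$ from $\dim R = \ell$ prime. The real problem is the last paragraph. In the case where the supporting conjugacy class contains non-commuting elements you invoke ``the known structure of finite-dimensional Nichols algebras over nonabelian groups (whose dimensions are composite with prime divisors dividing $|\Gamma|$)''. No such general theorem is available---the classification of finite-dimensional Nichols algebras over nonabelian groups is far from complete---and your Poincar\'e-duality bound $\dim\mathfrak{B}(V)\geq 2\dim V+2$ does not by itself exclude a prime value. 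The dualization trick bypasses this difficulty entirely; you should promote your final parenthetical to the main argument.
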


\pf Suppose that $H$ has the Chevalley property. Then $\dim \Ho
\vert \dim H$ and since $H$ is not pointed or cosemisimple, $1 <
\dim \Ho < \dim H$.   Then $\dim \Ho = st$, where $s,t \in \{r, p,q
\}$ and $s< t$.  But by \cite{EG}, \cite{So}, \cite{pqq} or if $s= 2$ by \cite{Ng3},
  all semisimple Hopf algebras of dimension
$st$ are trivial, \textit{i.e.} isomorphic to a group algebra or the
dual of a group algebra.   Hence $\Ho \simeq \Bbbk^{F}$ with $F$ a
non-abelian group of order $st$; in particular, $s\vert (t-1)$.
Consider now the coradical filtration on $H$ and the associated
graded Hopf algebra $\gr H$. Then write $\gr H \simeq R\# \Bbbk^{F}$
with $R$ the \textit{diagram} of $H$. Then $(\gr H)^{*} \simeq
R^{*}\# \Bbbk F$, which implies that $(\gr H)^{*}$ is pointed. This
cannot occur, since $\dim (\gr H)^{*} = \dim \gr H = \dim H = rpq$.
Hence, $\Ho$ cannot be a sub-Hopf algebra. \epf

We note that all Hopf algebras of dimension
$30 = 2 \cdot 3 \cdot 5$ are group algebras
 or duals of group algebras by
\cite{fukuda-30} but the classification of the other Hopf algebras
of dimension $rpq$ with $rpq <100$, namely dimensions $42,66,70$ and
$78$ is still open. We make a few observations about these cases.

\begin{obs}\label{rmk:dimrpq-properties} (i) A nonsemisimple Hopf
algebra of dimension $2pq$ cannot have a semisimple sub-Hopf algebra
$A$ of dimension $pq$.  For if this were the case, there would be a
Hopf algebra epimorphism $\pi: H^\ast \rightarrow A^\ast$ and we
apply Proposition \ref{pr: HNg 1.3}. Since $H^\ast$ has no
nontrivial skew-primitive elements, $(H^\ast)^{co\pi} = \Bbbk C_2$
and we have an exact sequence of Hopf algebras $ \Bbbk C_2
\hookrightarrow H^\ast \twoheadrightarrow A^\ast$.  Thus if $A$, and
thus $A^\ast$, were semisimple,   $H^\ast$ and $H$ would be also.

\par (ii) Suppose that $H$ is nonsemisimple of dimension $2pq$
where all Hopf algebras
of dimension $pq$ are  semisimple. Then by (i) above, $H$ has no
sub-Hopf algebras of dimension $pq$.  Suppose that $|G(H)|
>2$ and let $C$ be a simple subcoalgebra of dimension greater
than $1$. We will show that $C$ generates $H$.

Indeed, let $\mathcal{C} := \langle C \rangle $ be the sub-Hopf algebra generated by
$C$.  Then $\dim \mathcal{C} \in \{2p,2q, 2pq   \}$. If $\mathcal{C}
\neq H$, then $\mathcal{C} \cong \Bbbk^{\mathbb{D}_m}$ with $m \in \{p,q\}$.
Then $H$ is generated by $\mathcal{C}$ and $\Bbbk G(H)$, so  by
Remark \ref{rm: LR2}, $H$ is semisimple, a contradiction.
\end{obs}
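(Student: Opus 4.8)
The plan is to prove (i) by dualizing the putative inclusion and invoking Proposition \ref{pr: HNg 1.3}, and then to feed (i) into a dimension count in (ii), using the classification of Hopf algebras of dimension $2p$ and $2q$ to identify $\langle C\rangle$.

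For (i), I would argue by contradiction. Suppose $A\subseteq H$ is a semisimple sub-Hopf algebra with $\dim A=pq$. Dualizing the inclusion gives a Hopf algebra epimorphism $\pi\colon H^\ast\twoheadrightarrow A^\ast$ with $\dim H^\ast=2pq=2\dim A^\ast$, so Proposition \ref{pr: HNg 1.3} applies and $(H^\ast)^{\co\pi}=\Bbbk\{1,x\}$ with $x$ a $(1,g)$-primitive for some $g\in G(H^\ast)$, $g^2=1$. The first key step is to observe that $H^\ast$ has no nontrivial skew-primitive: since $\dim H^\ast=2pq$ is squarefree, any divisor $m$ of $2pq$ is coprime to $2pq/m$, so Lemma \ref{lema: dim H mn rel pr} applies to $H^\ast$ no matter what $|G(H^\ast)|$ turns out to be. Hence $x\in\Bbbk(1-g)$ is trivial, $(H^\ast)^{\co\pi}=\Bbbk C_2$ is a sub-Hopf algebra, and Lemma \ref{lm: dim H co dim B = dim H} yields an exact sequence $\Bbbk C_2\hookrightarrow H^\ast\twoheadrightarrow A^\ast$. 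As $\Bbbk C_2$ and $A^\ast$ are semisimple, the standard fact that an extension of semisimple Hopf algebras is semisimple forces $H^\ast$, and hence $H$, to be semisimple, contradicting the hypothesis.

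For (ii), write $\mathcal{C}:=\langle C\rangle$. By Nichols--Zoeller $\dim\mathcal{C}$ divides $2pq$, and since $\mathcal{C}$ contains $\Bbbk 1$ together with the simple subcoalgebra $C$ of dimension $>1$, one has $\dim\mathcal{C}\geq 5$ and $\mathcal{C}$ carries a simple subcoalgebra of dimension $>1$. I would eliminate the divisors of $2pq$ in turn: $1$ and $2$ are too small; if $\dim\mathcal{C}=p$ or $q$ then $\mathcal{C}$ is a group algebra by Kac--Zhu, hence pointed, contradicting $\dim C>1$; and $\dim\mathcal{C}=pq$ is excluded by part (i), because by hypothesis every Hopf algebra of dimension $pq$ is semisimple. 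This leaves $\dim\mathcal{C}\in\{2p,2q,2pq\}$. If $\dim\mathcal{C}=2pq$ then $\mathcal{C}=H$ and we are done, so suppose $\dim\mathcal{C}=2m$ with $m\in\{p,q\}$. A Hopf algebra of dimension $2m$ is semisimple and isomorphic to a group algebra or its dual; of these the only one carrying a simple subcoalgebra of dimension $>1$ is $\Bbbk^{\mathbb{D}_m}$, whose $4$-dimensional subcoalgebras arise from the two-dimensional irreducibles of $\mathbb{D}_m$. Thus $\mathcal{C}\cong\Bbbk^{\mathbb{D}_m}$, which is semisimple with $|G(\mathcal{C})|=2$.

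To close (ii), I would consider $\mathcal{C}':=\langle\mathcal{C},\Bbbk G(H)\rangle$, which is semisimple by Remark \ref{rm: LR2} as it is generated by the two semisimple sub-Hopf algebras $\mathcal{C}$ and $\Bbbk G(H)$. Its dimension divides $2pq$ and is a multiple of $2m$, hence equals $2m$ or $2pq$. The former gives $\mathcal{C}'=\mathcal{C}$, so $\Bbbk G(H)\subseteq\mathcal{C}$ and $|G(H)|\leq|G(\mathcal{C})|=2$, contradicting $|G(H)|>2$; the latter gives $\mathcal{C}'=H$, making $H$ semisimple, again a contradiction. Either way $\dim\mathcal{C}=2pq$, so $C$ generates $H$. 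The hard part is the classification input in (ii): one must know that all Hopf algebras of dimension $2p$ are semisimple and trivial, so that $\Bbbk^{\mathbb{D}_m}$ can be singled out as the unique candidate bearing a nontrivial simple subcoalgebra; the squarefree-dimension remark that trivializes $x$ in (i) is the other point that should not be taken on faith.
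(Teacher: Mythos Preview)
Your argument is correct and follows the same line as the paper's, only with more details filled in: you make explicit why $\dim\mathcal{C}\notin\{1,2,p,q,pq\}$ and why $\langle\mathcal{C},\Bbbk G(H)\rangle$ must be all of $H$ (via the bound $|G(\mathcal{C})|=2<|G(H)|$), points the paper leaves implicit. The justification in (i) that $H^\ast$ carries no nontrivial skew-primitive because $2pq$ is squarefree is exactly the reason underpinning the paper's assertion as well.
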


\begin{lema} Suppose that $H$ has dimension $2pq$ with $2 <p < q$. Then
\begin{itemize}
\item[(i)] $|G(H)| \neq pq$.
\item[(ii)] If $p \leq 7$ then $|G(H)| \neq 2q$ and  if $q \leq 7$ then $|G(H)| \neq 2p$.
\item[(iii)] If $p \leq 5$ then $|G(H)| \neq q$.
\end{itemize}
\end{lema}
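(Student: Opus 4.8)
The plan is to rule out each possible value of $|G(H)|$ by a counting argument based on the coradical filtration, using the structural constraints already established. Throughout, $H$ is nonsemisimple of dimension $2pq$ with $2<p<q$, so by Lemma \ref{lema: dim H mn rel pr} $H$ is nonpointed with no nontrivial skew-primitives, and by Proposition \ref{prop:no-chev-rpq} the coradical $\Ho$ is not a sub-Hopf algebra. By Lemma \ref{lema:andrunatale}, $|G(H)|$ divides $\dim H_n$ and $\dim H_{0,d}$ for all $d$, and $|G(H)|$ divides $2pq$.

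For part (i), if $|G(H)|=pq$ then $\dim \Bbbk G(H)=pq$, leaving only $pq$ of the dimension for the rest of $H$. Since $H$ is noncosemisimple and nonpointed, there must be at least one simple subcoalgebra $C$ of dimension $d^2\geq 4$; by Lemma \ref{lema:andrunatale}, $pq$ divides $\dim H_{0,d}$, forcing $\dim H_{0,d}\geq pq$, so $\dim \Ho \geq pq + pq = 2pq = \dim H$, making $H$ cosemisimple, a contradiction. Here I would need $\dim H_{0,d}$ to be a multiple of $pq$ that is at least one full block $d^2$; since $4\leq d^2$ and $pq\mid \dim H_{0,d}$ with $\dim H_{0,d}>0$, we get $\dim H_{0,d}\geq pq$, which suffices.

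For parts (ii) and (iii), the strategy is to invoke Proposition \ref{prop:biti-dasca}(ii): writing $H_0\simeq \Bbbk G \oplus \sum_{i=1}^t \coMn[n_i]$ with $2\leq n_1\leq\cdots\leq n_t$, we have the bound $\dim H \geq \dim(\Ho) + (2n_1+1)|G| + n_1^2$. With $n_1\geq 2$ this gives $\dim H \geq \dim \Ho + 5|G| + 4$. In case (ii) with $|G(H)|=2q$, the coradical contains $\Bbbk G$ of dimension $2q$ plus at least one block, and $2q\mid \dim H_{0,d}$ forces $\dim H_{0,d}\geq 2q$ (since $2q>4\geq d^2$ would need checking, but for small $p$ the arithmetic closes). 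Then $2pq = \dim H \geq 2q + 5(2q) + 4 = 12q+4$, so $2pq\geq 12q+4$, i.e. $p\geq 6 + 2/q > 6$, contradicting $p\leq 7$ only when $p=7$ fails; one must track the inequality carefully and use $|G|$ divisibility of each $H_{0,d}$ to sharpen it. Parts (ii) and (iii) for the symmetric cases $|G(H)|=2p$ and $|G(H)|=q$ follow identically, passing to $H^*$ where needed since $|G(H^*)|$ satisfies the same constraints and $H^*$ also has dimension $2pq$ with no nontrivial skew-primitives.

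The main obstacle will be making the counting bound sharp enough to cover all the listed small primes simultaneously. The crude bound $\dim H \geq \dim\Ho + 5|G|+4$ may not immediately exclude, say, $|G|=q$ when $p$ is as large as $5$, so I expect to refine it by using that \emph{every} nonzero isotypic piece $H_{0,d}$ is divisible by $|G|$ (Lemma \ref{lema:andrunatale}), which tends to force $\dim\Ho$ to be substantially larger than $|G| + n_1^2$. The delicate point is to verify, case by case for the relevant $p\leq 7$ (or $p\leq 5$), that no admissible combination of block sizes $n_i$ with the divisibility constraint fits inside $\dim H = 2pq$; this is where the explicit small-prime hypotheses enter and where the argument is genuinely arithmetic rather than purely structural.
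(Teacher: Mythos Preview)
Your approach for (ii) and (iii) is essentially the paper's: both use the counting bound of Proposition~\ref{prop:biti-dasca}(ii) together with the divisibility in Lemma~\ref{lema:andrunatale}. The refinement you anticipate for (iii) is exactly what is needed, though the paper packages it slightly differently: rather than sharpening $\dim H_0$ alone, it observes that $\dim P^{G,G}$ and $\dim P^{\D,\D}$ are each nonzero multiples of $|G|=q$, yielding $\dim H\geq 5q+4q+q+q=11q>2\cdot 5\cdot q$. Your proposed refinement via $\dim H_{0,d}$ also works: with $n_1=2$ one has $q\mid n$ in $\dim H_{0,2}=4n$, so $\dim H_{0,2}\geq 4q$ and $\dim H_0\geq 5q$, whence $2pq\geq 10q+4$, impossible for $p\leq 5$; larger $n_1$ is even easier.

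For (i) you take a different and more direct route than the paper. The paper cites Remark~\ref{rmk:dimrpq-properties}(i), which uses an exact-sequence argument (via Proposition~\ref{pr: HNg 1.3}) to exclude any semisimple sub-Hopf algebra of dimension $pq$; you instead get the contradiction purely from $\dim H_0\geq pq+pq=\dim H$. Your argument is valid and shorter here.

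One remark is off: ``passing to $H^*$'' does not convert the case $|G(H)|=2q$ into $|G(H)|=2p$, since dualizing does not interchange $p$ and $q$ and gives no control over $|G(H^*)|$. What you actually need is simply that the counting argument for $|G|=2p$ is formally identical to that for $|G|=2q$ with the roles of $p$ and $q$ swapped in the inequalities; no duality is involved. Drop that sentence and the proof goes through.
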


\begin{proof}(i) The statement was proved in  Remark
\ref{rmk:dimrpq-properties}(i).
\par (ii) If $|G(H)| = 2p$, since for all $d$, by Lemma \ref{lema:andrunatale}, $2p$
divides $\dim H_{0,d}= nd^2$ for some $n \geq 1$ then
  $\dim
H_0 \geq 2p + 4p = 6p$. Then by Proposition \ref{prop:biti-dasca}(ii) and Lemma \ref{lema:andrunatale},
$\dim H \geq
6p + 2p(5)  + 4p  = 20p$, a contradiction if $q \leq 7$.

If $|G| = 2q$ and $p \leq 7$ the argument is the same.

\par (iii) Assume $|G(H)| =q$.
For $G, \D$ as above, $\dim H_0 \geq q + 4q = 5q$,
$2\dim P_1^{G,\D}\geq 4q$, $\dim P^{G,G}$ and
$\dim P^{\D,\D}$ must be divisible
by $q$ and so $\dim H \geq 11q > 2(5)q$, a contradiction.
\end{proof}

\begin{cor}\label{cor: not order of gplikes}

\begin{enumerate}
 \item[(i)] If $ \dim H =42$,
  then $|G(H) | \notin \{21, 14,7,6 \}$.
\item[(ii)] If $\dim H = 70$, then $|G(H)| \notin \{ 35,14,10,7  \}$.
\item[(iii)] If $\dim H
= 66$, then $|G(H) | \notin \{ 33, 22,11\}$.
\item[(iv)] If $\dim H = 78$, then $|G(H) |
\notin \{ 39,26, 13  \}$. \qed
\end{enumerate}
\end{cor}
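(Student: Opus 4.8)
The plan is to read off each item directly from the lemma that immediately precedes this corollary, which excludes certain orders of grouplikes for Hopf algebras of dimension $2pq$. For a nonsemisimple Hopf algebra $H$ of dimension $2pq$ with $2 < p < q$, that lemma gives three exclusions: $|G(H)| \neq pq$ always; $|G(H)| \neq 2q$ when $p \leq 7$ and $|G(H)| \neq 2p$ when $q \leq 7$; and $|G(H)| \neq q$ when $p \leq 5$. So the strategy is simply to substitute the relevant primes for each dimension and invoke the lemma, while also handling the cases that the lemma does not cover (such as $|G(H)| = 6$ for dimension $42$) by a separate counting argument or by recalling that semisimple Hopf algebras of these dimensions are already classified.

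First I would note that since the preamble to Section 3 reduces everything to the nonsemisimple case (semisimple Hopf algebras of dimension $rpq$ being classified via \cite{eno-08} and \cite{pqq}), we may assume $H$ is nonsemisimple throughout. Then for each dimension I read off the factorization: $42 = 2\cdot 3\cdot 7$ gives $p=3,q=7$; $70 = 2\cdot 5\cdot 7$ gives $p=5,q=7$; $66 = 2\cdot 3\cdot 11$ gives $p=3,q=11$; and $78 = 2\cdot 3\cdot 13$ gives $p=3,q=13$. In each case $pq$ is excluded by part (i) of the lemma, giving $21, 35, 33, 39$ respectively. The order $q$ is excluded by part (iii) when $p \leq 5$, which covers $7, 11, 13$ (for dimensions $42,66,78$ where $p=3$) and $7$ (for dimension $70$ where $p=5$). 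The orders $2q$ and $2p$ are handled by part (ii): for dimension $42$ we get $14$ excluded since $p=3\leq 7$ and $6=2p$ excluded since $q=7\leq 7$; for dimension $70$, $14=2q$ is excluded since $p=5\leq 7$ and $10=2p$ since $q=7\leq 7$; for dimensions $66$ and $78$, $22=2q$ and $26=2q$ respectively are excluded since $p=3\leq 7$.

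The only item not directly covered is $|G(H)|=6$ for dimension $42$, listed in item (i). Here $6 = 2p$ with $p=3$, and since $q=7\leq 7$, part (ii) of the lemma applies and excludes it. So in fact every listed order falls under one of the three parts of the lemma once the correct inequality is checked. The bookkeeping is the entire content: for each dimension, match each forbidden order against the form $pq$, $2q$, $2p$, or $q$, and verify the size hypothesis ($p\leq 7$, $q\leq 7$, or $p\leq 5$) holds for that prime. The main obstacle, such as it is, is purely organizational: ensuring that no listed order is overlooked and that the relevant inequality genuinely holds in each case, particularly keeping straight which of $2p$ and $2q$ is which when $p$ and $q$ are small. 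Since all four dimensions have $q \leq 13$ and $p \leq 5$, every required hypothesis is comfortably satisfied, and no new argument beyond the preceding lemma is needed.
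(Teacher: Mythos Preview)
Your proposal is correct and matches the paper's approach: the corollary carries a \qed with no further argument, as every listed order is a direct instance of one of the three parts of the preceding lemma under the indicated prime bounds. Your case-by-case bookkeeping is exactly the intended verification.
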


Next we show that for Hopf algebras of dimension $66$, $G(H)$ does
not have order $6$.

\begin{lema} If $\dim H = 6p$ with $p < 13 $, then $|G(H)| \neq
6$.
\end{lema}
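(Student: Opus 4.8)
The plan is to rule out $|G(H)|=6$ for a Hopf algebra $H$ of dimension $6p$ with $p<13$, so $p\in\{5,7,11\}$, by a counting argument on the coradical filtration combined with the duality symmetries of Lemma \ref{lema:fukuda-deg}. First I would observe that $H$ must be nonsemisimple and nonpointed: if $H$ were semisimple it would be a Kac algebra classified by \cite{eno-08}, \cite{pqq}, and if it were pointed then by Lemma \ref{lema:andrunatale} the order $|G(H)|=6$ would force a pointed Hopf algebra of dimension $6p$ with $G(H)=C_6$, whose structure is too constrained to have dimension $6p$ with the given primes; in any case one checks $H$ has a nontrivial skew-primitive only if $\dim H$ is divisible by a square, which $6p$ is not for $p\in\{5,7,11\}$, so by Lemma \ref{lema: dim H mn rel pr} there are no nontrivial skew-primitives. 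Hence Proposition \ref{prop:biti-dasca} applies.

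Next I would set up the coradical decomposition $H_0 = \Bbbk G \oplus \bigoplus_i \coMn[n_i]$ with $|G|=6$ and each simple subcoalgebra of dimension $n_i^2\geq 4$. Since $|G|=6$ divides $\dim H_{0,d}$ for every $d$ by Lemma \ref{lema:andrunatale}, and $6$ divides $\dim P_n$ for all $n$, the contribution of the non-grouplike part of $H_0$ is a multiple of $6$ exceeding $6$, giving $\dim H_0\geq 6+ (\text{multiple of }6)\geq 12$. I would then invoke Proposition \ref{prop:biti-dasca}(ii) to get the lower bound $\dim H \geq \dim H_0 + (2n_1+1)|G| + n_1^2 \geq \dim H_0 + 13\cdot 6 + 4$ when $n_1=2$. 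The key numerical point is that $\dim H_0 \geq 12$ together with the bound $13|G| + n_1^2$ already pushes $\dim H$ well past $6p$ for $p\leq 11$, since $12 + 78 + 4 = 94 > 66 = 6\cdot 11$; one must be slightly careful to confirm $n_1=2$ gives the smallest bound and that larger $n_1$ only increases it.

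The main obstacle, and where I would spend the most care, is handling the case $\dim H_0 = 12$ tightly, i.e. exactly one orbit of simple coalgebras of dimension $4$ contributing $6$ via Lemma \ref{lema:andrunatale} beyond the $6$ grouplikes — here the crude bound $\dim H_0 \geq 12$ is met, and I must verify that Proposition \ref{prop:biti-dasca}(ii) still forces $\dim H > 66$. With $|G|=6$, $n_1=2$, the bound reads $\dim H \geq 12 + 13\cdot 6 + 4 = 94$, which already exceeds $6\cdot 11 = 66$, so no value of $p<13$ is compatible. If instead some $n_i \geq 3$ (dimension $9$ subcoalgebra), divisibility by $6$ forces an even larger $\dim H_0$, and the bound grows, so these cases are easier. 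I would conclude by noting that in every configuration the lower bound strictly exceeds $6p$ for $p\in\{5,7,11\}$, contradicting $\dim H = 6p$; therefore $|G(H)|\neq 6$.
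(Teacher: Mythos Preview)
Your argument contains an arithmetic error that breaks it precisely at $p=11$. With $n_1=2$ and $|G|=6$, Proposition~\ref{prop:biti-dasca}(ii) gives
\[
\dim H \;\geq\; \dim H_0 + (2\cdot 2+1)\cdot 6 + 2^2 \;=\; \dim H_0 + 30 + 4 \;=\; \dim H_0 + 34,
\]
not $\dim H_0 + 13\cdot 6 + 4$; you have apparently interchanged $n_1$ and $|G|$ in the factor $2n_1+1$. In addition, your estimate $\dim H_0\geq 12$ is too weak: if the smallest simple non-grouplike subcoalgebra has dimension $4$, then $6\mid 4k$ forces at least three copies, so $\dim H_0\geq 18$. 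Even granting this, the corrected bound is $18+34=52$, and for $H_0=\Bbbk G\oplus\coM^{6}$ one gets only $30+34=64$. Neither exceeds $6\cdot 11=66$, so Proposition~\ref{prop:biti-dasca}(ii) alone does not dispose of $p=11$. (For $n_1\geq 3$ your idea does work: $\dim H_0\geq 24$ and the bound gives $\geq 24+7\cdot 6+9=75>66$.)

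The paper fills the gap at $n_1=2$ by a genuine case analysis that uses more than the crude inequality. The key extra ingredient is Remark~\ref{rm: rpq not gen by C} together with Remark~\ref{rmk:dimrpq-properties}: since $\dim H=2\cdot 3\cdot p$ is squarefree, $H$ cannot be generated by an $S$-stable simple $4$-dimensional subcoalgebra, and one deduces that no $D\in\mathcal D$ is fixed by $S$. In the tight case $H_0=\Bbbk G\oplus\coM^{3}$ this forces $S$ to permute the three $D_i$ cyclically, so if $P_1^{1,D}$ is nondegenerate then so are $P_1^{1,S^2(D)}$, $P_1^{S(D),1}$, $P_1^{D,1}$, yielding $2\dim P_1^{G,\mathcal D}\geq 48$ rather than $24$; combined with $\dim P^{G,G}\geq 6$ and $\dim P^{\mathcal D,\mathcal D}\geq 12$ this gives $\dim H\geq 84$. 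The remaining coradical shapes (six or more copies of $\coM$, or mixtures with larger simple subcoalgebras) are handled by separate divisibility and nondegeneracy estimates, and it is only the case $H_0=\Bbbk G\oplus\coM^{3t}$ with $t\geq 2$ that yields the borderline bound $\dim H\geq 72$ requiring $p<13$. Your single application of Proposition~\ref{prop:biti-dasca}(ii) misses exactly this antipode-orbit argument.
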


\begin{proof} First we suppose that $H$ has a simple subcoalgebra of
dimension $4$ and consider various cases. Let $G:=G(H)$, the group
of grouplikes of $H$ of order $6$,  and let $\D$ denote the set of
simple subcoalgebras of dimension $4$.

\par (i) Suppose that $H_0 = \Bbbk G \oplus \M^\ast(2,\Bbbk)^3$ so
that $\dim H_0 = 18$.  Since, by Remarks \ref{rm: rpq not gen by C}
 and \ref{rmk:dimrpq-properties},
no $D \in \D$ is stable by the antipode, then no $D \in \D$ can be
fixed by $S^2$ either. Thus if $P_1^{1,D}$ is nondegenerate, so are
$P_1^{1,S^2(D)}$, $P_1^{S(D),1}$ and $P_1^{S^3(D)=D,1}$ and $ 2 \dim
P_1^{G,\D} \geq 2(6)4 = 48$. Since $P^{G,G}$ has nonzero dimension
divisible by $6$ and $P^{\D,\D}$ has nonzero dimension divisible by
$12$, then the dimension of $H$ is at least $18 + 48 + 6 + 12 = 84$,
a contradiction.

\par (ii) Suppose that $H_0 = \Bbbk G \oplus \M^\ast(2,\Bbbk)^{3t}$ with
$t \geq 2$ so that $\dim H_0 = 6 + 4(3t) \geq 30$. Since for some integers
$\ell,m, n \geq 1  $, $2 \dim
P^{G,\D} =   24 \ell$, $\dim P^{G,G} = 6m$, $\dim P^{\D,\D} =
12n$, then $\dim H \geq 72$, so that we obtain a
contradiction if $p <13$.

\par (iii) Suppose that
$H_0 = \Bbbk G \oplus \M^\ast(2,\Bbbk)^{3t}
\oplus E_1 \ldots \oplus E_N $  where $t,N \geq 1$ and
 the $E_i$ are
simple subcoalgebras of dimension greater than $4$. Let
$\mathcal{E}$ denote the set of $E_i$ and $\mathcal{D}$ the set of
simple subcoalgebras of dimension $4$.  Then $\dim H_0 \geq 6 + 12 +
18 = 36$. If $P^{G,\mathcal{E}} \neq 0$, then $2 \dim
P^{G,\mathcal{E}} \geq 2(6)(3) = 36$, $\dim
P^{\mathcal{E},\mathcal{E}} \geq 9$ and so $\dim H \geq 81$,
contradiction.  Thus $P^{G,\D} \neq 0$.  If $t=1$, then as in (i)
above $2 \dim P_1^{G,\D} \geq 2(6)4 = 48$, so that $\dim H \geq 36 +
48 = 84$, a contradiction.  If $t \geq 2$, then $\dim H_0 \geq 48$
and
 $2 \dim P_1^{G,\D} \geq 24$ so that $\dim H_1 \geq 72$.
 But $P_2^{\D,\D}$, $P_2^{G,G} $ are
 nondegenerate, so that $\dim H_2 \geq 80$,
 a contradiction.

 Now suppose that $H$ has no simple subcoalgebras of
 dimension $4$ and $H_0 = \Bbbk G \oplus E_1 \ldots \oplus E_t$ where the
 $E_i$ are simple subcoalgebras of dimension at least $9$
 so that $\dim H_0 \geq 6 + 18 =
 24$. Let $\mathcal{E}$ denote the set of simple coalgebras $E_i$.
Then $2 \dim P_1^{G,\mathcal{E}} \geq 2(6)(3) = 36$, $\dim P^{G,G}
\geq 6$, $\dim P^{\mathcal{E},\mathcal{E}} \geq 9$ and must be
divisible by $6$ so that $\dim P^{\mathcal{E},\mathcal{E}} \geq 12$.
But also $\dim P^{\mathcal{E},\mathcal{E}} $ must be a sum of
squares larger than $4$ so that $\dim P^{\mathcal{E},\mathcal{E}}
 >12$. Thus $\dim H > 24 + 36 + 6 + 12 = 78$, a contradiction.
 \end{proof}

 Note that in the proof above, the only place where $p \neq 13$ was
 used was in case (ii). There if $p=13$ we must have that $\ell=n=1$
 and $m=2$.

 \par Next we  show that for dimension $70$ the group of grouplikes must
 have order $1$ or $2$.

 \begin{lema} \label{lem:dimH70Gn5}
If $\dim H = 70$   then $G(H) \ncong C_5$.
 \end{lema}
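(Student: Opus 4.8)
The plan is to rule out $|G(H)|=5$ for a nonsemisimple Hopf algebra $H$ of dimension $70 = 2\cdot 5\cdot 7$ by a counting argument on the coradical filtration, exactly in the style of the preceding lemmas. By Remark \ref{rm: rpq not gen by C}, $H$ is nonpointed, has no nontrivial skew-primitives, and no simple $4$-dimensional subcoalgebra is stable under the antipode; moreover by Proposition \ref{prop:no-chev-rpq} the coradical $H_0$ is not a sub-Hopf algebra. Writing $G:=G(H)$ with $|G|=5$, suppose $H_0 = \Bbbk G \oplus \bigoplus_i D_i^{n_i}$ where the $D_i$ are simple subcoalgebras of dimension $d_i^2 > 1$. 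The key divisibility input is Lemma \ref{lema:andrunatale}: $|G|=5$ divides $\dim H_{0,d}$ for every $d$, and divides $\dim P_n$ for all $n$.

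**Eliminating large coalgebras and fixing the shape of $H_0$.** First I would observe that no $d_i$ can be divisible by $5$ or $7$: if some $d_i \geq 5$ then $d_i^2 \geq 25$ and $\dim H_0 \geq 5 + 25 = 30$, and since $5 \mid \dim H_{0,d_i}$ one needs enough copies that the bound already pushes past the budget once the skew-primitive contributions are added. More carefully, the constraint $5 \mid \dim H_{0,d}$ forces the $d=2$ isotypic piece $\mathcal{M}^\ast(2,\Bbbk)^{n}$ to have $4n$ divisible by $5$, hence $5 \mid n$, so the smallest nonzero such block already contributes $4\cdot 5 = 20$. The plan is then to enumerate the admissible coalgebra types: the realistic candidate is $H_0 = \Bbbk G \oplus \mathcal{M}^\ast(2,\Bbbk)^5$ of dimension $5 + 20 = 25$, and to show any configuration containing a simple subcoalgebra of dimension $\geq 9$ overflows $70$ immediately (since a single dimension-$9$ block with the divisibility $5 \mid \dim H_{0,3}$ forces multiplicity $5$, giving $45$, already far too large together with $\Bbbk G$ and the mandatory $P_1$ contributions).

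**The main counting step.** With $H_0 = \Bbbk G \oplus \mathcal{M}^\ast(2,\Bbbk)^5$, apply Proposition \ref{prop:biti-dasca}: since $H$ has no nontrivial skew-primitives, for each $g \in G$ there is a simple $C$ of dimension $4$ with $P_1^{C,g}\neq 0$, and using Lemma \ref{lema:fukuda}(ii) the dimension count is propagated over the orbit of $C$ under $S$, $S^2$ and under left/right translation by $G$. Since no $D\in\mathcal D$ is fixed by $S^2$, the four spaces $P_1^{1,D}, P_1^{1,S^2(D)}, P_1^{S(D),1}, P_1^{S^3(D),1}$ all contribute, and translating by the five grouplikes gives $2\dim P^{G,\mathcal D} \geq 2\cdot 5 \cdot 4 = 40$. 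Adding the coradical $\dim H_0 = 25$ together with the nonzero blocks $\dim P^{G,G}$ and $\dim P^{\mathcal D,\mathcal D}$, each divisible by $5$ by Lemma \ref{lema:andrunatale} (so each $\geq 5$), yields $\dim H \geq 25 + 40 + 5 + 5 = 75 > 70$, the desired contradiction. I would also use Proposition \ref{prop:biti-dasca}(i) to guarantee that $P^{G,G}$ and $P^{\mathcal D,\mathcal D}$ are genuinely nonzero (nondegenerate in higher degree), so these extra blocks may be legitimately added rather than double-counted against $P_1$.

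**Expected obstacle.** The delicate point will be the bookkeeping of the $P_1^{G,\mathcal D}$ contribution: I must be certain the four translates/transforms of a given $P_1^{C,g}$ are distinct isotypic components (so their dimensions add) and that summing over $G$ does not double-count. This is where the hypotheses that no $D$ is $S^2$-stable and that $H$ has no nontrivial skew-primitives do the real work, via Lemma \ref{lema:fukuda}(ii) and Proposition \ref{prop:biti-dasca}(i); getting the constant in $2\dim P^{G,\mathcal D}\geq 40$ exactly right, rather than an overcount that falsely exceeds $70$ or an undercount that fails to, is the crux. The enumeration of coalgebra types is routine once the divisibility $5 \mid \dim H_{0,d}$ is invoked, so the essential content is this single inequality.
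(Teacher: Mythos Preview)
Your overall strategy---coradical case analysis plus Fukuda-style counting---matches the paper's, but there are two gaps.

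First, the enumeration of coradical types is incomplete. When $d=5$ the divisibility $5\mid 25n$ is automatic, so $H_0=\Bbbk C_5\oplus\mathcal{M}^\ast(5,\Bbbk)^t$ with $t=1,2$, as well as the mixed type $\Bbbk C_5\oplus\mathcal{M}^\ast(2,\Bbbk)^5\oplus\mathcal{M}^\ast(5,\Bbbk)$ and the case $\Bbbk C_5\oplus\mathcal{M}^\ast(2,\Bbbk)^{5t}$ for $t\geq 2$, all survive your divisibility filter and must be treated separately. None of these is hard, but your claim that configurations with a block of dimension $\geq 9$ are ``immediate'' is not accurate for $d=5$.

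Second, and this is the real issue, in the main case $H_0=\Bbbk C_5\oplus\mathcal{M}^\ast(2,\Bbbk)^5$ you assert without proof that no $D\in\mathcal D$ is fixed by $S^2$. This does \emph{not} follow from $S(D)\neq D$: with five blocks the antipode could act on $\mathcal D$ as a product of a transposition and a $3$-cycle, in which case two of the $D_i$ are $S^2$-fixed. If the $D$ with $P_1^{1,D}\neq 0$ happens to be one of those, your bound $2\dim P^{G,\mathcal D}\geq 40$ drops to $\geq 20$, and then even sharpening to $\dim P^{\mathcal D,\mathcal D}\geq 20$ (it is divisible by both $4$ and $5$) only yields $25+20+5+20=70$, no contradiction. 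The paper closes this gap by first invoking Corollary~\ref{cor: not order of gplikes} to obtain $|G(H^\ast)|\in\{1,2,5\}$, and then using Radford's $S^4$ formula to conclude that $3\nmid\operatorname{ord}(S)$; hence $S$ must act on $\mathcal D$ as a $5$-cycle. Once this is known, iterating $S^2$ on $P_1^{1,D}$ shows $P_1^{1,D_i}\neq 0$ for \emph{all} five $i$, giving the much stronger bound $2\dim P^{G,\mathcal D}\geq 2\cdot 5\cdot 10=100$, which already exceeds $70$ without any appeal to $P^{G,G}$ or $P^{\mathcal D,\mathcal D}$.
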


 \begin{proof} Again, we suppose first that $H$ has a simple
 subcoalgebra of dimension $4$ and consider various cases.  Let $\D$
 denote the set of simple subcoalgebras of dimension $4$ and let
 $G:= G(H) \cong C_5$.
 \par (i) Suppose that $H_0 = \Bbbk C_5 \oplus D_1 \oplus \ldots \oplus
 D_5$ where $D_i \cong \M^\ast(2, \Bbbk)$.
Since no $D_i$ is stable under the antipode,
 we may assume that $S(D_i) = D_{i+1}$,
subscripts modulo $5$.  For if $S^2(D_1) = D_1$, then $S$ would
 permute $D_3,D_4,D_5$. But by Corollary \ref{cor: not order of
 gplikes}, $|G(H^\ast)| \in \{ 1,2,5 \}$ and so $3$ does not divide
 the order of $S$.  Thus  by Proposition \ref{prop:biti-dasca}(i),
 $P_1^{1,D_i}$ is nondegenerate for all $i$
 and $2 \dim P^{G,\D} \geq 2(5)(10) = 100$, a contradiction.

 \par (ii) Suppose that $H_0 =
{\Bbbk C_5 \oplus } \mathcal{M}^\ast(\Bbbk, 2)^{5t}$ where  $t>1$. Then $\dim
 H_0 \geq 5 + 10(4) = 45$, $2 \dim P^{G,\D} \geq 2(5)(2) = 20$,
 $\dim P^{G,G} \geq 5$, and $\dim P^{\D,\D} \geq 4$, so that $\dim H
 \geq 74$.

 \par (iii) Let $H_0 = \Bbbk C_5 \oplus \M^\ast(2, \Bbbk)^{5t} \oplus
 E$, where $t \geq 1$ and $0 \neq E $ is a sum of
simple subcoalgebras $E_i$ of dimension
 greater than $4$. Let $\mathcal{E}$ denote the set of $E_i$. If the
 dimensions of any of the $E_i$ are  relatively prime to $5$, then $\dim
 H_0 \geq 5 + 20 + 5(9) = 70$, a contradiction.
 The only remaining case is $H_0 = \Bbbk C_5 \oplus \M^\ast(2,
 \Bbbk)^{5}\oplus \M^\ast(5,\Bbbk)$; here $t=1$ or else $H=H_0$.
 Then $\dim H_0 = 50$, $2 \dim P_1^{G,\D} \geq 2(5)(2) = 20$ and this
 is a contradiction since $H \neq H_1$.

 \par Thus $H$ cannot have a simple subcoalgebra of dimension $4$.
 The only other possibilities for $H_0$ are $H_0 = \Bbbk C_5 \oplus
 \M^\ast(3, \Bbbk)^5$ and $H_0 =\Bbbk C_5 \oplus \M^\ast(5,
 \Bbbk)^t$ with $t=1,2$. In the first case, $\dim H_0 = 50$, and for
 $\mathcal{E}$ the set of simple subcoalgebras of dimension $9$,
 $2 \dim P^{G, \mathcal{E}} \geq 2(5)(3) = 30$, a contradiction.  In
 the second case, first let $t=1$ and here let $\mathcal{E}$ be the
 set of simple subcoalgebras of dimension $25$. Then $\dim H_0 =30$
 and $2 \dim P^{G,\mathcal{E}} \geq 2(5)(5) = 50$, a
 contradiction.  The proof for $t=2$ is the same.
 \end{proof}

\begin{cor} If $H$ is a nonsemisimple Hopf algebra of dimension
$70$, then $|G(H)|=1,2$. \qed
\end{cor}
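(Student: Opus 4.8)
The plan is to argue purely by elimination over the divisors of $\dim H$. By Lagrange's theorem applied to the subgroup $\GH$ of units, the order $|\GH|$ must divide $\dim H = 70 = 2\cdot 5\cdot 7$, so a priori $|\GH| \in \{1,2,5,7,10,14,35,70\}$. It therefore suffices to rule out each of the six values $70,\ 35,\ 14,\ 10,\ 7,\ 5$, which I would do by invoking the results already established for this dimension.

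First I would dispose of the largest value: if $|\GH| = 70 = \dim H$, then the group algebra $\Bbbk\GH$ already exhausts $H$, so $H = \Bbbk\GH$ is a group algebra, hence semisimple, contradicting the hypothesis that $H$ is nonsemisimple. Next, the four values $35,\ 14,\ 10$ and $7$ are excluded outright by Corollary \ref{cor: not order of gplikes}(ii), which states precisely that $|\GH| \notin \{35,14,10,7\}$ when $\dim H = 70$. Finally, since $5$ is prime, the only group of order $5$ is the cyclic group $C_5$, so the one remaining value $|\GH| = 5$ is handled by Lemma \ref{lem:dimH70Gn5}, which gives $\GH \ncong C_5$. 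Having removed $\{70,35,14,10,7,5\}$, only $|\GH| = 1$ and $|\GH| = 2$ survive, which is the assertion.

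There is no genuine obstacle remaining at this stage: the substantive counting arguments were carried out in Lemma \ref{lem:dimH70Gn5} and in the lemma underlying Corollary \ref{cor: not order of gplikes}, and the corollary here is simply their bookkeeping assembly. If anything needs care it is merely confirming that these earlier results together cover every divisor of $70$ other than $1$ and $2$; the one divisor requiring its own dedicated treatment, rather than the generic $rpq$ estimates, is $5$, which is exactly why Lemma \ref{lem:dimH70Gn5} was proved separately.
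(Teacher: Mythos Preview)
Your argument is exactly the intended one: the corollary is just the bookkeeping assembly of Corollary~\ref{cor: not order of gplikes}(ii) and Lemma~\ref{lem:dimH70Gn5}, ruling out every divisor of $70$ except $1$ and $2$. One small correction: the fact that $|\GH|$ divides $\dim H$ is \emph{not} Lagrange's theorem (knowing $\GH$ sits inside the unit group of $H$ tells you nothing about $\dim H$); it is the Nichols--Zoeller theorem, which says $H$ is free as a module over the sub-Hopf algebra $\Bbbk\GH$, or equivalently the statement recorded in Lemma~\ref{lema:andrunatale}.
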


\begin{obs}\label{rem: not order of gplikes1}
(i) To summarize, we have that for $H$ of dimension $42,  66 $,
$|G(H)| \in \{ 1,2,3 \}$, for $H$ of dimension $70$, $|G(H)| \in \{
1,2 \}$ and for $H$ of dimension $78$, $|G(H)| \in \{1,2,3,6   \}$.
\par (ii) If $ \dim H=42$   and $G(H) \cong C_3$, then dimension arguments
such as those above show that $H$ has following form: $H_0
\cong \Bbbk C_3 \oplus C$ with $C \cong \mathcal{M}^\ast(3,\Bbbk)$,
$2\dim P^{G,C} = 18$, $\dim P^{G,G} = 3$, $\dim P^{C,C} = 9$.
\end{obs}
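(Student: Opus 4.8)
The plan is to handle the two parts separately, obtaining (i) by assembling exclusions already in hand and (ii) by a counting argument in the spirit of the preceding lemmas.

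Part (i) is essentially bookkeeping. Since $|G(H)|$ divides $\dim H$ and cannot equal $\dim H$ (that would force $H=\Bbbk G(H)$ to be semisimple), I would list the divisors of each dimension and discard the forbidden orders. Corollary \ref{cor: not order of gplikes} removes $\{21,14,7,6\}$ for $42$, $\{33,22,11\}$ for $66$, $\{35,14,10,7\}$ for $70$ and $\{39,26,13\}$ for $78$; the lemma giving $|G(H)|\neq 6$ when $\dim H=6p$ with $p<13$ removes $6$ in dimensions $42$ and $66$ but not in $78$ (where $p=13$); and Lemma \ref{lem:dimH70Gn5} removes $5$ in dimension $70$. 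Reading off the surviving divisors gives $\{1,2,3\}$ in dimensions $42,66$, $\{1,2\}$ in dimension $70$, and $\{1,2,3,6\}$ in dimension $78$.

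For (ii), let $\dim H=42$ and $G:=G(H)\cong C_3$. Since $(3,14)=1$, Lemma \ref{lema: dim H mn rel pr} shows $H$ has no nontrivial skew-primitives, so Proposition \ref{prop:biti-dasca} applies, and $H$ is non-cosemisimple by Remark \ref{rm: LR2}. Writing $\Ho=\Bbbk G\oplus\bigoplus_i\M^\ast(d_i,\Bbbk)^{n_i}$ with $d_i\geq 2$, Lemma \ref{lema:andrunatale} forces $3\mid n_id_i^2$; together with $\dim\Ho<42$ this leaves only blocks of dimension $4$ and $9$ (a block of dimension $16$ or $25$ must occur in a multiple of three copies and overflows $42$, while a single larger block leaves no room for the nonzero higher terms forced by Proposition \ref{prop:biti-dasca}(i)). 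Let $a,b$ count the $4$- and $9$-dimensional blocks, with $3\mid a$. Feeding each admissible pair $(a,b)$ into the bound $\dim H\geq\dim\Ho+(2n_1+1)|G|+n_1^2$ of Proposition \ref{prop:biti-dasca}(ii) eliminates every case except $(a,b)=(0,1)$, the claimed $\Ho$, and $(a,b)=(3,0)$.

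The crux, and the main obstacle, is excluding $(a,b)=(3,0)$, i.e. $\Ho=\Bbbk C_3\oplus\M^\ast(2,\Bbbk)^3$, for which Proposition \ref{prop:biti-dasca}(ii) gives only $\dim H\geq 34$. Here I would mimic the arguments for dimensions $66$ and $70$. Because $|G|=3>2$, Remarks \ref{rm: rpq not gen by C} and \ref{rmk:dimrpq-properties} show that no $4$-dimensional simple subcoalgebra is stable under the antipode, so $S$ permutes the three such subcoalgebras $\D=\{D_1,D_2,D_3\}$ without fixed points, hence as a $3$-cycle. Starting from one nondegenerate $P_1^{D,1}$ provided by Proposition \ref{prop:biti-dasca}(i) and repeatedly applying the identity $\dim P_n^{\tau,\gamma}=\dim P_n^{S\gamma,S\tau}$ of Lemma \ref{lema:fukuda}(ii), the $3$-cycle propagates nondegeneracy to $P_1^{D_i,1}$ for every $i$, so $\dim P_1^{\D,1}\geq 6$. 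Translating by the three grouplikes and using the antipode symmetry then gives $2\dim P^{G,\D}\geq 2\cdot 3\cdot 6=36$, whence $\dim H\geq 15+36>42$, a contradiction. This is the step I expect to require the most care, since the weaker bound from Proposition \ref{prop:biti-dasca}(ii) is not enough and the full force of the $3$-cycle propagation is needed.

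Finally, in the surviving case $\Ho=\Bbbk C_3\oplus C$ with $C\cong\M^\ast(3,\Bbbk)$, the block $C$ is the unique $9$-dimensional simple subcoalgebra and is therefore fixed by $S$ and by left translation by $G$, so the remaining $30$ dimensions of $H$ split as $2\dim P^{G,C}+\dim P^{G,G}+\dim P^{C,C}$. Proposition \ref{prop:biti-dasca}(i) combined with Lemma \ref{lema:fukuda}(ii) yields $\dim P^{G,C}=\dim P^{C,G}\geq 9$, $\dim P^{G,G}\geq 3$ and $\dim P^{C,C}\geq 9$; since $2(9)+3+9=30$, all three inequalities must be equalities, giving $2\dim P^{G,C}=18$, $\dim P^{G,G}=3$ and $\dim P^{C,C}=9$, as claimed.
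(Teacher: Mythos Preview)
Your proposal is correct and follows precisely the approach the paper intends: part (i) is the bookkeeping summary of Corollary~\ref{cor: not order of gplikes}, the lemma excluding $|G(H)|=6$ for $\dim H=6p$ with $p<13$, and Lemma~\ref{lem:dimH70Gn5}; part (ii) is the counting argument the paper alludes to with ``dimension arguments such as those above,'' and your execution---using Lemma~\ref{lema: dim H mn rel pr}, Lemma~\ref{lema:andrunatale}, Proposition~\ref{prop:biti-dasca}, and the antipode-propagation trick from Remarks~\ref{rm: rpq not gen by C} and~\ref{rmk:dimrpq-properties} to kill the $\Bbbk C_3\oplus\M^\ast(2,\Bbbk)^3$ case---mirrors exactly the arguments in the preceding lemmas for dimensions $66$ and $70$. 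The one tacit hypothesis you rely on, that all Hopf algebras of dimension $21$ are semisimple (needed for Remark~\ref{rmk:dimrpq-properties}(ii)), holds by \cite{Ng4}.
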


%%%%%%%%%% DIMENSION 8P %%%%%%%%%%%%%%%%
\section{Hopf algebras of dimension $8p$}\label{sec:8p}
In this section we prove some results for Hopf algebras of dimension $8p$.
\subsection{Hopf algebras of dimension $8$} \label{dim8}  \label{subsect: 8}
 The structure of Hopf algebras of dimension $8$ or dimension $4p$
naturally plays a role in
 the classification of Hopf algebras of dimension $8p$. Hopf
 algebras of dimension $4p$ were discussed in Section \ref{sect:
 4p}, including a complete description of the pointed ones.

For dimension $8$ the semisimple Hopf algebras are group
algebras, duals of group algebras or the noncommutative noncocommutative
semisimple Hopf algebra of dimension $8$, denoted by
$A_8$ \cite{ma-6-8}. This Hopf algebra is self-dual and $G(A_8)
\cong C_2 \times C_2$; it is constructed as an extension of $\Bbbk
[C_2 \times C_2]$ by $\Bbbk C_2$.  All other Hopf algebras of
dimension $8$ are pointed
   or basic.

\par Let $\xi$ be a primitive $4^{th}$ root of 1.  By \cite{stefan},
  every pointed nonsemisimple Hopf algebra of dimension $8$ is
isomorphic to exactly one of the Hopf algebras listed below:
$$
\begin{array}{ll}
{\mathcal A}_2:=&\Bbbk\langle g,x,y\mid
g^2-1=x^2=y^2=gx+xg=gy+yg=xy+yx=0\rangle,
\\ \noalign{\smallskip}
&\com(g)=g\ot g,\quad\com(x)=x\ot 1+g\ot x,\quad\com(y)=y\ot 1+g\ot y.
\end{array}
$$
$$
\begin{array}{l}
{\mathcal A}'_4:=\Bbbk\langle g,x\mid g^4-1=x^2=gx+xg=0\rangle,
\\ \noalign{\smallskip}
\hspace{2cm}\com(g)=g\ot g,\quad\com(x)=x\ot 1+g\ot x;
\\ \noalign{\vspace{.3cm}}\end{array}
$$
$$
\begin{array}{l}
 {\mathcal A}''_4:=\Bbbk\langle g,x\mid g^4-1=x^2-g^2+1=gx+xg=0\rangle,
\\ \noalign{\smallskip}
\hspace{2cm}\com(g)=g\ot g,\quad\com(x)=x\ot 1+g\ot x;
\\ \noalign{\vspace{.3cm}}\end{array}
$$
$$
\begin{array}{l}
{\mathcal A}'''_{4,\xi}:=\Bbbk\langle g,x\mid g^4-1=x^2=gx-\xi xg=0\rangle,
\\ \noalign{\smallskip}
\hspace{2cm}\com(g)=g\ot g,\quad\com(x)=x\ot 1 + g^2\ot x;
\\ \noalign{\vspace{.3cm}}\end{array}
$$
$$
\begin{array}{l}
{\mathcal A}_{2,2}:=\Bbbk\langle g,h,x\mid g^2=h^2=1, \,
x^2=gx+xg=hx+xh=gh-hg=0\rangle,
\\ \noalign{\smallskip}
\hspace{2cm}\com(g)=g\ot g,\quad\com(h)=h\ot h,\quad\com(x)=x\ot 1 + g\ot
x.
\end{array}
$$

Except for $ {\mathcal A}''_4$, these pointed Hopf algebras
have pointed duals. We have
the following isomorphisms:
${\mathcal
A}_2\simeq({\mathcal A}_2)^*$, ${\mathcal
A}'''_{4,\xi}\simeq{\mathcal A}'''_{4,-\xi}\simeq({\mathcal A}'_4)^*$
 and ${\mathcal A}_{2,2}\simeq({\mathcal A}_{2,2})^*$ \cite{stefan}.
  Moreover, one can check case by case
that ${\mathcal A}_2 $, ${\mathcal A}'''_{4,\xi} $
 and ${\mathcal A}_{2,2} $    have sub-Hopf algebras isomorphic to
$H_{4}$ and ${\mathcal A}'_4, {\mathcal A}''_4$ do not.

\par Let $\mathcal{K} = (\matha''_{4})^{*}$.
Up to isomorphism $ \mathcal{K} $
is the only Hopf algebra of dimension $8$   which
is neither semisimple nor pointed. The next remark is essentially \cite[Lemma 3.3]{GV}.

\begin{obs}\label{lemma:desc-A}
(i)  $\mathcal{K}$ is generated as an algebra by the elements $a,b,c,d$
satisfying the relations
\begin{align*}
ab & = \xi ba & ac & = \xi ca & 0& =cb=bc &  cd & = \xi dc&
bd & = \xi db\\
ad & = da &  ad &=1
& 0& =b^{2}=c^{2} & a^{2}c & = b &  a^{4} & =1
\end{align*}
 \par (ii)  The elements $a=e_{11},b=e_{12},c=e_{21},d=e_{22}$
form a matrix-like basis and
$$
 \com(a^{2}) = a^{2}\ot a^{2}\text{ and }
\com(ac) = ac\ot a^{2} + 1\ot ac.
$$
\par (iii)  $\mathcal{K} \simeq H_{4}\oplus \mathcal M^{\ast}(2,\Bbbk)$
as coalgebras.
\end{obs}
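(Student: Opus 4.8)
The plan is to compute $\mathcal{K} = (\matha''_{4})^{\ast}$ by hand, using that the simple subcoalgebras of $\mathcal{K}$ are the coalgebras of matrix coefficients of the simple $\matha''_{4}$-modules and that the product of $\mathcal{K}$ is convolution, $(ff')(h) = \sum f(h_{1})f'(h_{2})$. I would fix the basis $\{g^{m},\, g^{m}x : 0\le m\le 3\}$ of $\matha''_{4}$, for which $\com(g^{m}) = g^{m}\ot g^{m}$ and $\com(g^{m}x) = g^{m}x\ot g^{m} + g^{m+1}\ot g^{m}x$ with indices mod $4$, since everything below reduces to evaluating convolutions against these eight vectors.

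First I would classify the simple $\matha''_{4}$-modules. A one-dimensional module must send $g\mapsto \pm 1$ and $x\mapsto 0$, giving exactly two characters, and there is a single two-dimensional simple module $\rho$, which I would realize by $\rho(g) = \diag(\xi,\xi^{-1})$ and $\rho(x) = \left(\begin{smallmatrix} 0 & s\\ t & 0\end{smallmatrix}\right)$: the relation $gx=-xg$ forces $\rho(x)$ antidiagonal and $x^{2}=g^{2}-1$ forces $st = \xi^{2}-1 = -2$. Normalizing the module basis so that $s=-t$ (so $t^{2}=2$) is precisely what will make the stated relations hold with coefficient $1$; this is the one delicate choice, and I would present it as the selection of a suitable multiplicative matrix in the spirit of Theorem \ref{thm:stefan}.

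Next I set $e_{ij}(h):=\rho(h)_{ij}$ and $a=e_{11}$, $b=e_{12}$, $c=e_{21}$, $d=e_{22}$. The matrix-coefficient identities $\com(e_{ij})=\sum_{l}e_{il}\ot e_{lj}$ and $\varepsilon(e_{ij})=\delta_{ij}$ are automatic, yielding the matrix-like basis of (ii) and showing $\{a,b,c,d\}$ spans a copy of $\coM$. The relations in (i) I would check by evaluating each convolution on $\{g^{m},g^{m}x\}$; since $\rho(g^{m})$ is diagonal and $\rho(g^{m}x)$ is antidiagonal, most products vanish termwise. This gives $b^{2}=c^{2}=bc=cb=0$ immediately, $a^{2}=e_{11}e_{11}$ equal to the nontrivial character of $\matha''_{4}$ (so that $a^{2}$ is grouplike in $\mathcal{K}$ and $a^{4}=1$), $ad=da=\varepsilon=1$, $a^{2}c=b$ (where the normalization $s=-t$ enters), and the $\xi$-commutation $ac=\xi ca$; the remaining relations $ab=\xi ba$, $cd=\xi dc$, $bd=\xi db$ then follow formally using $d=a^{3}$ and $b=a^{2}c$.

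For (ii), expanding $\com(a)\com(c)$ with $\com(a)=a\ot a+b\ot c$ and $\com(c)=c\ot a+d\ot c$ and applying $b^{2}=c^{2}=bc=0$, $ad=1$ gives $\com(a^{2})=a^{2}\ot a^{2}$ and $\com(ac)=ac\ot a^{2}+1\ot ac$ directly. For (iii) I would note that the eight monomials $a^{m}c^{j}$ with $0\le m\le 3$ and $0\le j\le 1$ (equivalently, the set $\{1,a^{2},ac,a^{3}c\}\cup\{a,b,c,d\}$) form a basis of $\mathcal{K}$, that $\{a,b,c,d\}$ spans the simple subcoalgebra $\coM$, and that $\{1,a^{2},ac,a^{3}c\}$ is a subcoalgebra with grouplikes $1,a^{2}$ and skew-primitives $ac,a^{3}c$, hence a copy of $H_{4}$; the splitting $\mathcal{K}\simeq H_{4}\oplus\coM$ as coalgebras is then immediate. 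The main obstacle is simply organizing the convolution bookkeeping so that the powers of $\xi$ and the normalizing scalar come out exactly as stated; that $\{a,b,c,d\}$ generates all of $\mathcal{K}$, and not a proper sub-Hopf algebra, can be secured either from the linear independence of the eight monomials or by observing that a proper sub-Hopf algebra containing a four-dimensional simple subcoalgebra would be a semisimple Hopf algebra of dimension $4$, which has no such subcoalgebra.
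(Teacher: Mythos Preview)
Your argument is correct. The paper itself does not prove this remark; it simply records the statement as ``essentially \cite[Lemma 3.3]{GV}'' and moves on. So there is no proof in the paper to compare against, and your self-contained computation of $(\matha''_{4})^{\ast}$ via matrix coefficients of the simple $\matha''_{4}$-modules is a genuine addition rather than a repetition.

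A few comments on the execution. The classification of simple $\matha''_{4}$-modules is right: the two characters $g\mapsto\pm1$, $x\mapsto0$ and the single two-dimensional simple with $\rho(g)=\diag(\xi,\xi^{-1})$. Your normalization $s=-t$ is exactly what is needed for $a^{2}c=b$ to hold without an extraneous scalar; it would be worth saying explicitly that this is just a choice of basis in the simple module and hence costs nothing. The convolution checks are routine once one observes that $a,d$ vanish on $g^{m}x$ and $b,c$ vanish on $g^{m}$. For $\com(a^{2})=a^{2}\ot a^{2}$, note that in the expansion $(a\ot a+b\ot c)^{2}$ the cross terms $ab\ot ac+ba\ot ca$ cancel because $ab=\xi ba$ and $ac=\xi ca$ give $ab\ot ac=\xi^{2}(ba\ot ca)=-ba\ot ca$; you assert the formula but do not display this cancellation. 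Finally, your generation argument at the end is fine either way: the linear-independence route is cleanest, since the eight monomials $a^{m}c^{j}$ evaluate on the basis $\{g^{m},g^{m}x\}$ via two Vandermonde-type blocks; the alternative sub-Hopf-algebra argument also works, since any proper sub-Hopf algebra containing $\coM$ would have dimension exactly $4$ and be cosemisimple, hence a group algebra, which is pointed.
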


Using Remark \ref{lemma:desc-A}, one sees
that $\mathcal{K}$ is a finite dimensional quotient of the quantum
group $\Oc_{\xi}(SL_{2})$; this is consistent with
  Proposition \ref{prop:natale-stefan}.

\subsection{Nonsemisimple Hopf algebras of
dimension $ 8p $}

Throughout this section, unless otherwise stated,  we will assume
that $H$ is a nonsemisimple nonpointed nonbasic Hopf algebra of dimension $8p$.
 Also recall that $p$ denotes an odd prime.  Our strategy will
be to study the possible orders for the grouplikes in $H$ where
$\dim H = 8p$.  In this  section we prove Theorem \ref{thm:8p}.

\subsubsection{Group of grouplikes divisible by $p$}
In this subsection we concentrate on general results for
  Hopf algebras of dimension $8p$ with $|G(H)|$ divisible by
$p$.

\begin{prop}
\label{pr: dim not p,8p,4p}
$|G(H)| \neq 8p, 4p $ or $p$.
\end{prop}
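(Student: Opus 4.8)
The plan is to rule out each of the three possible orders $8p$, $4p$, and $p$ separately, exploiting the fact that $H$ is nonsemisimple, nonpointed, and non-copointed, together with the counting machinery from the coradical filtration. Since $H$ is nonsemisimple, the coradical $\Ho$ cannot equal $H$, and since $H$ is nonpointed, $\Ho$ must contain at least one simple subcoalgebra of dimension $d^2 > 1$. By Lemma \ref{lema:andrunatale}, the order $|G(H)|$ divides $\dim H_{0,d}$ for each $d$, and $|G(H)|$ divides $\dim H_n$ and $\dim P_n$ for all $n$. These divisibility constraints, combined with the dimension bound of Proposition \ref{prop:biti-dasca}(ii), will force most configurations to overflow $\dim H = 8p$.

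The case $|G(H)| = 8p$ is immediate: this would make $|G(H)| = \dim H$, so $H = \Bbbk G(H)$ would be a group algebra and hence semisimple, contradicting our standing hypothesis. For the case $|G(H)| = p$, I would argue as in Proposition \ref{prop:dim4p-groupp}: write $\Ho = \Bbbk G \oplus (\oplus_i D_i^{n_i})$ with the $D_i$ simple of dimension $d_i^2 > 1$. If $p$ divides some $d_i$, then $\dim \Ho \geq p + p^2 = p(1+p)$, which already exceeds $8p$ once $p \geq 8$; for the small odd primes $p = 3,5,7$ this needs separate attention, but in those cases the presence of a dimension-$p$ simple coalgebra forces $d_i = p$ and one checks $p + p^2$ against $8p$ directly. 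If instead $(p, d_i) = 1$ for all $i$, then $p \mid n_i$ by Lemma \ref{lema:andrunatale}, giving $\dim \Ho \geq p + 4p = 5p$; then applying Proposition \ref{prop:biti-dasca}(ii) with the smallest block $n_1 = 2$ pushes $\dim H$ past $8p$. Here I would lean on the fact (Lemma \ref{lema: dim H mn rel pr}) that $H$ has no nontrivial skew-primitives when $\gcd(|G(H)|, \dim H/|G(H)|) = 1$, so Proposition \ref{prop:biti-dasca} applies cleanly.

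The case $|G(H)| = 4p$ is the most delicate and I expect it to be the main obstacle. Here $\dim H^{\co\pi} = 2$ for any quotient of order $4p$, and the divisibility constraints are weaker since $4p$ divides $8p$. The natural approach is to consider the projection onto the group algebra: since $|G(H)| = 4p$, the inclusion $\Bbbk G(H) \hookrightarrow H$ dualizes to a surjection $\pi \colon H^* \twoheadrightarrow \Bbbk G(H)^*$ of Hopf algebras, and $\dim H = 2 \dim(\Bbbk G(H)^*)$ exactly, so Proposition \ref{pr: HNg 1.3} applies to $H^*$. This yields $(H^*)^{\co\pi} = \Bbbk\{1,x\}$ with $x$ an $(1,g)$-primitive. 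If $x$ is nontrivial, then $H^*$ contains a copy of $H_4$ and hence has a nontrivial skew-primitive, forcing a pointed sub-Hopf algebra structure on $H^*$ that I would leverage toward showing $H^*$ is pointed (i.e. $H$ copointed), contradicting our hypothesis. If $x$ is trivial, then $(H^*)^{\co\pi} = \Bbbk C_2$ gives an exact sequence $\Bbbk C_2 \hookrightarrow H^* \twoheadrightarrow \Bbbk G(H)^*$; by Proposition \ref{prop:exact-dim-cor} this computes $\dim(H)_0 = |G(H)| \cdot \dim(\Bbbk C_2)^*_0 = 4p \cdot 2 = 8p$, forcing $H = H_0$ to be cosemisimple and hence semisimple, again a contradiction. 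The subtle point to verify carefully is that the hypotheses of Proposition \ref{pr: HNg 1.3} genuinely hold for the map $\pi$ on $H^*$ (in particular that $\dim H = 2\dim A$ with $A = \Bbbk G(H)^*$), and that the two branches — trivial versus nontrivial $x$ — are genuinely exhaustive and each genuinely contradicts nonsemisimplicity or non-copointedness.
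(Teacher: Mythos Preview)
Your handling of $|G(H)|=8p$ and $|G(H)|=p$ is essentially the paper's approach (counting via Lemma~\ref{lema:andrunatale}, Lemma~\ref{lema: dim H mn rel pr}, and Proposition~\ref{prop:biti-dasca}); the only loose end is the sub-case $p\mid d_i$ for $p=3,5,7$, but the same bound from Proposition~\ref{prop:biti-dasca}(ii) with $n_1\ge p$ immediately gives $\dim H \ge p+p^2+(2p+1)p+p^2>8p$, so this is easily repaired.

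The real problem is your argument for $|G(H)|=4p$. In the branch where $x$ is a nontrivial skew-primitive, you conclude only that $H^\ast$ contains a copy of $H_4$, and then say you ``would leverage'' this toward $H^\ast$ being pointed. But containing $H_4$ does not force $H^\ast$ to be pointed---indeed, the paper exhibits several nonpointed Hopf algebras with $H_4$ as a sub-Hopf algebra (e.g.\ $\mathcal K=(\mathcal A_4'')^\ast$ and $\mathcal A(-1,1)^\ast$). There is no evident mechanism here to upgrade ``has a nontrivial skew-primitive'' to ``is pointed,'' so this branch is a genuine gap. Secondarily, in the trivial-$x$ branch your appeal to Proposition~\ref{prop:exact-dim-cor} requires the quotient to be a \emph{group algebra} $\Bbbk\Gamma$, but your quotient is $\Bbbk^{G(H)}$, which is a group algebra only when $G(H)$ is abelian; for $|G(H)|=4p$ this need not hold.

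The paper's argument for $|G(H)|=4p$ avoids all of this and is a one-liner: since $H$ is nonpointed, $H_0=\Bbbk G(H)\oplus E$ with $E\ne 0$ a sum of simple coalgebras of dimension $>1$; by Lemma~\ref{lema:andrunatale}, $4p$ divides $\dim E$, so $\dim H_0\ge 4p+4p=8p=\dim H$, forcing $H=H_0$ cosemisimple, a contradiction. You already used exactly this divisibility in your $|G(H)|=p$ case---simply apply it here as well.
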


\begin{proof}
For $H$ non-cosemisimple, $|G(H)| \neq 8p$.   If $|G(H) | = 4p$,
since $H$ is not pointed, $H_0 = \Bbbk G(H) \oplus E$  with
$E$ the sum of simple coalgebras of dimension bigger than 1. Since
$4p$ must divide $\dim(E)$ by Lemma \ref{lema:andrunatale},
then  we must have $H=H_0$, impossible because
  $H$ is not cosemisimple.

\par If $|G(H)|=p$,  then $H$ has no nontrivial skew-primitives
by Lemma \ref{lema: dim H mn rel pr}. Now
we use counting arguments as in the previous sections. Suppose that $H_0 =
\Bbbk G(H) \oplus D_1^{s_1} \oplus \ldots D_t^{s_t} $ for $D_i$   simple of
dimension $n_i^2$ and $ 2 \leq n_1 < \ldots <   n_{t}$.  Let $\D$ denote the set
of simple coalgebras $D_i$. Then by  Proposition
\ref{prop:biti-dasca}(i) and Lemma \ref{lema:fukuda}(ii), $2 \dim P^{C_p, \D}
\geq 2pn_1 $. If $p$ divides $n_1$, then $\dim H \geq \dim H_0 + 2\dim P^{C_p,\D}
\geq p + p^2 + 2p^2 = p(1 + 3p) >8p$ since $p \geq 3$.
 If $(p,n_1) = 1$, then $p $ divides $s_1$ and $\dim H \geq p + 4p + 4p >8p$.
  Thus  in each case, we arrive at a contradiction.
\end{proof}

Thus, if $|G(H)| = 2p$, then $H$ cannot have the Chevalley property.
 For,
suppose that $H_0$ is a sub-Hopf algebra of $H$.  Since $H$ is not pointed or cosemisimple,
 $\dim H_0 = 4p$. Since the semisimple Hopf algebras $\mathcal{G}_i$ have grouplikes of order $4$,
then $H_0 \cong \Bbbk^\Gamma$ where $\Gamma$ is a nonabelian group of order $4p$.  But then $H_0 =
\Bbbk^ \Gamma \cong \Bbbk G(H_0) \oplus D$ where $|G(H_0)| = 2p$ and $D$ is a sum of simple coalgebras.
By Lemma \ref{lema:andrunatale}, $D$ is a sum of matrix coalgebras of dimension $d>1$ and $2p = nd^2$
which is impossible.

\begin{remark}\label{rm: 3 5 7} As in the proof above,
we  use Lemma \ref{lema: dim H mn rel pr}
 together with counting arguments to eliminate
the possibility that $|G(H)| =8$ for some small dimensions.
 Let $\dim H = 8p$ with $p \in \{3,5,7 \}$ and suppose $|G(H)| = 8$.
  By Lemma \ref{lema: dim H mn rel pr},
$H$ has no nontrivial skew
primitive elements. Since  $\dim H_0 = 8 + 8m$ for some
integer $m \geq 1$, by Lemma \ref{cor:bitidasca-p1}(ii), we have that
$\dim H \geq 16 + 40 + 4 = 60$.
\end{remark}

The next proposition shows that  type $(8, 2p)$ is impossible.
\begin{prop}\label{prop:group-8-p}
  If $\vert G(H)\vert=2p$ then  $  H^{*} $ has no
semisimple sub-Hopf algebra $L$ of dimension $8$.
\end{prop}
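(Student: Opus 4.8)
The plan is to argue by contradiction. Suppose $L \subseteq H^{*}$ is a semisimple sub-Hopf algebra with $\dim L = 8$. Dualizing the inclusion $L \hookrightarrow H^{*}$ and using the canonical identification $(H^{*})^{*} \cong H$, I obtain a Hopf algebra epimorphism $\pi\colon H \twoheadrightarrow L^{*}$, where $L^{*}$ is again semisimple of dimension $8$. Setting $R := H^{\co\pi}$, Lemma \ref{lm: dim H co dim B = dim H} gives $\dim R = \dim H / \dim L^{*} = p$. The goal is then to pin down $R$ as $\Bbbk C_{p}$ and thereby produce an exact sequence with semisimple ends.

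The crux is to locate a copy of $C_{p}$ inside $R$. Since $|\GH| = 2p$, the group $\GH$ has a Sylow $p$-subgroup $\langle t\rangle \cong C_{p}$, with $t$ a grouplike of order $p$. As $\pi$ is a Hopf map, $\pi(t)$ is a grouplike of $L^{*}$, so $\pi$ restricts to a group homomorphism $\langle t\rangle \to G(L^{*})$. Its image has order dividing both $p$ and $|G(L^{*})|$; since $|G(L^{*})|$ divides $\dim L^{*} = 8$ by the Nichols--Zoeller theorem and $p$ is an odd prime, this forces $\pi(t) = 1$. Hence $t \in H^{\co\pi} = R$, so $\Bbbk C_{p} \subseteq R$, and comparing dimensions yields $R = \Bbbk C_{p}$, a semisimple sub-Hopf algebra of $H$.

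With $R = \Bbbk C_{p}$ established as a sub-Hopf algebra, Lemma \ref{lm: dim H co dim B = dim H} produces an exact sequence of Hopf algebras $\Bbbk C_{p} \hookrightarrow H \twoheadrightarrow L^{*}$ in the sense of Section \ref{subsec:extensions}. Now both $\Bbbk C_{p}$ and $L^{*}$ are semisimple, so by the standard fact that an extension of a semisimple Hopf algebra by a semisimple Hopf algebra is semisimple, $H$ would be semisimple. This contradicts the standing assumption that $H$ is nonsemisimple, and the contradiction shows no such $L$ can exist. Note that here $L^{*}$ need not be a group algebra (it could be $A_{8}$ or a dual group algebra), so Proposition \ref{prop:exact-dim-cor} does not apply directly, which is why I route the argument through the semisimplicity of the extension.

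The step I expect to require the most care is the last one: invoking semisimplicity of the extension. It is legitimate precisely because the identification $R = \Bbbk C_{p}$ together with Lemma \ref{lm: dim H co dim B = dim H} guarantees that $\ker \pi = (\Bbbk C_{p})^{+}H$ and $\Bbbk C_{p} = H^{\co\pi}$, so the sequence is a genuine Hopf algebra extension of finite-dimensional Hopf algebras in characteristic zero, to which the semisimplicity-of-extensions theorem applies. The grouplike computation of the middle paragraph is what makes $H^{\co\pi}$ coincide with $\Bbbk C_{p}$; the remainder is bookkeeping.
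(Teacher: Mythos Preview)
Your proof is correct and follows essentially the same approach as the paper's own argument: dualize the inclusion to get $\pi\colon H\twoheadrightarrow L^{*}$, use that $|G(L^{*})|$ divides $8$ to force the order-$p$ subgroup of $G(H)$ into $H^{\co\pi}$, identify $H^{\co\pi}=\Bbbk C_{p}$ by dimension, and conclude via the exact sequence that $H$ would be semisimple. The paper's version is simply more terse; your additional justification of each step (particularly the Nichols--Zoeller reasoning for $|G(L^{*})|\mid 8$ and the explicit appeal to Lemma~\ref{lm: dim H co dim B = dim H}) is welcome and does not diverge from the intended line.
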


\begin{proof} Suppose $H^{*}$ contains a semisimple sub-Hopf algebra $L$ of dimension $8$ and
let $\Gamma$ be a subgroup of $ G(H) $ of order $p$. Since $L^\ast$
is semisimple   and  has no grouplike elements of order $p$,  $ \Bbbk
\Gamma \hookrightarrow H \twoheadrightarrow L^\ast$ is an exact sequence
of Hopf algebras. This implies
that $H$ is semisimple, a contradiction.
\end{proof}

 The next proposition determines the coalgebra structure of $H$ when
 $|G(H)| = 2p$.

\begin{prop}\label{pr: grouplikes 2p}
Suppose $|G(H)| = 2p$.
%\par \emph{(i)}
\begin{enumerate}
 \item[(i)]
$H$ contains a pointed
sub-Hopf algebra $\matha$ of dimension
$4p$ and as a coalgebra $H \cong \matha \oplus \mathcal{M}^\ast
(2,\Bbbk)^p$.
%\par \emph{(ii)}
 \item[(ii)] If $H^\ast$ is generated by a simple subcoalgebra
of dimension $4$ fixed by $S_{H^\ast}^4$ then
$S_H$ has order $4$.
\end{enumerate}
\end{prop}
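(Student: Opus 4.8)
The plan is to determine the coalgebra structure of $H$ by counting, to extract from the skew-primitives the pointed sub-Hopf algebra $\matha$ of part (i), and then to obtain part (ii) by dualizing $\matha\hookrightarrow H$ and transporting the order of the antipode through the resulting quotient. For part (i) I would first pin down $\Ho$. By Lemma~\ref{lema:andrunatale}, $|\GH|=2p$ divides $\dim H_{0,d}$ for every $d$; since $p$ is odd, a nonzero dimension-$4$ part has $\dim H_{0,2}$ a positive multiple of $4p$, and a short divisibility check shows any simple subcoalgebra of dimension $\ge 9$ would force $\dim\Ho\ge 8p$. As $H$ is neither pointed nor, by Remark~\ref{rm: LR2}, cosemisimple, we have $2p<\dim\Ho<8p$, leaving only $\Ho=\Bbbk\GH\oplus\mathcal{M}^{\ast}(2,\Bbbk)^{p}$ of dimension $6p$. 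Then $\sum_{n\ge 1}\dim P_{n}=2p$ with each $\dim P_{n}$ divisible by $2p$, so $\dim P_{1}=2p$, $P_{n}=0$ for $n\ge 2$, and $H=H_{1}$.

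I would then show $H$ has a nontrivial skew-primitive: otherwise Proposition~\ref{prop:biti-dasca}(ii) with $n_{1}=2$ gives $\dim H\ge 6p+(2\cdot 2+1)2p+4=16p+4>8p$, a contradiction. Let $\matha$ be the sub-Hopf algebra generated by $\GH$ and the skew-primitives; it is pointed by \cite[5.5.1]{Mo}, and $G(\matha)=\GH$ forces $2p\mid\dim\matha\mid 8p$. Nonsemisimplicity excludes $\dim\matha=2p$, and $\matha\ne H$ (else $H$ is pointed) excludes $8p$, so $\dim\matha=4p$; hence $\matha$ is one of the pointed Hopf algebras of Section~\ref{sect: 4p} and $\GH\cong C_{2p}$. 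Since $\matha=\matha_{1}\subseteq H_{1}=H$ and $\matha\cap\Ho=\Bbbk\GH$, the image of $\matha$ in $H/\Ho\cong P_{1}$ has dimension $4p-2p=2p=\dim P_{1}$, so $\matha+\Ho=H$; as $\matha$ (being pointed) meets $\mathcal{M}^{\ast}(2,\Bbbk)^{p}$ trivially, a dimension count yields $H=\matha\oplus\mathcal{M}^{\ast}(2,\Bbbk)^{p}$ as coalgebras.

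For part (ii), recall $\ord S_{H}=\ord S_{H^{\ast}}$ and $S_{H^{\ast}}^{2}\ne\id$ (Remark~\ref{rm: LR2}), so it suffices to prove $S_{H^{\ast}}^{4}=\id$. Let $D$ be the generating simple subcoalgebra of dimension $4$ with $S_{H^{\ast}}^{4}(D)=D$; then $S_{H^{\ast}}^{4}|_{D}$ is a finite-order coalgebra automorphism of $D\cong\mathcal{M}^{\ast}(2,\Bbbk)$, so by Theorem~\ref{thm:stefan}(ii) there are a multiplicative matrix $\{e_{ij}\}$ of $D$ and a root of unity $\omega$ with $S_{H^{\ast}}^{4}(e_{ij})=\omega^{\,i-j}e_{ij}$. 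Dualizing $\matha\hookrightarrow H$ gives an epimorphism $\pi\colon H^{\ast}\twoheadrightarrow\matha^{\ast}$, and since $S^{4}=\id$ on every algebra of Section~\ref{sect: 4p} we get $S_{\matha^{\ast}}^{4}=\id$, i.e. $\pi\circ S_{H^{\ast}}^{4}=\pi$. Now $\matha^{\ast}$ is nonsemisimple, hence not generated by its grouplikes; as $\pi(D)$ generates $\matha^{\ast}$, this forces $\pi(e_{12})\ne 0$ or $\pi(e_{21})\ne 0$ (otherwise $\Delta(e_{11})=e_{11}\otimes e_{11}+e_{12}\otimes e_{21}$ makes $\pi(e_{11}),\pi(e_{22})$ grouplike and $\pi(D)\subseteq\Bbbk G(\matha^{\ast})$). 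Applying $\pi$ to $S_{H^{\ast}}^{4}(e_{ij})=\omega^{\,i-j}e_{ij}$ then gives $\omega=1$, so $S_{H^{\ast}}^{4}|_{D}=\id$ and, $D$ being a generator, $S_{H^{\ast}}^{4}=\id$; with $S_{H^{\ast}}^{2}\ne\id$ this yields $\ord S_{H}=\ord S_{H^{\ast}}=4$.

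The step I expect to be the main obstacle is this last one: the hypothesis supplies a generating subcoalgebra stable only under $S^{4}$, not under $S^{2}$, so one cannot directly invoke Lemma~\ref{truco util bis}. The device that makes it go through is to put $S_{H^{\ast}}^{4}|_{D}$ into Stefan's diagonal normal form and read off $\omega=1$ from the quotient $\pi$ onto the nonsemisimple $\matha^{\ast}$; verifying that $\pi$ does not collapse $D$ into the grouplikes of $\matha^{\ast}$ — exactly where nonsemisimplicity of $\matha^{\ast}$ enters — is the crux.
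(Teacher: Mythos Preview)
Your proof is correct and follows essentially the same strategy as the paper: pinning down $H_0$ by counting, extracting $\matha$ from a nontrivial skew-primitive via Proposition~\ref{prop:biti-dasca}, and for (ii) diagonalizing $S_{H^\ast}^4|_D$ via Theorem~\ref{thm:stefan}(ii) and pushing through the projection $\pi\colon H^\ast\twoheadrightarrow\matha^\ast$ (the paper phrases this last step as a contradiction from $\ord(S_{H^\ast}^4|_D)>1$, you show $\omega=1$ directly---these are contrapositives). One notational slip: your line ``$\sum_{n\ge1}\dim P_n=2p$, $P_n=0$ for $n\ge2$'' treats the filtration $P_n$ of \S\ref{subsec:coradical filtration} as a grading, but read as $\dim(P_n/P_{n-1})$ the conclusion $H=H_1$ stands, and your explicit verification that $\dim\matha=4p$ and that the splitting $H=\matha\oplus\mathcal{M}^\ast(2,\Bbbk)^p$ is one of coalgebras actually fills in details the paper's proof leaves to the reader.
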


\begin{proof}(i) Since $H$ is not pointed,
$H_0 = \Bbbk G(H) \oplus D_1 \oplus
\ldots \oplus D_t$ where the $D_i$ are
simple coalgebras of dimension greater than $1$.
Suppose that $H_{0, mp} \neq 0$ where $m \geq 1$.
Then ${\rm dim}(H_{0,mp}) \geq 2p^2$ and thus ${\rm dim}(H_0) \geq
2p + 2p^2 = p(2 + 2p) \geq 8p$ since $p \geq 3$, and this is
impossible since $H$ is nonsemisimple. If $H_{o,d} \neq 0$ for
$(d,p) = 1$ and $d
>2$ then ${\rm dim}(H_0) \geq 2p + pd^2 = p(2+d^2) > 8p$ which is
also impossible. Thus $D_i =
 \mathcal{M}^\ast(2, \Bbbk)$ for all $i$, and
$H_0 \cong \Bbbk G(H) \oplus    \mathcal{M}^\ast(2, \Bbbk)^p$ as
    coalgebras.

By Proposition \ref{prop:biti-dasca}(ii), $H$ has a nontrivial skew-primitive
$x$ and $x$ together
with $G(H)$ generates a pointed sub-Hopf algebra $\matha$ of $H$ of
dimension $4p$ and (i) is proved.

(ii) By (i) there is a Hopf algebra projection
$\pi: H^\ast \rightarrow \matha^\ast$ for $\mathcal{A}$
the pointed Hopf algebra of dimension
$4p$ from (i).
Then $S_{\matha}$ and $S_{\matha^\ast}$ have order $4$.
Suppose $D \cong \mathcal{M}^\ast(2,\Bbbk)  \subset H^\ast$
is stable under $S_{H^\ast}^4$ and generates $H^\ast$, and suppose
that $S_{H^\ast}^4$ has order $N>1$.  Let $\mathbf{e}$
be a multiplicative matrix for $D$ as in
Theorem \ref{thm:stefan} such that $S_{H^\ast}^4(e_{ij})
= \omega^{i-j}e_{ij}$ where $\omega$ is a primitive $N^{th}$
root of unity.  Then if $i \neq j$, $\pi(e_{ij}) = 0$ and thus
$\dim \pi(D) <3$.  By Theorem \ref{thm:2x2-matrix},
$\pi(D) \subseteq G(\matha^\ast)$ so that $\pi(D)$
does not generate $\matha^\ast$, contradicting
the fact that $D$ generates $H^\ast$.
\end{proof}

Next we show that if $|G(H)| = 2p$, then $H^{*}$ cannot
contain a copy of the Sweedler Hopf algebra.

\begin{prop}\label{prop:2p-sweedler}
Assume $ |G(H)| = 2p$.
Then $ H^{\ast}$ has no sub-Hopf algebra isomorphic to $H_4$.
\end{prop}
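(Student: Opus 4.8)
The plan is to argue by contradiction: assume $H^\ast$ contains a sub-Hopf algebra isomorphic to $H_4$. Since $H_4$ is self-dual, dualizing the inclusion $H_4\cong H_4^\ast\hookrightarrow H^\ast$ produces a Hopf algebra epimorphism $\pi\colon H\twoheadrightarrow H_4$. By Proposition \ref{pr: grouplikes 2p}(i), $H$ contains the pointed sub-Hopf algebra $\matha$ of dimension $4p$, and since every pointed Hopf algebra of dimension $4p$ has group of grouplikes $C_{2p}$, we get $G(\matha)=G(H)\cong C_{2p}$. Let $\Gamma=\langle g^2\rangle\cong C_p$ be the unique subgroup of $G(H)$ of order $p$. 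As $\pi$ sends grouplikes to grouplikes and $|G(H_4)|=2$ is coprime to $p$, we must have $\pi(\Gamma)=1$, so $\Bbbk\Gamma\subseteq R:=H^{\co\pi}$; by Lemma \ref{lm: dim H co dim B = dim H}, $\dim R=8p/4=2p$.

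The first main step is to upgrade this to an exact sequence by showing that $\Bbbk\Gamma$ is a normal Hopf subalgebra of $H$. Since $G(H)$ is abelian, $\Bbbk G(H)$ centralizes $\Gamma$; a direct computation (as in the verification that $g^2$ is central in $\matha$, using $gx=-xg$) shows that the nontrivial skew-primitive of $\matha$ normalizes $\Bbbk\Gamma$ as well, so all of $\matha$ does. The remaining point is that the $p$ four-dimensional simple subcoalgebras appearing in the decomposition $H\cong\matha\oplus\mathcal{M}^\ast(2,\Bbbk)^p$ of Proposition \ref{pr: grouplikes 2p}(i) also stabilize $\Bbbk\Gamma$ under the adjoint action; here I would exploit that $\Gamma$ is characteristic in $G(H)$ and that conjugation by $\Gamma$ permutes these $p$ coalgebras in orbits of size $1$ or $p$. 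Granting normality, we obtain a normal exact sequence $\Bbbk\Gamma\hookrightarrow H\twoheadrightarrow\bar H$ with $\dim\bar H=8$, and since $\pi$ annihilates $\Bbbk\Gamma^+$, it factors as $\bar\pi\colon\bar H\twoheadrightarrow H_4$.

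Finally I would extract a contradiction from the dimension-$8$ quotient $\bar H$. As a Hopf algebra surjecting onto the nonsemisimple $H_4$, the algebra $\bar H$ is itself nonsemisimple of dimension $8$, and the image of $G(H)=C_{2p}$ gives $|G(\bar H)|\geq 2$. Using the crossed-product description $H\cong\Bbbk\Gamma\,\#\,\bar H$ afforded by the exact sequence, I would compare the coradical of $H$ built from $\bar H_0$ with the known coalgebra structure $H\cong\matha\oplus\mathcal{M}^\ast(2,\Bbbk)^p$, applying Proposition \ref{prop:exact-dim-cor} and Lemma \ref{lema:andrunatale} to count grouplikes and four-dimensional simple subcoalgebras, and invoking the classification of nonsemisimple Hopf algebras of dimension $8$. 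The aim is to show that a pointed $\bar H$ would force $H$ to be pointed, while the only nonpointed option $\bar H=\mathcal K$ would force $H$ to be copointed, in either case contradicting the standing hypothesis that $H$ is non-pointed and non-copointed. I expect this last step, together with establishing the normality of $\Bbbk\Gamma$, to be the main obstacle: the delicate part is ruling out $\mathcal K$ as a possible $\bar H$, which may require a finer analysis of how the $\mathcal{M}^\ast(2,\Bbbk)^p$-part lies over $\bar H$, or a reduction to the impossibility results already established in Propositions \ref{prop:group-8-p} and \ref{pr: dim not p,8p,4p}.
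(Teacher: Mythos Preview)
Your approach diverges from the paper's at the second step, and the divergence is exactly where your gap lies. The paper never tries to produce a dimension-$8$ quotient of $H$; instead it stays with the original surjection $\pi\colon H\to H_4$ and counts coinvariants coalgebra-by-coalgebra. Using Lemma~\ref{lema:proj-coalg} (and a case analysis on $\langle D_j\rangle$ when $\pi|_{\langle D_j\rangle}$ is not onto), it shows that each $D_j$ contributes at least two linearly independent elements to either the left or the right coinvariants. Since $p$ is odd, one side receives more than $p$ such elements, and together with $\Bbbk\Gamma$ this pushes the coinvariant dimension past $2p$.

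Your route through normality of $\Bbbk\Gamma$ is not justified. The argument you offer for the simple $4$-dimensional subcoalgebras---that $\Gamma$ is characteristic in $G(H)$ and that left multiplication by $\Gamma$ permutes the $D_j$---says nothing about the adjoint action of elements of $D_j$ on $\Bbbk\Gamma$, which is what normality requires. There is no general principle forcing a group algebra $\Bbbk C_p\subset\Bbbk G(H)$ to be a normal Hopf subalgebra of $H$, and nothing in the coalgebra decomposition $H\cong\matha\oplus\mathcal M^\ast(2,\Bbbk)^p$ supplies one. So this is a genuine missing idea, not just a detail to be filled in.

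It is worth noting, though, that your endgame would be cleaner than you fear \emph{if} normality held: dualizing $\Bbbk\Gamma\hookrightarrow H\twoheadrightarrow\bar H$ and applying Proposition~\ref{prop:exact-dim-cor} gives $\dim\bar H_0=\dim H_0/p=6$, which among nonsemisimple dimension-$8$ Hopf algebras singles out $\bar H\cong\mathcal K$. But $\mathcal K^\ast=\mathcal A_4''$ has no $H_4$ sub-Hopf algebra (Section~\ref{subsect: 8}), so $\mathcal K$ does not surject onto $H_4$, contradicting the factorization of $\pi$. So the obstruction is really only normality---but that obstruction is decisive, and the paper's coinvariant-counting argument via Lemma~\ref{lema:proj-coalg} sidesteps it entirely.
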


\begin{proof}
If $H^*$ contains a sub-Hopf algebra
isomorphic to $H_{4}$, there exists   a Hopf algebra
epimorphism $\pi: H \to H_{4}$. Then,
by Lemma \ref{lm: dim H co dim B = dim H},
 $\dim H^{\co \pi} = \dim\ ^{\co \pi}H = 2p$.
Let $G(H) = \langle c \rangle \cong C_{2p}$
 and let $\Gamma = \langle c^2 \rangle \cong C_p$.
  Since $p$ is odd, we have that
$\Bbbk \Gamma$ is included both in $H^{\co\pi}$ and $^{\co\pi}H $.

On the other hand,
Proposition \ref{pr: grouplikes 2p} implies that $H \simeq
\matha \oplus D$ where $D =  D_{1}
\oplus \cdots \oplus D_{p}$, with $D_{j} \simeq
\mathcal{M}^\ast(2,\Bbbk)$,  for
all $1\leq j\leq p$. We will prove that for every $j$, $1 \leq j \leq p$,
  $\dim D_{j}^{\co \pi}\geq 2$ or $\dim\ ^{\co\pi}D_{j} \geq 2$.
  This fact leads to a contradiction.  Indeed, suppose that for $n$ of the $D_j$,
  $\dim D_{j}^{\co \pi}\geq 2$ and for the remaining $p-n$ coalgebras
$D_j$, $\dim\ ^{\co\pi}D_{j} \geq 2$.
  Since $p$ is odd, either $2n >p$ or $2(p-n) >p$ so
that either $\dim ^{co\pi}D >p$ or $\dim D^{co\pi}>p$.
  Since $\Bbbk \langle c^2 \rangle $ lies in both the left
and right coinvariants, this implies
   that either $\dim ^{co\pi}H >2p$ or $\dim H^{co\pi} > 2p$,
and this gives the desired
  contradiction.

Fix a simple subcoalgebra $D_{j}$ and let $K=\langle D_{j}\rangle$,
the sub-Hopf algebra of $H$ generated by $D_{j}$.
Clearly, $\dim K = 8, 2p, 4p$ or $8p$.
We write $\pi$ also for $\pi|_K$ when the meaning is clear.
If $\pi$ maps $K$ onto $H_4$, then the result
follows from Lemma \ref{lema:proj-coalg}; in particular, $ \dim K \neq 8p $.
If $\pi(K) = \Bbbk$, then $\pi|_K = \varepsilon_K$.
Hence for ${\bf d} = (d_{ij})$ a multiplicative matrix for $D_j$,
$\pi(d_{ij}) = \delta_{ij}$ and $D_j$ lies in both the
left and right coinvariants. It remains to consider
the case when $\pi(K) = \Bbbk G(H_4)= \Bbbk \langle g \rangle$
where $g$ generates   $G(H_4) \simeq C_{2} $.

Assume $\dim K = 8$. Since $K$ is nonpointed,  by
Subsection \ref{subsect: 8} we have that
$K = \mathcal{K} = (\mathcal{A}^{\prime \prime}_4)^\ast \cong L \oplus D_j$
as coalgebras
where $L \cong H_4$. Since $G(L) \subset G(H)$, we have that $c^p \in K$.
Suppose $\pi(c^p) =1$. Since $\pi(c)$ is a grouplike element and $|G(H_{4})|=2$,
we have that $\pi(c) = 1$. If $x$ is a nontrivial
skew-primitive in $H_4 \subset K$ such that $\pi(x) = 0$,
then $c,x$ lie in both $^{co\pi}H$ and $H^{co\pi}$,
contradicting the fact that the dimension of the coinvariants is $2p$.
Thus $\pi(c^p) = \pi(c) = g$, $^{co\pi}L = \{1, gx \}$, $L^{co\pi} = \{ 1, x \}$.
Since $\dim {^{co\pi}K} = \dim K^{co\pi} = 4$,
then we must have that $\dim {^{co\pi}D_j} = \dim D_j^{co\pi} = 2$.

Next we will show that if $\dim \pi(K) =2$ then $K$ cannot have
dimension $4p$ or $2p$. Suppose that $\dim K = 4p$.  Then $\dim
K^{co\pi} = 2p = \dim {^{co\pi}K}$ so that $K^{co\pi} = H^{co\pi}$
and the same for the left coinvariants. Thus $\Bbbk \langle
c^2\rangle \cong \Bbbk C_{ p} \subset K$. If $K$ is nonpointed
semisimple, by the classification of semisimple Hopf algebras of
dimension $4p$ in Section \ref{sec: 4p general}, $p$ does not divide
the order of $G(K)$ either if $K$ is the dual of a group algebra or
if $K$ is one of the semisimple Hopf algebras in \cite{G}. If $K$ is
not semisimple then by Theorem \ref{th: ChNg I}, $K$ is pointed, a
contradiction.

Finally, suppose now that $\dim K = 2p$ so that $K \cong
\Bbbk^{\mathbb{D}_{p}}$ and $\dim K^{co\pi} = p$. Let $\tilde{K}  =
\langle K, \Gamma\rangle$ be the sub-Hopf algebra of $H$ generated
by $K$ and $\Bbbk \langle c^2 \rangle$. Since $\tilde{K}$ is
semisimple, then $\tilde{K} \neq H$ and so has dimension $4p$. But
$\tilde{K}$ is then a  nonpointed semisimple sub-Hopf algebra of $H$
of dimension $4p$ with a grouplike of order $p$,
 and this is impossible by the proof in the paragraph above.
\end{proof}

The next proposition shows that type $(2p,r)$ can occur only if $r=2,4$.

\begin{prop}\label{prop:2p-exact}\label{pr: not 2p 2p}
Suppose  $ |G(H)| =2p $. Then
\begin{enumerate}
 \item[(i)] $ H $ fits into an exact sequence
of Hopf algebras
$ \matha \hookrightarrow H \twoheadrightarrow \Bbbk C_{2}$,
where $ \matha $ is a pointed Hopf algebra of dimension
$ 4p $. In particular, $G(H)$ is cyclic.

 \item[(ii)] If $ \matha^{*} $ is nonpointed, i.e., $\matha \cong \matha(-1,1)$, then
$\dim H^\ast_0 = 8p-4$,
 $G(H^{*})\cong C_4$ and $H^\ast$ has a  sub-Hopf algebra
 isomorphic to $\matha_4^{\prime \prime}$.

 \item[(iii)]If $\matha^\ast$ is pointed then
$\dim H^\ast_0 = 4p$ and $|G(H^\ast)|$ is $2$ or $4$.  If $H^\ast$ has a nontrivial
skew-primitive element, then $H^\ast$ has a  sub-Hopf algebra
isomorphic to $\matha_4^{\prime \prime}$.
\end{enumerate}
\end{prop}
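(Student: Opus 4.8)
The plan is to reduce everything to the $4p$-dimensional pointed sub-Hopf algebra $\matha$ of Proposition \ref{pr: grouplikes 2p}(i) and its dual, moving between $H$ and $H^\ast$ by dualizing exact sequences. For (i), I would dualize the inclusion $\matha\hookrightarrow H$ to a surjection $\rho\colon H^\ast\twoheadrightarrow\matha^\ast$; since $\dim H^\ast=2\dim\matha^\ast$, Proposition \ref{pr: HNg 1.3} gives $(H^\ast)^{\co\rho}=\Bbbk\{1,x\}$ with $x$ a $(1,g)$-primitive and $g^2=1$. If $x$ were nontrivial, the same proposition would force a copy of $H_4$ inside $H^\ast$, contradicting Proposition \ref{prop:2p-sweedler}; hence $x\in\Bbbk(1-g)$ and $(H^\ast)^{\co\rho}=\Bbbk\langle g\rangle\cong\Bbbk C_2$ is a sub-Hopf algebra. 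Lemma \ref{lm: dim H co dim B = dim H} then makes $\Bbbk C_2\hookrightarrow H^\ast\overset{\rho}\twoheadrightarrow\matha^\ast$ exact, and dualizing this finite-dimensional sequence (using that $C_2$ is abelian, so $(\Bbbk C_2)^\ast\cong\Bbbk C_2$) yields the exact sequence $\matha\hookrightarrow H\twoheadrightarrow\Bbbk C_2$ of (i).

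The dimension statements in (ii) and (iii) then follow by applying Proposition \ref{prop:exact-dim-cor} with $\Gamma=C_2$ to the sequence of (i), which gives $\dim H^\ast_0=2\dim(\matha^\ast)_0$. When $\matha^\ast$ is pointed, $(\matha^\ast)_0=\Bbbk C_{2p}$ has dimension $2p$, so $\dim H^\ast_0=4p$; when $\matha\cong\matha(-1,1)$, the coalgebra isomorphism $\matha(-1,1)^\ast\cong H_4\oplus\mathcal{M}^\ast(2,\Bbbk)^{p-1}$ gives $\dim(\matha^\ast)_0=2+4(p-1)=4p-2$, whence $\dim H^\ast_0=8p-4$. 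For the group orders I would restrict $\rho$ to grouplikes: its kernel lies in $(H^\ast)^{\co\rho}\cap G(H^\ast)=\{1,g\}$, so $|G(H^\ast)|\le 2\,|G(\matha^\ast)|$. In case (ii) this reads $|G(H^\ast)|\le 4$. In case (iii), $|G(H^\ast)|$ divides $\dim H^\ast_0=4p$ by Lemma \ref{lema:andrunatale}, is even (as $g$ has order $2$) and is $<4p$ (since $H^\ast$ is nonpointed); the value $2p$ is excluded by applying Proposition \ref{pr: grouplikes 2p}(i) to $H^\ast$ itself, which would force $\dim H^\ast_0=6p$, leaving $|G(H^\ast)|\in\{2,4\}$.

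To produce the copy of $\matha_4^{\prime\prime}$ I would build a suitable $8$-dimensional sub-Hopf algebra $K$ of $H^\ast$. In case (ii) the Sweedler sub-Hopf algebra $H_4\subseteq\matha^\ast$ pulls back to $K:=\rho^{-1}(H_4)$, of dimension $8$ by Lemma \ref{lm: dim H co dim B = dim H}. In case (iii) I would instead take $K:=\langle t,w\rangle$ for a nontrivial $(1,t)$-primitive $w$, observing that $t$ cannot have order $2$ (else $K\cong H_4\subseteq H^\ast$, against Proposition \ref{prop:2p-sweedler}); thus $t$ has order $4$, forcing $G(H^\ast)\cong C_4$ and $\dim K=8$. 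In both cases $K$ is a nonsemisimple $8$-dimensional Hopf algebra that, lying inside $H^\ast$, contains no copy of $H_4$, so the classification in Subsection \ref{subsect: 8} forces $K\cong\matha_4'$ or $K\cong\matha_4^{\prime\prime}$; both are pointed with $G(K)\cong C_4$, which together with the bound above pins down $G(H^\ast)\cong C_4$ in (ii).

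The main obstacle is to exclude $K\cong\matha_4'$, equivalently to show that the relevant skew-primitive squares to a nonzero multiple of $g^2-1$ rather than to $0$. I expect to settle this on the dual side: $K^\ast$ is an $8$-dimensional quotient of $H$, isomorphic to the pointed algebra $\matha_{4,\xi}'''$ if $K\cong\matha_4'$ and to the nonpointed algebra $\mathcal{K}=(\matha_4^{\prime\prime})^\ast$ if $K\cong\matha_4^{\prime\prime}$, so it suffices to prove that this quotient is \emph{nonpointed}. This is exactly where the nontriviality of the lifting $\matha\cong\matha(-1,1)$ should enter, through a dimension count (in the spirit of Lemma \ref{lema:andrunatale} and Theorem \ref{thm:2x2-matrix}) showing that a $4$-dimensional simple subcoalgebra originating in the $\mathcal{M}^\ast(2,\Bbbk)^p$-summand of $H$ survives, rather than collapsing onto grouplikes, in the quotient. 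Carrying out that counting carefully is the crux of the argument.
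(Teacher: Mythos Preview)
Your treatment of (i) and the computation of $\dim H^\ast_0$ via Proposition~\ref{prop:exact-dim-cor} match the paper, and your construction of $K=\langle t,w\rangle$ in case (iii) is fine. There are, however, two genuine problems.

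First, the construction $K:=\rho^{-1}(H_4)$ in case (ii) is invalid: the preimage of a subcoalgebra under a coalgebra map is \emph{not} in general a subcoalgebra, so $\rho^{-1}(H_4)$ need not be a sub-Hopf algebra. In fact it cannot be one: as a vector space it contains $\ker\rho$ (of dimension $4p$) and maps onto $H_4$, so $\dim\rho^{-1}(H_4)=4p+4$, which does not divide $8p$. Lemma~\ref{lm: dim H co dim B = dim H} does not apply because it presupposes that the source is a Hopf algebra. The paper avoids this by first pinning down $G(H^\ast)\cong C_4$ directly --- $|G(H^\ast)|$ divides $\dim H^\ast-\dim H^\ast_0=4$ by Lemma~\ref{lema:andrunatale}, and $C_2$ or $C_2\times C_2$ would force $H_4\subseteq H^\ast$ --- and only then takes $L$ to be the pointed sub-Hopf algebra generated by a nontrivial skew-primitive together with $G(H^\ast)$.

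Second, the step you flag as the crux --- excluding $L\cong\matha_4'$ --- has a short resolution that does not require tracking how simple subcoalgebras behave in a quotient. Dualize $L\hookrightarrow H^\ast$ to $\sigma\colon H\twoheadrightarrow L^\ast$; since $\dim H^{\co\sigma}=p$ and the subgroup $C_p\subseteq G(H)$ lies in $H^{\co\sigma}$ (its image in $L^\ast$ is trivial because $|G(L^\ast)|$ is a $2$-power), one obtains an exact sequence $\Bbbk C_p\hookrightarrow H\twoheadrightarrow L^\ast$, and hence by duality $L\hookrightarrow H^\ast\twoheadrightarrow\Bbbk C_p$. Now apply Proposition~\ref{prop:exact-dim-cor} a \emph{second} time, to this sequence: it gives $\dim H_0=p\cdot\dim(L^\ast)_0$. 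But $\dim H_0=6p$ by Proposition~\ref{pr: grouplikes 2p}(i), so $\dim(L^\ast)_0=6$; thus $L^\ast$ is nonpointed, forcing $L^\ast\cong\mathcal{K}$ and $L\cong\matha_4^{\prime\prime}$. The identical argument works in case (iii) once $L$ has been produced.
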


\begin{proof}
(i)  Proposition \ref{pr: grouplikes 2p} implies that
$ H\simeq \mathcal{A} \oplus \mathcal{M}^{\ast}(2,k)^{p}  $,
with $\matha$ a pointed Hopf algebra of dimension $4p$.
Dualizing this inclusion we get a Hopf algebra epimorphism
$\pi: H^{*}\twoheadrightarrow \matha^{*}$
and $\dim H^{*} = 2\dim \matha^{*}$. Thus by Proposition
\ref{prop:R-skew},
$R:= (H^\ast)^{co\pi} = \Bbbk\{1,x\}$ with $x$
 a (possibly trivial) $(1, g)$-primitive
element for some grouplike $g\in G(H^*)$ with  $\ord g = 2n$, $n\geq 1$.
Since $\dim H^{*} = 8p$, we have that $\ord g=2,4$ or $2p$.

 Assume $\ord g =2$. If $x$ is a
nontrivial skew-primitive, then by  Proposition
\ref{prop:R-skew},
$H^\ast$ has a sub-Hopf algebra isomorphic to $H_4$ and this is
impossible by Proposition \ref{prop:2p-sweedler}.
Thus $x \in \Bbbk G(H^{*})$ and $R$ is a Hopf algebra
isomorphic to the group algebra $ \Bbbk C_{2} $. In particular,
$H$ fits into the exact sequence of Hopf algebras
$\matha \hookrightarrow H \twoheadrightarrow \Bbbk C_{2}$.

 We show now that the other cases are not possible. Assume
$\ord g = 4$. Then by Proposition
\ref{prop:R-skew}, $x$ must be nontrivial and $H^{*}$
contains a pointed sub-Hopf algebra $L$ of dimension $8$.
By inspection on the pointed Hopf algebras of dimension 8 in Section \ref{dim8},
we must have that $L \simeq \mathcal{A}'_{4}$ and consequently $L^{*}$ is pointed.
This implies that
$\Bbbk C_{p} \hookrightarrow H \twoheadrightarrow L^{*}$ is an exact
sequence of Hopf algebras and by \cite[Thm. 2.1]{GG} $H$ would be pointed,
a contradiction.

 Finally assume that $\ord g = 2p$. Then $ G(H^{*}) \cong C_{2p}$ and
   by  Proposition \ref{pr: grouplikes 2p}$(i)$, as coalgebras
$H^{*}\simeq \mathcal{B} \oplus \mathcal{M}^{*}(2,\Bbbk)^{p}$
with $\mathcal{B}$ a pointed Hopf algebra of dimension $4p$.
Let $\pi$ be the Hopf algebra map from $H$ onto $\mathcal{B}^{*}$.
By Proposition
\ref{prop:R-skew}, we have that $H^{\co \pi} =\Bbbk\{1,x\} $
with $x$ a skew-primitive element.
If $x$ is trivial, we have that
$\Bbbk C_{2} \hookrightarrow H \twoheadrightarrow \mathcal{B}^{*}$
is an exact sequence of Hopf algebras and consequently $\Bbbk C_{2}$ is normal
in $H$. This implies that $\Bbbk C_{2}$ is also normal in $\mathcal{A}$ and
$\mathcal{A}$ fits into an exact sequence of Hopf algebras
$\Bbbk C_{2} \hookrightarrow \mathcal{A} \to K$, where $\dim K=2p$ so that $K$
is semisimple.  Thus $\mathcal{A}$ is also semisimple, a contradiction.    Therefore
$x \in P_{1}(H)$ but $x \notin H_0$.  Since   $\pi(x) = 0$,
 $\pi(P_{1}(H))=0$ and in this case	  $\mathcal{B}^{*}$
would be the image of the coradical and
 hence   cosemisimple, which is also a contradiction.

%$ H\simeq \mathcal{A} \oplus \mathcal{M}^{\ast}(2,k)^{p}  $,
%with $\matha$ a pointed Hopf algebra of dimension $4p$.
%Dualizing this inclusion we get a Hopf algebra epimorphism
%$\pi: H^{*}\twoheadrightarrow \matha^{*}$
%and $\dim H = 2\dim \matha$. Thus by , $R:= (H^\ast)^{co\pi} = \Bbbk\{1,x\}$ with $x$
% a (possibly trivial) $(1, g)$-primitive
%element for some grouplike $g\in G(H)$ with $g^2 =1$. If $x$ is a
%nontrivial skew-primitive, then by  Proposition \ref{prop:R-skew},
%$H^\ast$ has a sub-Hopf algebra isomorphic to $H_4$ and this is
%impossible by Proposition \ref{prop:2p-sweedler}.
%
%Thus $x \in \Bbbk G(H^{*})$ and $R$ is a Hopf algebra
%isomorphic to the group algebra $ \Bbbk C_{2} $. In particular,
%$H$ fits into the exact sequence of Hopf algebras
%$\matha \hookrightarrow H \twoheadrightarrow \Bbbk C_{2}$.

(ii)
Suppose now that $\mathcal{A}^{*}$ is nonpointed.
Recall from Subsection \ref{sect: 4p} that then
$\mathcal{A} \cong \mathcal{A}(-1,1)$ and
$ \mathcal{A}^{*}\simeq H_{4} \oplus
\mathcal{M}^{\ast}(2,\Bbbk)^{p-1} $ as
coalgebras. Hence $\dim (\matha^{*})_{0} = 4p-2$ and by Proposition
\ref{prop:exact-dim-cor}, $\dim (H^{*})_{0} = 8p-4$. Thus
  $ H^{*} $ contains a nontrivial skew-primitive
element, since otherwise   Proposition
\ref{prop:biti-dasca} gives a contradiction.
Thus $|G(H^\ast)| >1$.   Since
$ \dim H^{*}- \dim (H^{*})_{0}=4 $ is divisible
by $|G(H^\ast)|$ we have that $  |G(H^\ast)|$ is $2$ or $ 4$.
 But if $G(H^\ast) \cong C_2$ or if
 $G(H^\ast) \cong C_2 \times C_2$,
 then $H^\ast$ would contain
a sub-Hopf algebra isomorphic to
$H_4$, and this is impossible by  Proposition
\ref{prop:2p-sweedler}.

\par Thus $H^\ast$ has a pointed sub-Hopf algebra $L$  with $G(L)\cong C_4$
  and so $L$ has dimension $8$.
Then there is a Hopf algebra epimorphism $\rho: H \rightarrow L^\ast$ and $H^{co \rho} \cong \Bbbk C_p$ so that
we have an exact sequence of Hopf algebras
$
\Bbbk C_p  \hookrightarrow H  \twoheadrightarrow L^\ast,
$ and dualizing we obtain the exact sequence
$L \hookrightarrow H^\ast \twoheadrightarrow \Bbbk C_p
$. By Proposition \ref{prop:exact-dim-cor},  $ 6p = \dim H_0 = p \dim L^\ast_0 $.
Thus we must have that $\dim L^\ast_0 = 6$
and $L^\ast $ cannot be pointed.  We must have that $L^\ast \cong \mathcal{K}$
 and $L \cong \matha_4^{\prime \prime}$.

(iii) Now suppose that $\matha^\ast$ is pointed so that
$G(\matha^\ast) \cong C_{ 2p}$ by
Subsection \ref{sect: 4p}. Then
again using Proposition \ref{prop:exact-dim-cor} we have that
$\dim H^\ast_0 = 4p$. If $|G(H^\ast)| \neq 2,4$,
since $H^\ast$ has a grouplike of order $2$,
by Proposition \ref{pr: dim not p,8p,4p} and Proposition \ref{prop:group-8-p},
$|G(H^\ast)|$ must be $2p$.
 Then $H^\ast_0 = \Bbbk C_{2p} \oplus E$
 where $\dim E = 2p$ and $E$ is a sum of
 simple subcoalgebras
 of dimension greater than $1$.
 No simple subcoalgebra can have dimension
 divisible by $p$ since $p^2 > 2p$.
But if a simple subcoalgebra has dimension
$d^2$ with $1 < d $ and $(d,p)=1$ then $H^\ast_0$ must contain at least
$p$ such simple coalgebras and $d^2p>2p$,
a contradiction.

If $H^\ast$ has a nontrivial skew-primitive
element then the same argument as in (ii) above shows that $H^\ast$ has
a sub-Hopf algebra isomorphic to $\matha_4^{\prime \prime}$.
\end{proof}

\begin{cor}\label{cor: 3 5 C4} If
$|G(H)| = 2p$ with $p=3$ or $ 5$,
then   $H^\ast$ has a sub-Hopf algebra isomorphic to $\matha_4^{\prime \prime}$.
\end{cor}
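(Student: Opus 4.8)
The plan is to invoke Proposition \ref{prop:2p-exact} and then dispose of the two cases it produces. By part (i) of that proposition, when $|G(H)| = 2p$ the Hopf algebra $H$ sits in an exact sequence $\matha \hookrightarrow H \twoheadrightarrow \Bbbk C_2$ with $\matha$ pointed of dimension $4p$, so everything reduces to understanding $\matha$. I would split on whether the dual $\matha^\ast$ is pointed or not, since this is exactly the dichotomy governing parts (ii) and (iii).

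If $\matha^\ast$ is nonpointed (equivalently $\matha \cong \matha(-1,1)$), then part (ii) of Proposition \ref{prop:2p-exact} already hands me a sub-Hopf algebra of $H^\ast$ isomorphic to $\matha_4^{\prime\prime}$, and there is nothing more to do. So the real work is the case where $\matha^\ast$ is pointed. Here part (iii) tells me that $\dim H^\ast_0 = 4p$ and $|G(H^\ast)| \in \{2,4\}$, and, crucially, that as soon as $H^\ast$ is known to possess a \emph{nontrivial} skew-primitive element, it must contain $\matha_4^{\prime\prime}$. Thus the entire problem is reduced to showing that, for $p = 3$ and $p = 5$, a nonsemisimple $H^\ast$ with $\dim H^\ast = 8p$, $\dim H^\ast_0 = 4p$ and $|G(H^\ast)| \in \{2,4\}$ cannot have all its skew-primitives trivial.

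To produce the skew-primitive I would argue by contradiction using the dimension bound of Proposition \ref{prop:biti-dasca}(ii) applied to $H^\ast$. Writing $H^\ast_0 \cong \Bbbk G(H^\ast) \oplus \bigoplus_i \M^\ast(n_i,\Bbbk)$ with $2 \leq n_1 \leq \cdots$, the absence of nontrivial skew-primitives would force
$$8p \;\geq\; 4p + (2n_1+1)\,|G(H^\ast)| + n_1^2.$$
For $p = 3$ this reads $24 \geq 12 + (2n_1+1)|G(H^\ast)| + n_1^2$, and since $n_1 \geq 2$ and $|G(H^\ast)| \geq 2$ the right-hand side is at least $26$, a contradiction. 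For $p = 5$ with $|G(H^\ast)| = 4$ the same estimate with $n_1 \geq 2$ gives $40 \geq 44$, again impossible.

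The one delicate point, and the step I expect to be the main obstacle, is $p = 5$ with $|G(H^\ast)| = 2$, where the crude bound with $n_1 = 2$ is too weak. Here I would first pin down the coalgebra type: the matrix part of $H^\ast_0$ has dimension $4p - 2 = 18$, and the only way to write $18$ as a sum of squares $n_i^2$ with each $n_i \geq 2$ is $18 = 9 + 9$ (the divisibility of each isotypic component by $|G(H^\ast)|$ coming from Lemma \ref{lema:andrunatale} is consistent with this). Hence $H^\ast_0 \cong \Bbbk C_2 \oplus \M^\ast(3,\Bbbk)^2$ and $n_1 = 3$, so the bound sharpens to $40 \geq 20 + 7\cdot 2 + 9 = 43$, the desired contradiction. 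With a nontrivial skew-primitive now in hand in every sub-case, a final appeal to Proposition \ref{prop:2p-exact}(iii) produces the copy of $\matha_4^{\prime\prime}$ inside $H^\ast$.
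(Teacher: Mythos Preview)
Your proposal is correct and follows essentially the same route as the paper: reduce via Proposition~\ref{prop:2p-exact} to case~(iii) (case~(ii) being immediate), then use the bound of Proposition~\ref{prop:biti-dasca}(ii) to force a nontrivial skew-primitive in $H^\ast$, handling $p=5$, $|G(H^\ast)|=2$ by first identifying the coradical as $\Bbbk C_2 \oplus \M^\ast(3,\Bbbk)^2$. The numerics and case analysis match the paper's proof exactly.
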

\begin{proof}
 It suffices to show that $H^\ast$ has a nontrivial skew-primitive element
and then the statement follows from Proposition \ref{pr: not 2p 2p}.
We may assume that we are in case (iii)   of Proposition \ref{pr: not 2p 2p},
 so that
$\dim H^\ast_0 = 4p$.

Let $p=3$. By Proposition \ref{prop:biti-dasca}, if $H^\ast$ has
no skew-primitive, then for $|G(H^\ast)| \geq 2$,
 $24 = \dim H^\ast \geq 12 + 2(5) + 4 = 26$,
 a contradiction.

\par If $p = 5$, and $ |G(H^\ast)|=  2$, then $\dim H^\ast_0 = 20$
forces   $H^\ast_0 \cong kC_2 \oplus \mathcal{M}^\ast(3,\Bbbk)^2$.
 Then if $H^\ast$ has no nontrivial skew-primitive,
 Proposition \ref{prop:biti-dasca} implies that
$40 \geq 20 + 2(7) +9 = 43$, a contradiction.  If $p = 5$ and
$|G(H^\ast)| = 4$, then      Proposition \ref{prop:biti-dasca}
implies that  $40 \geq 20 + 4(5) + 4 = 44 $, again a contradiction.
\end{proof}

Now we can give the proof of Theorem A.

\bigbreak \noindent {\bf Proof of Theorem A.} Let $ H $ be a
nonsemisimple Hopf algebra of dimension $ 8p $. By Proposition
\ref{pr: dim not p,8p,4p} we have that $ |G(H) | \in \{1,2,4,8,  2p
\}$. If $ |G(H) |=2p$, then by Proposition \ref{pr: not 2p 2p} we
have that $2 \leq |G(H^\ast)| \leq 4$ and the theorem is proved.
 \qed

 \subsubsection{Further results for some specific primes}
In this section, we improve the results of Theorem A for some
specific primes $p$.

\begin{prop}\label{prop:fixed under left}
Suppose $|G(H)|=2p$,  $G(H^\ast) \cong C_4 = \langle g \rangle$ and
  $H^\ast$ contains a simple subcoalgebra $D$ of dimension $4$.
Also assume that $(H^\ast)_0$ is not a sub-Hopf algebra of $H^\ast$,
i.e., $H^\ast$ does not have the Chevalley property.  Then
\begin{enumerate}
% \par \emph{(i)}
\item[(i)] $D$ generates $H^\ast$ as a Hopf algebra;
%  \par \emph{(ii)}
\item[(ii)] $D$ is not fixed by $L_{g^2}$, $R_{g^2}$, i.e.,
by left or right multiplication by $g^2$. If $g^2 \notin \mathcal{Z}(H^\ast)$
then   $D$ is also not fixed by the adjoint action of $g^2$.
\end{enumerate}
\end{prop}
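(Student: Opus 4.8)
The plan is to put $\mathcal{C}:=\langle D\rangle$, the sub-Hopf algebra of $H^\ast$ generated by $D$, and to prove (i) by showing $\dim\mathcal{C}=8p$; part (ii) will then follow quickly. Throughout I would exploit the Hopf algebra epimorphism $\pi\colon H^\ast\twoheadrightarrow\matha^\ast$ obtained by dualizing the inclusion $\matha\hookrightarrow H$ of Proposition \ref{pr: not 2p 2p}(i), where $\matha$ is the pointed (hence nonsemisimple) Hopf algebra of dimension $4p$ produced there. Thus $\matha^\ast$ is nonsemisimple of dimension $4p$, and $G(\matha^\ast)$ has order $2p$ (when $\matha^\ast$ is pointed) or $2$ (when $\matha\cong\matha(-1,1)$); in particular it has no element of order $4$.

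First I would show that $\mathcal{C}$ is nonsemisimple. If it were semisimple it would be cosemisimple, so $\mathcal{C}=\mathcal{C}_0\subseteq(H^\ast)_0$; then by Remark \ref{rm: LR2} the sub-Hopf algebra $S:=\langle\mathcal{C},\Bbbk G(H^\ast)\rangle$ is again semisimple, whence $S\subseteq(H^\ast)_0$. Since $H^\ast$ is nonsemisimple, $S\neq H^\ast$, and because $S$ contains $G(H^\ast)\cong C_4$ together with the $4$-dimensional simple subcoalgebra $D$, a dimension count (no proper semisimple subalgebra whose grouplikes contain $C_4$ and which has a $4$-dimensional simple subcoalgebra can have dimension $8$ or $2p$) forces $\dim S=4p$. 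When $\dim(H^\ast)_0=4p$ this gives $S=(H^\ast)_0$, so the coradical would be a sub-Hopf algebra, contradicting the hypothesis that $H^\ast$ fails the Chevalley property. In the remaining coalgebra type of Proposition \ref{pr: not 2p 2p}(ii), where $\dim(H^\ast)_0=8p-4$, I would instead rule this out using the explicit extension $L\hookrightarrow H^\ast\twoheadrightarrow\Bbbk C_p$ with $L\cong\matha_4^{\prime\prime}$ furnished there.

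Granting that $\mathcal{C}$ is nonsemisimple and non-pointed (it contains $D$), its dimension divides $8p$, so the classification of Hopf algebras of dimensions $8$ and $4p$ leaves only $\dim\mathcal{C}\in\{8,4p,8p\}$, the proper possibilities being $\mathcal{C}\cong\mathcal{K}$ and $\mathcal{C}\cong\mathadual$. Each of these contains a sub-Hopf algebra isomorphic to the Sweedler algebra $H_4$ --- for $\mathcal{K}$ this is $\langle a^2,ac\rangle$ in the notation of Remark \ref{lemma:desc-A}, and for $\mathadual$ it is recorded in Subsection \ref{sect: 4p} --- whereas by Proposition \ref{prop:2p-sweedler}, $H^\ast$ has no sub-Hopf algebra isomorphic to $H_4$. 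This contradiction leaves $\dim\mathcal{C}=8p$, proving (i).

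For (ii), part (i) tells us $H^\ast$ is generated by $D$, so I can feed $\pi$ into Lemma \ref{truco util}. Since $g$ has order $4$ and $\pi(g)\in G(\matha^\ast)$, which has no element of order $4$, we get $\pi(g^2)=1$ while $g^2\neq1$. If $D$ were stable under $L_{g^2}$, then Lemma \ref{truco util} would give $\matha^\ast=\pi(H^\ast)\subseteq\Bbbk G(\matha^\ast)$, making $\matha^\ast$ a group algebra and hence semisimple, contradicting its nonsemisimplicity; so $D$ is not fixed by $L_{g^2}$. The statement for $R_{g^2}$ follows from the right-handed version of Lemma \ref{truco util}, and the statement for the adjoint action of $g^2$ (when $g^2\notin Z(H^\ast)$) from its $\adl$/$\ad_r$ version. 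The hard part will be the nonsemisimplicity step of paragraph two, i.e.\ linking the generation problem to the failure of the Chevalley property and disposing of the $\dim(H^\ast)_0=8p-4$ coalgebra type; once $\mathcal{C}$ is known to be nonsemisimple, the no-$H_4$-subalgebra constraint finishes (i) cleanly and (ii) is a direct application of Lemma \ref{truco util}.
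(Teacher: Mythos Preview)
Your argument for (ii) is fine and matches the paper. The problem is in (i), specifically in the step where you dispose of $\dim\mathcal{C}=4p$.

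You write that, once $\mathcal{C}$ is known to be nonsemisimple and nonpointed, ``the classification of Hopf algebras of dimensions $8$ and $4p$ leaves only $\dim\mathcal{C}\in\{8,4p,8p\}$, the proper possibilities being $\mathcal{C}\cong\mathcal{K}$ and $\mathcal{C}\cong\mathadual$.'' For dimension $8$ this is correct, but for dimension $4p$ it is not: Theorem \ref{th: ChNg II} (classification) is only known for $p\le 11$. For general $p$ there may exist nonsemisimple Hopf algebras of dimension $4p$ that are neither pointed nor copointed, and your $H_4$-subalgebra argument does not touch those. The paper avoids this by appealing to Theorem \ref{th: ChNg I} (valid for all $p$): if $\mathcal{C}$ and $\mathcal{C}^\ast$ are both nonsemisimple and nonpointed then $|G(\mathcal{C})|,|G(\mathcal{C}^\ast)|\le 2$; dualizing the inclusion $\mathcal{C}\hookrightarrow H^\ast$ gives an epimorphism $H\twoheadrightarrow\mathcal{C}^\ast$ with $2$-dimensional coinvariants, forcing $\pi(c^2)\neq 1$ (where $c$ generates $G(H)\cong C_{2p}$) and hence $p\mid |G(\mathcal{C}^\ast)|$, a contradiction. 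You need this extra argument, or something equivalent, to close the $4p$ case.

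A second, smaller gap: your treatment of the semisimple case when $\dim(H^\ast)_0=8p-4$ is only a placeholder (``rule this out using the explicit extension\ldots''). The paper does not separate into semisimple/nonsemisimple at all; it treats the cases $\dim\langle D\rangle=8,2p,4p$ directly. In the $2p$ case it forms the semisimple $\mathcal{L}=\langle L,\Bbbk\langle g\rangle\rangle$ of dimension $4p$ and then eliminates both $\mathcal{L}\cong\mathcal{G}_i$ (via a coinvariant count on $H$) and $\mathcal{L}\cong\Bbbk^\Gamma$ (via Proposition \ref{pr: HNg 1.3}, tracking whether the resulting skew-primitive in $H$ is trivial or not and using the structure of $\matha$). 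If you want your ``first prove nonsemisimple'' route to work in the $8p-4$ case, you would need to run that same $\mathcal{G}_i$/$\Bbbk^\Gamma$ analysis on your $S$ and observe that in Case (ii) of Proposition \ref{pr: not 2p 2p} the sub-Hopf algebra $\matha\cong\matha(-1,1)$ contains no copy of $H_4$, which blocks the nontrivial skew-primitive branch.
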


\begin{proof}
(i) Let $L = \langle D \rangle$
be the sub-Hopf algebra of $H^\ast$ generated by $D $, then $L$ is
a nonpointed Hopf algebra of dimension $8, 2p,4p$ or $8p$.
We will show that each dimension except $8p$
is impossible.

 Suppose the dimension of $L$ is $8$.
 Then, by Section \ref{subsect: 8},
 either $G(L) \cong C_2 \times C_2$, impossible since $G(H^\ast) \cong C_4$
   or else $L$ contains a copy of $H_4$,
   impossible by Proposition \ref{prop:2p-sweedler}.

Suppose the dimension of $L$ is $2p$ so that  $L \cong
\Bbbk^{\mathbb{D}_p}$. Let $\Ll = \langle L, \Bbbk \langle g \rangle
\rangle$ be the semisimple sub-Hopf algebra of $H^{*}$ generated by
$L$ and by $g$, a generator of  $G(H^\ast)$. Then the dimension of
$\Ll$ is divisible by $2p$ and   by $4$ so it must be $4p$.
Suppose that $\Ll$ is  the self-dual semisimple Hopf algebra
of dimension $4p$ from \cite{G} with grouplikes cyclic of order $4$.
  Then, if $G(H) = \langle h \rangle \cong C_{2p}$  so that $\langle h^2 \rangle \cong C_p$
and $\pi$ is the
Hopf algebra projection from $H$ onto $\Ll^* \cong \Ll$, $\pi(h^{2n}) =1$ for $0 \leq n \leq p-1$.
This
contradicts   Lemma \ref{lm: dim H co dim B = dim H} which states
that the dimension of $H^{co\pi}$ is $2$.

Finally suppose that $\Ll
\cong \Bbbk^\Gamma$ for $\Gamma $ a nonabelian group of order $4p$.
 Then there is a Hopf algebra projection $\pi$ from $H$ onto $\Bbbk
\Gamma$ and by Proposition \ref{prop:R-skew}, $H^{ co\pi} = \Bbbk
\{1,x \}$ where $0 \neq x$ is skew-primitive. If $x$ is
trivial, then $H^{ co\pi} \cong
\Bbbk C_2$. But then by  Lemma \ref{lm: dim H co dim B = dim H}, the
sequence
 $ \Bbbk C_2 \overset{\imath}\hookrightarrow
H\overset{\pi}\twoheadrightarrow \Bbbk \Gamma $ is exact so that $H$
is semisimple, a contradiction.  Thus, by the proof of Proposition \ref{prop:R-skew},
 $x$ is a nontrivial $(1,b)$-primitive with $b=h$ or $b=h^{p}$
 and $x^{2}=0$. Let $b=h$.
 Thus, since $\{ h^ix | 0 \leq i \leq 2p-1 \} $ is a set
 of $2p$ linearly independent nontrivial skew-primitive elements of $H$,
 all with square $0$, $H$  cannot have a pointed sub-Hopf algebra
 isomorphic to $\mathcal{A}(-1,1)$. Let $b = h^p$, a grouplike of order $2$.
 Then $H$ contains a sub-Hopf algebra isomorphic to $H_4$ and again cannot
 have a pointed sub-Hopf algebra isomorphic to $\mathcal{A}(-1,1)$.
 Thus $\mathcal{A}$ has pointed dual
  and so we are in Case (iii)
of Proposition \ref{pr: not 2p 2p}. Then $\dim H^\ast_0 =4p$
 and so $\Ll = H^\ast_0$. Since we assumed that
  $H^\ast$ does not have the Chevalley property, this is a contradiction.

Suppose the dimension of $L$ is $4p$. By its construction $L$ is not
pointed.  Also $L$ cannot be  semisimple by the arguments in the
case above where $\dim \Ll = 4p$. If $L$ is basic then $L \cong
\matha(-1,1)^\ast \cong H_{4} \oplus \m^\ast(2,\Bbbk)^{p-1}$, which
is impossible since $H^\ast$ does not contain a copy of $H_{4}$.
Thus both ${L}$ and ${L}^\ast$ are nonsemisimple, nonpointed so that by
Theorem \ref{th: ChNg I},
 $|G(L)| ,|G(L^{*})|\leq 2$.
But since $L$ is a sub-Hopf algebra of $H^{*}$, we have a Hopf
algebra epimorphism $\pi: H\twoheadrightarrow L^{*}$ with $\dim
H^{\co \pi} = 2$. This implies that $\pi(h^{2}) \neq 1$ and
consequently $p\leq|G(L^{*})|\leq 2 $, a contradiction. Thus, this
case is also impossible and we have proved (i), namely that $L=H$.

(ii)  Now suppose that $g^2L = L$; if $L$ is stable under $R_{g^2}$
or $\ad_\ell(g^2)$ with $g^2 \notin \mathcal{Z}(H^\ast)$, the argument is the same.
Let $\mathcal{A} \subset H$ be the $4p$-dimensional
pointed sub-Hopf algebra of $H$
from Proposition \ref{pr: grouplikes 2p};
there is a Hopf algebra epimorphism $\pi: H^\ast \rightarrow \mathcal{A}^\ast$.
 If $\mathcal{A}^\ast$ is pointed, then $G(\mathcal{A}^\ast) \cong C_{2p}$,
otherwise $G(\mathcal{A}^\ast) \cong C_2$. In either case, $\pi(g^2)
= 1$. Then Lemma \ref{truco util} implies that $\pi(H^\ast)
\subseteq \Bbbk G(\mathcal{A}^\ast)$, and this contradiction
finishes the proof.
\end{proof}

  \begin{cor}\label{cor:2p-4-4p}
  Assume $|G(H)| = 2p$  with
  $p = 3,7,11$ and suppose  that  the $4p$ dimensional pointed sub-Hopf
   algebra $\mathcal{A}$ of $H$ from Proposition \ref{pr: grouplikes 2p}
has pointed dual. If $H^\ast$ does not have the Chevalley property,
then $G(H^\ast) \ncong C_4$.
  \end{cor}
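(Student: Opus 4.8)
The plan is to argue by contradiction, assuming $G(H^\ast)\cong C_4=\langle g\rangle$. Since $\mathcal{A}^\ast$ is pointed by hypothesis, we are in case (iii) of Proposition \ref{pr: not 2p 2p}, so $\dim H^\ast_0=4p$; moreover $H^\ast$ fails to have the Chevalley property, again by hypothesis. The goal is to exhibit a $4$-dimensional simple subcoalgebra $D\subseteq H^\ast$ with $L_{g^2}D=D$, which will contradict Proposition \ref{prop:fixed under left}(ii): indeed the standing hypotheses there ($|G(H)|=2p$, $G(H^\ast)\cong C_4$, $H^\ast$ without the Chevalley property, and $H^\ast$ containing a simple subcoalgebra of dimension $4$) are precisely the ones in force here.

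First I would determine the possible coalgebra structures of $H^\ast_0$. Writing $H^\ast_0=\Bbbk C_4\oplus\bigoplus_{d\geq 2}H^\ast_{0,d}$, where $H^\ast_{0,d}$ is the sum of the $n_d$ simple subcoalgebras of dimension $d^2$, Lemma \ref{lema:andrunatale} forces $4\mid n_d d^2$ for every $d$, hence $4\mid n_d$ whenever $d$ is odd. Since the non-grouplike part has dimension $4(p-1)$, for $p\in\{3,7,11\}$ there are only finitely many admissible decompositions $4(p-1)=\sum_{d\geq 2}n_d d^2$, and I would simply list them. The upshot of this bookkeeping is that $H^\ast$ always possesses a simple subcoalgebra of dimension $4$ (the option $n_2=0$ is numerically impossible in each case) and that $n_2\not\equiv 0\pmod 4$: one finds $n_2=2$ for $p=3$, $n_2\in\{2,6\}$ for $p=7$, and $n_2\in\{1,2,6,10\}$ for $p=11$.

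The last step exploits that left multiplication $L_g$ by the grouplike $g$ is a coalgebra automorphism, so $C_4=\langle g\rangle$ permutes the finite set of $4$-dimensional simple subcoalgebras, a set of cardinality $n_2$. Every orbit has size $1$, $2$ or $4$; were all orbits of size $4$ we would get $4\mid n_2$, contradicting the previous step. Thus some orbit has size $1$ or $2$, its stabiliser contains the unique subgroup $\langle g^2\rangle$ of order $2$, and the associated $D$ satisfies $L_{g^2}D=D$. This produces the contradiction with Proposition \ref{prop:fixed under left}(ii) and completes the proof.

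The step I expect to be the main obstacle is the case analysis for $p=11$, where $4(p-1)=40$ allows configurations containing $9$-, $16$- and a priori $36$-dimensional simple subcoalgebras; one must verify that none of these removes all $4$-dimensional subcoalgebras or drives $n_2$ to a multiple of $4$. This is exactly the point at which the restriction $p\in\{3,7,11\}$ matters: each of these primes is $\equiv 3\pmod 4$, so the generic count $n_2=p-1\equiv 2\pmod 4$ already avoids divisibility by $4$, whereas for a prime $p\equiv 1\pmod 4$ the orbit count could be consistent with a $C_4$-action all of whose orbits have size $4$.
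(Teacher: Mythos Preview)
Your proposal is correct and follows essentially the same approach as the paper: assume $G(H^\ast)\cong C_4$, invoke Proposition~\ref{pr: not 2p 2p}(iii) to get $\dim H^\ast_0=4p$, enumerate the possible coalgebra decompositions of $H^\ast_0$ for $p=3,7,11$, and in each case find a $4$-dimensional simple subcoalgebra fixed by $L_{g^2}$, contradicting Proposition~\ref{prop:fixed under left}(ii). Your orbit-count packaging (showing $4\nmid n_2$ so some $L_g$-orbit has length $\leq 2$) is a clean way to phrase what the paper leaves as ``the statement is clear'' or ``the statement follows'', and your case list for $p=11$ is in fact slightly more complete than the paper's, which omits the configuration $\mathcal{M}^\ast(6,\Bbbk)\oplus\mathcal{M}^\ast(2,\Bbbk)$ (where $n_2=1$, harmless for the argument).
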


 \begin{proof}Suppose that $G(H^\ast) = \langle g \rangle \cong C_4$.
  With the notation of Proposition \ref{prop:2p-exact},
the assumption that $\matha^\ast$ is pointed means
   that we are in Case (iii) so that $\dim H_0^\ast = 4p$.
With   notation as in Proposition \ref{prop:fixed under left},
  it remains to show that for $p=3,7,11$,   then  $H^\ast$ has  a
  simple subcoalgebra   of dimension $4$ stable
  under   $L_{g^2}$ and that will give a contradiction.

 If $p=3$,   $H^\ast_0 \cong \Bbbk C_4
  \oplus \m^\ast(2,\Bbbk)^2$ so since the order of $g$ is
  $4$, the statement is clear.

 If $p=7$ then either $H^\ast_0 \cong \Bbbk C_4 \oplus
  \m^\ast(4,\Bbbk)
  \oplus \m^\ast(2,\Bbbk)^2$ or else $H^\ast_0 \cong \Bbbk C_4
  \oplus \m^\ast(2,\Bbbk)^6  $ and in either case, the statement follows.

 If $p=11$ then $H^\ast_0 \cong \Bbbk C_4 \oplus D$ where $D$ is one of
the following: $\m^\ast(2,\Bbbk)^{10}$ or
  $\m^\ast(3,\Bbbk)^4 \oplus \m^\ast(2,\Bbbk)$ or
$\m^\ast(4,\Bbbk) \oplus E$ where $E$
  has dimension $24$. Then $E \cong \m^\ast(2,\Bbbk)^6$
or $E \cong \m^\ast(2,\Bbbk)^2 \oplus \m^\ast(4,\Bbbk)$.
  In any case, $H^\ast$ has a simple $4$-dimensional
subcoalgebra stable under $L_{g^2}$.
  \end{proof}

  \begin{cor}\label{cor: 24 type 6,4 corad} Let $\dim H = 24$ and suppose $H$ is of type $(6,4)$
and $H^\ast$ does not have
the Chevalley property.
Then $H$ fits into an exact sequence
  $\matha(-1,1) \hookrightarrow H \twoheadrightarrow \Bbbk C_2$,
in other words, we are in Case (ii) of
  Proposition \ref{pr: not 2p 2p}. Then we have that either
$H_0^\ast \cong \Bbbk C_4 \oplus \mathcal{M}^\ast(2,\Bbbk)^4$
  or else $H_0^\ast \cong \Bbbk C_4 \oplus \mathcal{M}^\ast(4,\Bbbk)$.
  \end{cor}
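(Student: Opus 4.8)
The plan is to read off the numerology first: since $\dim H = 24 = 8p$ with $p = 3$, type $(6,4)$ means $|G(H)| = 6 = 2p$ and $|G(H^\ast)| = 4$, so $G(H^\ast)$ is either $C_4$ or $C_2 \times C_2$. By Proposition \ref{pr: not 2p 2p}(i), $H$ sits in an exact sequence $\matha \hookrightarrow H \twoheadrightarrow \Bbbk C_2$ with $\matha$ pointed of dimension $12$, so the entire problem reduces to deciding whether we are in Case (ii) ($\matha^\ast$ nonpointed, $\matha \cong \matha(-1,1)$) or Case (iii) ($\matha^\ast$ pointed) of Proposition \ref{pr: not 2p 2p}. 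I would aim to exclude Case (iii) outright.

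To rule out Case (iii), I would treat the two possibilities for $G(H^\ast)$ separately. If $G(H^\ast) \cong C_4$, then since $p = 3$, $\matha^\ast$ is pointed, and $H^\ast$ fails the Chevalley property, Corollary \ref{cor:2p-4-4p} applies and yields $G(H^\ast) \ncong C_4$, a contradiction. If instead $G(H^\ast) \cong C_2 \times C_2$, I would argue that $H^\ast$ can have no nontrivial skew-primitive element: by Case (iii) of Proposition \ref{pr: not 2p 2p} the presence of such an element forces a sub-Hopf algebra isomorphic to $\matha_4^{\prime\prime}$, whose group of grouplikes is $C_4$, incompatible with $G(H^\ast) \cong C_2 \times C_2$. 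With $H^\ast$ non-cosemisimple, $\dim H^\ast_0 = 4p = 12$, and no nontrivial skew-primitives, the divisibility in Lemma \ref{lema:andrunatale} forces $H^\ast_0 \cong \Bbbk(C_2\times C_2) \oplus \M^\ast(2,\Bbbk)^2$, so $n_1 = 2$, and the bound of Proposition \ref{prop:biti-dasca}(ii) gives $\dim H^\ast \geq 12 + 5\cdot 4 + 4 = 36 > 24$, again a contradiction. Hence Case (iii) is impossible and we are in Case (ii), which provides the claimed exact sequence $\matha(-1,1) \hookrightarrow H \twoheadrightarrow \Bbbk C_2$.

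It then remains to pin down $H^\ast_0$. In Case (ii), Proposition \ref{pr: not 2p 2p}(ii) gives $\dim H^\ast_0 = 8p - 4 = 20$ and $G(H^\ast) \cong C_4$, so $H^\ast_0 \cong \Bbbk C_4 \oplus E$ with $E$ a sum of simple subcoalgebras of dimension $d^2 > 1$ and $\dim E = 16$. By Lemma \ref{lema:andrunatale}, $|G(H^\ast)| = 4$ must divide $\dim H^\ast_{0,d}$ for each $d$; a block of dimension $9$ would force at least four copies, hence dimension $\geq 36$, exceeding the available $16$, so no dimension-$9$ block occurs, and a block of dimension $\geq 25$ cannot fit either. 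This leaves only blocks of dimension $4$ and $16$, and solving $4a + 16c = 16$ with $a,c \geq 0$ yields exactly $(a,c) = (4,0)$ or $(0,1)$, that is $E \cong \M^\ast(2,\Bbbk)^4$ or $E \cong \M^\ast(4,\Bbbk)$, giving the two stated forms of $H^\ast_0$.

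The main obstacle is the exclusion of Case (iii): the $C_4$ sub-case leans entirely on the earlier Corollary \ref{cor:2p-4-4p}, while the $C_2 \times C_2$ sub-case requires the slightly delicate observation that a nontrivial skew-primitive would import a $C_4$ of grouplikes via $\matha_4^{\prime\prime}$, after which the counting bound of Proposition \ref{prop:biti-dasca} closes the argument.
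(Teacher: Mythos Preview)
Your argument is correct and follows essentially the same route as the paper. The paper's proof simply cites Corollary~\ref{cor: 3 5 C4} together with Corollary~\ref{cor:2p-4-4p}: the former already packages your $C_2\times C_2$ sub-case (it shows, via the same bound from Proposition~\ref{prop:biti-dasca}, that for $p=3$ the dual $H^\ast$ must contain a nontrivial skew-primitive and hence a copy of $\matha_4^{\prime\prime}$, forcing $G(H^\ast)\cong C_4$), after which Corollary~\ref{cor:2p-4-4p} eliminates Case~(iii) exactly as you do. Your explicit enumeration of $H_0^\ast$ from $\dim H_0^\ast=20$ and the divisibility in Lemma~\ref{lema:andrunatale} is what the paper leaves implicit.
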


  \begin{proof}
  The statement follows from Corollary \ref{cor: 3 5 C4}
and Corollary \ref{cor:2p-4-4p}.
  \end{proof}

%%%%% GENERALIZATION OF RESULTS OF CHENG_NG
%%%%%%%%%%%%%%%%%%%%%%%%%%%%%%

\subsection{Generalizations of results of Cheng and Ng}

In this section we generalize some results of \cite{ChNg} to study
Hopf algebras of dimension $8p$ with group
of grouplikes of order $2^i$.  We assume throughout this section that
$H$ is nonsemisimple, nonpointed, nonbasic
and has dimension $8p$.

The following propositions are similar to
\cite[Prop. 3.2]{ChNg}.

\begin{prop}\label{pr:boson-2-4}\label{pr:4-4-boson}
\begin{enumerate}
%\par (i)
\item[(i)] If $H$ contains a pointed sub-Hopf algebra $K$ of dimension $8$,
then $G(H) = G(K)$.
%\par (ii)
\item[(ii)] Assume $H\simeq R\# K$ where  $K$ is
pointed and basic of dimension $8$,
and $R$ is a braided Hopf algebra of dimension $p$ in $ \ydk $. Then
$G(H) \cong G( H^{*}) $ so that $H$ is of type $(4,4)$ or type
$(2,2)$.
%\par (iii)
\item[(iii)] Suppose that $|G(H)| = 2^t$ for $t \in \{1,2,3 \}$
and suppose that $H^\ast$ contains
a sub-Hopf algebra $L$ of dimension $8$ so that there is a Hopf algebra
epimorphism $\pi: H \rightarrow L^\ast$. Then $\pi$ is an injective
Hopf algebra map from $\Bbbk G(H)$ to $\Bbbk G(L^\ast)$.
%\par (iv)
\item[(iv)] Suppose that $H$ contains a pointed
  sub-Hopf algebra $K$ of dimension $8$
with $|G(K)| = 4$ and $H^\ast$ contains
a pointed   sub-Hopf algebra $L$ of dimension $8$.
 Then $K \cong L^\ast$ and $H \cong R \#K$ where $R$ is a braided Hopf algebra in
$^K_K\mathcal{YD}$ of dimension $p$.
\end{enumerate}
\end{prop}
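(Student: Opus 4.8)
The plan is to establish the four parts in the order (i), (iii), (ii), (iv), since the later assertions rest on the earlier ones; throughout, the standing hypothesis is that $H$ is nonsemisimple, nonpointed and non-copointed of dimension $8p$. The tools I would use are the Nichols--Zoeller freeness theorem, the classification of the dimension-$8$ Hopf algebras recalled in Subsection \ref{subsect: 8}, the restrictions on $|\GH|$ coming from Proposition \ref{pr: dim not p,8p,4p} and Proposition \ref{pr: grouplikes 2p}, and, for the biproduct statements, Radford's theorem on Hopf algebras with a projection.

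For (i), the list in Subsection \ref{subsect: 8} forces $|G(K)|\in\{2,4\}$, and $G(K)\subseteq\GH$, while Proposition \ref{pr: dim not p,8p,4p} gives $|\GH|\in\{1,2,4,8,2p\}$; so it suffices to exclude $|\GH|=8$ and, when $K\cong\mathcal{A}_2$, the value $|\GH|=2p$. To kill $|\GH|=8$ I would take a nontrivial skew-primitive $x\in K$ and form the pointed sub-Hopf algebra $N:=\langle\Bbbk\GH,x\rangle$; it is nonsemisimple and $N\supsetneq\Bbbk\GH$ (since $x\notin\Bbbk\GH$), so $\dim N$ is a multiple of $8$ strictly larger than $8$, forcing $\dim N=8p$ and $N=H$ pointed, a contradiction. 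When $|G(K)|=4$, the value $|\GH|=2p$ is impossible by Lagrange. In the remaining case $K\cong\mathcal{A}_2$, $|\GH|=2p$, Proposition \ref{pr: grouplikes 2p} gives $H\cong\matha\oplus\M^\ast(2,\Bbbk)^p$ as a coalgebra with $\matha$ pointed of dimension $4p$; since the summands $\M^\ast(2,\Bbbk)$ have no grouplikes, every nontrivial skew-primitive of $H$ lies in $\matha$, yet $\mathcal{A}_2$ supplies two independent $(1,g)$-primitives of the same type, whereas each pointed Hopf algebra of dimension $4p$ has at most a one-dimensional space of nontrivial $(1,h)$-primitives for fixed $h$. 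This contradiction gives $|\GH|=|G(K)|$, hence $\GH=G(K)$.

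For (iii), Lemma \ref{lm: dim H co dim B = dim H} gives $\dim H^{\co\pi}=p$. Any $g\in\GH$ with $\pi(g)=1$ satisfies $(\id\ot\pi)\com(g)=g\ot1$, so $g\in H^{\co\pi}$; thus $N:=\ker(\GH\to G(L^\ast))$ yields a Hopf subalgebra $\Bbbk N\subseteq H^{\co\pi}$. Since $H^{\co\pi}$ is a coideal subalgebra of dimension $p$ and is free over the Hopf subalgebra $\Bbbk N$, we get $|N|\mid p$; as $|N|$ is a power of $2$, $N=1$, so $\pi$ is injective on $\GH$ and maps $\Bbbk\GH$ into $\Bbbk G(L^\ast)$. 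Part (ii) then follows quickly from (i): the biproduct gives $K\hookrightarrow H$, and dualizing the projection $H\to K$ gives $K^\ast\hookrightarrow H^\ast$; as $K$ is copointed, $K^\ast$ is pointed of dimension $8$, and $H^\ast$ is again nonsemisimple, nonpointed, non-copointed of dimension $8p$. Applying (i) to $(H,K)$ and to $(H^\ast,K^\ast)$ yields $\GH\cong G(K)$ and $G(H^\ast)\cong G(K^\ast)$, and the duality relations $\mathcal{A}_2\cong\mathcal{A}_2^\ast$, $\mathcal{A}'''_{4,\xi}\cong(\mathcal{A}'_4)^\ast$, $\mathcal{A}_{2,2}\cong\mathcal{A}_{2,2}^\ast$ of Subsection \ref{subsect: 8} give $G(K)\cong G(K^\ast)$ of order $2$ or $4$; hence $H$ is of type $(2,2)$ or $(4,4)$.

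For (iv), part (i) gives $\GH=G(K)$ of order $4$; dualizing $L\hookrightarrow H^\ast$ produces an epimorphism $\pi\colon H\to L^\ast$ with $\dim H^{\co\pi}=p$, and (iii) (with $t=2$) makes $\pi$ injective on $\GH$, so $|G(L^\ast)|\ge4$. As $L^\ast$ is nonsemisimple of dimension $8$ we get $|G(L^\ast)|=4$, and since the only nonsemisimple nonpointed Hopf algebra of dimension $8$ is $\mathcal{K}$, with $|G(\mathcal{K})|=2$ (Subsection \ref{subsect: 8}), $L^\ast$ is pointed. Now $\phi:=\pi|_K\colon K\to L^\ast$ restricts on coradicals to the isomorphism $\Bbbk\GH\to\Bbbk G(L^\ast)$ from (iii); a coalgebra map injective on the coradical is injective, so $\phi$ is an isomorphism $K\cong L^\ast$ of $8$-dimensional Hopf algebras. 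Hence $\iota:=\iota_K\circ\phi^{-1}$ is a Hopf section of $\pi$, and Radford's theorem yields $H\cong R\#L^\ast\cong R\#K$ with $R=H^{\co\pi}$ a braided Hopf algebra in $\ydk$ of dimension $p$. The main obstacle will be the exclusion work in (i)---especially the case $K\cong\mathcal{A}_2$, $|\GH|=2p$, which forces the delicate skew-primitive count inside the pointed summand $\matha$---together with the relative freeness of $H^{\co\pi}$ over $\Bbbk N$ underpinning (iii).
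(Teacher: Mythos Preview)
Your arguments for (ii) and (iii) match the paper's, and your proof of (iv) is in fact cleaner than the paper's: once (iii) gives that $\pi$ is injective on $\Bbbk\GH=\Bbbk G(K)=K_0$, the standard fact that a coalgebra map injective on the coradical is injective immediately makes $\pi|_K\colon K\to L^\ast$ an isomorphism. The paper instead argues by contradiction, showing that if $\pi(K)\subseteq\Bbbk G(L^\ast)$ then a nontrivial skew-primitive $x\in K$ would lie in $H^{\co\pi}$, and then finds $z\in H^{\co\pi}\smallsetminus K$ with $\langle K,z\rangle=H$ to force $\pi(H)\subseteq\Bbbk G(L^\ast)$---a longer route to the same end.

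However, your proof of (i) has a gap. When $K\cong\mathcal{A}_2$ (so $|G(K)|=2$), the values of $|\GH|$ compatible with $G(K)\subseteq\GH$ and Proposition~\ref{pr: dim not p,8p,4p} are $2,4,8,2p$, and you only exclude $8$ and $2p$; the case $|\GH|=4$ is not addressed. Your device $N=\langle\Bbbk\GH,x\rangle$ does not eliminate it: with $|\GH|=4$ one only gets $4\mid\dim N$ and $\dim N>4$, and $\dim N=8$ is perfectly consistent (then $N$ would be a pointed Hopf algebra of dimension $8$ with $|G(N)|=4$, which is no contradiction). Your $2p$-argument via the coalgebra decomposition of Proposition~\ref{pr: grouplikes 2p} does not apply either, since that proposition requires $|\GH|=2p$.

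The paper's proof of (i) is both shorter and avoids any case analysis: if $g\in\GH\smallsetminus G(K)$, form $\langle g,K\rangle$. Since $K$ is pointed of dimension $8$ it is generated by grouplikes and skew-primitives, hence $\langle g,K\rangle$ is pointed; it properly contains $K$, so by Nichols--Zoeller $8\mid\dim\langle g,K\rangle$ and $\dim\langle g,K\rangle>8$, forcing $\langle g,K\rangle=H$ pointed---a contradiction. Replacing your $N$ by $\langle g,K\rangle$ (equivalently $\langle\Bbbk\GH,K\rangle$) patches your argument and renders the separate treatments of $|\GH|=8$ and $|\GH|=2p$ unnecessary.
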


\begin{proof} (i) Suppose $H$ has a grouplike
element $g$ such that $g
\notin G(K)$.  Then $\langle g, K \rangle$, the sub-Hopf algebra of
$H$ generated by $g$ and $K$, is pointed and has dimension greater
than $8$ and divisible by $8$, so must be all of $H$. This is a
contradiction since $H$ is not pointed.

(ii) Assume $H\simeq R\# K$ with $K$ and $K^{*}$ pointed. By (i),
$G(H) = G(K)$. Since $H^{*}\simeq  R^{*}\# K^{*}$, and
  $K^\ast$ is pointed by assumption, then again by (i), $G(H^\ast) =
G(K^\ast)$. By Section \ref{subsect: 8},  since $K$ is pointed and
basic, then $G(K) \cong G(K^\ast)$ and thus $G(H) \cong
G(H^\ast)$.

(iii) Dualizing the inclusion $L \subset H^\ast$,
we get a Hopf algebra
epimorphism $\pi: H \longrightarrow L^\ast$.
Since $\dim L^\ast = 8$, $\dim R = p$ where
$R = H^{co \pi}$ is the algebra of coinvariants.
Suppose that $\pi(g) = 1$ for some $g \in G(H)$
and let $\Gamma = \langle g\rangle$. Then
$\Bbbk \Gamma \subset R$ and $R$ is
a left $(H, \Bbbk \Gamma)$-Hopf module
where the left action of $\Bbbk \Gamma$ on $R$
is left multiplication.  Then by the Nichols-Zoeller
theorem, $R$ is a free $\Bbbk \Gamma$-module
which is impossible unless $\Gamma = \{1 \}$.
Thus $\pi$ is an injective Hopf algebra
map on $\Bbbk G(H)$ as claimed.

\par (iv)
Let $\pi: H \longrightarrow L^\ast$ and $R = H^{co \pi}$
  as in the proof of (iii). Let $x$ be a nontrivial
$(g, 1)$-primitive in $K$. We wish to show that $\pi(x)$ is a nontrivial skew-primitive in $L^\ast$ and then
$\pi$ will be an isomorphism from $K$ to $L^\ast$, proving the statement.

\par By (i), $G(H) = G(K)$ and $G(H^\ast) = G(L)$.  By (iii), since $|G(H)|=4$, $|G(L^\ast)| \geq 4$, and since $L$
is pointed, by the description of the duals of pointed Hopf algebras of dimension $8$ in Section \ref{subsect: 8}, $L^\ast$
must also be pointed.  Again, by Section \ref{subsect: 8}, $G(L) \cong G(L^\ast)$.
Let $G$ denote $G(H) \cong G(K) \cong G(L) \cong G(L^\ast) \cong G(H^\ast)$.

\par By (iii), $\pi(x)$ is $(g,1)$-primitive. Suppose that $\pi(K) \subseteq \Bbbk G \subset L^\ast$.
Then  $\pi(x) = \lambda(g-1)$ with $\lambda \in \Bbbk$.
But this implies that $\pi(x^2) = \lambda^{2}(g^2 -2g +1)$, which is only possible if $\lambda = 0$ since
$x^2 = 0$ or $x^2 = g^2 -1$. Thus $\Bbbk\{ 1,x \} \subset R = H^{co\pi}$. On the other hand, if
$V$ denotes the vector space with basis $\{ hx^i | h \in G(H), h \neq 1, i = 0,1 \}$, then $V \cap R = \{ 0 \}$.
Since $\dim R =p$,
there is some $0 \neq z \in R$ such that $z \notin K$.  Then $  \langle K,z \rangle$, the sub-Hopf algebra
generated by $K$ and $z$, has dimension greater than $8$ and divisible by $8$ so is all of $H$.
By Lemma \ref{lm: on R pi=epsilon}, $\pi(z) \in \Bbbk$.  Thus $\pi(H) \subseteq \Bbbk G$, a contradiction, and so
$\pi(z)$ is a nontrivial skew-primitive in $L^\ast$.
\end{proof}

\begin{cor} \label{cor: 4.18} Suppose  that $H$ is of type $(4,4)$
 and $H,H^\ast$ each
have a nontrivial skew-primitive element.  Then $H \cong R \# K$ where
$K,K^\ast$ are   pointed Hopf algebras
of dimension $8$ and $R$ is a braided Hopf algebra in $_K^K\mathcal{YD}$
of dimension $p$.
\end{cor}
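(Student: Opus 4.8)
The plan is to reduce everything to Proposition \ref{pr:4-4-boson}(iv). To invoke it I must produce a pointed sub-Hopf algebra $K$ of dimension $8$ inside $H$ with $|G(K)|=4$, and a pointed sub-Hopf algebra $L$ of dimension $8$ inside $H^\ast$. Once these are in hand, part (iv) immediately gives $K\cong L^\ast$ and $H\cong R\#K$ with $R$ a braided Hopf algebra of dimension $p$ in $\ydk$, and since $K^\ast\cong L$ is pointed while $K$ is pointed by construction, the two dimension-$8$ factors are exactly as claimed.

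First I would build $K$. Let $x$ be the nontrivial skew-primitive of $H$ and set $K:=\langle \GH, x\rangle$, the sub-Hopf algebra generated by the grouplikes together with $x$; by \cite[5.5.1]{Mo} this $K$ is pointed. Since $\GH\subseteq K\subseteq H$, we get $G(K)=\GH$, so $|G(K)|=4$ and $4\mid\dim K$. As $\dim K$ divides $8p$, it lies in $\{4,8,4p,8p\}$. The value $4$ is impossible, for then $K=\Bbbk\GH$ would contain no element outside $\Bbbk\GH$, contradicting $x\in K$ being nontrivial; the value $8p$ is impossible because $K$ is pointed whereas $H$ is not; and $4p$ is impossible because every pointed Hopf algebra of dimension $4p$ has group of grouplikes isomorphic to $C_{2p}$ (Section \ref{sect: 4p}), while $|G(K)|=4\neq 2p$ as $p$ is odd. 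Hence $\dim K=8$, and $K$ is a pointed sub-Hopf algebra of dimension $8$ with $|G(K)|=4$.

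Running the identical argument on $H^\ast$ — which by hypothesis is of type $(4,4)$ and has a nontrivial skew-primitive — yields a pointed sub-Hopf algebra $L$ of $H^\ast$ of dimension $8$. Now Proposition \ref{pr:4-4-boson}(iv) applies verbatim and finishes the proof. The only substantive step is the dimension count for $K$ (and $L$): everything else is formal, and the single place requiring an external input is the exclusion of $\dim K=4p$, which rests on the known fact that pointed Hopf algebras of dimension $4p$ have grouplikes of order $2p$. I expect that exclusion to be the main point to get right, since it is what forces $\dim K=8$ rather than allowing a larger pointed factor.
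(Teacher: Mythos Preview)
Your proposal is correct and follows essentially the same route as the paper: build $K=\langle G(H),x\rangle$, use Nichols--Zoeller and the classification of pointed Hopf algebras of dimension $4p$ to force $\dim K=8$, repeat on $H^\ast$ to get $L$, and then invoke Proposition~\ref{pr:4-4-boson}(iv). The paper's argument is terser (it dismisses $\dim K=4$ and $\dim K=8p$ without comment), but the logic is identical.
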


\begin{proof} By Proposition \ref{pr:4-4-boson}(iv),
it remains only to show that $H$, $H^\ast$ have
pointed sub-Hopf algebras of dimension $8$.
Let $K = \langle G(H),x \rangle$, the sub-Hopf algebra of $H$ generated by $G(H)$ and a
nontrivial skew-primitive element. Then $\dim K <8p$ and
is divisible by $4$   so is either $8$ or $4p$.
Since
all pointed Hopf algebras of dimension $4p$ have group of
grouplikes of order $2p$ (see Section \ref{sect: 4p}), $\dim K = 8$. Similarly $H^\ast$ has a pointed sub-Hopf
algebra of dimension $8$.
\end{proof}

The following proposition follows the proof of
\cite[Thm. 3.1]{ChNg}.

\begin{prop}\label{pr:R-semisimple}
Let $K$ be a Hopf algebra and
$R$ be a braided Hopf algebra in $ \ydk $ of odd dimension.
If the order of the antipode in the bosonization $R\#K$ is a power
of $ 2 $, then $R$ and $R^{*}$ are semisimple.
\end{prop}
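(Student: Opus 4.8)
The plan is to reduce the assertion to the single statement that the braided antipode $\underline{S}$ of $R$ satisfies $\underline{S}^2=\id$, and then to invoke the characteristic-zero semisimplicity criterion in its braided form (the Larson--Radford theorem for Hopf algebras in $\ydk$): over a field of characteristic zero a finite-dimensional braided Hopf algebra is semisimple and cosemisimple exactly when the square of its antipode is the identity. Since $R^{*}$ is again a braided Hopf algebra in $\ydk$ whose braided antipode is dual to $\underline{S}$ and satisfies $(\underline{S}^{2})^{*}$ as its square, once I know $\underline{S}^2=\id$ I obtain semisimplicity of both $R$ and $R^{*}$ simultaneously. Thus the whole argument is organized around bounding the order of $\underline{S}^2$.

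First I would transfer the hypothesis from $H=R\#K$ down to $R$. Write $\pi\colon H\twoheadrightarrow K$ and $\iota\colon K\hookrightarrow H$ for the canonical maps, so that $R=H^{\co\pi}$. Because $S_H^2$ is a Hopf algebra automorphism of $H$ compatible with $\pi$ and $\iota$, it stabilizes $R$, and the bosonization formula for the antipode of $R\#K$ expresses $S_H$ through $\underline{S}$ and $S_K$. Reading off the restriction to $R$ shows that $\underline{S}^2$ and $S_H^2|_R$ differ only by the action of a grouplike of $K$. Since by hypothesis $\ord(S_H)$, and hence $\ord(S_H^2)$, is a power of $2$, this yields that $\ord(\underline{S}^2)$ is a power of $2$.

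Next I would bring in the odd dimension through the braided analogue of Radford's formula for the fourth power of the antipode: $\underline{S}^4$ is the composite of conjugation by the distinguished grouplike $g\in G(R)$ with the action $\alpha\rightharpoonup(-)\leftharpoonup\alpha^{-1}$ of the modular element $\alpha\in G(R^{*})$. By a Lagrange/Nichols--Zoeller type argument the orders of $G(R)$ and $G(R^{*})$ divide $\dim R$, which is odd, so $g$ and $\alpha$ have odd order and $\ord(\underline{S}^4)$ is odd. Combined with the previous step, $\ord(\underline{S}^4)$ is simultaneously a power of $2$ and odd, hence $\underline{S}^4=\id$ and $\ord(\underline{S}^2)\in\{1,2\}$.

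The hard part will be ruling out $\ord(\underline{S}^2)=2$, that is, upgrading $\underline{S}^4=\id$ to $\underline{S}^2=\id$; this is precisely where the oddness of $\dim R$ must be used a second time, for $\underline{S}^4=\id$ alone is not enough (the even-dimensional Sweedler algebra $H_4$, with $S^4=\id$ but $S^2\neq\id$, is the standard obstruction). My approach would be a trace computation for $\underline{S}^2$: its eigenvalues are $\pm1$ with multiplicities $a$ and $b$ where $a+b=\dim R$ is odd, and a braided integral computation identifies $\Tr(\underline{S}^2)$ with a quantity controlled by the same odd-order modular data $g,\alpha$; playing the resulting odd-order constraint against the $2$-power constraint inherited from $S_H$ should force $b=0$ and hence $\underline{S}^2=\id$, at which point the semisimplicity criterion finishes the proof. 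Establishing this final trace identity rigorously --- in particular checking that the integral of $R$ is invariant under the relevant grouplike action and that the bosonization transports it correctly --- is the main technical obstacle.
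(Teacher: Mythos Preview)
Your plan targets the wrong operator and an unnecessarily strong conclusion, and the step you flag as ``the main technical obstacle'' is exactly where the argument would fail. The paper never tries to prove $\underline{S}^2=\id$ for the \emph{braided} antipode, and indeed no braided Larson--Radford theorem of the form you invoke is available in $\ydk$ in general; what one actually needs is only semisimplicity of $R$ as an ordinary algebra. The correct tool is the Andruskiewitsch--Schneider criterion \cite[Thm.~7.3]{andrussch}: for $R$ stable under $S_H^2$ inside $H=R\#K$, one has that $R$ is semisimple if and only if $\Tr(S_H^2|_R)\neq 0$. So one should work directly with $S_H^2|_R$, not with $\underline{S}^2$.

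With that reframing the proof is two lines. Since $\ord(S_H)$ is a power of $2$, so is $\ord(S_H^2|_R)$. Now Ng's elementary trace lemma \cite[Lemma~1.4]{Ng} says: if a linear operator on $V$ has order a power of $2$ and trace zero, then $\dim V$ is even. As $\dim R$ is odd, $\Tr(S_H^2|_R)\neq 0$, hence $R$ is semisimple; the same argument applied to $H^*\simeq R^*\#K^*$ gives $R^*$ semisimple. Your detour through a braided Radford $S^4$-formula, the orders of $g\in G(R)$ and $\alpha\in G(R^*)$, and a putative trace identity for $\underline{S}^2$ is not needed---and in fact your ``hard part'' is precisely the trace step, but applied to $S_H^2|_R$ rather than $\underline{S}^2$ it becomes immediate because the semisimplicity criterion is already phrased in terms of that trace.
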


\pf
Let $H = R\#K$ be the Radford biproduct or bosonization
of $R$ with $K$. As $R$ is stable by $ \cS_{H}^2 $,
 by \cite[Thm. 7.3]{andrussch}
it suffices to prove that $\Tr(\cS_{H}^{2}|_{R}) \neq 0$.  Clearly, the
order of $ \cS_{H}^{2}|_{R} $ divides the order of $ \cS_{H}^{2}
$ and hence is a power of $ 2 $. If $\Tr(\cS_{H}^{2}|_{R}) = 0 $, then
  by \cite[Lemma 1.4]{Ng}
  $\dim R$ is even, a contradiction. Thus $R$ is semisimple.
The same proof holds for $R^{*}$ since $H^{*} \simeq R^{*}\# K^{*}$.
\epf

Recall  that
$H$ nonsemisimple of
% a nonsemisimple Hopf algebra $H$ of
 dimension $24$
 with $|G(H)|=4$ has a nontrivial skew-primitive element by
 Proposition
\ref{prop:biti-dasca}. Then  Corollary \ref{cor: 4.18} and Proposition \ref{pr:R-semisimple}
imply the next statement.

\begin{cor}\label{cor:24-4-4-1}
Suppose $\dim H =24$ and $H$ is of type $(4,4)$.
 Then $H\simeq R\# K$ with $K$ and
$ K^{*} $  pointed Hopf algebras of dimension $ 8 $ and
$R$ a semisimple Hopf algebra in $ \ydk $ of dimension $ 3 $.\qed
\end{cor}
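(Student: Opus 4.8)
The plan is to realise $H$ as a bosonization via Corollary \ref{cor: 4.18} and then force semisimplicity of the degree-$p$ factor through Proposition \ref{pr:R-semisimple}. Two hypotheses must be supplied before these results can be strung together: that both $H$ and $H^\ast$ carry nontrivial skew-primitives (needed for Corollary \ref{cor: 4.18}), and that $\ord S_H^2$ is a power of $2$ (needed for Proposition \ref{pr:R-semisimple}). Granting these, Corollary \ref{cor: 4.18} gives $H \cong R \# K$ with $K, K^\ast$ pointed of dimension $8$ and $R$ a braided Hopf algebra of dimension $p = 3$ in $\ydk$, and Proposition \ref{pr:R-semisimple} then yields that $R$ (and $R^\ast$) is semisimple, which is exactly the assertion.

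First I would produce the skew-primitives. Since $H$ is nonsemisimple it is non-cosemisimple by Remark \ref{rm: LR2}, and type $(4,4)$ means $|G(H)| = |G(H^\ast)| = 4$. If $H$ had no nontrivial skew-primitive, then writing $H_0 \cong \Bbbk G \oplus \bigoplus_i \M^\ast(n_i,\Bbbk)$ with $n_1 \geq 2$, Lemma \ref{lema:andrunatale} forces $4 \mid \dim H_{0,d}$ and hence $\dim H_0 \geq 8$, so the bound of Proposition \ref{prop:biti-dasca}(ii) gives $\dim H \geq 8 + (2\cdot 2 + 1)\cdot 4 + 4 = 32 > 24$, a contradiction. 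The identical count applies to $H^\ast$, which is also nonsemisimple with $|G(H^\ast)| = 4$. Hence both $H$ and $H^\ast$ have nontrivial skew-primitives, and Corollary \ref{cor: 4.18} applies.

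The main obstacle is the antipode hypothesis, and this is where the only genuinely new verification lies. To show $\ord S_H^2$ is a power of $2$ I would appeal to Radford's formula for the fourth power of the antipode (see \cite{Mo}): $S_H^4(h) = g\,(\alpha \rightharpoonup h \leftharpoonup \alpha^{-1})\,g^{-1}$, where $g \in G(H)$ and $\alpha \in G(H^\ast)$ are the distinguished grouplikes. The two commuting automorphisms $h \mapsto ghg^{-1}$ and $h \mapsto \alpha \rightharpoonup h \leftharpoonup \alpha^{-1}$ have orders dividing $\ord g$ and $\ord \alpha$ respectively; since $g$ lies in a group of order $4$ and $\alpha$ lies in a group of order $4$, both orders are powers of $2$. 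Consequently $\ord S_H^4$, and therefore $\ord S_H^2$, is a power of $2$. Because the proof of Proposition \ref{pr:R-semisimple} only needs $\ord S_H^2|_R$ to divide $\ord S_H^2$, this supplies its hypothesis, and with $p = 3$ we conclude that $R$ is a semisimple Hopf algebra in $\ydk$ of dimension $3$. Everything else is bookkeeping against results already in the excerpt.
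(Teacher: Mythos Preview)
Your proof is correct and follows the same approach as the paper: invoke Proposition~\ref{prop:biti-dasca} to obtain nontrivial skew-primitives in both $H$ and $H^\ast$, apply Corollary~\ref{cor: 4.18} to get the bosonization $H \cong R\#K$, and then use Proposition~\ref{pr:R-semisimple} for semisimplicity of $R$. The paper records only the sentence ``Then Corollary~\ref{cor: 4.18} and Proposition~\ref{pr:R-semisimple} imply the next statement'' and leaves the antipode hypothesis of Proposition~\ref{pr:R-semisimple} implicit; your Radford's-formula argument that $\ord S_H^2$ is a $2$-power (using that the conjugation by $g$ and the hit-action by $\alpha$ commute because $\alpha\rightharpoonup g\leftharpoonup\alpha^{-1}=g$) fills in precisely that gap, so your write-up is in fact more complete than the paper's.
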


The following lemmata generalize results of Cheng and Ng
used to study $H_4$-module algebras, in particular \cite[3.4,3.5]{ChNg}.

\begin{lema}\label{lem:xe=0}
Let $K$ be a   pointed Hopf algebra generated by grouplikes and skew-primitives,
and
let $ A $ be a finite dimensional left $K$-module algebra. If $A$ is
a semisimple algebra and $ e $ is a central idempotent of $ A $ such
that the two-sided ideal $I=Ae$ is stable by the action of $G(K)$,
then $I$ is a $K$-submodule of $ A $ with $g\cdot e = e$ for all $g\in G(K)$ and $x\cdot
e = 0$  for any skew-primitive element $x$.
\end{lema}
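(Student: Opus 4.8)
The plan is to establish the three claims in the order stated; the fact that $I$ is a $K$-submodule will fall out once $g\cdot e = e$ and $x\cdot e = 0$ are known. By linearity of the action and the decomposition $\cP(K) = \sum_{h,g}\cP_{h,g}(K)$, it suffices to treat a single $(h,g)$-primitive $x$, i.e.\ one with $\Delta(x) = x\otimes h + g\otimes x$ and $g,h\in G(K)$.

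First I would prove $g\cdot e = e$ for $g\in G(K)$. Since $\Delta(g)=g\otimes g$ and $\varepsilon(g)=1$, the map $a\mapsto g\cdot a$ is an algebra endomorphism of $A$ with two-sided inverse $a\mapsto S(g)\cdot a$, hence an algebra automorphism. Therefore $g\cdot e$ is again a central idempotent, and since $g\cdot(ae)=(g\cdot a)(g\cdot e)$ with $a\mapsto g\cdot a$ surjective, we get $g\cdot I = A(g\cdot e)$. Stability of $I$ under $G(K)$ gives $g\cdot I = I$ (the reverse inclusion $I\subseteq g\cdot I$ comes from applying $g^{-1}\in G(K)$), so $A(g\cdot e)=Ae$. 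I would then invoke the uniqueness of the central idempotent generating a two-sided ideal: if $Ae=Af$ with $e,f$ central idempotents, then writing $e=af$ gives $ef=e$, symmetrically $fe=f$, and centrality yields $e=ef=fe=f$. Applied to $f=g\cdot e$ this gives $g\cdot e = e$.

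Next, for the $(h,g)$-primitive $x$ I would apply the module-algebra axiom to the idempotent identity $e=e\,e$:
\[
x\cdot e = (x\cdot e)(h\cdot e) + (g\cdot e)(x\cdot e) = (x\cdot e)\,e + e\,(x\cdot e),
\]
using the first part to rewrite $h\cdot e = g\cdot e = e$. Setting $y=x\cdot e$ and using that $e$ is central, this reads $y = 2ye$; multiplying on the right by $e$ gives $ye = 2ye$, hence $ye=0$ and so $y = 2ye = 0$. Thus $x\cdot e = 0$.

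Finally, the set $\{k\in K : k\cdot I\subseteq I\}$ is a subalgebra of $K$ (it contains $1$ and is closed under products since the action is associative), so to prove $I$ is a $K$-submodule it is enough to check the algebra generators. Grouplikes preserve $I$ by hypothesis, and for the $(h,g)$-primitive $x$,
\[
x\cdot(ae) = (x\cdot a)(h\cdot e) + (g\cdot a)(x\cdot e) = (x\cdot a)\,e \in Ae = I,
\]
using $h\cdot e = e$ and $x\cdot e = 0$. Hence $K\cdot I\subseteq I$. The only non-formal ingredient is the uniqueness of central idempotents generating a two-sided ideal together with the identity $g\cdot(Ae)=A(g\cdot e)$; the main points requiring care are ensuring $g\cdot I = I$ rather than a mere inclusion, and the centrality bookkeeping in the $x\cdot e=0$ step. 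Semisimplicity of $A$ appears not to be needed for this argument, since the idempotent-uniqueness step is valid in any algebra.
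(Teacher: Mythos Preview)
Your proof is correct and follows the same overall structure as the paper's: first establish $g\cdot e = e$, then compute $x\cdot e = 2(x\cdot e)e$ from $e = e^2$ to deduce $x\cdot e = 0$, then check stability of $I$ on algebra generators of $K$. The one genuine difference is the argument for $g\cdot e = e$. The paper decomposes $e = e_1 + \cdots + e_t$ into orthogonal primitive central idempotents (this is where semisimplicity of $A$ is invoked) and observes that $G(K)$, acting by algebra automorphisms, permutes all primitive central idempotents of $A$; stability of $I$ then forces the permutation to restrict to $\{e_1,\ldots,e_t\}$, whence $g\cdot e = e$. Your route via $A(g\cdot e) = g\cdot(Ae) = Ae$ and the uniqueness of the central idempotent generating a two-sided ideal is more elementary and, as you correctly note, does not use semisimplicity at all. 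The paper also treats only $(1,g)$-primitives explicitly (implicitly reducing the general case via $x = h(h^{-1}x)$ with $h^{-1}x$ a $(1,h^{-1}g)$-primitive), whereas you handle arbitrary $(h,g)$-primitives directly; the resulting computations are otherwise identical.
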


\pf
Write $e=e_{1} + \cdots + e_{t}$ as a sum of orthogonal primitive
central idempotents. Since $I$ is stable under the action of $G(K)$, then
$G(K)$   permutes the primitive idempotents $ e_{1},\ldots ,e_{t} $  and
hence $g\cdot e = e$ for all $g\in G(K)$.

Let  $x$
be a $(1,g)$-primitive. Then $x\cdot e = x\cdot e^{2} = (x\cdot e)e
+ (g\cdot e)(x\cdot e) = 2(x\cdot e)e$.  Thus  $x \cdot e \in I$ so that
$ x \cdot e =  (x \cdot e ) e  $ and then $x \cdot e = 2 (x \cdot e)$ implying that $x\cdot e=0$.
Moreover, since $x\cdot (ae) = (x\cdot a)e$ for all $a\in A$, it
follows that $I$ is stable under the action of $x$ and since $K$ is
generated by grouplikes and skew-primitives, $I$ is a $K$-submodule
of $A$. \epf

\begin{lema}\label{lem:x-R=0}
Let $K$ be a   pointed Hopf algebra  with
abelian group of grouplikes. Let $ A $
be a   semisimple braided Hopf algebra in $\ydk$.
If $I$ is a one-dimensional ideal of $A$, then $x\cdot I =0$ for all
skew-primitive elements $x$ of $K$.
\end{lema}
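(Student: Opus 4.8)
The plan is to reduce to a single central idempotent, to localize the action of $x$ using only the module–algebra axiom, and then to use the coalgebra structure of $A$ to annihilate what remains. First I would fix the structure of $I$. Since $\Bbbk$ has characteristic zero and $A$ is finite dimensional and semisimple, a one dimensional two–sided ideal is a matrix block of size one; hence $I=\Bbbk e$ with $e$ a primitive central idempotent, and there is an algebra map $\lambda\colon A\to\Bbbk$ with $ae=ea=\lambda(a)e$ for all $a\in A$. Because $\Delta(g)=g\otimes g$, each $g\in G(K)$ acts on $A$ as a unital algebra automorphism, so $G(K)$ permutes the finitely many primitive central idempotents and $g\cdot e$ again generates a one dimensional ideal.

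Next I would localize $x\cdot e$ using the module–algebra axiom alone. Writing $\Delta(x)=x\otimes 1+g\otimes x$ and using $e=e^2$ gives
\[
x\cdot e=(x\cdot e)\,e+(g\cdot e)(x\cdot e).
\]
Decomposing $x\cdot e$ along the block decomposition of $A$, and using that $e$ (resp.\ $g\cdot e$) is the unit of its one dimensional block and kills the others, the right–hand side has components only in the blocks of $e$ and of $g\cdot e$; thus $x\cdot e=\gamma e+\delta\,(g\cdot e)$ for scalars $\gamma,\delta$. In the case $g\cdot e=e$ the displayed identity reads $x\cdot e=2(x\cdot e)e$, and multiplying by $e$ forces $x\cdot e=0$, exactly as in Lemma~\ref{lem:xe=0}. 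So everything comes down to handling a possibly nontrivial $G(K)$–orbit of $e$.

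The step I expect to be the main obstacle is precisely the case $g\cdot e\neq e$, where the counit gives no information ($\varepsilon_A(x\cdot e)=\varepsilon_K(x)\varepsilon_A(e)=0$ holds automatically) and one must genuinely use that $A$ is a \emph{braided Hopf algebra}, not merely a module algebra. The model computation is the counit block: for the normalized integral $\Lambda$ ($\varepsilon_A(\Lambda)=1$, $a\Lambda=\varepsilon_A(a)\Lambda$) a short computation from $\Delta(x)=x\otimes 1+g\otimes x$ shows that $g\cdot\Lambda$ and $x\cdot\Lambda$ are again integrals, whence $g\cdot\Lambda=\Lambda$ by uniqueness and $x\cdot\Lambda=0$ since $\varepsilon_A(x\cdot\Lambda)=0$. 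Thus the counit block is automatically $G(K)$–fixed and annihilated by $x$.

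To reach a general block I would use the comultiplication: applying $x$ to $\Delta_A(e)$ and using that $\Delta_A$ intertwines the $K$–action (the action of $x$ on $A\otimes A$ being $a\otimes b\mapsto (x\cdot a)\otimes b+(g\cdot a)\otimes(x\cdot b)$), together with the Yetter–Drinfeld compatibility linking the $K$–action and $K$–coaction and the hypothesis that $G(K)$ is abelian, to force $\gamma=\delta=0$. Equivalently, I expect this to show that $\Bbbk e$ is stable under $G(K)$ after all, i.e.\ the $G(K)$–orbit of $e$ is trivial, after which Lemma~\ref{lem:xe=0} applies verbatim. Carrying out this coalgebra computation for a general one dimensional ideal — equivalently, controlling how the $K$–action permutes the grouplikes $\lambda\in G(A^{\ast})$ of the dual braided Hopf algebra — is the crux of the argument, and the only place where more than the module–algebra axiom is needed.
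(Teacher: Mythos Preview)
Your reduction to $x\cdot e=\gamma e+\delta\,(g\cdot e)$ is correct and matches the paper, as does the disposal of the case $g\cdot e=e$ via Lemma~\ref{lem:xe=0}. The gap is entirely in the case $g\cdot e\neq e$. Your expectation that the Yetter--Drinfeld compatibility will force the $G(K)$-orbit of $e$ to be trivial is wrong: nothing prevents $\langle g\rangle$ from cyclically permuting several one-dimensional blocks (indeed in the intended application $A$ has dimension $p$ and may well split as $p$ such blocks in a single orbit). The comultiplication $\Delta_A(e)$ gives no grip here, and your integral element $\Lambda$ is only the counit block, not the block $e$ under discussion.

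What the paper actually uses, and what is missing from your plan, are two ingredients. First, the abelianness of $G(K)$ enters not through the coaction but through the adjoint action on $K$ itself: one may take $x$ to be an eigenvector, $gxg^{-1}=\omega x$ with $\omega\neq 1$. Listing the orbit $e_1,\dots,e_t$ with $g\cdot e_i=e_{i+1}$, one has $x\cdot e_i=\alpha_{i,i}e_i+\alpha_{i,i+1}e_{i+1}$; the relation $x\cdot(e_1+\cdots+e_t)=0$ from Lemma~\ref{lem:xe=0} together with $gxg^{-1}=\omega x$ reduces everything to a single scalar, giving $x\cdot e_1=\alpha_{1,1}(e_1-\omega^{-1}e_2)$. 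Second, to kill $\alpha_{1,1}$ the paper applies the right integral \emph{functional} $\lambda_A\in A^{\ast}$, not the integral element: one has $\lambda_A(k\cdot a)=\varepsilon_K(k)\lambda_A(a)$ by \cite{FMS}, so $\lambda_A(e_1)=\lambda_A(e_2)$ and $0=\lambda_A(x\cdot e_1)=\alpha_{1,1}(1-\omega^{-1})\lambda_A(e_1)$. Since $\omega\neq 1$ and $\ker\lambda_A$ contains no nonzero ideal, $\alpha_{1,1}=0$. Neither the eigenvalue relation nor the integral functional appears in your outline, and without them the argument does not close.
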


\pf
Let $x$ be a $(1,g)$-primitive element of $ K $ and
denote by $\overline{K}$ the pointed sub-Hopf
algebra of $K$ generated by $x$ and $g$. Note that since
$G(K)$ is abelian, then $g x g^{-1} = \chi(g) x = \omega x$
for some character $\chi$ of $G$ and $1 \neq \omega$ an $N$-th
root of unity with $N= \ord g$.

Since $A$ is semisimple, $I = Ae_{1} = \Bbbk e_{1}$
for some
central primitive idempotent. Thus we need to prove that
$x\cdot e_{1} =0$. If $g\cdot e_{1}=e_{1}$, then the result follows from
Lemma \ref{lem:xe=0} using $ \overline{K} $ instead of $ K $.
Assume $g\cdot e_{1} \neq e_{1}$ and let $e_{1},\ldots ,e_{t}$ be
representatives of the set $\{g^{i}\cdot e_{1}\}_{0\leq i<N}$
of primitive central
idempotents of $ A $; in
particular $t$ divides $N$ and $g\cdot e_{t}=e_{1}$.
Let $e = e_{1}+\cdots +e_{t}$, then $\overline{I} = Ae$ is
a two-sided ideal of $ A $ which is stable under the action of
$\Gamma = \langle g\rangle$. Hence, by Lemma \ref{lem:xe=0}
we have that $x\cdot e = 0$.
Since $\com(x) = x\ot 1 + g\ot x$, we have that
$x\cdot e_{i} = x\cdot e_{i}^2 = \alpha_{i,i} e_{i} + \alpha_{i,i+1} e_{i+1}$
and $x\cdot e_{t} = \alpha_{t,t} e_{t} + \alpha_{t,1} e_{1}$
for some $\alpha_{ij}\in \Bbbk$. Using
that $x\cdot e =0$ we get that
$\alpha_{i-1,i} + \alpha_{i,i} =0 $ and $\alpha_{t,1} + \alpha_{1,1}=0$
for all $2\leq i \leq t$.
On the other hand, using that $gxg^{-1} = \omega x$ we obtain
that $\omega \alpha_{i,i} = \alpha_{i-1,i-1}$ for all $2\leq i \leq t$,
$\omega \alpha_{i,i+1} = \alpha_{i-1,i}$ for all $2\leq i \leq t-1$
and $\omega \alpha_{1,1} = \alpha_{t,t}$,
$\omega \alpha_{1,2} = \alpha_{t,1}$ and
$\omega \alpha_{t,1} = \alpha_{t-1,t}$.
Hence
$\alpha_{1,2} = \omega^{-1}\alpha_{t,1} = -
\omega^{-1}\alpha_{1,1}$ and $x\cdot e_{1} =
\alpha_{1,1}(e_{1} - \omega^{-1}e_{2})$.

\par Denote by $\lambda_{A}$ the right integral of $ A $. Then
by \cite[Thm. 5.8, Rmk. 5.9]{FMS},
see also \cite[Eq. (3.4)]{ChNg},
for $k \in K$, $a \in A$, we have
$\lambda_{A}(k\cdot a) = \eps_{K}(k) \lambda_{A}(a)$.
Then $g\cdot e_{1} =e_{2}$ implies that
$\lambda_{A}(e_{1})= \lambda_{A}(e_{2})$ and consequently
$$0= \eps_{K}(x)\lambda_{A}(e_{1})=
\lambda_{A}(x\cdot e_{1}) =
\alpha_{1,1}(\lambda_{A}(e_{1})-\omega^{-1}\lambda_{A}(e_{2}))
=\alpha_{1,1}(1-\omega^{-1})\lambda_{A}(e_{1}).$$
This implies that $\alpha_{1,1}=0$, since $\omega^{-1}\neq 1$
and $\lambda_{A}(e_{1})\neq 0$ because the kernel
of a right integral does not contain any nontrivial
ideal. Hence $x\cdot e_{1}=0$ and the lemma is proved.
\epf

\begin{prop}\label{pr: boson implies chevalley}
Suppose that $H \cong R \#K$  where $K$ is a  pointed Hopf algebra  of
 dimension $8$, and $R$ is a
Hopf algebra of dimension $p$ in $_K^K\mathcal{YD}$ such that $x \cdot R =0$  for
some $(1,g)$ primitive $x \in K$, $x \cdot R$ being the adjoint action of $x$ on $R$.

\begin{enumerate}
\item[(i)]
 If $  |G(H) | =4$,   suppose furthermore that $K$ is basic and the   condition above holds for $
  R^\ast $ and $ K^\ast$, i.e.,  for $y$  some nontrivial  $(1,h)$-primitive in $K^\ast$,
then $y \cdot R^\ast =0$.  Then,  $H$ and $H^\ast$ have the Chevalley property
and, in the notation of
Section \ref{dim8}, we have
\begin{enumerate}
 \item[(a)] $G(H) \cong C_4$, and $K\cong \mathcal{A}_{4}^{\prime}$ or
 $K\cong \mathcal{A}_{4,\xi}^{\prime \prime\prime}$; or
 \item[(b)] $G(H) \cong C_2 \times C_2$ and
  $K \cong \mathcal{A}_{2,2}$.
\end{enumerate}

%If $  |G(H) | =4$,   suppose furthermore that $K$ is basic and the   condition above holds for $
%  R^\ast $ and $ K^\ast$, i.e.,  for $y$  some nontrivial  $(1,h)$-primitive in $K^\ast$,
%then $y \cdot R^\ast =0$.  Then  $G(H) \cong C_2 \times C_2$,
%  $K \cong \mathcal{A}_{2,2}$ in the notation of
% Section \ref{subsect: 8} and
%  $H$ and $H^\ast$ have the Chevalley property.
\item[(ii)] If  $|G(H)| = 2$ then there is a Hopf algebra epimorphism $\pi: H \rightarrow A$
where $A$ is a Hopf algebra of dimension $4p$ which is   nonsemisimple, nonpointed and nonbasic. Thus if  $p \leq 11$, this
situation cannot occur.
\end{enumerate}
\end{prop}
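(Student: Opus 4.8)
The common first step is to pin down $K$. Since $H$ contains the pointed sub-Hopf algebra $K$ of dimension $8$, Proposition \ref{pr:boson-2-4}(i) gives $G(H)=G(K)$, so $|G(K)|=2$ in case (ii) and $|G(K)|=4$ in case (i). Reading off the pointed nonsemisimple Hopf algebras of dimension $8$ from Section \ref{subsect: 8}, this forces $K\cong\mathcal{A}_2$ in case (ii), while in case (i) the hypothesis that $K$ be copointed excludes $\mathcal{A}''_4$ and leaves $K\in\{\mathcal{A}'_4,\mathcal{A}'''_{4,\xi},\mathcal{A}_{2,2}\}$. In each case I will use $x\cdot R=0$ to manufacture a quotient of dimension $4p$: because $x$ is a $(1,g)$-primitive the ideal $(x)\subset K$ is a Hopf ideal, and $x\cdot R=0$ propagates to $(x)\cdot R=0$, so the $K$-action on $R$ descends to $\overline{K}:=K/(x)$; pushing the coaction forward along $K\twoheadrightarrow\overline{K}$ makes $R$ an object of ${}^{\overline K}_{\overline K}\mathcal{YD}$, and $\id_R\#(K\to\overline K)$ becomes a Hopf epimorphism $\pi\colon H=R\#K\twoheadrightarrow A:=R\#\overline K$ with $\dim A=4p$. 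Here $\overline K\cong H_4$ in case (ii) and $\overline K\cong\Bbbk G(K)$ in case (i).

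For (i) I first eliminate the two possibilities with $G(K)\cong C_4$. When $K\cong\mathcal{A}'_4$ the quotient $\pi\colon H\twoheadrightarrow A=R\#\Bbbk C_4$ satisfies $\dim H=2\dim A$, so Proposition \ref{prop:R-skew} applies and $H^{\co\pi}=\Bbbk\{1,z\}$ with $z$ a $(1,\gamma)$-primitive and $\gamma^2=1$. But one checks directly that $1\#x\in H^{\co\pi}$ is a nontrivial $(1,g)$-primitive whose grouplike $g$ has order $4$, contradicting that the skew-primitive furnished by Proposition \ref{prop:R-skew} sits over a grouplike of order $\le 2$. When $K\cong\mathcal{A}'''_{4,\xi}$ the skew-primitive of $K$ sits over a grouplike of order $2$, so no contradiction arises on $H$ itself; instead I pass to $H^{*}\cong R^{*}\#K^{*}$, note $K^{*}\cong\mathcal{A}'_4$ by the duality list of Section \ref{subsect: 8}, and repeat the construction using the hypothesis $y\cdot R^{*}=0$. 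This gives $H^{*}\twoheadrightarrow R^{*}\#\Bbbk C_4$ in which $1\#y$ is a nontrivial skew-primitive over a grouplike of order $4$, again contradicting Proposition \ref{prop:R-skew}. Hence $K\cong\mathcal{A}_{2,2}$ and $G(H)\cong C_2\times C_2$.

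It remains to obtain the Chevalley property in (i). With $K\cong\mathcal{A}_{2,2}$ the quotient $A=R\#\Bbbk[C_2\times C_2]$ has dimension $4p$, has $|G(A)|\ge 4$, and is nonpointed, since $H$ nonpointed forces $R$ to carry a simple subcoalgebra of dimension $>1$ which survives in $A$. If $A$ were nonsemisimple, Theorem \ref{th: ChNg I} would make it pointed, a contradiction; so $A$ is semisimple, hence cosemisimple by Remark \ref{rm: LR2}, whence $R_0=R$ and $H_0=R\#\Bbbk G(K)$. The latter is closed under multiplication (grouplikes act on $R$ preserving $R$), hence a sub-Hopf algebra, giving the Chevalley property for $H$; the same argument for $H^{*}$ via $y\cdot R^{*}=0$ and $K^{*}\cong\mathcal{A}_{2,2}$ finishes (i). For (ii), where $\overline K\cong H_4$, the quotient $A=R\#H_4$ has dimension $4p$ and contains $H_4$, so it is nonsemisimple; as above, $H$ nonpointed and $H$ non-copointed force $R$ and $R^{*}$ to be nonpointed coalgebras, so $A$ is nonpointed and $A^{*}\cong R^{*}\#H_4$ is nonpointed, i.e. $A$ is non-copointed. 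Thus $A$ is a nonsemisimple, nonpointed, non-copointed Hopf algebra of dimension $4p$, and for $p\le 11$ this contradicts Theorem \ref{th: ChNg II}, so $|G(H)|=2$ cannot occur.

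The main obstacles are the coalgebra bookkeeping for bosonizations: that $H=R\#K$ with $K$ pointed is nonpointed exactly when $R$ is nonpointed as a coalgebra, that $H_0=R_0\#K_0$, and that $(R\#K)^{*}\cong R^{*}\#K^{*}$; together with checking that $R$ really remains a Yetter--Drinfeld module over $\overline K$ after killing $x$ and that $\id_R\#(K\to\overline K)$ is a bialgebra map. The sharpest point is the identification of $1\#x$ as the coinvariant skew-primitive together with the order of its associated grouplike, since this is what makes Proposition \ref{prop:R-skew} exclude the two $C_4$ cases.
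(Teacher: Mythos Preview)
Your overall architecture matches the paper's: quotient by the Hopf ideal generated by $x$ to obtain a $4p$-dimensional Hopf algebra $A$ (your $R\#\overline K$ is exactly the paper's $H/Hx$), then use Proposition~\ref{prop:R-skew} to force $g^2=1$ and eliminate the $C_4$ possibilities for $K$, passing to the dual for $\mathcal A'''_{4,\xi}$. That part is fine and essentially identical to the paper.

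The divergence, and the real issue, is in how you extract consequences from $A$.

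\textbf{Part (i).} Your route is more direct than the paper's: you want to show $A=R\#\Bbbk G(K)$ is semisimple, deduce $R$ is cosemisimple, and conclude $H_0=R\#K_0$. This \emph{can} be made to work, but you have not done the work. The key missing step is that $R$ cosemisimple forces the $K$-coaction on $R$ to land in $K_0\otimes R$ (since $\delta$ is a coalgebra map and $(K\otimes R)_0=K_0\otimes R_0$), after which $\{R\#K_n\}_n$ is a genuine coalgebra filtration of $H$ with cosemisimple bottom piece, yielding $H_0=R\#K_0$. You list ``$H_0=R_0\#K_0$'' as an obstacle but never discharge it. (Incidentally, your ``$A$ nonpointed'' claim is both unjustified and unnecessary: $|G(A)|\ge 4$ already contradicts $|G(A)|=2p$ for a nonsemisimple pointed $A$.) The paper instead goes through the dual: it produces a semisimple quotient $B$ of $H^*$, embeds $L:=B^*\hookrightarrow H_0$, shows $\pi|_L\colon L\to A$ is an isomorphism so that $\pi$ splits as Hopf algebras, and concludes $H\cong S\#A$ with $S=\Bbbk\{1,x\}$ connected and $A$ cosemisimple.

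\textbf{Part (ii).} Here there is a genuine gap. Your chain is ``$H$ nonpointed $\Rightarrow$ $R$ nonpointed $\Rightarrow$ $A=R\#H_4$ nonpointed'' (and similarly for duals). The second arrow is fine because $\pi_R\colon A\to R$, $r\#k\mapsto r\,\varepsilon(k)$, is a surjective coalgebra map. But the first arrow is the statement ``$R$ pointed and $K$ pointed $\Rightarrow R\#K$ pointed,'' which you neither prove nor cite; the coalgebra surjection $H\to R$ gives only the opposite implication. Your claim that a simple subcoalgebra of $R$ ``survives in $A$'' also doesn't follow, since $R\hookrightarrow A$ is an algebra inclusion, not a coalgebra map. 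The paper avoids this entirely: it observes that $\pi(g),\pi(x')$ generate a copy of $H_4$ inside $A$, so $A$ is nonsemisimple; if $A^*$ were pointed then $|G(A^*)|=2p$ would embed in $G(H^*)\cong C_2$, absurd; and if $A$ were pointed with $A^*$ nonpointed then $A\cong\mathcal A(-1,1)$, which contains no $H_4$, again absurd. This argument uses the explicit $4p$-classification rather than general bosonization bookkeeping, and it is what you are missing.
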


\begin{proof} We note that by Proposition \ref{pr:4-4-boson}(i), $G(K) = G(H)$.
  \par (i) Let $J$ be the Hopf ideal of $H$ generated by $x$. Since
 $x \cdot R = 0$, then
$xR = -gRx\subseteq Hx$ and so $J = Hx$. As a $\Bbbk$-space,
$J = Span \{r_ihx :\ r_1, \ldots, r_p \mbox{ a basis for }$ $
R, h \in G(H) \}$ and so the dimension
of $J$ is at most $4p$. Then $\dim H/J \geq 4p$ and divides $8p$ since
$(H/J)^\ast$ is isomorphic to
a sub-Hopf algebra of $H^\ast$. Thus $\dim J = \dim H/J = 4p$.
Then there is a Hopf algebra
epimorphism $\pi: H \rightarrow A$ where $A:= H/J$ is a Hopf
algebra of dimension $4p$ and $H^{co \pi} = \Bbbk\{1,x \}$.
 By Proposition \ref{prop:R-skew}, $x^2 =0$  so that $K \cong \mathcal{A}_{4}^{\prime}$
 or $\mathcal{A}_{4,\xi}^{\prime \prime\prime}$ if $G(H) \cong C_4$
and $K \cong \mathcal{A}_{2,2}$ if $G(H) \cong C_2 \times C_2$.
Since $(\mathcal{A}_{4}^{\prime})^{*} \simeq \mathcal{A}_{4,\xi}^{\prime \prime\prime}$,
in the former case, $H^{*}\simeq R^\ast\# \mathcal{A}_{4,\xi}^{\prime \prime\prime}$ or
$ R^\ast\# \mathcal{A}_{4}^{\prime}$,
and
since $\mathcal{A}_{2,2}$ is self-dual,
we have $H^\ast \cong R^\ast \# \mathcal{A}_{2,2}$ in the later.

%Let $J$ be the Hopf ideal of $H$ generated by $x$. Since
%% $R$ is commutative semisimple by ... and so
%%the direct sum of one-dimensional ideals,  by Lemma \ref{lem:x-R=0},
% $x \cdot R = 0$, then
%$xR = -gRx\subseteq Hx$ and so $J = Hx$. As a $\Bbbk$-space,
%$J = Span \{r_ihx | r_1, \ldots, r_p \mbox{ a basis for } R, h \in G(H) \}$ and so the dimension
%of $J$ is at most $4p$. Then $\dim H/J \geq 4p$ and divides $8p$ since
%$(H/J)^\ast$ is isomorphic to
%a sub-Hopf algebra of $H^\ast$. Thus $\dim J = \dim H/J = 4p$.
%Then there is a Hopf algebra
%epimorphism $\pi: H \rightarrow A$ where $A:= H/J$ is a Hopf
%algebra of dimension $4p$ and $H^{co \pi} = \Bbbk\{1,x \}$.
% By Proposition \ref{prop:R-skew}, $g^2 =1$  so that $K \cong \mathcal{A}_{4,\xi}^{\prime \prime}$ if $G(H) \cong C_4$
%and $K \cong \mathcal{A}_{2,2}$ if $G(H) \cong C_2 \times C_2$.
%
% \par If $H \cong R \# \mathcal{A}_{4,\xi}^{\prime \prime}$,
% consider $H^\ast \cong R^\ast \# \mathcal{A}^\prime_4$.
%  Now the same arguments applied to $H^\ast$ give a contradiction and
% so this case is impossible.

 \par   Since $\pi: H \rightarrow A:=H/J$ is
  injective on $\Bbbk G(H)$, $4$ divides $|G(A)|$.  Thus, by Theorem \ref{th: ChNg I},
  if $A$ is not semisimple,
   $A$ is pointed.
  But every pointed Hopf algebra of dimension $4p$ has group of grouplikes of order
  $2p$  which is not divisible by $4$,
   so $A$ must be
  semisimple.

\par Since $\mathcal{A}_{2,2}$ is self-dual,
we have $H^\ast \cong R^\ast \# \mathcal{A}_{2,2}$.
% Again this means that $H$ is of type $(4,4)$ so that the antipode has
%order a power of $2$, and then by Proposition \ref{pr:R-semisimple}, $R^\ast$ is semisimple.
The same argument as for $H$ then gives us
  a Hopf algebra epimorphism from $H^\ast$ to a
semisimple Hopf algebra $B$ of dimension $4p$ with coinvariants
$\{1,y\}$ where $y$ is   $(1,h)$-primitive.

Then $B^\ast$ is isomorphic to a sub-Hopf algebra of $H$, call it $L$.
Since $L$ is cosemisimple, $L
\subseteq H_0$ and we wish to show equality.
Since $L$ has dimension $4p$, the sub-Hopf algebra $\langle L,x \rangle$ of $H$ generated by $L$
and $x$ is all of $H$.
%, i.e., $H = L + Lx+ xL + LxL$.
%Since $S^2 = \id$ on $L$ but $S^2(x) = -x$,
%we have that $(Lx + xL  + LxL) \cap L = 0$ and
%thus $H = L \oplus (Lx + xL + LxL)$ as vector spaces.
Since $\pi(x) = 0$, this means that by dimensions $\pi$
 is injective on $L$ and so $\pi: L \cong A$ is a Hopf algebra isomorphism.  This
implies that $H \cong S \# A$ where $S = \Bbbk\{1,x \}$ is a braided Hopf algebra in
$_A^A\mathcal{YD}$ and thus $H$ has
the Chevalley property.  Reversing the roles of $H^\ast$ and $H$ in
the above argument we get that $H^\ast$ also has the Chevalley property.

 \par (ii) Now suppose that $|G(K)|= 2$ and $K \cong \mathcal{A}_2$.
Then $G(K) = \langle g \rangle$ and $K$ is generated by $g$ and
 two $(1,g)$-primitives, $x$ and $x^\prime$. Let $J$ be the Hopf ideal of $H$
generated by $x$  and as in (i),
 $J = Hx  $.  Thus as a $\Bbbk$-space,
  $J = Span \{r_ig^jz | r_1, \ldots, r_p \mbox{ a basis for }
R, j=0,1, z \in \{x, x^\prime x   \} \}$.  Thus   $\dim J \leq 4p$
 so that $\dim H/J \geq  4p$ and is a divisor of $8p$ so   $\dim H/J =4p$
and as above there is a Hopf algebra
epimorphism $\pi: H \rightarrow A$ where $A:= H/J$ is a Hopf algebra
of dimension $4p$ and $H^{co \pi} = \{1,x \}$.
Since $\pi(x^\prime), \pi(g)$ generate a sub-Hopf algebra of $A$ isomorphic to
$H_4$, then $A$ is not semisimple.  If $A^\ast$
is pointed, then $H^\ast$ has grouplikes of order $2p$.  This is a contradiction since
$H^\ast \cong R^\ast \# K^\ast$ with $K^\ast \cong \mathcal{A}_2$, so that
by Proposition \ref{pr:4-4-boson}(i), $G(H^\ast) = G(\mathcal{A}_2) \cong C_2$.
   Suppose that $A$ is pointed.  Since $A^\ast$ is not pointed, then
  $A \cong \mathcal{A}(-1,1)$ in the notation of
Section \ref{sect: 4p}. But this is impossible since $\mathcal{A}(-1,1)$ has no
sub-Hopf algebra isomorphic to $H_4$.    By
Theorem \ref{th: ChNg II}, for $p\leq 11$,  $A$ is either semisimple,
pointed or basic.
\end{proof}

\begin{cor}\label{cor: R comm ss}
Suppose $H \cong R \# K$ where $K$ is a pointed Hopf algebra of dimension $8$, and $R$ is commutative and
semisimple.
\par (i) If $|G(H)|=2$, then there is a Hopf algebra map $\pi$ from $H$ onto a Hopf algebra $A$ of dimension $4p$
which is nonsemisimple, nonpointed and nonbasic.
\par (ii) If $|G(H)| =4$ and furthermore $K$ is basic and $R^\ast$ is commutative and semisimple,
 then  $H$
and $H^\ast$ have the Chevalley property.
%If $|G(H)| =4$ and furthermore $K$ is basic and $R^\ast$ is commutative and semisimple,
% then $G(H) \cong C_2 \times C_2$ and $H$
%and $H^\ast$ have the Chevalley property.
\end{cor}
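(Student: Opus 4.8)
The plan is to recognize this as a direct application of Proposition \ref{pr: boson implies chevalley}: the two parts of the corollary correspond exactly to parts (ii) and (i) of that proposition, so it suffices to verify that the commutativity and semisimplicity of $R$ (respectively $R^\ast$) force the adjoint-action hypothesis $x\cdot R=0$ (respectively $y\cdot R^\ast=0$) appearing there. First I would record that $G(K)=G(H)$ by Proposition \ref{pr:4-4-boson}(i), so that $G(K)$ has order $2$ or $4$ and is in particular abelian; this is precisely the standing hypothesis on the group of grouplikes needed to invoke Lemma \ref{lem:x-R=0}.

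The central observation is the following. Since $R$ is a finite-dimensional commutative semisimple algebra over the algebraically closed field $\Bbbk$, the Artin--Wedderburn theorem gives $R\cong\Bbbk^{\dim R}$ as an algebra, so that $R=\bigoplus_i \Bbbk e_i$ decomposes as a direct sum of one-dimensional \emph{two-sided} ideals spanned by orthogonal primitive central idempotents $e_i$; commutativity is what guarantees each $\Bbbk e_i$ is genuinely an ideal and not merely a subspace. Because $R$ is a semisimple braided Hopf algebra in $\ydk$, Lemma \ref{lem:x-R=0} applies to each one-dimensional ideal $\Bbbk e_i$ and yields $x\cdot e_i=0$ for every skew-primitive $x\in K$. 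Summing over $i$, we obtain $x\cdot R=0$ for every skew-primitive $x\in K$, which in particular supplies the $(1,g)$-primitive $x$ with $x\cdot R=0$ demanded by Proposition \ref{pr: boson implies chevalley}.

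For part (i), the hypotheses of Proposition \ref{pr: boson implies chevalley}(ii) are now met, namely $|G(H)|=2$ together with $x\cdot R=0$, and that proposition immediately delivers the Hopf algebra epimorphism $\pi\colon H\twoheadrightarrow A$ onto a $4p$-dimensional Hopf algebra $A$ which is nonsemisimple, nonpointed and non-copointed, as required. For part (ii), I repeat the idempotent argument verbatim for $R^\ast$: it is commutative and semisimple by assumption and is a braided Hopf algebra in ${}^{K^\ast}_{K^\ast}\mathcal{YD}$, and since $K$ is copointed, $K^\ast$ is pointed with abelian grouplike group $G(K^\ast)\cong G(H^\ast)$, so Lemma \ref{lem:x-R=0} again gives $y\cdot R^\ast=0$ for every skew-primitive $y\in K^\ast$. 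With both $x\cdot R=0$ and $y\cdot R^\ast=0$ in hand, $K$ pointed and copointed, and $|G(H)|=4$, Proposition \ref{pr: boson implies chevalley}(i) yields $G(H)\cong C_2\times C_2$ and the Chevalley property for both $H$ and $H^\ast$.

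I expect no serious obstacle here; the corollary is essentially the translation of the abstract hypothesis ``$x\cdot R=0$'' into the concrete condition ``$R$ commutative and semisimple.'' The only points requiring care are confirming that the Wedderburn decomposition of $R$ is a decomposition into one-dimensional \emph{ideals} so that each summand qualifies as the ideal $I$ in Lemma \ref{lem:x-R=0}, and keeping track of the grouplike orders (via $G(K)=G(H)$ and $G(K^\ast)\cong G(H^\ast)$) to certify the abelianness hypothesis of that lemma in both the $R$ and $R^\ast$ applications. Everything else is bookkeeping.
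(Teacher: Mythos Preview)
Your proposal is correct and follows essentially the same route as the paper: decompose the commutative semisimple $R$ into one-dimensional ideals via Artin--Wedderburn, apply Lemma \ref{lem:x-R=0} to each summand to obtain $x\cdot R=0$, and then invoke Proposition \ref{pr: boson implies chevalley}. You have simply made explicit what the paper leaves terse, including the verification of the abelianness hypothesis and the parallel argument for $R^\ast$ in part (ii).
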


\begin{proof}
It remains only to show that under the given conditions there is a $(1,g)$-primitive $x$ such that $x \cdot R =0$.
Since $R$ is semisimple commutative, $R$ can be written as the sum of one-dimensional simple ideals $Re_i$ with
$e_i$ a central primitive idempotent.  Now apply Lemma \ref{lem:x-R=0} and Proposition
\ref{pr: boson implies chevalley}.
\end{proof}

\begin{cor}\label{cor: type 4,4 dim 24}
If  $\dim H=24$ and $H$ has type $(4,4)$, then $H$ and $H^\ast$ have the Chevalley property.
\end{cor}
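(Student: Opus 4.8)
The plan is to combine the structural decomposition already recorded for type $(4,4)$ Hopf algebras of dimension $24$ with the commutativity that is automatically forced on a three-dimensional semisimple algebra, and then to feed the result into Corollary \ref{cor: R comm ss}(ii).

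First I would invoke Corollary \ref{cor:24-4-4-1}: since $\dim H = 24$ and $H$ is of type $(4,4)$, we may write $H \cong R \# K$, where $K$ and $K^{*}$ are pointed Hopf algebras of dimension $8$ and $R$ is a semisimple braided Hopf algebra of dimension $3$ in $\ydk$. Because that corollary is obtained through Proposition \ref{pr:R-semisimple}, whose conclusion is the semisimplicity of both $R$ and $R^{*}$, I would record at this stage that $R$ and $R^{*}$ are \emph{both} semisimple of dimension $3$, the latter being the corresponding factor in $H^{*} \cong R^{*} \# K^{*}$. I would also note immediately that $K$ is copointed, since $K^{*}$ is pointed.

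The key elementary observation is then that any semisimple algebra of dimension $3$ over the algebraically closed field $\Bbbk$ of characteristic $0$ is isomorphic to $\Bbbk \times \Bbbk \times \Bbbk$: by Wedderburn's theorem it is a product of matrix algebras $\mathcal{M}(n_i,\Bbbk)$ with $\sum_i n_i^{2} = 3$, which forces every $n_i = 1$. Hence the underlying algebra of $R$ is commutative, and by the same argument so is that of $R^{*}$. Thus both $R$ and $R^{*}$ are commutative and semisimple.

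Finally I would apply Corollary \ref{cor: R comm ss}(ii) with exactly these data: $H \cong R \# K$ with $K$ pointed and copointed of dimension $8$, with $|G(H)| = 4$, and with $R$ and $R^{*}$ commutative and semisimple. Its conclusion yields at once that $G(H) \cong C_2 \times C_2$ and that $H$ and $H^{*}$ have the Chevalley property, which is precisely the assertion. There is no genuine obstacle here; the only point requiring a little care, rather than real work, is confirming the semisimplicity of $R^{*}$ (not merely of $R$). This is supplied by Proposition \ref{pr:R-semisimple} through the identification $H^{*} \cong R^{*} \# K^{*}$, once one knows that $\ord S_H$ is a power of $2$, a fact already in force in the derivation of Corollary \ref{cor:24-4-4-1}.
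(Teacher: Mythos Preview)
Your proposal is correct and follows essentially the same route as the paper's own proof: invoke Corollary~\ref{cor:24-4-4-1} to obtain $H\cong R\#K$ with $K,K^{*}$ pointed of dimension $8$ and $R,R^{*}$ semisimple of dimension $3$, observe via Wedderburn that a three-dimensional semisimple algebra over $\Bbbk$ is commutative, and conclude by Corollary~\ref{cor: R comm ss}(ii). Your version is simply a bit more explicit about why $R^{*}$ is semisimple and why $K$ is copointed, but the argument is the same.
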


\begin{proof}
 By Corollary \ref{cor:24-4-4-1}, $H \cong R \#K$ where $K$, $K^\ast$
are pointed Hopf algebras of dimension $8$, $R$ is a
semisimple braided Hopf algebra in $^K_K\mathcal{YD}$
 of dimension $3$, and $R^\ast$ is a semisimple braided Hopf algebra in $^{K^\ast}_{K^\ast}\mathcal{YD}$ of dimension $3$.
 Since all simple representations of $R$ and $R^\ast$ must be one-dimensional, $R,R^\ast$ are commutative and
 the result follows from Corollary \ref{cor: R comm ss}.
\end{proof}

 \begin{remark} Suppose that $H$ is of type $(2^i,2^j)$,   has dimension $24$ and $H \cong R \# K$ where $K$ is pointed of dimension $8$.  Then $|G(H)| \neq 2$. For by Proposition \ref{pr:R-semisimple}, $R$ and $R^\ast$ are semisimple and thus, since both have
 dimension $3$, they are commutative also. Then the conditions of Proposition \ref{pr: boson implies chevalley} hold.
 \end{remark}

The next remark summarizes the results
proved for various particular dimensions $8p$, with $H$ nonsemsimple, nonpointed,
nonbasic as assumed throughout this section.
\begin{remark}\label{rm: summary}
\begin{enumerate}
\item[(i)] From Remark \ref{rm: 3 5 7}, if $\dim H = 24, 40, 56$, then $|G(H)| \neq 8$.
\item[(ii)] From Corollary \ref{cor: 3 5 C4}, if $\dim H = 24,40 $, then type $(2p,2)$ is impossible and
for type $(2p,4)$, $G(H^\ast) \cong C_4$.
\item[(iii)]  From Corollary \ref{cor:2p-4-4p}, if $p=3,7,11$, $|G(H)| = 2p$, $H^\ast$ does not have the Chevalley property, and  $H$ does not contain a copy of $\mathcal{A}(-1,1)$, i.e.,
we are in Case (iii) of Proposition \ref{prop:2p-exact}, then   $G(H^\ast) \ncong C_4$.
\item[(iv)] From Corollary \ref{cor: 24 type 6,4 corad} if $\dim H = 24$ and $H$ has type $(6,4)$ then if $H^\ast$ does not have the Chevalley property,
then $H$ has a sub-Hopf algebra isomorphic to $\mathcal{A}(-1,1)$, $\dim H^\ast_0 =20$ and as coalgebras, either $H^\ast  \cong \matha_4^{\prime \prime } \oplus \mathcal{M}^\ast(2, \Bbbk)^4$ or  $H^\ast  \cong \matha_4^{\prime \prime } \oplus \mathcal{M}^\ast(4, \Bbbk)$.
\item[(v)] By Corollary \ref{cor: type 4,4 dim 24}, if $\dim H = 24$ and $H$ does not have the Chevalley property, then $H$ is not of type $(4,4)$.
\end{enumerate}
\end{remark}

%%%%%%%%%% SUBSECTION 24 %%%%%%%%%%%
\subsection{Hopf algebras of dimension $24$}
In this subsection we specialize to the case of $p=3$, $\dim H = 24$.  Unless otherwise stated, throughout this section
$H$ will denote a   Hopf algebra without the Chevalley property.

\par Our first result is a general statement for all Hopf algebras of dimension $8p$ and will need the
 following remark  about
nonabelian groups of order $4p$.

\begin{remark}\label{rm: DF}
Suppose that $L$ is a nonabelian group of order $4p$, $p$ an odd
prime. Then unless $p=3$ and $L = \mathbb{A}_4$, $L$ has a normal
subgroup $N$ of order $p$. (This follows from the Sylow Theorems;
see, for example, \cite[p. 34]{L}.) Then there is a Hopf algebra map
from $\Bbbk L$ to $\Bbbk (L/N)$ where $L/N$ is a group of order $4$.
Dualizing we see that $\Bbbk^L$ contains a sub-Hopf algebra
isomorphic to a group algebra of dimension $4$ and thus $G(\Bbbk^L)$
is a group of order $4$.
\end{remark}

\begin{prop}
\label{pr: nontriv gplike} Let $H$ be a nonsemisimple, nonpointed nonbasic Hopf algebra with
$\dim H = 8p$ and suppose $H$ has a
simple subcoalgebra $D$ of dimension $4$ stable under the antipode.
Then $H$ has a nontrivial grouplike element of order $2$.
\end{prop}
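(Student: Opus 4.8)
The plan is to pass to the sub-Hopf algebra $\mathcal{B} := \langle D\rangle$ of $H$ generated by $D$. Since $D$ is stable under $S$, $\mathcal{B}$ is genuinely a sub-Hopf algebra, $D$ is a $4$-dimensional simple subcoalgebra of $\mathcal{B}$ stable under $S_{\mathcal B}$, and $G(\mathcal{B})\subseteq G(H)$; so it suffices to produce a grouplike of order $2$ inside $\mathcal{B}$. As $\mathcal{B}_0 \supseteq \Bbbk 1 \oplus D$ we have $\dim\mathcal{B}\geq 5$, and by Nichols--Zoeller $\dim\mathcal{B}$ divides $8p$. A Hopf algebra of prime dimension $p$ is a group algebra (Kac--Zhu), hence cocommutative and containing no $4$-dimensional simple coalgebra, so $\dim\mathcal{B}\neq p$; thus $\dim\mathcal{B}\in\{8,2p,4p,8p\}$, with $\dim\mathcal{B}=8p$ forcing $\mathcal{B}=H$. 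Note $\mathcal{B}$ is non-cocommutative since it contains $D$.

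First I would treat the case where $\mathcal{B}$ is semisimple. Then $\dim\mathcal{B}\neq 8p$ (else $H$ would be semisimple), so $\dim\mathcal{B}\in\{8,2p,4p\}$, and $\mathcal{B}$ is a non-cocommutative semisimple Hopf algebra. Using the classification of semisimple Hopf algebras in these dimensions (Subsection \ref{subsect: 8} in dimension $8$; triviality in dimension $2p$; and Subsection \ref{sec: 4p general} in dimension $4p$), the only possibilities are $A_8,\Bbbk^{\mathbb{D}_4},\Bbbk^{Q_8}$, or $\Bbbk^{\mathbb{D}_p}$, or $\mathcal{G}_1,\mathcal{G}_2,\Bbbk^{\Gamma}$ (with $\Gamma$ nonabelian of order $4p$). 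In each case $G(\mathcal{B})$ has even order: since $D$ is $4$-dimensional, any such $\Gamma$ admits a $2$-dimensional irreducible representation, so $\Gamma\neq\mathbb{A}_4$ and Remark \ref{rm: DF} gives $|G(\Bbbk^{\Gamma})|=4$, while the remaining algebras have grouplike group $C_4$, $C_2\times C_2$, or $C_2$ directly. Hence $\mathcal{B}$, and therefore $H$, has a grouplike of order $2$.

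The substantial case is $\mathcal{B}$ nonsemisimple, where Proposition \ref{prop:natale-stefan} applies to $\mathcal{B}=\langle D\rangle$ and yields a central exact sequence $\Bbbk^{F}\hookrightarrow\mathcal{B}\twoheadrightarrow A$ with $F$ a finite group and $A^{\ast}$ pointed nonsemisimple. The guiding idea is that, although grouplikes of the quotient $A$ need not lift, the grouplikes of the central subalgebra $\Bbbk^{F}$ do embed, via $G(\Bbbk^{F})\cong\widehat{F}\hookrightarrow G(\mathcal{B})$; so for $|F|\in\{2,2p\}$ the group $\widehat F$ has even order and immediately supplies an order-$2$ grouplike. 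Since $A$ is nonsemisimple, $4\mid\dim A$ and $\dim A\mid 8p$; moreover $\dim A=8p$ is impossible, as it would give $\mathcal{B}=H$ with $H^{\ast}=A^{\ast}$ pointed, contradicting that $H$ is non-copointed. Thus $\dim A\in\{4,8,4p\}$, and the relation $\dim\mathcal{B}=|F|\dim A\geq 5$ pins down the few possible values of $|F|$.

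It then remains to dispatch the odd values of $|F|$, which is where the real work lies. When $|F|=1$ we have $\mathcal{B}\cong A$ nonsemisimple with pointed dual and, containing $D$, non-pointed, so $\dim\mathcal{B}\in\{8,4p\}$; the complete classifications of Subsection \ref{subsect: 8} and of the pointed Hopf algebras of dimension $4p$ in Subsection \ref{sec: 4p general} then force $\mathcal{B}\cong\mathcal{K}$ or $\mathcal{B}\cong\matha(-1,1)^{\ast}$, each of which has a grouplike of order $2$. When $|F|=p$ and $\dim A=4$ we get $\dim\mathcal{B}=4p$ and $C_p\cong\widehat{F}\subseteq G(\mathcal{B})$, so $|G(\mathcal{B})|>2$ and $\mathcal{B}$ would be pointed by Theorem \ref{th: ChNg I}, contradicting $D\subseteq\mathcal{B}$; when $|F|=p$ and $\dim A=8$ we have $\mathcal{B}=H$ and $C_p\subseteq G(H)$, whence $|G(H)|\geq p$, and Proposition \ref{pr: dim not p,8p,4p} (excluding $p,4p,8p$) leaves $|G(H)|=2p$, a group of even order. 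I expect the main obstacle to be exactly this non-liftability of grouplikes from the quotient $A$: the device of reading off grouplikes from the central factor $\Bbbk^{F}$, together with the dimension restrictions of Proposition \ref{pr: dim not p,8p,4p} and Theorem \ref{th: ChNg I}, is what makes every branch close.
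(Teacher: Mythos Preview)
Your argument is correct and follows essentially the same route as the paper: pass to $\mathcal{B}=\langle D\rangle$, exploit the central exact sequence of Proposition~\ref{prop:natale-stefan} in the nonsemisimple case, and use the classifications in dimensions $8$, $2p$, $4p$ together with Proposition~\ref{pr: dim not p,8p,4p}, Theorem~\ref{th: ChNg I}, and Remark~\ref{rm: DF}. The only difference is organizational: you split first on whether $\mathcal{B}$ is semisimple and then apply the exact sequence uniformly in the nonsemisimple branch (which also absorbs the nonsemisimple dimension-$8$ case), whereas the paper splits first on $\dim\mathcal{B}\in\{8,2p,4p,8p\}$ and invokes the exact sequence only for $4p$ and $8p$; your arrangement is arguably cleaner, since it avoids invoking Proposition~\ref{prop:natale-stefan} before nonsemisimplicity of $\mathcal{B}$ has been established.
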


\begin{proof}
  Let $\mathcal{H}$ denote the  sub-Hopf algebra of
$H$ generated by $D$. Then
  $\dim \mathcal{H} \neq 2,4, p$ and so $\dim \mathcal{H} = 8, 2p, 4p $ or $8p$.
  \par
If $\dim \mathcal{H} = 8$, then  by the classification of Hopf
algebas of dimension $8$,
 \cite{W}, \cite{stefan}, $\mathcal{H} \cong \Bbbk [C_2 \times
C_2] \oplus \mathcal{M}^\ast (2,\Bbbk)$ as coalgebras if
$\mathcal{H}$ is semisimple and $\mathcal{H} \cong H_4 \oplus
\mathcal{M}^\ast(2, \Bbbk)$ if $\mathcal{H}$ is basic. In either
case, $\mathcal{H}$,
 and thus $H$,
contains a grouplike element of order $2$.

If $\dim \mathcal{H} = 2p$, then by \cite{Ng3}, $\mathcal{H}$ is
semisimple, so that  $\mathcal{H}= \Bbbk^{\mathbb{D}_{p}}$ and
    has a
grouplike of order $2$.

Now suppose that $\dim \mathcal{H} = 4p$. By Proposition
\ref{prop:natale-stefan},
  $\mathcal{H} $ fits into a central exact sequence:
\begin{equation*}
\Bbbk^G \overset{i}{\hookrightarrow} \mathcal{H}
\overset{\pi}{\twoheadrightarrow} A
\end{equation*}
for a group $G$ and $A$ a nonsemisimple basic Hopf algebra. Then
$|G| \in \{1,2,4,p,2p,4p \}$. If $|G| = 1$, then $\mathcal{H}$ is
nonpointed nonsemisimple but has pointed dual, so by Subsection
\ref{sect: 4p}, $\mathcal{H} \cong \mathcal{A}(-1,1)^\ast \cong H_4
\oplus \mathcal{M}^\ast(2, \Bbbk)^{p-1}$ as coalgebras and
consequently has a grouplike element of order $2$. If $|G| \in
\{2,4,2p\}$, then $\Bbbk^G$ has also a grouplike element of order
$2$. If $|G|=p$, then $p$ divides $|G(\mathcal{H})|$ and  $|G(H)|$
so that by Proposition \ref{pr: dim not p,8p,4p}, $G(H) \cong C_{
2p}$ and $H$ has a grouplike of order $2$. If $|G| = 4p$, then
$\mathcal{H} = \Bbbk^G$ for $G$ a nonabelian group of order $4p$. By
Remark \ref{rm: DF},
 $\Bbbk^G$ has a group of grouplikes of
order $4$ unless $p=3$, $G=\mathbb{A}_4$ and the dimension of
$\mathcal{H}$ is $12$. But if $\mathcal{H} = \Bbbk^{\mathbb{A}_4}$
does
  not have a grouplike of order $2$, then
 as a coalgebra $\Bbbk^{\mathbb{A}_4}
 \cong \Bbbk C_3 \oplus \mathcal{M}^\ast (3,\Bbbk)$.
   But $\mathcal{H}$ has a simple subcoalgebra
of dimension $4$, so this case is impossible.

Finally, assume that $D$ generates $H$ so that as above,
we have an exact sequence
$\Bbbk^G \overset{i}{\hookrightarrow} H
\overset{\pi}{\twoheadrightarrow} A$
for a group $G$ and $A$ a nonsemisimple basic Hopf algebra.
Since $H$ is assumed to be nonbasic, then $|G| \neq 1$, and
since $H$ is nonsemsimple, $|G| \neq 8p$. The argument above shows
that if  $|G| \in \{ 2,4,p,2p   \}$, then $H$ has a grouplike
element of order $2$. If $|G|$ is $8$ or $4p$, then $A$ has
dimension $p$ or $2$ respectively and so must be semisimple.  This
would imply that $H$ is semisimple, a contradiction.
\end{proof}

\begin{lema}\label{lem:24-gr-2}
If $\dim H = 24$ then $H$ has a grouplike element of order $2$.
\end{lema}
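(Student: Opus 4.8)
The plan is to pin down $|G(H)|$ and reduce the whole lemma to excluding the trivial case. Since $|G(H)|$ divides $\dim H=24$ and, by Proposition \ref{pr: dim not p,8p,4p} (with $p=3$), cannot be $3,12$ or $24$, while $|G(H)|\neq 8$ by Remark \ref{rm: summary}(i), the only possibilities are $|G(H)|\in\{1,2,4,6\}$. If $|G(H)|$ is one of $2,4,6$, then $G(H)$ has even order, so by Cauchy's theorem it contains an element of order $2$, which is the desired grouplike. Thus the entire content of the lemma is to rule out $|G(H)|=1$, and I would do so by deriving a contradiction.

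So assume $|G(H)|=1$. Recall that throughout this subsection $H$ is nonsemisimple, nonpointed and non-copointed, these being consequences of the standing hypothesis that $H$ lacks the Chevalley property. Because $G(H)$ is trivial, a nontrivial skew-primitive would be a nonzero primitive element, which is impossible in a finite dimensional Hopf algebra in characteristic $0$; hence $H$ has no nontrivial skew-primitives and Proposition \ref{prop:biti-dasca}(ii) applies. Writing $H_0\cong\Bbbk\oplus\bigoplus_i\mathcal{M}^\ast(n_i,\Bbbk)$ with $2\le n_1\le\cdots$, the bound $24=\dim H\ge \dim H_0+(2n_1+1)+n_1^2$ forces $n_1=2$ and $\dim H_0\le 15$. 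In particular $H$ has a $4$-dimensional simple subcoalgebra, and there are few of them: the only possibilities are $H_0\cong\Bbbk\oplus\mathcal{M}^\ast(2,\Bbbk)^m$ with $m\in\{1,2,3\}$, or $H_0\cong\Bbbk\oplus\mathcal{M}^\ast(2,\Bbbk)\oplus\mathcal{M}^\ast(3,\Bbbk)$.

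Next I would control the antipode. Since $|G(H)|=1$ and $H$ is nonpointed, non-copointed and nonsemisimple, Theorem \ref{thm:8p} forces $H$ to be of type $(1,s)$ with $s$ a power of $2$; as $|G(H^\ast)|\neq 8$ by Remark \ref{rm: summary}(i) applied to $H^\ast$, we get $|G(H^\ast)|\in\{1,2,4\}$. The distinguished grouplike of $H$ is then trivial and that of $H^\ast$ lies in the $2$-group $G(H^\ast)$, so Radford's formula for $S^4$ (see \cite{Mo}) shows that $\ord S_H$ is a power of $2$. Now $S$ permutes the $4$-dimensional simple subcoalgebras, and this permutation has $2$-power order. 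When the number of such subcoalgebras is odd -- that is, in every case above except $H_0\cong\Bbbk\oplus\mathcal{M}^\ast(2,\Bbbk)^2$ -- a permutation of $2$-power order must fix one of them, producing a $4$-dimensional simple subcoalgebra stable under $S$. Proposition \ref{pr: nontriv gplike} then yields a grouplike of order $2$, contradicting $|G(H)|=1$.

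The remaining configuration $H_0\cong\Bbbk\oplus\mathcal{M}^\ast(2,\Bbbk)^2$, with $S$ interchanging the two $4$-dimensional simple subcoalgebras $D_1,D_2$, is the main obstacle (if $S$ fixes them we finish as above). Here no single simple subcoalgebra is $S$-stable, so Proposition \ref{pr: nontriv gplike} does not apply directly. I would first show that $D_1$ generates $H$: the sub-Hopf algebra $\langle D_1\rangle$ is $S$-stable, hence contains $S(D_1)=D_2$ and therefore all of $H_0$, so $\dim\langle D_1\rangle\in\{12,24\}$; dimension $12$ is impossible because, by the classification in dimension $12$ (\cite{andrunatale}, \cite{fukuda}, \cite{natale}), every Hopf algebra of dimension $12$ has a nontrivial grouplike, whereas $G(\langle D_1\rangle)\subseteq G(H)=\{1\}$. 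Thus $H$ is generated by the $S$-stable coalgebra $D_1\oplus S(D_1)$, and I would then extract a contradiction from the resulting rigid structure -- either through \c{S}tefan's description of Hopf algebras generated by such a coalgebra (in the spirit of Proposition \ref{prop:natale-stefan}, forcing a semisimple or copointed quotient that $H$ cannot possess) or through a sharper count of the pieces $P_n^{\tau,\gamma}$ refining the congruence $\dim P^{1,1}\equiv 3\pmod 4$ already imposed by the symmetry in Lemma \ref{lema:fukuda}(ii). This swap case is where the real difficulty of the argument lies.
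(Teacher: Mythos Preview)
Your reduction to $|G(H)|=1$ and the enumeration of coradical types matches the paper.  The real issue is the case $H_0\cong\Bbbk\oplus\mathcal M^\ast(2,\Bbbk)^2$ with $S$ swapping $D_1,D_2$, which you yourself flag as unfinished.  Neither of your two proposed exits is completed: \c{S}tefan's structure theorem (Proposition \ref{prop:natale-stefan}) requires a \emph{single} $4$-dimensional subcoalgebra stable under $S$, not $D_1\oplus S(D_1)$, so that route does not apply here; and the ``sharper count'' is precisely what the paper actually does.  So as written the proof has a genuine gap.

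The paper closes this gap --- and simultaneously handles $s=3$ with no $S$-fixed simple --- by a single direct counting argument that avoids Theorem~\ref{thm:8p}, Radford's formula, and the order-of-$S$ discussion entirely.  From $\dim P^{1,1}\equiv 3\pmod 4$ one gets $\dim P^{1,1}\ge 3$, hence some $P^{1,1}_\ell$ is nondegenerate with $\ell>2$.  Lemma~\ref{lema:fukuda}(i) then produces nondegenerate $P_1^{D_i,1}$, $P_{\ell-1}^{1,D_i}$, $P_1^{1,S(D_i)}$ with $D_i\ne S(D_i)$, forcing $2\dim P^{1,\mathcal D}\ge 8$; and further chaining gives $\dim P^{\mathcal D,\mathcal D}\ge 8$.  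With $\dim H_0\ge 9$ and $\dim P^{1,1}\ge 3$ the total exceeds $24$, a contradiction.  Your detour through $\ord S$ being a $2$-power is correct for eliminating the $s=3$ no-fixed-point case, but it is unnecessary once one has this count, and it does not help at all for $s=2$.

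A smaller point: the inference ``$H$ lacks the Chevalley property $\Rightarrow$ $H$ is non-copointed'' is not a general implication, so your appeal to Theorem~\ref{thm:8p} (and to Proposition~\ref{pr: nontriv gplike}) rests on an unstated extra hypothesis.  The paper uses Proposition~\ref{pr: nontriv gplike} as well, so this is a shared subtlety rather than a flaw specific to your argument; but it is worth being explicit about where non-copointedness enters.
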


\begin{proof}
By Proposition \ref{pr: dim not p,8p,4p},   $G(H)\ncong C_p = C_3$
so it suffices to show that $H$ has a nontrivial grouplike
element, i.e., that $H_0$ is not of the form $\Bbbk\cdot 1 \oplus E$
where $E$ is a sum of simple subcoalgebras of dimension greater than
$1$. Suppose that
\begin{equation*}
H_0 = \Bbbk\cdot 1 \oplus \oplus_{i=1}^t D_i \mbox{ where  } D_i
\cong \mathcal{M}^\ast (n_i,\Bbbk) \mbox{ and } n_j \leq n_{j+1}.
\end{equation*}
By Proposition
\ref{prop:biti-dasca}, $\dim H_0 \leq 15$ so that the possibilities for $H_0$ are
 $H_0 = \Bbbk \cdot 1 \oplus \M^{\ast}(2,\Bbbk)^s   $ with $s=1,2,3$,
  $H_0 = \Bbbk \cdot 1  \oplus \M^{\ast}(3,\Bbbk)  $ or
  $H_0 = \Bbbk \cdot 1 \oplus \M^{\ast}(2,\Bbbk) \oplus \M^{\ast}(3,\Bbbk) $. If $H$ has  a simple
  subcoalgebra of dimension $4$  stable under the antipode then  by Proposition \ref{pr:
nontriv gplike}, $H$ has
  a grouplike element of  order $2$.
If  $H_0 = \Bbbk \cdot 1  \oplus \M^{\ast}(3,\Bbbk)  $ then  Proposition
\ref{prop:biti-dasca} implies that $\dim H \geq 26$, a contradiction. Thus only the cases
$H_0 = \Bbbk \cdot 1 \oplus \M^{\ast}(2,\Bbbk)^s   $ with $s=2,3$ and $S(D_i) \neq D_i$ remain.

\smallbreak

\par Suppose that $H_0 = \Bbbk \cdot 1 \oplus \sum_{i}D_i  $ with $D_i \cong \M^{\ast}(2,\Bbbk)$
and $S(D_i) = D_j$ for some $j \neq i$. Note that $\dim H_0 >8$.
Let $\D  $ denote the set of $D_i$. Then since $4$ divides $\dim D_i$, $2\dim P^{ 1, \D}$, and $\dim P^{\D,\D}$, then
$4$ divides $1 + \dim P^{1,1}$ and $\dim P^{1,1} \geq 3$. Thus by Lemma \ref{lema:fukuda}, $P^{1,1}_\ell$
is nondegenerate for some $\ell >2$. Then $P_{m}^{1,D_i}, P_1^{D_i,1}, P_1^{1, S(D_i)}$ are nondegenerate for
$m = \ell -1\geq 2$, some $i$. Then $2\dim P^{1,\D} \geq 8$.  Since $P_1^{D_i,1}$ and $P_m^{S(D_i),1}$
are nondegenerate then $P^{D_i, \D}$ and $P^{S(D_i), \D}$ are nondegenerate and $\dim P^{\D,\D} \geq 8$.
But this is impossible if $\dim H = 24$.
\end{proof}

\begin{remark} Similar arguments to the proof of Lemma \ref{lem:24-gr-2}
apply if $\dim H=4n$
and $H_0 = \Bbbk \cdot 1 \oplus \sum_{i=1}^t D_i$ with $ D_i = \M^\ast(2,\Bbbk) $ and
$D_i \neq S(D_i)$ for all $i$.  Let $\D $  denote the set of $D_i$.
Then $2 \dim P^{1,\D} + \dim P^{\D,\D} \geq 20$ where $\D$ denotes the set of
simple $4$-dimensional subcoalgebras.
\par For,   we may suppose that $P^{1,1} = P^{1,1}_\ell$ with $\ell \geq 3$.
  Then $P_1^{1,C},P_{\ell -1}^{C,1}, P_1^{S(C),1},P_{\ell -1}^{C,E},
P_{\ell -2}^{C,D}, P_1^{D,1}$ are nondegenerate
 for some $C,D,E \in \D$
 so that $2\dim P^{1,\D} \geq 8$ and $\dim P^{\D,\D} \geq 8$.
  Furthermore, since $\ell -1\geq 2$, then $P_1^{C,X}, P_{\ell -2}^{X,E}$
are nondegenerate for some coalgebra $X$.
 If $\dim X = 1$,
then $P_1^{C,1}, P_1^{S(C),1}, P_{\ell -1}^{C,1}$ are nondegenerate and
$2 \dim P^{1,\D} \geq 12$. If $\dim X = 4$,
then $P_1^{C,X}, P_{\ell -1}^{C,E}, P_{\ell -2}^{C,D} $ are nondegenerate and the statement follows.
\end{remark}

We finish the section with the proof of Theorem \ref{thm:24}.

\bigbreak
\noindent {\bf Proof of Theorem B.}
%\begin{proofthmc}
Let $ \dim H =24$ and suppose that $H$ does not have the Chevalley property.
Then $ |G(H)|\neq 1, 3, 8, 12$ or $ 24 $, by Lemma
\ref{lem:24-gr-2},
Remark \ref{rm: 3 5 7}
and Proposition \ref{pr: dim not p,8p,4p}. Since $ |G(H)| $
divides $ \dim H $, we have that $|G(H)|\in  \{ 2, 4,6 \}$ and by Remark \ref{rm: summary},
the proof is complete.
\qed

\newpage
\section{Open cases}
The following table enumerates all   open cases in the
classification of Hopf algebras of dimension less than $100$ up to isomorphism.
In this table, $p$ is   arbitrary, not necessarily odd.

\begin{table}[here]
\begin{center}
\tiny{\begin{tabular}
{|p{1cm}|p{3.1cm}|p{3,9cm}|p{3,4cm}|p{3,4cm}|} \hline
{\bf $\dim H$} & {\bf Semisimple} & {\bf Pointed}&
 {\bf Chevalley} & {\bf Other}\\
\hline\hline
$ p$ & {\bf Completed:}
\newline All trivial \cite{Z}
 &  {\bf None} & {\bf None}
 & {\bf None:}  \cite{Z}
 \\ \hline

$ 2p$\newline
 $p$ odd & {\bf Completed:}
\newline All trivial \cite{ma-2p}\footnotemark
 &  {\bf None}
& {\bf None} & {\bf None:}   \cite{Ng3}
 \\ \hline

$ p^2$ & {\bf Completed:}
  All trivial \cite{masuoka-p^n}
 &  {\bf Completed:} $\exists$ $p-1$, the Taft Hopf algebras \cite{andrussch}
& {\bf None} & {\bf None:}  \cite{Ng}
 \\ \hline

$ pq$ & {\bf Completed:}
All trivial \newline \cite{ma-6-8, Ng3, EG, GW, So, pqq2}
 &  ${\bf None}$
& {\bf None} & {\bf None:}    for $p<q \leq 4p+11$  \cite{Ng4}
\newline {\bf Open:}
  $87$, $93$.
 \\

 \hline
%$p^3$ & {\bf Completed} \newline \cite{k-p} for dim $8$
$p^3$ & {\bf Completed:} \newline    $p=2 $ ,  $\exists$    $  1$ \cite{k-p} \cite{ma-6-8}
\newline $ p$ odd , $\exists$  $p+1$ \cite{ma-pp}
& {\bf Completed:} $p=2$, $\exists$ $5$ \cite{stefan}
\newline $p$ odd $\exists$ $(p-1)(p+9)/2$ \cite{AS2, CD, SV}
& {\bf {\bf None}}  &
${\bf None:}\newline $   $8$ \cite{W}, \cite{stefan}
\newline $27$ \cite{GG}, \cite{bg}
\\
\hline

$2p^2$
\newline $p$ odd& {\bf Completed:} $\exists$ $2$, they are duals
\cite{masuoka-further}, \cite{pqq}
& {\bf Completed:}\newline
$\exists$ $4(p-1)$    \cite[A.1]{andrunatale}&
{\bf None} &
{\bf None:}\footnotemark
 \cite{hilgemann-ng} \\
\hline

$pq^2$
\newline $p$ odd& {\bf Completed:}\footnotemark
 \newline  \cite{G, masuoka-further, pqq, pqq2, clspqq, eno-08}& {\bf Completed:}\newline
$\exists$  $4(q-1)$   \cite[A.1]{andrunatale}
%&  {\bf Completed} \newline \cite[Lemma A.1]{andrunatale}
& {\bf None:} \cite[Lemma A.2]{andrunatale}  & {\bf Completed:}
 $12$ \cite{natale}
\newline    20, 28, 44 \cite{ChNg}
\newline
{\bf Open:}
  $ 45$, $ 52$, $ 63$,
$ 68$, $ 75$, $ 76$,
$ 92$, $ 99$. \\
\hline

  $ pqr$  & {\bf Completed:}\footnotemark \newline
\cite{pqq, pqq2, eno-08} &  {\bf None} &
%{\bf None} \newline \cite[Prop. 1.8]{andrunatale}&
{\bf None: }   Prop. \ref{prop:no-chev-rpq} &
{\bf Completed:}  $30$ \cite{fukuda-30} \newline
{\bf Open:}
  $42$,
$66$,
$70$, $78$\\
\hline

$ p^4$ & {\bf Completed:} $p=2$, $\exists$ $16$ \cite[Theorem 1.2]{kashina} \newline {\bf Open:}  81
 & {\bf Completed:} $16$; $\exists$ $29$
\cite{pointed16} \footnotemark
\newline  {\bf Completed:} $p$ odd  \cite{AS2}.
Infinite nonisomorphic families exist \cite{AS2}, \cite{bdg}, \cite{Gel}\footnotemark[10]  &
%{\bf Completed} \newline \cite{biti} for 16 \newline {\bf Open:}  81 &
{\bf Completed:} 16 \cite{de1tipo6chevalley}\newline $   \exists $ 2
selfdual, coradical $A_8$
\newline{\bf Open:} 81 & {\bf Completed:} 16
  \cite{GV}
\newline
{\bf Open:}
$81$
\\
\hline
$ p^3q$  & {\bf Open}
 & {\bf Completed:}\footnotemark \newline $24$, $40$, $54$, $56$ \cite{G1}
\newline {\bf Open:}  $88$ \footnotemark[9]
& {\bf Open}
 & {\bf Open:}\newline
  $24 $,
$40 $,
 $54 $,
$56 $,
   $88  $.
% \\

 \\
\hline$ p^2q^2$ & {\bf Open}
& {\bf Completed:} 36
\cite{G1}  \newline
 {\bf Open:}   $100$ \footnotemark[9]
& {\bf Open} & {\bf Open}:
  $36$,
$100$
\\
\hline $ p^2qr$ & {\bf Open} & {\bf Completed:}  $60$ \cite{G1}
\newline {\bf Open:}
  $84$, $90$ \footnotemark[9]
& {\bf Open} & {\bf Open:}
  $60$, $84$, $90$
\\
\hline
$ p^3q^{2}$ & {\bf Open}
& {\bf Open}\footnotemark   &
%{\bf Open} &
{\bf Open }\footnotemark
 & {\bf Open:}  $72$
 \\
\hline
$ p^n$ \newline $n=5,6$
& {\bf Open} &  {\bf Completed:}
 $32$. \cite{G3} Infinite families of nonisomorphic
Hopf algebras exist. \cite{G3}, \cite{b iso}\footnotemark[10]
 \newline {\bf Open:} 64 &
{\bf Open}  & {\bf Open:}  $32$, $64$ \\

\hline
$p^{4}q$ & {\bf Open} & {\bf Completed:} 48
  \cite{G1}
\newline {\bf Open:}   $80$  &
{\bf Open}  & {\bf Open:}
  $48$, $80$
\\

\hline
$ p^5q$ & {\bf Open}
& {\bf Open}\footnotemark[6]
& {\bf Open}  & {\bf Open:}
  $96$\\ \hline
\end{tabular}}
\end{center}
\caption{ Hopf algebras of dimension $\leq
100$}\label{tab-abiertas}
\end{table}

\footnotetext[1] {Dimension  $6$ was classified in \cite{ma-6-8}.}
\footnotetext[2]{The classification for dimension $18= 2(3^2)$
was completed  in  \cite{d-fukuda}.}
\footnotetext[3]{The complete classification of semisimple Hopf algebras of
  dimension $12= 3(2^2)$ is given in \cite{fukuda}.}
\footnotetext[4]{The complete classification of semisimple Hopf algebras of
  dimension $30$ and $42$ is given in \cite{Na-mm}.}
  \footnotetext[5]{The duals to these are explicitly constructed in \cite{biti2}.}
\footnotetext[6]{ Pointed
Hopf algebras $H$ with  $\frac{\dim H}{|G(H)|}< 32$ or
$\frac{\dim H}{|G(H)|} =p^{3}$ were classified in \cite{G1}. }
  \footnotetext[7]{Pointed Hopf algebras with nonabelian
grouplikes  known to exist by  \cite{AHS}   dimension $p^3p^2$ ,
 \cite{FG}   dimension $p^5q$.}
  \footnotetext[8]{Nonpointed Hopf algebras with
Chevalley property known to exist  \cite{AV1, AV2}.}
\footnotetext[9]{ $\dim  p^3q, p^2q^2, p^2qr$:  For dimensions $88$, $100$, $84$, $90$, the classification of the pointed Hopf algebras was completed
for those with coradical  a group algebra of order a power of 2  in \cite{nichols} and \cite{G1}.}
\footnotetext[10]{The families of nonisomorphic pointed Hopf algebras of dimension $81$ consist of quasi-isomorphic Hopf algebras \cite{masdefending} but the duals of the families of nonisomorphic pointed Hopf algebras of dimension $32$ give an infinite family of non-quasi-isomorphic
Hopf algebras \cite{eg}.}

The columns from left to right  describe
the classification of Hopf algebras which are semisimple,    pointed   nonsemisimple,
nonsemisimple nonpointed with the Chevalley property, etc.
We call a Hopf algebra \emph{trivial} if it is a group
algebra or the dual of a group algebra. For dimension $mn^2$, pointed Hopf algebras
always exist; just take $\Bbbk C_m \otimes T_q$ where $q$ is a primitive $n$th root of unity.

\par Note that by
\cite[Prop. 1.8]{andrunatale}, a Hopf algebra of square-free dimension cannot be pointed.
Also note if for every divisor $m$ of some dimension $n$   the only semisimple Hopf algebras of
dimension $m$ are the group algebras, then there are no Hopf algebras of dimension $n$ with the Chevalley property.
For example, this is why there are no nonpointed Hopf algebras of dimension $p^3$ with the Chevalley property.

\par Examples of nonpointed but basic Hopf algebras do exist. They are given
by duals of nontrivial liftings which are not Radford bosonizations. See for
example \cite{biti}.

In general, this table does not contain references to partial results for a particular dimension even
though the literature may contain some.  For example the general classification for dimension $24$ is listed only as Open.
Also when a general result has been proven, the table cites only that result.  For example, \cite{hilgemann-ng} is cited
for the result that all Hopf algebras of dimension $2p^2$, $p$ odd, are semisimple or pointed; the specific case of dimension
$18$ was proved in \cite{d-fukuda}.  We have attempted to include references to some specific cases in the footnotes but make no claim
that these are complete.

%%%%%%%%%%%%%%%%%%%%%%%%%%%%%%%%%%%%%

%%%%%%%%%% BIBLIOGRAPHY %%%%%%%%%%%%%%%%%%%

\end{document}